\numberwithin{equation}{section}
\newtheorem{theorem}{Theorem}[section]
\newtheorem{lemma}[theorem]{Lemma}
\newtheorem{proposition}[theorem]{Proposition}
\newtheorem{definition}{Definition}[section]
\newtheorem{remark}{Remark}[section]
\newcommand{\f}{\mathscr{F}}
\newcommand{\lr}{\mathcal{L}}
\newcommand{\sr}{\mathcal{S}}
\newcommand{\e}{\mathbb{E}}
\newcommand{\br}{\mathbb{R}}
\newcommand{\pr}{\mathcal{P}}
\newcommand{\dd}{\partial}
\newcommand{\brn}{{\mathbb{R}^n}}
\newcommand{\brd}{{\mathbb{R}^d}}
\newcommand{\de}{\Delta}
\newcommand{\hv}{\widehat{v}}
\newcommand{\tx}{\widetilde{X}}
\newcommand{\tz}{\widetilde{Z}}
\newcommand{\dr}{\mathcal{D}}
\newcommand{\bd}{\mathbb{D}}
\newcommand{\argmin}{\mathop{\rm argmin}}
\title{Second Order Fully Nonlinear Mean Field Games with  Degenerate Diffusions}
\author[a]{Alain Bensoussan\footnote{E-mail: axb046100@utdallas.edu}}
\author[b]{Ziyu Huang\footnote{E-mail: zyhuang19@fudan.edu.cn}}
\author[c]{Shanjian Tang\footnote{E-mail: sjtang@fudan.edu.cn}}
\author[b]{Sheung Chi Phillip Yam\footnote{E-mail: scpyam@sta.cuhk.edu.hk}}
\affil[a]{\small \it International Center for Decision and Risk Analysis, Naveen Jindal School of Management, University of Texas at Dallas, Dallas, Texas, USA}
\affil[b]{\small \it Department of Statistics, The Chinese University of Hong Kong, Shatin, N.T., Hong Kong SAR}
\affil[c]{\small \it Department of Finance and Control Sciences, School of Mathematical
	Sciences,  Fudan University,  and Key Laboratory of Mathematics for Nonlinear Sciences (Fudan University), Ministry of Education, Shanghai 200433, China}
\begin{document}

\maketitle

\begin{abstract}
In this article, we study the global-in-time well-posedness of second order mean field games (MFGs) with both nonlinear drift functions simultaneously depending on the state, distribution and control variables, and the diffusion term depending on both  state and distribution. Besides, the diffusion term is allowed to be degenerate, unbounded and even nonlinear in the distribution, but it does not depend on the control. First,  we establish the global well-posedness of the corresponding forward-backward stochastic differential equations (FBSDEs), which  arise from the maximum principle under a so-called  $\beta$-monotonicity commonly used in the optimal control theory.  The $\beta$-monotonicity admits more interesting cases, as representative examples including but not limited to the displacement monotonicity, the small mean field effect condition or the Lasry-Lions monotonicity; and  ensures the well-posedness result in diverse non-convex examples.  In our settings, we pose assumptions directly on the drift and diffusion coefficients and the cost functionals, rather than  indirectly on the Hamiltonian, to make the conditions more visible. Our probabilistic method  tackles the nonlinear dynamics with a  linear but  infinite dimensional version, and together with our recently proposed cone property for the adjoint processes, following in an almost straightforward way the conventional approach to the classical stochastic control problem, we derive a sufficiently good regularity of the value functional, and finally show that it is the unique classical solution to the  MFG master equation. Our results require fairly few conditions on the functional coefficients for solution of the MFG, and a bit more conditions---which are least  stringent in the contemporary literature--- for  classical solution of the MFG master equation.\\ 

\noindent{\textbf{Keywords:}} mean field games; Forward-backward stochastic differential equations; Nonlinear drift; Degenerate diffusion; Cone property; Jacobian Flows; Hessian Flows; $\beta$-monotonicity; Classical solution of master equation

\noindent {\bf Mathematics Subject Classification (2020):} 60H30; 60H10; 93E20. 

\end{abstract}


\newpage

\section{Introduction}\label{sec:intro}

Mean field games (MFGs), which have received its great popularity in recent years, were first introduced by Lasry and Lions in a series of papers \cite{CP,JM1,JM2,JM3} and also independently by Huang, Caines and Malhamé \cite{HM2,HM1,HM}. In such a problem, the controlled individual state is affected not only by its state and the imposed control, but also by the equilibrium probability distribution of the state of the overall population; since then there is a huge amount of theoretical research and application explorations. For PDE approaches to forward-backward system (or HJB-FP system) for MFGs, we refer to Bensoussan--Frehse--Yam \cite{AB_book}, Bertucci \cite{Bertucci}, Bertucci-Lasry-Lions \cite{Bertucci'}, Gomes--Pimentel--Voskanyan \cite{GDA}, Graber--M\'{e}sz\'{a}ros \cite{GM}, Huang--Tang \cite{HZ} and Porretta \cite{PA}. For the master equation analytical methods to MFGs, we refer to Bertucci-Lasry-Lions \cite{Bertucci}, Cardaliaguet--Cirant--Porretta \cite{CP1}, Cardaliaguet--Delarue--Lasry--Lions \cite{CDLL} and Gangbo--M\'esz\'aros--Mou--Zhang \cite{GW}. For probabilistic approaches to MFGs and master equations, we refer to Ahuja--Ren--Yang \cite{SA1}, Bensoussan--Huang--Tang--Yam \cite{AB11}, Bensoussan--Tai--Wong--Yam \cite{AB9'1,AB9'2}, Bensoussan--Wong--Yam--Yuan \cite{AB10'}, Buckdahn--Li--Peng-- Rainer \cite{BR}, Carmona--Delarue \cite{book_mfg}, Chassagneux--Crisan--Delarue \cite{CJF} and Huang--Tang \cite{HZ1}; while with the probabilistic approach to mean field type control problems, we refer to Bensoussan--Huang--Yam \cite{AB6,AB8}, Bensoussan--Huang--Tang--Yam \cite{AB10}, Bensoussan--Tai--Yam \cite{AB5}, Bensoussan--Yam \cite{AB}, Carmona--Delarue \cite{CR} and  Ricciardi \cite{RM}.

In this article, we study the well-posedness of generic second order MFGs with nonlinear drifts, and degenerate and unbounded state-distribution dependent diffusions; and here we only demand on very few conditions on the coefficient functions for solution of the MFG, which is even more flexible than the contemporary literature. To do so, we develop a novel and original method which resembles very much the conventional approach to the classical stochastic control problem; moreover, our methodology greatly simplifies the original MFG problem and can include more different settings; even for classical solution of the MFG master equation, our conditions are essentially least stringent in the literature.

Let $(\Omega,\f,\{\f_t,0\le t\le T\},\mathbb{P})$ be a completed filtered probability space (with the filtration being  augmented by all the $\mathbb{P}$-null sets)  in which an $n$-dimensional Brownian motion $\{B(t),\ 0\le t\le T\}$ is defined, and it is $\f_t$-adapted. We denote by $\mathcal{P}_{2}(\brn)$ the space of all probability measures of finite second order moments on $\brn$, equipped with the 2-Wasserstein metric $W_2$. Given the functional coefficients
\begin{align*}
	&b:[0,T]\times\brn\times\mathcal{P}_{2}(\brn)\times \brd\to \brn,\quad \sigma:[0,T]\times \brn \times\pr_2(\brn) \to \br^{n\times n},\\
	&f:[0,T]\times\brn\times\mathcal{P}_{2}(\brn)\times \brd\to \br,\quad g:\brn\times\mathcal{P}_{2}(\brn)\to \br,
\end{align*}
with an initial condition $(t,\mu)\in[0,T]\times\pr_2(\brn)$ and  a random vector $\xi\in L_{\f_t}^2$ independent of the Brownian motion such that $\lr(\xi)=\mu$, we now consider the following MFG problem: 
\begin{equation}\label{intro_1}
	\left\{
	\begin{aligned}
		&v_{t\xi}(\cdot)\in\argmin_{v(\cdot)\in\mathcal{L}_{\mathscr{F}}^2(t,T)}J_{t\xi}\left(v(\cdot);m(s),t\le s\le T\right),\\
		&J_{t\xi}\left(v(\cdot);m(s),0\le s\le T\right):=\e\left[\int_t^T f\left(s,X^v(s),m(s),v(s)\right)dt+g\left(X^v(T),m(T)\right)\right],\\
		&X^v(s)=\xi+\int_t^sb\left(r,X^v(r),m(r),v(r)\right)dr+\int_t^s\sigma\left(r,X^v(r) ,m(r)\right)dB(r), \quad s\in [t,T];\\[4mm]
		&m(s):=\mathcal{L}(X^{v_{t\xi}}(s)),\quad  s\in[t,T].
	\end{aligned}
	\right.
\end{equation}
MFG \eqref{intro_1} is expected to have a unique solution $v_{t\xi} \in \mathcal{L}_{\mathscr{F}}^2(t,T)$ under suitable assumptions on coefficient functions $(b,\sigma,f,g)$ (see Section~\ref{sec:MP}), and we denote by  $X_{t\xi}\in\sr_\f^2(t,T)$ the corresponding equilibrium state process, and by $\left\{m^{t,\mu}(s):=\lr(X_{t\xi}(s)),\ t\le s\le T\right\}$ the distribution flow of the equilibrium state process. Given the distribution flow $m^{t,\mu}$ and an initial state $x\in\brn$, we then consider the following stochastic optimal control problem: 
\begin{equation}\label{intro_1'}
	\left\{
	\begin{aligned}
		&v_{tx\mu}(\cdot)\in\argmin_{v(\cdot)\in\mathcal{L}_{\mathscr{F}}^2(t,T)}J_{tx}\left(v(\cdot);m^{t,\mu}\right)\\
		&\ \ \qquad :=\argmin_{v(\cdot)\in\mathcal{L}_{\mathscr{F}}^2(t,T)}\e\left[\int_t^T f\left(s,X^v(s),m^{t,\mu}(s)),v(s)\right)ds+g\left(X^v(T),m^{t,\mu}(T)\right)\right],\\
		&X^v(s)=x+\int_t^sb\left(r,X^v(r),m^{t,\mu}(r),v(r)\right)dr+\int_t^s\sigma\left(r,X^v(r) ,m^{t,\mu}(r)\right)dB(r).
	\end{aligned}
	\right.
\end{equation}
It is a standard stochastic control problem rather than a McKean-Vlasov one, as the apparently exogenous term $m^{t,\mu}$ is the distribution of the equilibrium state of the MFG \eqref{intro_1} instead of  that of the underlying controlled state. It  is expected to have a unique optimal control, and since it depends on $\xi$ only through its law $\mu$, it is reasonable to denote the optimal control by $v_{tx\mu}\in\mathcal{L}_{\mathscr{F}}^2(t,T)$. We shall use the conventional stochastic control approach to study the regularity of the value functional
\begin{equation}\label{intro_4}
	\begin{split}
		V(t,x,\mu):&=\inf_{v(\cdot)\in\lr_{\f}^2(t,T)}J_{tx}\left(v(\cdot);m^{t,\mu}\right)\\
		&=J_{tx}\left(v_{tx\mu}(\cdot);m^{t,\mu}\right),\quad (t,x,\mu)\in[0,T]\times\brn\times\pr_2(\brn),
	\end{split}	
\end{equation}
where $\mu$ is the initial condition of MFG \eqref{intro_1} and it serves as an augmented infinite-dimensional state of  Problem \eqref{intro_1'}, and $x$ is the spatial  initial condition for Problem \eqref{intro_1'}. One of our main results (Theorem~\ref{thm:1}) asserts that, the functional ${V}$ is the unique classical solution of the following MFG master equation
\begin{equation}\label{intro_5}
	\left\{
	\begin{aligned}
		&\dd_t V(t,x,\mu)+\frac{1}{2}\text{Tr}\left[\left(\sigma\sigma^\top\right) (t,x,\mu)D_x^2 V(t,x,\mu)\right]+H\left(t,x,\mu,D_x V(t,x,\mu)\right)\\
		&+\int_\brn \left\{ \left(D_p H \left(t,y,\mu,D_x V(t,y,\mu)\right)\right)^\top D_y\frac{dV}{d\nu}(t,x,\mu)({y})\right.\\
		&\quad\qquad\left. +\frac{1}{2}\text{Tr}\left[\left(\sigma\sigma^\top\right) (t,y,\mu)D_y^2\frac{dV}{d\nu}(t,x,\mu)({y})\right]\right\}d\mu(y)=0,\quad t\in[0,T);\\[3mm]
		&V(T,x,\mu)=g(x,\mu),\quad (x,\mu)\in \brn\times\pr_2(\brn),
	\end{aligned}	
	\right.
\end{equation}
where the Hamiltonian $H:[0,T]\times\brn\times\pr_2(\brn)\times\brn\to\br$ is defined as
\begin{align}
		&H (s,x,m,p):=\inf_{v\in\brd} L\left(s,x,m,v,p\right),\label{bh'}\\
		\text{and}\qquad &L(s,x,m,v,p):=p^\top  b(s,x,m,v)+f(s,x,m,v), \label{H'}
\end{align}
and  $\frac{dV}{d\nu}$ is the linear functional derivative (see \cite[Chapter 5]{book_mfg}) of $V$ in the measure argument (see Section~\ref{sec:notation} below). MFG \eqref{intro_1} and Problem \eqref{intro_1'} give a representation formula \eqref{intro_4} of the solution of the MFG master equation \eqref{intro_5}; for similar representation formula, we also refer to Buckdahn et al. \cite{BR} and Bertucci et al. \cite[(3.32) and (3.33)]{Bertucci}.
We emphasize that we only require very few conditions for the equilibrium solution of the MFG \eqref{intro_1} and Problem \eqref{intro_1'}, and require a bit more conditions  (but being essentially least stringent than those in the contemporary literature) for classical solution of the MFG master equation \eqref{intro_5}. 
From the viewpoint of control theory, in our article, we solve MFG \eqref{intro_1} and Problem \eqref{intro_1'} via the global solution of the  systems of forward-backward stochastic differential equations (FBSDEs) derived from the maximum principles for their optimal controls; then, we study the Jacobian and Hessian flows of respective systems of FBSDEs associated with MFG \eqref{intro_1} and Problem \eqref{intro_1'} to calculate the derivatives of the value functional $V$ (see \eqref{intro_4}) with respect to both spatial and measure argument. As a natural setting of MFG problems commonly found in applications, we impose assumptions directly on the coefficients of the state system and the cost functional, rather than indirectly on the Hamiltonian functional. Our conditions are thus more visible in a specific MFG problem, and admit more interesting examples such as \cite[Section 9]{AB10'},  which are difficult to be tackled under the Hamiltonian settings.

The  unique existence of global solutions of all the aforementioned FBSDEs, including those for the Jacobian and Hessian flows,  relies on the well-posedness of a general system of  FBSDEs satisfying  the $\beta$-monotonicity---which is commonly used in stochastic control theory, and is discussed in detail in our previous work \cite{AB10,AB11,AB12}. The $\beta$-monotonicity was inspired by the method of continuation in the coefficients of Hu-Peng \cite{YH2} and Peng-Wu \cite{SP}, and addresses a more general situation to incorporate more interesting cases as representative examples including but not limited to  the displacement monotonicity \cite{SA1,CR,HZ1}, the small mean field effect condition \cite{AB11,AB9'1} or the Lasry-Lions monotonicity \cite{book_mfg} for MFGs; indeed, the last three monotonicity conditions have their own interests, and one might overlap with another but without any simple inclusion. We refer to \cite{AB12} for more details  on diverse monotonicity assumptions for MFGs and their relations to $\beta$-monotonicity; there also available are  the definition of the $\beta$-monotonicity and a detailed construction  of an appropriate map $\beta$ for a specific mean field problem, together with a more subtle discussion.  In contrast to the approach via partitioning the time interval, the method of continuation in the coefficients under the $\beta$-monotonicity involves partitioning of the spatial domain. We partition the time interval  in \cite{AB10'} of the first order MFGs with nonlinear drift functions and in \cite{AB9'1} for the second order MFGs with linear drifts and constant diffusion terms; while we use the method of continuation in the coefficients under the $\beta$-monotonicity in \cite{AB11} for MFG with a linear drift and a state-distribution-control dependent diffusion. In particular, the advantages of either temporal or spatial based approach depend on the problems and the respective settings. For the second order nonlinear MFGs considered in this article, the latter seems to be more effective. The conditions in Section~\ref{sec:MP} are quite naturally seen from a  view of stochastic control, while they are not that obvious from an analytical view; moreover, we allow the diffusion coefficients to be degenerate, unbounded and nonlinear in the distribution variable, which is not easy to be dealt with via a pure PDE method. Our stochastic control approach also takes advantage of imposing less regularity on the coefficients over the analytical approach. In particular, when the coefficient functions are $C^1$ in both spatial and control variables and are just continuous in the distribution argument, MFG \eqref{intro_1} and Problem \eqref{intro_1'} can still be warranted with a unique solution; besides, when the coefficient functions are $C^2$ in both spatial and control variables and are also $C^1$ in the distribution argument, the value functional is shown to be $C^2$ in spatial variable and $C^1$ in distribution variable; furthermore, when the coefficient functions are $C^3$ in both spatial and control variables and $C^2$ in the distribution argument, the master equation \eqref{intro_5} is shown to have a unique classical solution. 

In the  literature, the first batch of solubility for mean field problems focus on  the linear settings, such as \cite{AB11,CR}. One major difficulty of our problem consists in the nonlinearity---of both drift and  diffusion in the distribution,  and also of the drift in both state and control---so as to ensure the global-in-time solution. The lately proposed cone property for the state process and the adjoint process is crucial to overcome the difficulty, and see~\cite{AB10'',AB10'} for its role in the first order mean field theory via the approach of partitioning the time domain. To the best of our knowledge, even for the standard stochastic control problems, this cone property is brand new in the literature. Particularly, for classical stochastic control problems in the absence of mean field terms, the FBSDEs approach is dated back to Bismut \cite{Bismut} for the linear quadratic optimal stochastic control problem; and the nonlinear cases  more conveniently appeal to the Bellman equations. Yet for MFGs (and also mean field type control problems), the Bellman equation and the master equations evolve in a measure space, and are considerably difficult to be solved. While the FBSDEs approach turns out to be more effective since the mean field term causes little difference to it from augmenting an extra state, as the FBSDEs technique is by default handling the control problem from an infinite-dimensional perspective; besides, our cone property is quite natural to be posed so as to monitor the growths of various sensitivities of adjoint processes with respect to different initial data. Especially for the nonlinear dynamics, the probabilistic method together with the cone property actually enable us to resolve the mean field problem in a linearized manner as a straightforward way aligning with the conventional approach in the classical stochastic control theory.

In this article, the running cost function is allowed to be non-separable and to have a quadratic growth, and is assumed to be strongly convex and to have a small mean field effect. The small mean field effect conditions are widely discussed in the literature in various settings, and we refer to our previous work \cite{AB12} for a detailed discussion on the relationship among this small mean field effect assumption, the $\beta$-monotonicity, the displacement monotonicity, and Lasry-Lions monotonicity. Especially, the convexity and the small mean field effect condition for the running cost function combined with the cone property together imply the $\beta$-monotonicity, and thus the well-posedness for the associated FBSDEs and the regularity of their solutions. In \cite{AB12}, we also proposed a new monotonicity condition for the running cost function, namely the running cost function consists of  two parts which all depend on the control and the state distribution, yet one of which has a strong convexity and a small mean field effect condition, while another has a newly introduced displacement quasi-monotonicity. We emphasize that the assumptions for the running cost function in this article can be generalized to a more general case as that in \cite{AB12}, while to avoid technicalities we only state the results in a modest way so that the reader can grasp the mainstream flow of ideas; for the statements and proofs under more general conditions, we shall leave them to the interested reader and in our future further explorations. Moreover, our present stochastic control method can be applied to some more general cases beyond usual convex settings, such as the case when the terminal functional $g$ is non-convex; also see \cite[Section 3.2]{AB12} for details. From the stochastic control perspective, our result can also be extended to mean field problems involving common noises with the distribution of the state being replaced with the distribution  conditioned on the filtration generated by the common noise; we refer to \cite{AB13} for mean field type control with common noises. 

The rest of this article is organized as follows. In  Section~\ref{sec:MP}, we give the well-posedness of FBSDEs arising from the maximum principle associated with MFG \eqref{intro_1} and Problem \eqref{intro_1'}, and  also introduce the cone property. In Section~\ref{sec:distribution}, we study the Jacobian flows for FBSDEs associated to MFG \eqref{intro_1} and Problem \eqref{intro_1'} so as to give the G\^ateaux derivatives of the solution processes of these FBSDEs with respect to the initial condition. In Section~\ref{sec:second_deri}, we study the Hessian flows of the second order derivatives of the same set of processes. In Section~\ref{sec:V}, we give the regularity of the value functional $V$, and we eventually establish that $V$ is the unique classical solution of the master equation \eqref{intro_5}. Some statements in Sections~\ref{sec:MP}, \ref{sec:distribution}, \ref{sec:second_deri} and \ref{sec:V} are proven in Appendices \ref{pf:lem:5}, \ref{pf:distribution}, \ref{pf:prop:9} and \ref{pf:prop:10}, respectively.

\subsection{Notations}\label{sec:notation}

For any $X\in L^2(\Omega,\f,\mathbb{P};\brn)$, we denote by $\lr(X)$ its law and by $\|X\|_2$ its $L^2$-norm. For every $t\in[0,T]$, we  denote by $L^2_{\f_t}$ the set of all $\f_t$-measurable square-integrable $\brn$-valued random vectors, and denote by $\lr^2_{\f}(0,T)$  the set of all $\f_t$-progressively-measurable $\brn$-valued processes $\alpha_\cdot=\{\alpha_t,\ 0\le t\le T\}$ such that $\e\left[\int_0^T |\alpha_t|^2dt\right]<+\infty$. We  denote by $\mathcal{S}^2_{\f}(0,T)$ the family of all $\f_t$-progressively-measurable $\brn$-valued processes $\alpha_\cdot=\{\alpha_t,\ 0\le t\le T\}$ such that $\e\left[\sup_{0\le t\le T} |\alpha_t|^2\right]<+\infty$. We denote by $\mathcal{P}_{2}(\brn)$ the space of all probability measures of finite second order moments on $\brn$, equipped with the 2-Wasserstein metric: $$W_2\left(m,m'\right):=\inf_{\pi\in\Pi\left(m,m'\right)}\sqrt{\int_{\brn\times\brn}\left|x-x'\right|^2\pi\left(dx,dx'\right)},$$
 where $\Pi\left(m,m'\right)$ is the set of joint probability measures with respective marginals $m$ and $m'$. We denote by $\delta_0$ the point mass distribution of the random variable $\xi$ such that $\mathbb{P}(\xi=\mathbf{0})=1$. More results on Wasserstein metric space can be found in \cite{AL}.

The linear functional derivative of a functional $k(\cdot):\mathcal{P}_{2}(\brn)\to\br$ at $m\in \mathcal{P}_{2}(\brn)$ is another functional $\mathcal{P}_{2}(\brn)\times \brn\ni(m,y)\mapsto\dfrac{dk}{d\nu}(m)(y)$,  being jointly continuous and satisfying 
$$\int_{\brn}\Big|\dfrac{dk}{d\nu}(m)(y)\Big|^{2}dm(y)\leq c(m)
$$
 for some positive constant $c(m)>0$ which is bounded on any bounded subsets of $\pr_2(\brn)$, such that 
\begin{equation*}
    \lim_{\epsilon\to0}\dfrac{k((1-\epsilon)m+\epsilon m')-k(m)}{\epsilon}=\int_\brn\dfrac{dk}{d\nu}(m)(y)\left(dm'(y)-dm(y)\right), \quad \forall m'\in\mathcal{P}_{2}(\brn);
\end{equation*}
we refer the reader to \cite{AB,book_mfg} for more details about the notion of linear functional derivatives. In particular, the linear functional derivatives in $\pr_2(\brn)$ are connected with the G\^ateaux derivatives in $L^2(\Omega,\f,\mathbb{P};\brn)$ in the following manner. For a linearly functional differentiable functional $k:\pr_2(\brn)\to\br$ such that the derivative $D_y\frac{d k}{d\nu}(\mu)(y)$ is jointly continuous in $(\mu,y)$ and $D_y\frac{d k}{d\nu}(\mu)(y)\le c(\mu)(1+|y|)$ for $(\mu,y)\in\pr_2(\brn)\times\brn$,  the functional $K(X):=k(\lr(X)), \  X\in L^2(\Omega,\f,\mathbb{P};\brn)$ has the following G\^ateaux derivative:
\begin{align}\label{lem01_1}
	D_X K(X)(\omega)=D_y\frac{d k}{d\nu}(\lr(X))(X(\omega)). 
\end{align}
Furthermore, if $k$ is twice linearly functional differentiable, then the functional $K$ is also twice G\^ateaux differentiable, and the G\^ateaux derivative at $X$ along a direction $Z\in L^2(\Omega,\f,\mathbb{P};\brn)$ is given by
\begin{align*}
    D_X^2 K(X)\left(Z\right)=\left(D_y^2\frac{d k}{d\nu}(\lr(X))(X)\right)^\top Z+\widetilde{\e}\left[ \left(D_{y'}D_y\frac{d^2 k}{d\nu}(\lr(X))\left(X,\tx\right)\right)^\top \tz\right]. 
\end{align*}
From here onward, for any random variable $\xi$, we write $\widetilde{\xi}$ for its independent copy, and $\widetilde{\e}[\widetilde{\xi}]$ for the corresponding expectation taken. We also refer to \cite[Section 2.5]{AB13} for G\^ateaux derivatives of functionals of conditional probability measures, in case of coping with the present problem but under the additional common noise setting.

For the sake of convenience, in this article, we write $f|_{a}^b:=f(b)-f(a)$ for the difference of a  functional $f$ between two points $b$ and $a$. For any matrix $Q\in\br^{n\times n}$ and vector $x\in\brn$, we use the notation $Qx^{\otimes 2}:=x^\top Qx$.

\section{Solubility of MFG \eqref{intro_1} and Problem \eqref{intro_1'}}\label{sec:MP}

In this section, we shall solve MFG \eqref{intro_1} and Problem \eqref{intro_1'} under the following assumptions. 

\textbf{(A1)} (i) The coefficient $b$ is continuous in $t$, 
and continuously differentiable in $x$, $v$ and $m$, with the derivatives $D_x b,\ D_v b$ being bounded by $L$, and the derivative $D_y\frac{db}{d\nu}$ being bounded by $l_b^m\le L$. All the derivatives $D_x^2 b, \ D_vD_x b, \ D_v^2 b, \ D_y \frac{d}{d\nu}D_xb, \ D_y \frac{d}{d\nu}D_v b$ exist, and they are continuous in all their arguments, and there exist nonnegative constants $L_b^0,L_b^1,L_b^2\le L$, such that for any $s\in[0,T]$, $m\in \pr_2(\brn)$, $v\in\brd$ and $x,y\in\brn$,
\begin{equation}\label{generic:condition:b}
\begin{aligned}
    \left|D_x^2 b(s,x,m,v)\right|,\quad \left| D_y \frac{d}{d\nu} D_x b(s,x,m,v)(y)\right|\le\ & \frac{L_b^0}{1+|x|+|v|+W_2(m,\delta_0)}, \\[3mm]
    \left|D_v D_x b(s,x,m,v)\right|,\quad \left| D_y \frac{d}{d\nu} D_v b(s,x,m,v)(y)\right|\le\ & \frac{L_b^1}{1+|x|+|v|+W_2(m,\delta_0)}, \\[3mm]
    \left| D_v^2 b(s,x,m,v)\right|\le\ & \frac{L_b^2}{1+|x|+|v|+W_2(m,\delta_0)}.
\end{aligned}
\end{equation}
Moreover, there exists $\lambda_b >0$ such that,
\begin{align}\label{generic:condition:b'}
    (D_v b)(D_v b)^\top(s,x,m,v) \geq \lambda_b I_n,\quad \forall (s,x,v)\in [0,T]\times \brn\times \brd.
\end{align}

(ii) The coefficient $\sigma=(\sigma^1,\dots,\sigma^n)$ is continuous in $t$ and linear in $x$ as $\sigma^j(s,x,m)=\sigma_0^j(s,m)+\sigma_1^j(s)x$ for $1\le j\le n$, with $\sigma_0^j$ being $l_\sigma^m(\le L)$ continuous in $m$ and the norm of matrix $\sigma^j_1$ being bounded  by $L$. The derivative $D_y\frac{d\sigma^j_0}{d\nu}$ exists and is bounded by $l_{\sigma}^m$. 

\textbf{(A2)} The function $f$ is continuous in $t$ and has a quadratic growth in $(x,m,v)$; the function $g$ has a quadratic growth in $(x,m)$. All the following derivatives 
$$D_x^2 f, \ D_vD_x f, \ D_v^2 f, \ D_y \frac{d}{d\nu}D_xf, \ D_y \frac{d}{d\nu}D_v f, \ D_x^2 g, \quad \text{and} \quad \ D_y \frac{d}{d\nu}D_xg$$
 exist, and are continuous in all their arguments. They also satisfy for any $(s,x,v)\in [0,T]\times\brn\times \brd$, $m,m'\in \pr_2(\brn)$ and $y,y'\in\brn$,
\begin{align*}
	\left| \left(D_x^2,D_vD_x,D_v^2\right) f (s,x,m,v) \right|+\left| D_x^2 g(x,m) \right|\le\ & L,\\[3mm]
	\left| D_y\frac{d f}{d \nu} (s,x,m',v)(y')-D_y\frac{d f}{d \nu} (s,x,m,v)(y)\right|&+\left| D_y\frac{d g}{d \nu} (x,m')(y')-D_y\frac{d g}{d \nu} (x,m)(y)\right|\\
	\le\ & L\left(W_2(m,m')+|y'-y|\right).
\end{align*}
Moreover, there exist nonnegative constants $L_f^0,L_f^1,L_g\le L$, such that
\begin{equation}\label{small_mf_condition}
	\left| D_y \frac{d}{d\nu} D_x f(s,x,m,v)(y)\right|\le L_f^0, \quad \left| D_y \frac{d}{d\nu} D_v f(s,x,m,v)(y)  \right|\le L_f^1,\quad \left| D_y \frac{d}{d\nu} D_x g(x,m)(y)\right|\le L_g.
\end{equation}

\textbf{(A3)} There exist $\lambda_v>0,\lambda_x\geq 0$ and  $\lambda_g\geq 0$ such that, for any $(s,m)\in[0,T]\times\pr_2(\brn)$, $x,x'\in\brn$ and $v,v'\in\brd$,
\begin{align*}
	&f\left(s,x',m,v'\right)-f(s,x,m,v)\geq \left(D_x f (s,x,m,v)\right)^\top  \left(x'-x\right)+\lambda_x \left|x'-x\right|^2\\
	&\qquad\qquad\qquad\qquad\qquad\qquad\qquad +\left(D_v f (s,x,m,v)\right)^\top  \left(v'-v\right)+\lambda_v \left|v'-v\right|^2,\\[3mm]
	&g\left(x',m\right)-g(x,m)\geq \left(D_x g (x,m)\right)^\top  \left(x'-x\right)+\lambda_g \left|x'-x\right|^2.
\end{align*}

We now illustrate more on our assumptions in (A1)-(A3). From Condition \eqref{generic:condition:b}, we can see that when the drift function $b$ is linear in $x$ and $v$ with distribution-independent coefficients, then $L_b^0=L_b^1=L_b^2=0$. Therefore, (A1) extends the standard linear assumption on $b$ in the literature, such as \cite{AB11,book_mfg,HZ2}. Our Assumption (A1) is also used in \cite{AB10'',AB10'} for the first order mean field theory with a generic drift. Condition \eqref{small_mf_condition} is actually the small mean field effect condition with suitable choice of the parameters $L_f^0$, $L_f^1$ and $L_g$, we refer to \eqref{lem:2_0} for its role; we also refer to \cite{AB11} for similar assumptions; indeed, the boundedness condition \eqref{generic:condition:b}, and the positive definite condition \eqref{generic:condition:b'}, the small mean field effect condition \eqref{small_mf_condition} in (A2), the convexity conditions in (A3), and the conditions of the parameters in the following \eqref{lem:2_0} altogether imply the $\beta$-monotonicity, and thus the well-posedness for the associated FBSDEs and the regularity of the solutions. For the definition of the $\beta$-monotonicity and the usage of it in proving the solvability of MFGs with generic drifts, we refer to \cite[Condition 2.1]{AB12} and \cite[Section 5.1]{AB12}, respectively, and shall not repeat these again in this article. The $\beta$-monotonicity can include the displacement monotonicity and small mean field effect condition as special cases (see \cite[Section 2]{AB12}), and it can also include some examples which cannot be dealt by the displacement monotonicity (see \cite[Section 3]{AB12}); we also refer to \cite{AB12} for details and also discussions on different monotonicity assumptions for MFGs. Our Assumption (A3) for $f$ can be replaced by a more general condition as Condition 3.3 in \cite{AB12}, namely partitioning it into two parts, so that both parts still depend on the control and the state distribution, yet one satisfies a strong convexity and a small mean field effect condition, while the other has a newly introduced displacement quasi-monotonicity (defined in (3.18) in \cite{AB12}). However, this will involve much more parameters and notations, so we leave the case with the abovementioned assumption to the readers to verify. 

We also need the following condition on the bounding parameters: 
\begin{definition}
We say that the mean field system \eqref{intro_1} has Property (S), if the followings hold:
\begin{equation}\label{lem:2_0}
    \begin{aligned}
        &2\lambda_g> L_g;\\
        &2\lambda_x> L_f^0+\frac{2L^2L_b^0}{\lambda_b} +2c_1(L,\lambda_b) l_b^m +\sqrt{2}c_2(L,\lambda_b)l_{\sigma}^m + \frac{3nL^2(l_\sigma^m)^2}{2\lambda_g-L_g};\\[3mm]
        &2\lambda_v>\frac{L^2 L_b^2}{\lambda_b}+ \frac{L_f^1+\frac{3L^2L_b^1}{\lambda_b}+c_1(L,\lambda_b)l_b^m +c_2(L,\lambda_b)l_{\sigma}^m}{2\sqrt{2\lambda_x- L_f^0-\frac{2L^2L_b^0}{\lambda_b} -2c_1(L,\lambda_b)l_b^m   -\sqrt{2}c_2(L,\lambda_b)l_{\sigma}^m - \frac{3nL^2(l_\sigma^m)^2}{2\lambda_g-L_g}}},
    \end{aligned}
\end{equation}
\normalsize
where
\begin{align*}
    c_1(L,\lambda_b):= \frac{L^2}{\lambda_b} \left(1+\frac{L^2}{\lambda_b}\right),\quad c_2(L,\lambda_b):=4L\sqrt{\frac{nL}{\lambda_b}\left(1+\frac{L^2}{\lambda_b}\right)\left(1+\frac{L^2(1+nL)}{\lambda_b}\left(1+\frac{L^2}{\lambda_b}\right)\right)}.
\end{align*}     
\end{definition}

Recall in Assumption (A2) that, the bounding parameters $L_f^0$, $L_f^1$ and $L_g$ represent the dependence of the derivatives $D_x f$, $D_v f$ and $D_x g$ on the distribution variable, respectively; and recall in Assumption (A2) that, the parameters $l_b^m$ and $l_\sigma^m$ represent the dependence of the coefficients $b$ and $\sigma$ in the distribution variable, respectively. The parameters $L_b^0$, $L_b^1$ and $L_b^2$ also indicate how the non-linear the drift function $b$ is. Condition \eqref{lem:2_0} actually states that the convexity of the cost functions is required to be larger than a constant depending on the above parameters. As a particular case, when the coefficients $b$ and $\sigma$ are independent of $m$ and the drift function $b$ is linear in $x$ and $v$, then $l_b^m=l_\sigma^m=L_b^0=L_b^1=L_b^2=0$ and Condition \eqref{lem:2_0} is 
\begin{align*}
    &2\lambda_g> L_g,\quad 2\lambda_x> L_f^0,\quad 2\lambda_v>\frac{L_f^1}{2\sqrt{2\lambda_x- L_f^0}},
\end{align*}
which is equivalent with the small mean field assumption proposed in \cite{AB9'1} and used in \cite{AB11} for the linear case. 

Under the above assumptions, MFG \eqref{intro_1} is associated with the following system of FBSDEs arising from the stochastic maximum principle:
\begin{equation}\label{FB:mfg_generic}
\left\{
    \begin{aligned}
        &X(s) = \xi+\int_t^s b\left(r,X(r),\lr(X(r)),v(r)\right)dr+ \int_t^s \left[\sigma_0\left(r,\lr(X(r))\right)+\sigma_1(r)X(r)\right]dB(r),\\[3mm]
        &P(s) = -\int_s^T Q(r) dB(r)+ D_x g\left(X(T),\lr(X(T))\right)\\
        &\quad\qquad +\int_s^T \bigg[ D_x b\left(r,X(r),\lr(X(r)),v(r)\right)^\top P(r)+\sum_{j=1}^n \left(\sigma^j_1(r)\right)^\top Q^j(r)\\
        &\quad\qquad\qquad\qquad + D_x f\left(r,X(r),\lr(X(r)),v(r)\right)\bigg]dr,\\[3mm]
        &D_v b\left(s,X(s),\lr(X(s)),v(s)\right)^\top P(s)+ D_v f\left(s,X(s),\lr(X(s)),v(s)\right)=0,\quad s\in[t,T],
    \end{aligned}
\right.
\end{equation}
whose solution will be denoted by $\left(X_{t\xi},P_{t\xi},Q_{t\xi},v_{t\xi}\right)$ with the subscript $``t\xi"$ indicating the dependence  on the initial data. The third equation in the system \eqref{FB:mfg_generic} is the necessary condition for the optimal control arising from the maximum principle; and this maximum principle \eqref{FB:mfg_generic} is also  sufficient  under the convex settings. In our previous work \cite[Theorem 5.1]{AB12}, we have proven the sufficient maximum principle for MFG \eqref{intro_1}: if the system of FBSDEs \eqref{FB:mfg_generic} has a solution $\left(X_{t\xi},P_{t\xi},Q_{t\xi},v_{t\xi}\right)\in \sr^2_\f(t,T)\times\sr^2_{\f}(t,T)\times\left(\lr^2_{\f}(t,T)\right)^n\times\lr^2_{\f}(t,T)$, then, $v_{t\xi}$ is the unique solution of MFG \eqref{intro_1}. In \cite[Theorem 5.2]{AB12}, we give a well-posedness result of FBSDEs \eqref{FB:mfg_generic} by using the well-posedness result for FBSDEs in Hilbert spaces under $\beta$-monotonicity. Here, we first give the $L^2$-boundedness and continuity of the solution with respect to the initial condition $\xi$.

\begin{theorem}\label{lem:2}
	Under Assumptions (A1)-(A3) and the validity of Property (S), there is a unique adapted solution $\left(X_{t\xi},P_{t\xi},Q_{t\xi},v_{t\xi}\right)$ of the FBSDEs \eqref{FB:mfg_generic}, and $v_{t\xi}$ is the unique solution of MFG \eqref{intro_1}. For $\xi,\xi'\in L_{\f_t}^2$, we have	
	\small
	\begin{align}
		&\e\left[\sup_{t\le s\le T}\left|\left(X_{t\xi}(s), P_{t\xi}(s),v_{t\xi}(s)\right)\right|^2+\int_t^T |Q_{t\xi}(s)|^2ds \right] \notag \\[2mm]
        \le\ & C(L,T,\lambda_b,\lambda_v,\lambda_x,l_b^m, l_\sigma^m,L_b^0,L_b^1,L_b^2,L_f^0,L_f^1)\left(1+\|\xi\|_2^2\right), \label{lem2_1}\\[3mm]
		&\e\bigg[\sup_{t\le s\le T}\left|\left(X_{t\xi'}(s)-X_{t\xi}(s), P_{t\xi'}(s)-P_{t\xi}(s),v_{t\xi'}(s)-v_{t\xi}(s)\right)\right|^2+ \int_t^T \left|Q_{t\xi'}(s)-Q_{t\xi}(s)\right|^2 ds\bigg] \notag \\[2mm]
        \le\ &  C(L,T,\lambda_b,\lambda_v,\lambda_x,l_b^m, l_\sigma^m,L_b^0,L_b^1,L_b^2,L_f^0,L_f^1)\e\|\xi'- \xi\|_2^2. \label{lem2_2}
	\end{align}	
	\normalsize
\end{theorem}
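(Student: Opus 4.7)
The existence and uniqueness of the quadruple $(X_{t\xi}, P_{t\xi}, Q_{t\xi}, v_{t\xi})$ and the identification of $v_{t\xi}$ as the unique solution of MFG \eqref{intro_1} are already supplied by \cite[Theorems 5.1, 5.2]{AB12}: the hypotheses (A1)--(A3) together with Property (S) are precisely tailored to yield the $\beta$-monotonicity on which that well-posedness rests. The task therefore reduces to deriving the a priori estimates \eqref{lem2_1} and \eqref{lem2_2}.

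For \eqref{lem2_1}, the plan is to first upgrade the pointwise optimality identity $D_v b^\top P + D_v f = 0$ into a linear-growth bound of the form
$$|v(s)|^2 \le C\bigl(1 + |X(s)|^2 + |P(s)|^2 + W_2(\lr(X(s)), \delta_0)^2\bigr),$$
which is available because of the coercivity $(D_v b)(D_v b)^\top \ge \lambda_b I_n$ from \eqref{generic:condition:b'} and the bounds on $D_v f$ coming from (A2). Applying It\^o's formula to $P(s)^\top X(s)$ over $[t,T]$, inserting the terminal identity $P(T) = D_x g(X(T), \lr(X(T)))$, and invoking the strong convexity conditions (A3) together with the small-mean-field-effect bounds \eqref{small_mf_condition}, will produce a coercive inequality in which $\lambda_v \e\int_t^T |v|^2 ds$, $\lambda_x \e\int_t^T |X|^2 ds$ and $\lambda_g \e|X(T)|^2$ on the left-hand side are threatened by error contributions carrying the constants $L_b^0, L_b^1, L_b^2, l_b^m, l_\sigma^m, L_f^0, L_f^1, L_g$ on the right. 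Property (S) is exactly the arithmetic inequality under which these errors are strictly dominated; absorbing them and closing with standard SDE estimates (including BDG applied to the forward SDE and to the BSDE for $Q$) produces \eqref{lem2_1}.

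The estimate \eqref{lem2_2} would follow by running the same argument on the difference processes $\Delta X := X_{t\xi'} - X_{t\xi}$, $\Delta P := P_{t\xi'} - P_{t\xi}$, $\Delta Q$, $\Delta v$. Using Taylor expansion with integral remainder for $b, \sigma, f, g$ along the $t\xi$-trajectory reduces each nonlinear difference to a linear integral against bounded coefficients, where the second-derivative decay \eqref{generic:condition:b} together with the already-established bound \eqref{lem2_1} delivers the required Lipschitz-in-$m$ behavior of the remainders. Applying It\^o to $(\Delta P)^\top \Delta X$, invoking (A3) on second-order increments, and absorbing cross terms via Property (S) as before yields the analogous coercive bound driven by $\e|\xi' - \xi|^2$, from which \eqref{lem2_2} follows after another round of BDG.

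The main obstacle is that the drift $b$ is nonlinear in $(x, m, v)$ and $\sigma$ is nonlinear in $m$, so the canonical FBSDE monotonicity argument picks up cross terms mixing $\Delta X, \Delta P, \Delta v$ with $W_2(\lr(X_{t\xi'}), \lr(X_{t\xi}))$ that are not visibly controlled by the cost convexity alone. The sharp second-derivative decay \eqref{generic:condition:b} and the distributional Lipschitz bounds $l_b^m, l_\sigma^m$ deliver these cross terms with explicit constants, and Property (S) is the numerical threshold under which the resulting quadratic form on $(\Delta X, \Delta P, \Delta v)$ remains strictly positive. Arranging those constants so that the final inequality matches exactly the thresholds in \eqref{lem:2_0} is the most delicate piece of bookkeeping in the argument.
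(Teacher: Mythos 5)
Your proposal follows essentially the same route as the paper: well-posedness is delegated to \cite[Theorems 5.1 and 5.2]{AB12}, the a priori bound \eqref{lem2_1} comes from It\^o applied to $P^\top X$ combined with the convexity in (A3), the small-mean-field bounds \eqref{small_mf_condition}, the cone-type estimates for $P$ and $v$ extracted from the first-order condition together with \eqref{generic:condition:b'}, and Property (S) to absorb the cross terms, closed by Gr\"onwall, standard BSDE estimates and BDG; the difference estimate \eqref{lem2_2} is run identically on the increments. The only detail your plan leaves implicit is that the $l_\sigma^m$-weighted cross term $\sum_j\big(\sigma_0^j(s,\cdot)\big|_{\delta_0}^{\lr(X(s))}\big)^\top Q^j(s)$ must be handled by first deriving a bound on $\int_t^T\|Q(s)\|_2^2\,ds$ in terms of $X$ and $v$ (via It\^o on $|P(s)|^2$) before Property (S) can absorb it, which is precisely where the constant $c_2(L,\lambda_b)$ in \eqref{lem:2_0} originates.
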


\begin{remark}
    Assumptions (A1)-(A3) and  Property (S) altogether imply the $\beta$-monotonicity \cite{AB12} corresponding to FBSDEs \eqref{FB:mfg_generic}, and thus the well-posedness for the associated FBSDEs by the continuation method in the coefficients \cite{YH2,SP}. The continuation method in the coefficients under the $\beta$-monotonicity \cite{AB10,AB11} involves partition of  the spatial domain, which is quite different from that of the temporal partition. In general, the latter leads us from a PDE perspective while the former is taken from a probabilistic perspective. The latter is used in our previous work \cite{AB10'} for the first order MFGs with nonlinear drift functions and in \cite{AB9'1} for the second order MFG with linear drift and constant diffusion; while the former approach is used in our recent work \cite{AB11} for MFGs with linear drifts and linear diffusions, see \cite{AB12} for a more detailed discussion. In view of \cite{AB11,AB12,AB9'1,AB10'} and also \eqref{lem2_2} and Lemma~\ref{lem:3} of the present article, to give the global solvability of the required FBSDEs, the boundedness result of the corresponding Jacobian flows helps in both the aforementioned approaches. We emphasize that it is not that one approach is better than the other; indeed, it is upon whether an approach is more convenient to one problem in a certain related setting; for the second order nonlinear MFG considered in the current article, the former seems to be more convenient. 
\end{remark}


\begin{proof}
	Both well-posedness  and sufficiency of the maximum principle are proven in our previous work \cite[Theorems 5.1 and 5.2]{AB12}; here, we only prove \eqref{lem2_1} and \eqref{lem2_3}, and the proof of \eqref{lem2_2} is similar to that of \eqref{lem2_1}. For simplicity of notations, we omit the subscript $t\xi$ in $(X_{t\xi},P_{t\xi},Q_{t\xi},v_{t\xi})$. Applying It\^o's formula to $P(s)^\top X(s)$ and taking expectation, we have
\begin{align*}
    &\e\left[D_xg(X(T),\lr(X(T)))^\top X(T)-P(t)^\top \xi \right] \notag\\[3mm]
    =\ & \e\bigg[\int_t^T \left[b(s,X(s),\lr(X(s)),v(s))-D_x b (s,X(s),\lr(X(s)),v(s)) X(s)\right]^\top P(s)  \\
    &\qquad + \sum_{j=1}^{n} \sigma_0^j (s,\lr(x(s)))^\top Q^j(s)-D_xf (s,X(s),\lr(X(s)),v(s))^\top X(s) ds\bigg].
\end{align*}
Substituting the third equation of FBSDEs \eqref{FB:mfg_generic} into the last equality, we obtain
\begin{align}
    &\e\left[D_xg(X(T),\lr(X(T)))^\top X(T)-P(t)^\top \xi \right] \notag\\
    =\ & \e\bigg\{\int_t^T \left[b(s,X(s),\lr(X(s)),v(s))-\left[(D_x b,D_v b) (s,X(s),\lr(X(s)),v(s)) \right]\begin{pmatrix}
        X(s)\\ v(s)
    \end{pmatrix}\right]^\top P(s) \notag \\
    &\qquad + \sum_{j=1}^{n} \sigma_0^j (s,\lr(x(s)))^\top Q^j(s)-\left[\begin{pmatrix}
        D_x f\\ D_v f
    \end{pmatrix}(s,X(s),\lr(X(s)),v(s))\right]^\top \begin{pmatrix}
        X(s)\\ v(s)
    \end{pmatrix} ds\bigg\} \notag \\
    =\ & \e\Bigg\{\int_t^T \left[b(s,\cdot,\lr(X(s)),\cdot)\bigg|^{(X(s),v(s))}_{(0,0)} -\left[(D_x b,D_v b) (s,X(s),\lr(X(s)),v(s)) \right]\begin{pmatrix}
        X(s)\\ v(s)
    \end{pmatrix}\right]^\top P(s)  \notag \\
    &\qquad -\left[\begin{pmatrix}
        D_x f\\ D_v f
    \end{pmatrix}(s,\cdot,\lr(X(s)),\cdot)\bigg|^{(X(s),v(s))}_{(0,0)} \right]^\top \begin{pmatrix}
        X(s)\\ v(s)
    \end{pmatrix} +\sum_{j=1}^n\bigg[\sigma^j_0(s,\cdot)\Big|^{\lr(X(s))}_{\delta_0} \bigg]^\top Q^j(s) \notag \\
    &\qquad + \left[b(s,0,\cdot,0))\Big|^{\lr(X(s))}_{\delta_0}\right]^\top P(s)-\left[\begin{pmatrix}
        D_x f\\ D_v f
    \end{pmatrix}(s,0,\cdot,0)\bigg|^{\lr(X(s))}_{\delta_0} \right]^\top \begin{pmatrix}
        X(s)\\ v(s)
    \end{pmatrix} \notag\\
    &\qquad + b(s,0,\delta_0,0))^\top P(s) + \sum_{j=1}^{n}  \sigma^j_0(s,\delta_0)^\top Q^j(s)- \left[\begin{pmatrix}
        D_x f\\ D_v f
    \end{pmatrix}(s,0,\delta_0,0)\right]^\top \begin{pmatrix}
        X(s)\\ v(s)
    \end{pmatrix} ds\Bigg\}. \label{lem:2_1}
\end{align}
From the third equation of FBSDEs \eqref{FB:mfg_generic} and Condition \eqref{generic:condition:b'}, we have
\begin{align*}
    P(s)=-\left((D_vb) (D_vb)^\top\right)^{-1} (D_vb) (D_v f)(s,X(s),\lr(X(s)),v(s)), 
\end{align*}
and then, from Assumption (A2), we know that
\begin{align}
    |P(s)| &\le \frac{1}{\lambda_b} \left|D_vb(s,X(s),\lr(X(s)),v(s))\right|\cdot \left|D_v f(s,X(s),\lr(X(s)),v(s))\right| \notag \\
    &\le \frac{1}{\lambda_b} \cdot L \cdot L \left[1+|X(s)|+W_2(\lr(X(s)),\delta_0)+|v(s)|\right] \notag \\
    &= \frac{L^2}{\lambda_b} \left[1+|X(s)|+W_2(\lr(X(s)),\delta_0)+|v(s)|\right]. \label{lem:2_2}
\end{align}
Then, from Condition \eqref{generic:condition:b}, we further deduce know that
\begin{align}
    &\left|\left[b(s,\cdot,\lr(X(s)),\cdot)\Big|^{(X(s),v(s))}_{(0,0)} -\left[(D_x b,D_v b) (s,X(s),\lr(X(s)),v(s)) \right]\begin{pmatrix}
        X(s)\\ v(s)
    \end{pmatrix}\right]^\top P(s)\right| \notag \\
    \le\ & \frac{L^2 L_b^0}{\lambda_b} |X(s)|^2 +\frac{2L^2 L_b^1}{\lambda_b} |X(s)|\cdot |v(s)| +\frac{L^2 L_b^2}{\lambda_b} |v(s)|^2. \label{lem:2_3}
\end{align}
From \eqref{lem:2_2} and Assumption (A1), we know that 
\begin{align}
    \left|\left[b(s,0,\cdot,0))\Big|^{\lr(X(s))}_{\delta_0}\right]^\top P(s)\right| \le \frac{L^2l_b^m}{\lambda_b} W_2(\lr(X(s)),\delta_0) \left[1+|X(s)|+W_2(\lr(X(s)),\delta_0)+|v(s)|\right]. \label{lem:2_4}
\end{align}
From the convexity assumption in (A3), we see that
\begin{equation}\label{lem:2_5}
\begin{aligned}
    \left[\begin{pmatrix}
        D_x \\ D_v 
    \end{pmatrix}f(s,\cdot,\lr(X(s)),\cdot)\bigg|^{(X,v)(s)}_{(0,0)} \right]^\top \begin{pmatrix}
        X\\ v
    \end{pmatrix} (s)\geq\ & 2\lambda_v|v(s)|^2+2\lambda_x |X(s)|^2, \\
    \left[D_xg(\cdot,\lr(X(T)))\Big|^{X(T)}_{0} \right]^\top X(T)\geq\ & 2\lambda_g |X(T)|^2.
\end{aligned}
\end{equation}
From Condition \eqref{small_mf_condition}, we have:
\begin{equation}\label{lem:2_5'}
\begin{aligned}
    &\left|\left[\begin{pmatrix}
        D_x \\ D_v 
    \end{pmatrix}f(s,0,\cdot,0)\bigg|^{\lr(X(s))}_{\delta_0} \right]^\top \begin{pmatrix}
        X\\ v
    \end{pmatrix}(s)\right| \le\ L_f^0\  W_2(\lr(X(s)),\delta_0)\ |X(s)|+L_f^1 \ W_2(\lr(X(s)),\delta_0)\ |v(s)|,\\
    &\left|\left[D_xg(0,\cdot)\Big|^{\lr(X(T))}_{\delta_0}\right]^\top X(T)\right|\le L_g W_2(\lr(X(T)),\delta_0)|X(T)|.
\end{aligned}
\end{equation}
From Assumption (A1) and Cauchy's inequality, we also know that
\begin{align}
    \e \bigg|\int_t^T \sum_{j=1}^n\bigg[\sigma^j_0(s,\cdot)\Big|^{\lr(X(s))}_{\delta_0} \bigg]^\top Q^j(s) ds \bigg| \le\ & l_\sigma^m \int_t^T W_2(\lr(X(s)),\delta_0) \sum_{j=1}^n \left\|Q^j(s)\right\|_2 ds \notag \\
    \le\ & l_\sigma^m \sqrt{n} \left(\int_t^T W_2^2(\lr(X(s)),\delta_0) ds\right)^{\frac{1}{2}} \cdot \left(\int_t^T \|Q(s)\|_2^2 ds\right)^{\frac{1}{2}}. \label{sigma_m_1}
\end{align}
Applying It\^o's formula to $|P(s)|^2$, we have
\begin{align*}
    &\e\left[|D_x g(X(T),\lr(X(T)))|^2-|P(t)|^2 \right]\\
    =\ &\e\bigg[\int_t^T \bigg(\sum_{j=1}^n \left|Q^j(s)\right|^2-2\sum_{j=1}^n P(s)^\top \left(\sigma^j_1(s)\right)^\top Q^j(s)\\
    &\qquad\qquad -2P(s)^\top D_x b(s,X(s),\lr(X(s)),v(s))^\top P(s) -2P(s)^\top D_x f(s,X(s),\lr(X(s)),v(s))\bigg) ds\bigg].
\end{align*}
Further, 
\begin{align*}
    &\e\bigg[\int_t^T \sum_{j=1}^n \left|Q^j(s)\right|^2 ds\bigg]\\
    \le\ & 2\e\bigg[\int_t^T \bigg( \sum_{j=1}^n P(s)^\top \left(\sigma^j_1(s)\right)^\top Q^j(s) +P(s)^\top D_x b(s,X(s),\lr(X(s)),v(s))^\top P(s)\\
    &\qquad\qquad  +P(s)^\top D_x f(s,X(s),\lr(X(s)),v(s))\bigg) ds\bigg]+\e\left[|D_x g(X(T),\lr(X(T)))|^2\right]\\
    \le\ & 2L \int_t^T \bigg( \sum_{j=1}^n \|P(s)\|_2 \cdot\left\|Q^j(s)\right\|_2 +\|P(s)\|_2^2+ \|P(s)\|_2\cdot \left(1+2\|X(s)\|_2+\|v(s)\|_2\right) \bigg) ds\\
    &+3L^2\left(1+2\|X(T)\|_2^2\right)\\
    \le\ & \frac{1}{2}\int_t^T\sum_{j=1}^n \left\|Q^j(s)\right\|_2^2ds +(2nL^2+2L)\int_t^T \|P(s)\|_2^2 ds\\
    &+2L \int_t^T \|P(s)\|_2\cdot \left(1+2\|X(s)\|_2+\|v(s)\|_2\right) ds+3L^2\left(1+2\|X(T)\|_2^2\right).
\end{align*}
Then, using the estimate \eqref{lem:2_2}, we see
\begin{align}
    &\int_t^T \left\|Q(s)\right\|_2^2ds \notag \\
    \le\ & \frac{16L^3}{\lambda_b^2}\left(\lambda_b+L^2(1+nL)\right)\int_t^T \!\! \left(1+2\|X(s)\|^2_2+\|v(s)\|^2_2\right) ds+6L^2\left(1+2\|X(T)\|_2^2\right). \label{sigma_m_2}
\end{align}
Substituting \eqref{sigma_m_2} into \eqref{sigma_m_1}, we have
\begin{align}
    &\e \bigg|\int_t^T \sum_{j=1}^n\bigg(\sigma^j_0(s,\cdot)\bigg|^{\lr(X(s))}_{\delta_0} \bigg)^\top Q^j(s) ds \bigg| \notag \\
    \le\ & \frac{4\sqrt{nL^3} l_\sigma^m }{\lambda_b}\sqrt{\lambda_b+L^2(1+nL)} \ \left(\int_t^T \|X(s)\|_2^2 ds\right)^{\frac{1}{2}}  \left(\int_t^T  \|v(s)\|^2_2 ds\right)^{\frac{1}{2}} \notag \\
    &+\frac{4\sqrt{2nL^3} l_\sigma^m }{\lambda_b}\sqrt{\lambda_b+L^2(1+nL)}  \int_t^T \|X(s)\|_2^2 ds +2\sqrt{3n}Ll_\sigma^m  \left(\int_t^T \|X(s)\|_2^2 ds\right)^{\frac{1}{2}} \|X(T)\|_2 \notag \\
    &+C(L,\lambda_b,T) \left(\int_t^T \|X(s)\|_2^2 ds\right)^{\frac{1}{2}}. \label{sigma_m_3}
\end{align}
Substituting \eqref{lem:2_3}-\eqref{sigma_m_3} back into \eqref{lem:2_1}, we have
\begin{align}
    &\e\left[2\lambda_g |X(T)|^2+\int_t^T 2\lambda_v|v(s)|^2+2\lambda_x|X(s)|^2 ds \right] \notag\\
    \le\ & \e\Bigg\{\int_t^T \bigg[ \frac{L^2 L_b^0}{\lambda_b} |X(s)|^2 +\frac{2L^2 L_b^1}{\lambda_b} |X(s)|\cdot |v(s)| +\frac{L^2 L_b^2}{\lambda_b} |v(s)|^2+\frac{L^2l_b^m}{\lambda_b} W^2_2(\lr(X(s)),\delta_0)  \notag \\
    &\qquad\qquad + \left(\frac{L^2l_b^m}{\lambda_b} + L_f^0\right) W_2(\lr(X(s)),\delta_0)\  |X(s)|+\left(\frac{L^2l_b^m}{\lambda_b}+L_f^1 \right) W_2(\lr(X(s)),\delta_0)|v(s)| \bigg] ds \notag \\
    &\qquad +L_g W_2(\lr(X(T)),\delta_0)\ |X(T)|\Bigg\} \notag \\
    &+\e\bigg\{\int_t^T \bigg[ \frac{L^2l_b^m}{\lambda_b} W_2(\lr(X(s)),\delta_0)+ |b(s,0,\delta_0,0))| \cdot|P(s)|+ \sum_{j=1}^{n} \left|\sigma_0(s)\right| \cdot \left|Q^j(s)\right| \notag \\
    &\quad\qquad\qquad + |D_xf(s,0,\delta_0,0)|\cdot |X(s)|+ |D_vf(s,0,\delta_0,0)|\cdot |v(s)| \bigg]ds \notag\\
    &\qquad + |D_xg(0,\delta_0)|\cdot |X(T)| + |P(t)|\cdot |\xi|\bigg\} \notag \\
    & +\frac{4\sqrt{nL^3} l_\sigma^m }{\lambda_b}\sqrt{\lambda_b+L^2(1+nL)} \left(\int_t^T \|X(s)\|_2^2 ds\right)^{\frac{1}{2}} \left(\int_t^T  \|v(s)\|^2_2 ds\right)^{\frac{1}{2}} \notag \\
    & +\frac{4\sqrt{2nL^3} l_\sigma^m }{\lambda_b}\sqrt{\lambda_b+L^2(1+nL)} \int_t^T \|X(s)\|_2^2 ds +2\sqrt{3n}Ll_\sigma^m  \left(\int_t^T \|X(s)\|_2^2 ds\right)^{\frac{1}{2}} \|X(T)\|_2 \notag \\
    & +C(L,\lambda_b,T) \left(\int_t^T \|X(s)\|_2^2 ds\right)^{\frac{1}{2}} \notag \\
    \le\ & \frac{L^2 L_b^2}{\lambda_b}\int_t^T \|v(s)\|_2^2 ds + L_g\|X(T)\|_2^2 +2\sqrt{3n}Ll_\sigma^m  \left(\int_t^T \|X(s)\|_2^2 ds\right)^{\frac{1}{2}} \|X(T)\|_2 \notag \\
    &+ \left[\frac{L^2 (L_b^0+2l_b^m)}{\lambda_b} + L_f^0 + \frac{4\sqrt{2nL^3} l_\sigma^m }{\lambda_b}\sqrt{\lambda_b+L^2(1+nL)}\right]\int_t^T \|X(s)\|_2^2 ds\notag\\
    &+\left[\frac{L^2 (2L_b^1+l_b^m)}{\lambda_b}+L_f^1 + \frac{4\sqrt{nL^3} l_\sigma^m }{\lambda_b}\sqrt{\lambda_b+L^2(1+nL)}\right] \left(\int_t^T \|X(s)\|_2^2 ds\right)^{\frac{1}{2}} \left(\int_t^T\|v(s)\|_2^2ds\right)^{\frac{1}{2}} \notag \\
    &+C(L,\lambda_b, T)\Bigg[ \left(\int_t^T \|X(s)\|_2^2 ds\right)^{\frac{1}{2}}+ \left(\int_t^T \|P(s)\|_2^2 ds\right)^{\frac{1}{2}}+ \left(\int_t^T \|Q(s)\|_2^2 ds\right)^{\frac{1}{2}} \notag \\
    &\ \qquad\qquad\qquad + \left(\int_t^T \|v(s)\|_2^2 ds\right)^{\frac{1}{2}}+\|X(T)\|_2 + \|P(t)\|_2 \cdot \|\xi\|_2 \Bigg]. \notag 
\end{align}
Then, we see from Young's inequality that 
\begin{align*}
    &\Bigg|\sqrt{2\lambda_g-L_g}\|X(T)\|_2 - \frac{\sqrt{3n}Ll_\sigma^m}{\sqrt{2\lambda_g-L_g}} \left(\int_t^T \|X(s)\|_2^2 ds\right)^{\frac{1}{2}} \Bigg|^2\\
    &+\Bigg|\sqrt{\frac{\Lambda_1}{\lambda_b}}\left(\int_t^T \|X(s)\|_2^2 ds\right)^{\frac{1}{2}}
    - \frac{\Lambda_2}{2\sqrt{\Lambda_1}} \left(\int_t^T \|v(s)\|_2^2 ds\right)^{\frac{1}{2}}\Bigg|^2\\
    &+ \left(2\lambda_v-\frac{L^2 L_b^2}{\lambda_b}-\frac{\Lambda_2}{2\sqrt{\Lambda_1}}  \right) \int_t^T \|v(s)\|_2^2 ds\\
    \le\ & C(L,\lambda_b, T)\Bigg[ \left(\int_t^T \|X(s)\|_2^2 ds\right)^{\frac{1}{2}}+ \left(\int_t^T \|P(s)\|_2^2 ds\right)^{\frac{1}{2}}+ \left(\int_t^T \|Q(s)\|_2^2 ds\right)^{\frac{1}{2}} \notag \\
    &\qquad\qquad\qquad + \left(\int_t^T \|v(s)\|_2^2 ds\right)^{\frac{1}{2}}+\|X(T)\|_2 + \|P(t)\|_2 \cdot \|\xi\|_2 \Bigg], 
\end{align*}
with
\begin{align*}
    \Lambda_1:=\ & 2\lambda_b\lambda_x- \lambda_bL_f^0-L^2 (L_b^0+2l_b^m)- 4\sqrt{2nL^3} l_\sigma^m \sqrt{\lambda_b+L^2(1+nL)}- \frac{3n\lambda_bL^2(l_\sigma^m)^2}{2\lambda_g-L_g},\\
    \Lambda_2:=\ & \sqrt{\lambda_b} L_f^1+\frac{L^2}{\sqrt{\lambda_b}} (2L_b^1+l_b^m) + 4\sqrt{\frac{nL^3}{\lambda_b}} l_\sigma^m \sqrt{\lambda_b+L^2(1+nL)}.
\end{align*}
From \eqref{lem:2_0}, we know that for any $\epsilon>0$,
\begin{align}
    &\int_t^T \|v(s)\|_2^2 \ ds \notag \\
    \le\ &C\Bigg[ \left(\int_t^T \|X(s)\|_2^2\  ds\right)^{\frac{1}{2}}+ \left(\int_t^T \|P(s)\|_2^2\  ds\right)^{\frac{1}{2}}+ \left(\int_t^T \|Q(s)\|_2^2\  ds\right)^{\frac{1}{2}} \notag \\
    &\quad\qquad + \left(\int_t^T \|v(s)\|_2^2\  ds\right)^{\frac{1}{2}}+\|X(T)\|_2 + \|P(t)\|_2 \cdot \|\xi\|_2 \Bigg] \notag \\
    \le\ & \epsilon \left[\sup_{t\le s\le T}\|X(s)\|_2^2+\sup_{t\le s\le T}\|P(s)\|_2^2+\int_t^T  \Big( \left\|Q(s)\right\|_2^2 + \|v(s)\|_2^2 \Big)\  ds\right]+\frac{C}{\epsilon} \left(1+\|\xi\|_2^2\right).  \label{lem:2_6}
\end{align}
Using Gr\"onwall's inequality, we have
\begin{equation}\label{lem:2_7}
	\e\bigg[\sup_{t\le s\le T}|X(s)|^2\bigg]\le C(L,T)\ \e\left[1+|\xi|^2+\int_t^T |v(s)|^2\ ds\right].
\end{equation}
The estimate \eqref{lem:2_7}, usual BSDE estimates in \cite{YH2,SP,MR1696772} together with the well-known Burkholder-Davis-Gundy (BDG) inequality altogether give
\begin{equation}\label{lem:2_8}
	\e\bigg[\sup_{t\le s\le T}|P(s)|^2+\int_t^T |Q(s)|^2 \ ds\bigg]\le C(L,T)\ \e\left[1+|\xi|^2+\int_t^T |v(s)|^2\ ds\right].
\end{equation}   
Substituting \eqref{lem:2_7} and \eqref{lem:2_8} into \eqref{lem:2_6}, we have
\begin{align*}
    & \int_t^T \|v(s)\|_2^2\  ds \le \epsilon \, C(L,T)\left[\int_t^T  \|v(s)\|_2^2 \  ds\right]+C\left(\epsilon+\frac{1}{\epsilon}\right) \left(1+\|\xi\|_2^2\right).
\end{align*}
By choosing $\epsilon$ small enough such that $\epsilon  C(L,T)\le\frac{1}{2}$, we have
\begin{align*}
    \int_t^T \|v(s)\|_2^2 ds \le C\left(1+\|\xi\|_2^2\right), 
\end{align*}
which can be submitted back into \eqref{lem:2_7} and \eqref{lem:2_8} to obtain
\begin{equation}\label{lem:2_9}
\begin{aligned}
    \e\left[\sup_{t\le s\le T}\left|\left(X(s), P(s)\right)^\top\right|^2+\int_t^T |Q(s)|^2ds \right] \le C\left(1+\|\xi\|_2^2\right).
\end{aligned}
\end{equation}
From the third equation in FBSDEs \eqref{FB:mfg_generic}, we know that 
\begin{align*}
    &D_v f\left(s,X(s),\lr(X(s)),v(s)\right)=-D_v b\left(s,X(s),\lr(X(s)),v(s)\right)^\top P(s),
\end{align*}
and then,
\begin{align*}
    &[D_v f(s,X(s),\lr(X(s)),v(s))-D_v f(s,X(s),\lr(X(s)),0)]^\top v(s)\\
    =\ & -v(s)^\top D_v b(s,X(s),\lr(X(s)),v(s))^\top P(s) -D_v f(s,X(s),\lr(X(s)),0)^\top v(s).
\end{align*}
Therefore, from the convexity of $f$ in $v$ in Assumption (A3), we have
\begin{align*}
    2\lambda_v|v(s)|^2\le\ & |v(s)|\cdot |D_v b(s,X(s),\lr(X(s)),v(s))|\cdot |P(s)| + |D_v f(s,X(s),\lr(X(s)),0)|\cdot |v(s)|\\
    \le\ & L|v(s)| \left[1+|P(s)|+|X(s)|+W_2(\lr(X(s)),\delta_0)\right],
\end{align*}
and then, 
\begin{align}\label{lem:2_10}
    |v(s)|\le\ & \frac{L}{2\lambda_v}\left[1+|P(s)|+|X(s)|+W_2(\lr(X(s)),\delta_0)\right].
\end{align}
From \eqref{lem:2_9} and \eqref{lem:2_10}, we obtain \eqref{lem2_1}.
\end{proof}

In particular, in this proof, we emphasize the following claims: from the third equation of FBSDEs \eqref{FB:mfg_generic}, Condition \eqref{generic:condition:b'} and its \textit{Schur complement}, we know that 
\begin{align*}
    P_{t\xi}(s)=-\left((D_vb) (D_vb)^\top\right)^{-1} (D_vb) (D_v f)\left(s,X_{t\xi}(s),\lr(X_{t\xi}(s)),v_{t\xi}(s)\right), 
\end{align*}
by then, from Assumptiuon (A2), we can come up with the following estimate:
\begin{align}\label{lem:2_2(zw)}
    \left|P_{t\xi}(s)\right|\le \frac{L^2}{\lambda_b} \big[1+\left|X_{t\xi}(s)\right|+W_2(\lr(X_{t\xi}(s)),\delta_0)+\left|v_{t\xi}(s)\right|\big];
\end{align}
again, from the third equation of FBSDEs \eqref{FB:mfg_generic}, we know that 
\begin{align*}
    &\big[D_v f\left(s,X_{t\xi}(s),\lr(X_{t\xi}(s)),v_{t\xi}(s)\right)-D_v f(s,X_{t\xi}(s),\lr(X_{t\xi}(s)),0)\big]^\top v_{t\xi}(s)\\
    =\ & -v_{t\xi}(s)^\top D_v b\left(s,X_{t\xi}(s),\lr(X_{t\xi}(s)),v_{t\xi}(s)\right)^\top P_{t\xi}(s) -D_v f\left(s,X_{t\xi}(s),\lr(X_{t\xi}(s)),0\right)^\top v_{t\xi}(s),
\end{align*}
and therefore, from the convexity of $f$ in the argument $v$ in accordance with Assumption (A3), we have
\begin{align}\label{lem:2_10(zw)}
    \left|v_{t\xi}(s)\right|\le\ & \frac{L}{2\lambda_v}\big[1+|P_{t\xi}(s)|+|X_{t\xi}(s)|+W_2(\lr(X_{t\xi}(s)),\delta_0)\big].
\end{align}
We next show that the Lagrangian $L$ defined in \eqref{H'} has a minimizer in $v$ when the variables $(x,m,p)$ are in the following cone set with a suitable constant $K>0$:
\begin{align*}
    \mathcal{C}_K:=\left\{(x,m,p)\in\brn\times\pr_2(\brn)\times\brn: |p|\le K(1+|x|+W_2(m,\delta_0))\right\};
\end{align*}
we also call that $(x,m,p)$ satisfies a cone property with $K$ when $(x,m,p)\in c_K$. The concept of the ``cone property'' (or ``cone condition'') was proposed in \cite{AB10'',AB10'} in the study of the first order generic mean field problems. In view of \eqref{lem:2_2(zw)} and \eqref{lem:2_10(zw)}, the next proposition shows that the cone property is automatically satisfied under Assumptions (A1)-(A3) and the subsequent Condition \eqref{lem:2_12}; to motivate this last condition, recall that $\lambda_v$ corresponds to the convexity of $f$ in $v$, and $\lambda_b$ represents the non-degeneracy of $D_v b$ and henceforth the positive definiteness of the matrix $(D_v b)(D_v b)^\top$ (see \eqref{generic:condition:b'}), so Condition \eqref{lem:2_12} is valid whenever either $\lambda_v$ or $\lambda_b$ is large enough.

\begin{proposition}\label{prop:cone}
    Under Assumptions (A1)-(A3), suppose that
    \begin{align}\label{lem:2_12}
        &2\lambda_v> \frac{L^2}{\lambda_b}\left(L+L_b^2+\frac{LL_b^2}{2\lambda_v}\right).
    \end{align}
    Then, for any $s\in [0,T]$ and $(x,m,p)\in\mathcal{C}_K$ with $K:=\left(1-\frac{L^3}{2\lambda_v\lambda_b}\right)^{-1}\frac{L^2}{\lambda_b}\left(1+\frac{L}{2\lambda_v}\right)$, i.e.
    \begin{align}\label{cone_condition}
        |p|\le \left(1-\frac{L^3}{2\lambda_v\lambda_b}\right)^{-1}\frac{L^2}{\lambda_b}\left(1+\frac{L}{2\lambda_v}\right) \left[1+|x|+W_2(m,\delta_0)\right], 
    \end{align}
    the map $\brd\ni v\mapsto L(s,x,m,v,p)\in\br$ has a unique minimizer $\hv(s,x,m,p)$, and we have
    \begin{equation}\label{D_pH&D_xH}
    \begin{aligned}
        D_p H(t,x,m,p)=\ &  b\left(t,x,m,\hv(s,x,m,p)\right);\\
        D_x H(t,x,m,p)=\ & D_x b\left(t,x,m,\hv(s,x,m,p)\right)^\top p+ D_x f\left(t,x,m,\hv(s,x,m,p)\right). 
    \end{aligned}
    \end{equation}
    As a consequence, under conditions \eqref{lem:2_0} and \eqref{lem:2_12}, we have
    \begin{equation}\label{lem2_3}
        v_{t\xi}(s)=\hv\left(s,X_{t\xi}(s),\lr\left(X_{t\xi}(s)\right),P_{t\xi}(s) \right),\quad s\in[t,T].
    \end{equation}
    Therefore, FBSDEs \eqref{FB:mfg_generic} for $(X_{t\xi},P_{t\xi},Q_{t\xi})$  reads
    \begin{equation}\label{FB:mfg_generic'}
    \left\{
    \begin{aligned}
        &X_{t\xi}(s) = \xi+\int_t^s D_pH\left(r,X_{t\xi}(r),\lr(X_{t\xi}(r)),P_{t\xi}(r)\right)dr\\
        &\qquad\qquad + \int_t^s \left[\sigma_0\left(r,\lr(X_{t\xi}(r))\right)+\sigma_1(r)X_{t\xi}(r)\right]dB(r),\\[3mm]
        &P_{t\xi}(s) = \int_s^T \bigg[ D_x H\left(r,X_{t\xi}(r),\lr(X_{t\xi}(r)),P_{t\xi}(r)\right)+\sum_{j=1}^n \left(\sigma^j_1(r)\right)^\top Q_{t\xi}^j(r)\bigg]dr\\
        &\qquad\qquad -\int_s^T Q_{t\xi}(r) dB(r)+ D_x g\left(X_{t\xi}(T),\lr(X_{t\xi}(T))\right),\quad s\in[t,T].
    \end{aligned}
    \right.
    \end{equation}
\end{proposition}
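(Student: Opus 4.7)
The plan is to prove the three claims in order: (i) strict convexity of $v\mapsto L(s,x,m,v,p)$ on the cone $\mathcal{C}_K$ together with coercivity, yielding existence and uniqueness of $\hv(s,x,m,p)$; (ii) the envelope formulas \eqref{D_pH&D_xH}; and (iii) the cone property for $(X_{t\xi},P_{t\xi})$, which by uniqueness of the minimizer identifies $v_{t\xi}$ with $\hv$ evaluated along the FBSDE solution.

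For (i), differentiating twice in $v$ gives $D_v^2 L(s,x,m,v,p)=D_v^2 f(s,x,m,v)+\sum_{i=1}^n p_i\,D_v^2 b_i(s,x,m,v)$. From (A3) one has $D_v^2 f\ge 2\lambda_v I_n$, while Condition \eqref{generic:condition:b} yields $|D_v^2 b|\le L_b^2/(1+|x|+|v|+W_2(m,\delta_0))$. For $(x,m,p)\in\mathcal{C}_K$ this gives $|p^\top D_v^2 b|\le KL_b^2$, so $D_v^2 L\ge (2\lambda_v-KL_b^2)I_n$. A direct computation shows that \eqref{lem:2_12} is equivalent to $2\lambda_v(1-\tfrac{L^3}{2\lambda_v\lambda_b})>\tfrac{L^2L_b^2}{\lambda_b}(1+\tfrac{L}{2\lambda_v})$, i.e. to $2\lambda_v>KL_b^2$, so the map $v\mapsto L(s,x,m,v,p)$ is strictly convex on $\mathbb{R}^d$. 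Since $|D_v b|\le L$ and $f$ is $\lambda_v$-strongly convex in $v$, $L(s,x,m,v,p)\ge \lambda_v|v|^2-L|p|\,|v|+\text{l.o.t.}$, giving coercivity in $v$; hence a unique minimizer $\hv(s,x,m,p)$ exists and satisfies the first-order condition
\begin{equation*}
  (D_v b)(s,x,m,\hv)^\top p + D_v f(s,x,m,\hv)=0.
\end{equation*}
The strict positivity of $D_v^2 L$ together with the implicit function theorem makes $\hv$ continuously differentiable in $(x,m,p)$.

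For (ii), differentiating $H(s,x,m,p)=p^\top b(s,x,m,\hv)+f(s,x,m,\hv)$ with respect to $p$ and $x$ and applying the chain rule produces extra terms of the form $[(D_v b)^\top p+D_v f]\cdot D_p\hv$ and $[(D_v b)^\top p+D_v f]\cdot D_x\hv$, both of which vanish by the first-order condition. This yields exactly $D_p H= b(\cdot,\hv)$ and $D_x H= (D_x b)(\cdot,\hv)^\top p+D_x f(\cdot,\hv)$, i.e. \eqref{D_pH&D_xH}.

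For (iii), substituting \eqref{lem:2_10(zw)} into \eqref{lem:2_2(zw)} and collecting the $|P_{t\xi}(s)|$ terms gives
\begin{equation*}
  \Bigl(1-\tfrac{L^3}{2\lambda_v\lambda_b}\Bigr)|P_{t\xi}(s)|
  \le \tfrac{L^2}{\lambda_b}\Bigl(1+\tfrac{L}{2\lambda_v}\Bigr)\bigl[1+|X_{t\xi}(s)|+W_2(\mathcal{L}(X_{t\xi}(s)),\delta_0)\bigr];
\end{equation*}
note that \eqref{lem:2_12} forces $2\lambda_v\lambda_b>L^3$, so the prefactor is positive and dividing through gives precisely $|P_{t\xi}(s)|\le K[1+|X_{t\xi}(s)|+W_2(\mathcal{L}(X_{t\xi}(s)),\delta_0)]$, i.e. $(X_{t\xi}(s),\mathcal{L}(X_{t\xi}(s)),P_{t\xi}(s))\in\mathcal{C}_K$ for $s\in[t,T]$. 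The third equation of \eqref{FB:mfg_generic} then says that $v_{t\xi}(s)$ satisfies the first-order condition defining the minimizer of $L(s,X_{t\xi}(s),\mathcal{L}(X_{t\xi}(s)),\cdot,P_{t\xi}(s))$ on the cone, and by uniqueness from (i) we get \eqref{lem2_3}. Finally, substituting this identity back into \eqref{FB:mfg_generic} and invoking \eqref{D_pH&D_xH} rewrites the system in the Hamiltonian form \eqref{FB:mfg_generic'}. The main technical pinch point is the algebraic check that \eqref{lem:2_12} is exactly the clean condition ensuring simultaneously the positivity of $1-\tfrac{L^3}{2\lambda_v\lambda_b}$ and the inequality $2\lambda_v>KL_b^2$ needed for convexity on $\mathcal{C}_K$; everything else is the standard envelope/implicit-function machinery.
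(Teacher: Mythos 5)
Your proposal is correct and follows essentially the same route as the paper: bound $p^\top D_v^2 b$ on the cone by $KL_b^2$ to get strict convexity of $v\mapsto L(s,x,m,v,p)$ (your algebraic check that \eqref{lem:2_12} is exactly $2\lambda_v>KL_b^2$, and that it forces $2\lambda_v\lambda_b>L^3$, matches the paper's implicit computation), then the envelope formula via the first-order condition, and finally the cone property for $P_{t\xi}$ obtained by combining \eqref{lem:2_2(zw)} with \eqref{lem:2_10(zw)}. The only cosmetic difference is that the paper explicitly verifies positivity of $D_v^2 L$ for nearby $(x',m',p')$ before invoking differentiability of $\hv$, whereas you appeal to the implicit function theorem directly; this is the same idea with slightly less bookkeeping.
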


\begin{proof}
From Assumptions (A1) and (A3) and Condition \eqref{lem:2_12}, for any $(s,x,\mu)$ and any $p$ satisfying the cone property \eqref{cone_condition}, we can compute that
\begin{align*}
    &D_v^2 L\left(s,x,m,v,p\right)\\
    =\ & D_v^2 b\left(s,x,m,v\right)^\top p+D_v^2 f\left(s,x,m,v\right)\\
    \geq\ & -\frac{L_b^2}{1+|x|+|v|+W_2(m,\delta_0)} \left(1-\frac{L^3}{2\lambda_v\lambda_b}\right)^{-1}\frac{L^2}{\lambda_b}\left(1+\frac{L}{2\lambda_v}\right) \left[1+|x|+W_2(m,\delta_0)\right]+2\lambda_v \\
    \geq\ &-\left(1-\frac{L^3}{2\lambda_v\lambda_b}\right)^{-1}\frac{L^2L_b^2}{\lambda_b}\left(1+\frac{L}{2\lambda_v}\right)+2\lambda_v>0,
\end{align*}
from which we see that the map $\brd\ni v\mapsto L\left(s,x,m,v,p \right)$ is strictly convex, and there exists a unique minimizer $\hv(s,x,m,p)$. Moreover, for any $\epsilon>0$ and any $(x',m',p')$ such that $|x'-x|,\ W_2(m,m'),\ |p'-p|\le\epsilon$, we can also derive that
\begin{align*}
   & D_v^2 L\left(s,x',m',v,p' \right) \geq\  -\frac{L_b^2|p'|}{1+|x'|+|v'|+W_2(m',\delta_0)} +2\lambda_v \\
    \geq\ & -\frac{L_b^2|p|}{1+|x'|+|v'|+W_2(m',\delta_0)}-\frac{L_b^2\epsilon}{1+|x'|+|v'|+W_2(m',\delta_0)} +2\lambda_v \\
    \geq\ & -\frac{L_b^2}{1+|x'|+|v'|+W_2(m',\delta_0)}\left(1-\frac{L^3}{2\lambda_v\lambda_b}\right)^{-1}\frac{L^2}{\lambda_b}\left(1+\frac{L}{2\lambda_v}\right) \left[1+|x|+W_2(m,\delta_0)\right]\\
    &-\frac{L_b^2\epsilon}{1+|x'|+|v'|+W_2(m',\delta_0)} +2\lambda_v \\
    \geq\ & -\left(1-\frac{L^3}{2\lambda_v\lambda_b}\right)^{-1}\frac{L^2 L_b^2}{\lambda_b}\left(1+\frac{L}{2\lambda_v}\right) +2\lambda_v  \\
    &-\frac{L_b^2 \epsilon}{1+|x'|+|v'|+W_2(m',\delta_0)} \left[1+2\left(1-\frac{L^3}{2\lambda_v\lambda_b}\right)^{-1}\frac{L^2}{\lambda_b}\left(1+\frac{L}{2\lambda_v}\right)\right].
\end{align*}
From Condition \eqref{lem:2_12}, whenever $\epsilon$ is small enough, the right hand side of the inequality remains negative. Therefore, the minimizing map $\hv$ is well-defined in the neighbourhood of $(x,m,p)$. Then, from Assumptions (A1) and (A2) and the first order condition 
\begin{align*}
    D_v L\left(s, x,m,\hv(s,x,m,p),p\right)=0, 
\end{align*}
we obtain \eqref{D_pH&D_xH}. We next try to establish \eqref{lem2_3}. From \eqref{lem:2_2(zw)} and \eqref{lem:2_10(zw)}, we know that
\begin{align*}
    \left|P_{t\xi}(s)\right|\le\ & \frac{L^2}{\lambda_b} \left[1+\left|X_{t\xi}(s)\right|+W_2\left(\lr(X_{t\xi}(s)),\delta_0\right)\right]+\frac{L^2}{\lambda_b}\left|v_{t\xi}(s)\right|\\[3mm]
    \le\ &  \frac{L^2}{\lambda_b} \left[1+\left|X_{t\xi}(s)\right|+W_2\left(\lr(X_{t\xi}(s)),\delta_0\right)\right]\\
    &+\frac{L^3}{2\lambda_v\lambda_b}\left[1+\left|P_{t\xi}(s)\right|+\left|X_{t\xi}(s)\right|+W_2\left(\lr(X_{t\xi}(s)),\delta_0\right)\right].
\end{align*}
Since $\frac{L^3}{2\lambda_v\lambda_b}<1$, we further see that
\begin{align}\label{lem:2_11}
    \left|P_{t\xi}(s)\right|\le \left(1-\frac{L^3}{2\lambda_v\lambda_b}\right)^{-1}\frac{L^2}{\lambda_b}\left(1+\frac{L}{2\lambda_v}\right)\left[1+\left|X_{t\xi}(s)\right|+W_2\left(\lr(X_{t\xi}(s)),\delta_0\right)\right],
\end{align}
that is, $P(s)$ satisfies the cone property $\mathcal{C}_K$ with $K=\left(1-\frac{L^3}{2\lambda_v\lambda_b}\right)^{-1}\frac{L^2}{\lambda_b}\left(1+\frac{L}{2\lambda_v}\right)$. And since $v_{t\xi}(s)$ satisfies 
\begin{align*}
    D_v L\left(s,X_{t\xi}(s),\lr\left(X_{t\xi}(s)\right),v,P_{t\xi}(s) \right)=0,
\end{align*}
we also deduce \eqref{lem2_3}; and \eqref{FB:mfg_generic'} is a direct consequence of \eqref{D_pH&D_xH}.
\end{proof}

In the rest of this article, we use $C(\alpha_1,\dots, \alpha_k)$ to denote a constant depending only on parameters$(\alpha_1,\dots, \alpha_k)$; otherwise, we always use an ordinary $C$ to denote by a constant depending all on $(L,T,\lambda_b,\lambda_v,\lambda_x,l_b^m, l_\sigma^m,L_b^0,L_b^1,L_b^2,L_f^0,L_f^1)$. As long as $\lr\left(X_{t\xi}(s)\right)$ for $t\le s\le T$ given, Problem \eqref{intro_1'} is certainly a classical stochastic control problem, meanwhile $X_{t\xi}$ is a given process. Problem \eqref{intro_1'} is associated with the following FBSDEs arising from maximum principle: \ for $s\in[t,T],$
\begin{equation}\label{intro_3}
\left\{	
	\begin{aligned}
		&X_{tx\mu}(s)=x+\int_t^s b\left(r,X_{tx\mu}(r),\lr(X_{t\xi}(r)),v_{tx\mu}(r)\right)dr\\
		&\qquad\qquad \qquad+\int_t^s \left[ \sigma_0(r,\lr(X_{t\xi}(s)))+\sigma_1(r)X_{tx\mu}(r)\right]dB(r),\\
		&P_{tx\mu}(s)=-\int_s^T Q_{tx\mu}(r)dB(r)+D_x g\left(X_{tx\mu}(T),\lr(X_{t\xi}(T))\right)\\
        &\quad\qquad \qquad+\int_s^T \bigg[D_x b(r,X_{tx\mu}(r),\lr(X_{t\xi}(r)),v_{tx\mu}(r))^\top P_{tx\mu}(r)+\sum_{j=1}^n\left(\sigma_1^j(r)\right)^\top Q_{tx\mu}^j(r) \\
        &\quad\qquad\qquad\qquad \quad+D_x f \left(r,X_{tx\mu}(r),\lr(X_{t\xi}(r)),v_{tx\mu}(r)\right)\bigg]dr,\\
        &D_v b\left(s,X_{tx\mu}(s),\lr(X_{t\xi}(s)),v_{tx\mu}(s)\right)^\top P_{tx\mu}(s)+ D_v f\left(s,X_{tx\mu}(s),\lr(X_{t\xi}(s)),v_{tx\mu}(s)\right)=0.
	\end{aligned}
\right.
\end{equation}
Here, we use the subscript $tx\mu$ to denote the dependence of the system \eqref{intro_3} on the initial condition; and since the system depends on $\xi$ only through its law $\mu$, so it is reasonable to use the subscript $\mu$ instead of $\xi$. From the well-known sufficiency maximum principle \cite{Fleming} for stochastic control problems, we know that, if FBSDEs \eqref{intro_3} has a solution $\left(X_{tx\mu},P_{tx\mu},Q_{tx\mu},v_{tx\mu}\right)\in \sr^2_\f(t,T)\times\sr^2_{\f}(t,T)\times\left(\lr^2_{\f}(t,T)\right)^n\times\lr^2_{\f}(t,T)$, then, $v_{tx\mu}$ is the unique solution of Problem \eqref{intro_1'}. We next state the well-posedness of FBSDEs \eqref{intro_3}, and also the $L^2$-boundedness and continuity of the solution with respect to spatial variable $x$ and measure argument $\mu$.

\begin{theorem}\label{lem:5}
	Under Assumptions (A1)-(A3) and validity of \eqref{lem:2_0}, there is a unique adapted solution $(X_{tx\mu},P_{tx\mu},Q_{tx\mu},v_{tx\mu})$ of FBSDEs \eqref{intro_3}, and $v_{tx\mu}$ is the unique optimal control of Problem \eqref{intro_1'}. For $x,x'\in\brn$ and $\mu,\mu'\in\pr_2(\brn)$, we have
	\begin{align}
		&\e\bigg[\sup_{t\le s\le T}\left|\left(X_{tx\mu}(s), P_{tx\mu}(s),v_{tx\mu}(s)\right)^\top\right|^2+ \int_t^T \left|Q_{tx\mu}(s)\right|^2ds\bigg]\le C\left(1+|x|^2+W^2_2(\mu,\delta_0)\right), \label{lem5_1}\\[3mm]
		&\e\bigg[\sup_{t\le s\le T}\left|\left(X_{tx'\mu'}(s)-X_{tx\mu}(s), P_{tx'\mu'}(s)-P_{tx\mu}(s),v_{tx'\mu'}(s)-v_{tx\mu}(s)\right)^\top\right|^2 \notag \\
        &\ \quad\qquad\qquad\qquad\qquad\qquad + \int_t^T \left|Q_{tx'\mu'} (s) -Q_{tx\mu}(s)\right|^2 ds\bigg] \le C\left(|x'-x|^2+W_2^2\left(\mu,\mu'\right)\right).\label{lem5_2}
	\end{align}
    Moreover, if \eqref{lem:2_12} holds, then we have 
    \begin{equation}\label{lem5_3}
        v_{tx\mu}(s)=\hv\left(s,X_{tx\mu}(s),\lr\left(X_{t\xi}(s)\right),P_{tx\mu}(s) \right),\quad s\in[t,T], 
    \end{equation}
    henceforth, FBSDEs \eqref{intro_3} for $(X_{tx\mu},P_{tx\mu},Q_{tx\mu})$ also reads
    \begin{equation}\label{intro_3'}
    \left\{
    \begin{aligned}
        &X_{tx\mu}(s) = x+\int_t^s D_pH\left(r,X_{tx\mu}(r),\lr(X_{t\xi}(r)),P_{tx\mu}(r)\right)dr\\
        &\qquad\qquad + \int_t^s \left[\sigma_0\left(r,\lr(X_{t\xi}(r))\right)+\sigma_1(r)X_{tx\mu}(r)\right]dB(r),\\
        &P_{tx\mu}(s) = \int_s^T \bigg[ D_x H\left(r,X_{tx\mu}(r),\lr(X_{t\xi}(r)),P_{tx\mu}(r)\right)+\sum_{j=1}^n \left(\sigma^j_1(r)\right)^\top Q_{tx\mu}^j(r)\bigg]dr\\
        &\qquad\qquad -\int_s^T Q_{tx\mu}(r) dB(r)+ D_x g\left(X_{tx\mu}(T),\lr(X_{t\xi}(T))\right),\quad s\in[t,T].
    \end{aligned}
    \right.
    \end{equation}
\end{theorem}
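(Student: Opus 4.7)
The key structural observation is that Problem \eqref{intro_1'} is a \emph{classical} (non-McKean--Vlasov) stochastic control problem once the measure flow $\{m^{t,\mu}(s)=\lr(X_{t\xi}(s)),\ s\in[t,T]\}$ has been fixed by Theorem~\ref{lem:2}; the measure input enters \eqref{intro_3} as an exogenous, $\lr^2$-integrable time-dependent coefficient satisfying $\int_t^T W_2^2(\lr(X_{t\xi}(s)),\delta_0)\,ds \le C(1+W_2^2(\mu,\delta_0))$. My plan is therefore to mirror the proof of Theorem~\ref{lem:2} with the McKean--Vlasov loop removed. Well-posedness and sufficiency of the maximum principle first: since Assumptions (A1)-(A3) and Property (S) imply the $\beta$-monotonicity used in \cite[Theorems 5.1, 5.2]{AB12}, and a fortiori the analogous monotonicity for the simpler system \eqref{intro_3} where the law is not coupled to the state, the continuation method in the coefficients furnishes a unique adapted solution $(X_{tx\mu},P_{tx\mu},Q_{tx\mu},v_{tx\mu})$; convexity of $f$ in $(x,v)$ and of $g$ in $x$ then makes $v_{tx\mu}$ the unique optimal control of \eqref{intro_1'}.

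For the bound \eqref{lem5_1}, I would apply It\^o's formula to $P_{tx\mu}(s)^\top X_{tx\mu}(s)$, substitute the third equation of \eqref{intro_3}, and repeat verbatim the telescoping decomposition \eqref{lem:2_1} where the measure is now frozen at $\lr(X_{t\xi}(s))$. The convexity of $f,g$ in Assumption (A3) supplies the $\lambda_x|X|^2+\lambda_v|v|^2+\lambda_g|X(T)|^2$ lower bound; the Schur complement estimate on $P_{tx\mu}$ yields $|P_{tx\mu}(s)|\le \tfrac{L^2}{\lambda_b}(1+|X_{tx\mu}(s)|+W_2(\lr(X_{t\xi}(s)),\delta_0)+|v_{tx\mu}(s)|)$ as in \eqref{lem:2_2(zw)}; Assumption~(A1)(ii) and the BSDE estimate give an $L^2$ bound on $Q_{tx\mu}$ in terms of $X_{tx\mu},P_{tx\mu},v_{tx\mu}$. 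The inhomogeneities involving $W_2(\lr(X_{t\xi}(s)),\delta_0)$ are absorbed as ``source terms'' controlled via \eqref{lem2_1} by $C(1+W_2^2(\mu,\delta_0))$; after Young's inequality, Property~(S) absorbs the quadratic cross-terms in $X,v$, and Gr\"onwall together with the BDG inequality delivers \eqref{lem5_1} with $|x|^2+W_2^2(\mu,\delta_0)$ on the right-hand side.

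The stability estimate \eqref{lem5_2} is obtained by applying It\^o's formula to $(P_{tx'\mu'}-P_{tx\mu})^\top(X_{tx'\mu'}-X_{tx\mu})$ and to $|P_{tx'\mu'}-P_{tx\mu}|^2$, mimicking the derivation of \eqref{lem2_2}. The crucial input, and the place where the non-trivial content sits, is that the driving measure changes with $\mu$: by the coupling characterization of $W_2$ and \eqref{lem2_2} of Theorem~\ref{lem:2}, one has $W_2^2(\lr(X_{t\xi'}(s)),\lr(X_{t\xi}(s)))\le \|X_{t\xi'}(s)-X_{t\xi}(s)\|_2^2\le C\,W_2^2(\mu,\mu')$. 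Plugging this into the difference equations of \eqref{intro_3}, Assumption~(A1)-(A2) produce Lipschitz increments bounded by $|X_{tx'\mu'}-X_{tx\mu}|+|v_{tx'\mu'}-v_{tx\mu}|+W_2(\mu,\mu')$, and the same $\beta$-monotonicity manipulation used in the first estimate, combined with Gr\"onwall, closes the bound at $C(|x'-x|^2+W_2^2(\mu,\mu'))$. I expect this propagation of $\mu\mapsto\lr(X_{t\xi}(\cdot))$ through the entire estimate to be the main technical obstacle, since one must carefully track it in both the forward SDE (appearing inside $\sigma_0$ and $b$) and the backward SDE (through $D_xb$, $D_xf$, $D_xg$ and the adjoint).

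Finally, for \eqref{lem5_3} and the reformulation \eqref{intro_3'}, I would reproduce the cone-property argument of Proposition~\ref{prop:cone} in the present setting. From the third equation of \eqref{intro_3} one gets the Schur complement representation $P_{tx\mu}(s)=-((D_vb)(D_vb)^\top)^{-1}(D_vb)(D_vf)(s,X_{tx\mu}(s),\lr(X_{t\xi}(s)),v_{tx\mu}(s))$ and, via Assumption~(A3) applied to $v\mapsto f$, the estimate $|v_{tx\mu}(s)|\le \tfrac{L}{2\lambda_v}(1+|P_{tx\mu}(s)|+|X_{tx\mu}(s)|+W_2(\lr(X_{t\xi}(s)),\delta_0))$; combining the two and using $\tfrac{L^3}{2\lambda_v\lambda_b}<1$ (ensured by \eqref{lem:2_12}) yields exactly the cone inequality \eqref{cone_condition} for $(X_{tx\mu}(s),\lr(X_{t\xi}(s)),P_{tx\mu}(s))$. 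Proposition~\ref{prop:cone} then supplies the unique minimizer $\hv$, the first-order identity $D_vL(s,X_{tx\mu},\lr(X_{t\xi}),v_{tx\mu},P_{tx\mu})=0$ forces $v_{tx\mu}(s)=\hv(s,X_{tx\mu}(s),\lr(X_{t\xi}(s)),P_{tx\mu}(s))$, and substitution into \eqref{intro_3} via \eqref{D_pH&D_xH} produces \eqref{intro_3'}.
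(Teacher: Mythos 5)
Your proposal is correct and follows essentially the same route as the paper's proof in Appendix~\ref{pf:lem:5}: well-posedness via the $\beta$-monotonicity/continuation framework of \cite{AB12}, the duality computation $P^\top X$ with the Schur-complement bound \eqref{lem:5_2} and the control estimate \eqref{lem:5_10} closing the a priori bound, the stability estimate obtained by differencing and invoking \eqref{lem2_2} through the optimal coupling to convert $\|\xi'-\xi\|_2$ into $W_2(\mu,\mu')$, and the cone inequality \eqref{lem:5_11} feeding Proposition~\ref{prop:cone} to yield \eqref{lem5_3} and \eqref{intro_3'}. No gaps to report.
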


The proof of Theorem~\ref{lem:5} is similar to that for Theorem~\ref{lem:2}, and it is put in Appendix~\ref{pf:lem:5}. Similar to \eqref{lem:2_2(zw)}, among the derivations of the proof of Theorem~\ref{lem:5}, we emphasize the following claims: from the third equation of FBSDEs \eqref{intro_3}, Condition \eqref{generic:condition:b'} and Assumption (A2) altogether yield that:
\begin{align}\label{zw:1}
    \left|P_{tx\mu}(s)\right|\le \frac{L^2}{\lambda_b} \big[1+\left|X_{tx\mu}(s)\right|+W_2\left(\lr(X_{t\xi}(s)),\delta_0\right)+\left|v_{tx\mu}(s)\right|\big];
\end{align}
also, the third equation of FBSDEs \eqref{intro_3} and the convexity of $f$ in $v$ proposed in Assumption (A3) can imply
\begin{align}\label{zw:1'}
    |v_{tx\mu}(s)|\le\ & \frac{L}{2\lambda_v}\left[1+|P_{tx\mu}(s)|+|X_{tx\mu}(s)|+ W_2\left(\lr(X_{t\xi}(s)),\delta_0\right)\right].
\end{align}
Then, Condition \eqref{lem:2_12}, the last two inequalities \eqref{zw:1} and \eqref{zw:1'} yield the following cone property $\mathcal{C}_K$ with $K=\left(1-\frac{L^3}{2\lambda_v\lambda_b}\right)^{-1}\frac{L^2}{\lambda_b}\left(1+\frac{L}{2\lambda_v}\right)$:
\begin{align}\label{cone_property_SC}
    |P_{tx\mu}(s)|\le \left(1-\frac{L^3}{2\lambda_v\lambda_b}\right)^{-1}\frac{L^2}{\lambda_b}\left(1+\frac{L}{2\lambda_v}\right)\left[1+|X_{tx\mu}(s)|+W_2\left(\lr(X_{t\xi}(s)),\delta_0\right)\right],
\end{align}
which further deduces \eqref{lem5_3} in view of Proposition~\ref{prop:cone}. And the formulation of FBSDEs \eqref{intro_3'} is a consequence of FBSDEs \eqref{intro_3} and the relations \eqref{D_pH&D_xH}.

\begin{remark}
    We here illustrate more about the relations among the FBSDEs \eqref{FB:mfg_generic}, \eqref{FB:mfg_generic'}, \eqref{intro_3} and \eqref{intro_3'}. The system \eqref{FB:mfg_generic} is equivalent to the system \eqref{FB:mfg_generic'}, since that the cone property $\mathcal{C}_K$ \eqref{lem:2_11} is satisfied, and so relations \eqref{D_pH&D_xH} hold. In a similar manner, the system \eqref{intro_3} is equivalent to the system \eqref{intro_3'}. Moreover, from the uniqueness result of FBSDEs \eqref{FB:mfg_generic}, when we set $x=\xi$ in the system \eqref{intro_3}, then it is equivalent to the system \eqref{FB:mfg_generic}; similar discussion can also be found in \cite[Section 3]{BR}. 
\end{remark}

Up to now, Theorem~\ref{lem:2} warrants the solvability of MFG \eqref{intro_1}, and Theorem~\ref{lem:5} ensures the solvability of Problem \eqref{intro_1'}. Our results can also be readily extended to MFG with a common noise $\{B^0_s,0\le s\le T\}$ in a similar manner. The major difference is that the distribution $m(s)=\lr(X_{t\xi}(s))$ of the equilibrium state in MFG \eqref{intro_1} should be be replaced by the conditional distribution $m(s)=\lr\left(X_{t\xi}(s)|\f^0_s\right)$, where the filtration $\f^0_\cdot$ is generated by the common noise $B^0$; and the modified approach can even allow us to tackle the controlled SDE with the common noise term $\sigma^0(s,m(s))dB^0(s)$. In such a case, the system of FBSDEs \eqref{FB:mfg_generic} becomes a conditional distribution dependent FBSDEs for $(X_{t\xi},P_{t\xi},Q_{t\xi},Q^0_{t\xi},v_{t\xi})$, where $Q^0_{t\xi}(s)$ is the coefficient of the $dB^0(s)$ term of the backward equation. More related discussion can also be found in \cite{AB13} particularly for mean field type control problem with a common noise. 

\begin{remark}
    Our methodology can also be applied to second order mean field type control (MFTC) problems with nonlinear drifts and degenerate diffusions. The system of FBSDEs corresponding to a MFTC problem is different from that to MFG (see \cite{AB11} for instance): since the system also depends on the law of the current state process, by then, there will be some extra terms in the FBSDEs corresponding to a MFTC problem than FBSDEs \eqref{FB:mfg_generic} for MFG \eqref{intro_1}. To handle these extra terms, in Assumption \eqref{generic:condition:b}, we also need the following boundedness condition for the derivative $D_{y'}D_y\frac{d^2 b}{d\nu^2}$:
    \begin{align*}
        \left| D_{y'}D_y\frac{d^2 b}{d\nu^2}(s,x,m,v)(y,y')\right|\le \frac{L_b^0}{1+|x|+|v|+W_2(m,\delta_0)}.
    \end{align*}
    Besides, since the MFTC problem is a control problem (but not with an additional fixed point problem), we shall not require any monotonicity conditions. For the MFTC problem, we can also have the cone property as in Proposition~\ref{prop:cone}, which together with our proposed stochastic control approach can deduce the well-posedness of the associated FBSDEs. We shall study this topic in an alternative future work.
\end{remark}

From now on, we focus on  the regularity of the value function $V$ defined in \eqref{intro_4} and then the solubility of the master equation \eqref{intro_5}. From Theorem~\ref{lem:5}, $V$ defined in \eqref{intro_4} satisfies
\begin{equation}\label{intro_4'}
	\begin{split}
		V(t,x,\mu)=\e\left[\int_t^T f\left(s,X_{tx\mu}(s),\lr(X_{t\xi}(s)),v_{tx\mu}(s)\right)ds+g\left(X_{tx\mu}(T),\lr(X_{t\xi}(T))\right)\right].
	\end{split}	
\end{equation}
In the following sections, we shall calculate the G\^ateaux derivative of $\left(X_{t\xi}(s),P_{t\xi}(s),Q_{t\xi}(s),v_{t\xi}(s)\right)$ in $\xi$, and that of  $\left(X_{tx\mu}(s),P_{tx\mu}(s),Q_{tx\mu}(s),v_{tx\mu}(s)\right)$ in $(x,\mu)$, which will be used to derive  the classical regularity of the value functional $V$.

\section{G\^ateaux derivatives of the processes}\label{sec:distribution}

We here give the first order G\^ateaux derivatives of the solutions of the two systems of FBSDEs \eqref{FB:mfg_generic} and \eqref{intro_3}. 

\subsection{Jacobian flows of FBSDEs \eqref{FB:mfg_generic}}

We here give the G\^ateaux differentiability of $\left(X_{t\xi}(s),P_{t\xi}(s),Q_{t\xi}(s),v_{t\xi}(s)\right)$ in the initial $\xi\in L_{\f_t}^2$. For the sake of convenience, in the rest of this article, we denote by $\theta:=(x,m,v)\in \brn\times\pr_2(\brn)\times\brd$ and $\Theta:=(x,p,q,v)\in \brn\times\brn\times\brn\times\brd$. We denote by $\theta_{t\xi}(s)$ the process $(X_{t\xi}(s),\lr(X_{t\xi}(s)),v_{t\xi}(s))$ and $\Theta_{t\xi}(s)$ the process $(X_{t\xi}(s),P_{t\xi}(s),Q_{t\xi}(s),v_{t\xi}(s))$. In view of Assumption (A2) and (A3), for any $x,y\in\brn$, $v,u\in\brd$ and $(s,m)\in[0,T]\times\pr_2(\brn)$,
\begin{equation}\label{convex'}
	\begin{split}
		\left[\left(
		\begin{array}{cc}
			D_v^2 f & D_vD_x f\\
			D_xD_v f & D_x^2 f
		\end{array}
		\right)(s,x,m,v) \right] \left(
		\begin{array}{cc}
			u\\
			y
		\end{array}
		\right)^{\otimes 2} & \geq  2\lambda_v |u|^2+2\lambda_x |y|^2,\\
		\text{and}\qquad  D_x^2 g(x,m)y^{\otimes 2} &\geq 2\lambda_g |y|^2.
	\end{split}
\end{equation}
Given the process $\Theta_{t\xi}$, for any $\eta\in L_{\f_t}^2$, consider the following FBSDEs for the indetermined process $\left(\mathcal{D}_\eta X_{t\xi},\mathcal{D}_\eta P_{t\xi},\mathcal{D}_\eta Q_{t\xi},\mathcal{D}_\eta v_{t\xi}\right)\in \left(L^2(t,T;L^2(\Omega,\f,\mathbb{P};\brn))\right)^4$:
\begin{align}
		\mathcal{D}_\eta X_{t\xi}(s)=\ & \eta+\int_t^s \bigg\{D_x b(r, \theta_{t\xi}(r))\mathcal{D}_\eta X_{t\xi}(r) +D_v b(r, \theta_{t\xi}(r))\mathcal{D}_\eta v_{t\xi}(r) \notag \\
        &\qquad\qquad +\widetilde{\e}\bigg[ D_y \frac{db}{d\nu} (r,\theta_{t\xi}(r))\left(\widetilde{X_{t\xi}}(r)\right) \widetilde{\mathcal{D}_\eta X_{t\xi}}(r)\bigg] \bigg\}dr \notag \\
		&+\int_t^s \widetilde{\e}\bigg[ D_y \frac{d\sigma_0}{d\nu} (r,\lr(X_{t\xi}(r)))\left(\widetilde{X_{t\xi}}(r)\right) \widetilde{\mathcal{D}_\eta X_{t\xi}}(r)\bigg]+  \sigma_1(r)\mathcal{D}_\eta X_{t\xi}(r) dB(r), \notag \\
		\mathcal{D}_\eta P_{t\xi}(s)=\ & D_x^2 g (X_{t\xi}(T),\lr(X_{t\xi}(T)))^\top  \mathcal{D}_\eta X_{t\xi}(T) \notag \\
        &+\widetilde{\e}\left[\left(D_y \frac{d}{d\nu}D_x g (X_{t\xi}(T),\lr(X_{t\xi}(T)))\left(\widetilde{X_{t\xi}}(T)\right) \right)^\top  \widetilde{\mathcal{D}_\eta X_{t\xi}}(T)\right] \notag \\
		&+\int_s^T\Bigg\{ D_xb(r,\theta_{t\xi}(r))^\top \mathcal{D}_\eta P_{t\xi}(r)+\sum_{j=1}^n\left(\sigma_1^j (r)\right)^\top \mathcal{D}_\eta Q_{t\xi}^j(r) \notag \\
        &\quad\qquad + \left(P_{t\xi}(r)\right)^\top \bigg\{D_x^2 b(r,\theta_{t\xi}(r)) \mathcal{D}_\eta X_{t\xi}(r)+\widetilde{\e}\left[ D_y\frac{d}{d\nu}D_x b(r,\theta_{t\xi}(r))\left(\widetilde{X_{t\xi}}(r)\right) \widetilde{\mathcal{D}_\eta X_{t\xi}}(r)\right] \notag \\
        &\quad\qquad\qquad\qquad\qquad + D_vD_x b(r,\theta_{t\xi}(r)) \mathcal{D}_\eta v_{t\xi}(r)\bigg\}   \notag\\
        &\quad\qquad +D_x^2 f  (r,\theta_{t\xi}(r))^\top \mathcal{D}_\eta X_{t\xi}(r) +\widetilde{\e}\left[\left(D_y \frac{d}{d\nu}D_x f  (r,\theta_{t\xi}(r))\left(\widetilde{X_{t\xi}}(r)\right)\right)^\top \widetilde{\mathcal{D}_\eta X_{t\xi}}(r)\right] \notag \\
        &\quad\qquad + D_vD_x f (r,\theta_{t\xi}(r))^\top  \mathcal{D}_\eta v_{t\xi}(r)\Bigg\}dr -\int_s^T\mathcal{D}_\eta Q_{t\xi}(r)dB(r), \quad s\in[t,T], \label{FB:dr}
\end{align}
with the following condition
\begin{align}
    0=\ & D_x D_v f  (s,\theta_{t\xi}(s))^\top \mathcal{D}_\eta X_{t\xi}(s) +\widetilde{\e}\left[\left(D_y \frac{d}{d\nu}D_v f  (s,\theta_{t\xi}(s))\left(\widetilde{X_{t\xi}}(s)\right)\right)^\top \widetilde{\mathcal{D}_\eta X_{t\xi}}(s)\right] \notag \\
    &+D_v^2 f (s,\theta_{t\xi}(s))^\top  \mathcal{D}_\eta v_{t\xi}(s)+D_v b(s,\theta_{t\xi}(s))^\top \mathcal{D}_\eta P_{t\xi}(s) \notag\\
    &+ \left(P_{t\xi}(s)\right)^\top \bigg\{D_x D_v b(s,\theta_{t\xi}(s)) \mathcal{D}_\eta X_{t\xi}(s)+ D_v^2 b(s,\theta_{t\xi}(s)) \mathcal{D}_\eta v_{t\xi}(s) \notag \\
    &\qquad\qquad\qquad +\widetilde{\e}\left[ D_y\frac{d}{d\nu}D_v b(s,\theta_{t\xi}(s))\left(\widetilde{X_{t\xi}}(s) \right) \widetilde{\mathcal{D}_\eta X_{t\xi}}(s)\right]\bigg\}, \label{FB:dr_condition}
\end{align}
where $\left(\widetilde{X_{t\xi}}(s),\widetilde{\mathcal{D}_\eta X_{t\xi}}(s)\right)$ is an independent copy of $\left(X_{t\xi}(s), \mathcal{D}_\eta X_{t\xi}(s)\right)$. We next give the well-posedness of FBSDEs \eqref{FB:dr}-\eqref{FB:dr_condition}, and also the regularity of the solution.

\begin{lemma}\label{lem:3}
	Under Assumptions (A1)-(A3) and the validity of \eqref{lem:2_0}, there is a unique adapted solution $\left(\mathcal{D}_\eta X_{t\xi},\mathcal{D}_\eta P_{t\xi},\mathcal{D}_\eta Q_{t\xi},\mathcal{D}_\eta v_{t\xi}\right)$ of FBSDEs \eqref{FB:dr}-\eqref{FB:dr_condition}, such that
	\begin{align}\label{lem3_1}
		\e\left[\sup_{t\le s\le T}\left|\left(\mathcal{D}_\eta X_{t\xi}(s),\mathcal{D}_\eta P_{t\xi}(s),\mathcal{D}_\eta v_{t\xi}(s)\right)^\top\right|^2+ \int_t^T\left|\mathcal{D}_\eta Q_{t\xi}(s)\right|^2 ds\right]\le C\|\eta\|_2^2.
	\end{align}
\end{lemma}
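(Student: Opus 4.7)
The plan is to mimic the proof of Theorem~\ref{lem:2}, since FBSDEs \eqref{FB:dr}-\eqref{FB:dr_condition} is the formal Jacobian flow of \eqref{FB:mfg_generic} and is linear (affine) in the unknowns with coefficients depending only on the already solved process $\Theta_{t\xi}$ from Theorem~\ref{lem:2}. The strategy is to (i) eliminate $\mathcal{D}_\eta v_{t\xi}$ via the pointwise relation \eqref{FB:dr_condition}, (ii) establish an a priori $L^2$-estimate by applying It\^o's formula to $(\mathcal{D}_\eta P_{t\xi})^\top \mathcal{D}_\eta X_{t\xi}$, and (iii) invoke the continuation method in coefficients under $\beta$-monotonicity from \cite{AB10,AB11,AB12} to conclude existence and uniqueness.

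For step (i), the coefficient matrix multiplying $\mathcal{D}_\eta v_{t\xi}(s)$ in \eqref{FB:dr_condition} is exactly the Hessian $D_v^2 L(s,\theta_{t\xi}(s),P_{t\xi}(s))=D_v^2 f + P_{t\xi}^\top D_v^2 b$ evaluated along the solution; by the cone property \eqref{lem:2_11} established in Proposition~\ref{prop:cone}, together with an argument parallel to that of Proposition~\ref{prop:cone} (which is implied by Property (S)), this matrix is uniformly positive definite. Hence $\mathcal{D}_\eta v_{t\xi}(s)$ can be expressed as a bounded affine function of $\mathcal{D}_\eta X_{t\xi}(s)$, $\mathcal{D}_\eta P_{t\xi}(s)$, and the corresponding independent-copy expectation $\widetilde{\e}[\cdots]$, and substituting this expression back reduces \eqref{FB:dr}-\eqref{FB:dr_condition} to a closed linear FBSDE system.

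For step (ii), after substituting the drivers from \eqref{FB:dr} and using \eqref{FB:dr_condition} to convert all $\mathcal{D}_\eta v_{t\xi}$-terms into derivatives of $b$ and $f$, the bulk of the It\^o expansion assembles into a quadratic form in $(\mathcal{D}_\eta X_{t\xi},\mathcal{D}_\eta v_{t\xi})$ governed by the Hessians of $f$ and $g$; the convexity inequality \eqref{convex'} then yields the dividend $2\lambda_x\|\mathcal{D}_\eta X_{t\xi}\|_2^2 + 2\lambda_v\|\mathcal{D}_\eta v_{t\xi}\|_2^2 + 2\lambda_g\|\mathcal{D}_\eta X_{t\xi}(T)\|_2^2$. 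The remaining contributions are of three types, treated exactly as in \eqref{lem:2_3}-\eqref{sigma_m_3} of Theorem~\ref{lem:2}: the Hessian-of-$b$ terms acting on $P_{t\xi}$, for which the cone property cancels the denominators in \eqref{generic:condition:b} and produces uniform constants of order $L^2 L_b^0/\lambda_b$, $L^2 L_b^1/\lambda_b$ and $L^2 L_b^2/\lambda_b$; the small mean field effect $\widetilde{\e}[\cdots]$ cross terms controlled by $L_f^0,L_f^1,L_g,l_b^m$; and the diffusion-related terms controlled by $l_\sigma^m$, which need an auxiliary It\^o expansion of $|\mathcal{D}_\eta P_{t\xi}|^2$ to bound $\mathcal{D}_\eta Q_{t\xi}$ in the spirit of \eqref{sigma_m_2}. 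Property (S) is then precisely the sharp threshold needed for Young's inequality to absorb these remainders into the convexity dividend, yielding $\int_t^T\|\mathcal{D}_\eta v_{t\xi}\|_2^2\,ds\le C\|\eta\|_2^2$; standard SDE estimates together with BSDE estimates in the spirit of \cite{YH2,SP} and the BDG inequality then upgrade this to \eqref{lem3_1}.

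Given the a priori estimate, existence and uniqueness in step (iii) follow from the continuation method in the coefficients of Hu-Peng \cite{YH2} and Peng-Wu \cite{SP}, adapted to the $\beta$-monotonicity setting of \cite{AB11,AB12}: the estimate derived above is precisely the $\beta$-monotonicity bound required by the continuation argument, applied now to the linearized system whose coefficients are frozen at $\Theta_{t\xi}$. I expect the main delicate point to be the bookkeeping of the $\widetilde{\e}[\cdots]$ cross terms from distribution dependence, which have no counterpart in a classical linear FBSDE and which must be reabsorbed using exactly the same small-mean-field-effect budget that was already consumed in Theorem~\ref{lem:2}; the linearization inherits the monotonicity structure of the original nonlinear system, so no assumptions beyond those of Theorem~\ref{lem:2} are required.
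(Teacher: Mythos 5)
Your proposal follows essentially the same route as the paper's proof: It\^o's formula on $(\mathcal{D}_\eta P_{t\xi})^\top\mathcal{D}_\eta X_{t\xi}$, substitution of the first-order condition \eqref{FB:dr_condition}, the convexity dividend from \eqref{convex'}, the small-mean-field budget, an auxiliary It\^o expansion of $|\mathcal{D}_\eta P_{t\xi}|^2$ to control $\mathcal{D}_\eta Q_{t\xi}$, absorption via Property (S) and Young's inequality, a Gr\"onwall/BSDE/BDG bootstrap, and well-posedness by the continuation method under $\beta$-monotonicity. The only small imprecision is your appeal to the cone property \eqref{lem:2_11} of Proposition~\ref{prop:cone}, which formally requires \eqref{lem:2_12}; the estimate actually needed (and used in the paper) is \eqref{lem:2_2}, i.e.\ $|P_{t\xi}(s)|\le \frac{L^2}{\lambda_b}[1+|X_{t\xi}(s)|+W_2(\lr(X_{t\xi}(s)),\delta_0)+|v_{t\xi}(s)|]$, whose numerator cancels against the denominators in \eqref{generic:condition:b} and holds under (A1)--(A3) alone.
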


\begin{proof}
Note the fact that $\Theta_{t\xi}$ is already known process, and \eqref{FB:dr}-\eqref{FB:dr_condition} is a system of FBSDEs with official solution $\left(\mathcal{D}_\eta X_{t\xi},\mathcal{D}_\eta P_{t\xi},\mathcal{D}_\eta Q_{t\xi},\mathcal{D}_\eta v_{t\xi}\right)$. Therefore, the well-posedness of FBSDEs \eqref{FB:dr} is given by \cite[Lemma 2.1]{AB12} (about FBSDEs in Hilbert spaces under $\beta$-monotonicity), where the corresponding $\beta$-monotonicity is fulfilled with the conditions \eqref{convex'} and \eqref{FB:dr_condition}. The present proof is similar to that for \cite[Theorem 5.2]{AB12}, and we also refer to \cite[Lemma 3.1]{AB11} with a similar proof. Here, we only give a proof of \eqref{lem3_1}. In this proof, for notational convenience, we use the notations $(\mathcal{X},\mathcal{P},\mathcal{Q},\mathcal{V})$ to denote the process $\left(\mathcal{D}_\eta X_{t\xi},\mathcal{D}_\eta P_{t\xi},\mathcal{D}_\eta Q_{t\xi},\mathcal{D}_\eta v_{t\xi}\right)$. By applying It\^o's formula to $\mathcal{P}(s)^\top \mathcal{X}(s)$, we have
\begin{align*}
    &\e\Bigg\{D_x^2 g (X_{t\xi}(T),\lr(X_{t\xi}(T))) \left(  \mathcal{X}(T)\right)^{\otimes 2} \\
    &\quad +\widetilde{\e}\left[\left(\widetilde{\mathcal{X}}(T)\right)^\top\left(D_y \frac{d}{d\nu}D_x g (X_{t\xi}(T),\lr(X_{t\xi}(T)))\left(\widetilde{X_{t\xi}}(T)\right) \right) \right]\mathcal{X}(T)-\mathcal{P}(t)^\top \eta \Bigg\}\\
    =\ & \e \int_t^T \Bigg\{ \mathcal{P}(s)^\top \widetilde{\e}\bigg[ D_y \frac{db}{d\nu} (s,\theta_{t\xi}(s))\left(\widetilde{X_{t\xi}}(s)\right) \widetilde{\mathcal{X}}(s)\bigg]+ \mathcal{P}(s)^\top D_v b(s, \theta_{t\xi}(s))\mathcal{V}(s)\\
    &\ \quad\qquad -\sum_{j=1}^n \mathcal{Q}^j(s)^\top  \widetilde{\e}\bigg[ D_y \frac{d\sigma^j}{d\nu} (s,\lr(X_{t\xi}(s)))\left(\widetilde{X_{t\xi}}(s)\right) \widetilde{\mathcal{X}}(r)\bigg] \\
    &\ \quad\qquad -P_{t\xi}(s)^\top \bigg[D_x^2 b(s,\theta_{t\xi}(s)) \mathcal{X}(s)+\widetilde{\e}\left[ D_y\frac{d}{d\nu}D_x b(s,\theta_{t\xi}(s))\left(\widetilde{X_{t\xi}}(s) \right)\widetilde{\mathcal{X}}(s)\right]\\
    &\ \qquad\qquad\qquad\qquad + D_vD_x b(s,\theta_{t\xi}(s)) \mathcal{V}(s)\bigg]  \mathcal{X}(s) \\
    &\ \quad\qquad - \mathcal{X}(s)^\top D_x^2 f  (s,\theta_{t\xi}(s))^\top \mathcal{X}(s) -\mathcal{X}(s)^\top \widetilde{\e}\left[\left(D_y \frac{d}{d\nu}D_x f (s,\theta_{t\xi}(s))\left(\widetilde{X_{t\xi}}(s)\right)\right)^\top \widetilde{\mathcal{X}}(s)\right]  \\
    &\ \quad\qquad -\mathcal{X}(s)^\top D_vD_x f (s,\theta_{t\xi}(s))^\top  \mathcal{V}(s)\Bigg\}ds.
\end{align*}
Substituting \eqref{FB:dr_condition} into this last equality, we have
\begin{align}
    &\e\Bigg\{D_x^2 g (X_{t\xi}(T),\lr(X_{t\xi}(T))) \left(  \mathcal{X}(T)\right)^{\otimes 2} \notag \\
    &\quad +\widetilde{\e}\left[\left(\widetilde{\mathcal{X}}(T)\right)^\top\left(D_y \frac{d}{d\nu}D_x g (X_{t\xi}(T),\lr(X_{t\xi}(T)))\left(\widetilde{X_{t\xi}}(T)\right) \right) \right]\mathcal{X}(T)-\mathcal{P}(t)^\top \eta \Bigg\}\notag \\
    =\ & \e \int_t^T \Bigg\{ \mathcal{P}(s)^\top \widetilde{\e}\bigg[ D_y \frac{db}{d\nu} (s,\theta_{t\xi}(s))\left(\widetilde{X_{t\xi}}(s)\right) \widetilde{\mathcal{X}}(s)\bigg]\notag \\
    &\quad\quad\qquad -\sum_{j=1}^n \mathcal{Q}^j(s)^\top  \widetilde{\e}\bigg[ D_y \frac{d\sigma^j}{d\nu} (s,\lr(X_{t\xi}(s)))\left(\widetilde{X_{t\xi}}(s)\right) \widetilde{\mathcal{X}}(s)\bigg] \notag \\
    &\quad\quad\qquad - P_{t\xi}(s)^\top \bigg\{ \left[\begin{pmatrix}
        D_v^2 b & D_vD_xb\\ D_xD_v b & D_x^2b
    \end{pmatrix}(s,\theta_{t\xi}(s))\right]\begin{pmatrix}
        \mathcal{V}(s)\\ \mathcal{X}(s)
    \end{pmatrix}^{\otimes 2} \notag \\
    &\quad\qquad\qquad\qquad\qquad +\widetilde{\e}\left[ D_y\frac{d}{d\nu}D_v b(s,\theta_{t\xi}(s))\left(\widetilde{X_{t\xi}}(s)\right) \widetilde{\mathcal{X}}(s)\right] \mathcal{V}(s)  \notag \\
    &\quad\qquad\qquad\qquad\qquad +\widetilde{\e}\left[ D_y\frac{d}{d\nu}D_x b(s,\theta_{t\xi}(s))\left(\widetilde{X_{t\xi}}(s)\right) \widetilde{\mathcal{X}}(s)\right]  \mathcal{X}(s) \bigg\} \notag \\
    &\quad\quad\qquad -\left[\begin{pmatrix}
        D_v^2 f & D_vD_xf\\ D_xD_v f & D_x^2f
    \end{pmatrix}(s,\theta_{t\xi}(s))\right]\begin{pmatrix}
        \mathcal{V}(s)\\ \mathcal{X}(s)
    \end{pmatrix}^{\otimes 2} \notag \\
    &\quad\quad\qquad -\mathcal{X}(s)^\top \widetilde{\e}\left[\left(D_y \frac{d}{d\nu}D_x f  (s,\theta_{t\xi}(s))\left(\widetilde{X_{t\xi}}(s)\right)\right)^\top \widetilde{\mathcal{X}}(s)\right] \notag  \\
    &\quad\quad\qquad -\mathcal{V}(s)^\top \widetilde{\e}\left[\left(D_y \frac{d}{d\nu}D_v f  (s,\theta_{t\xi}(s))\left(\widetilde{X_{t\xi}}(s)\right)\right)^\top \widetilde{\mathcal{X}}(s)\right] \Bigg\}ds. \label{lem:3_1}
\end{align}
From Condition \eqref{FB:dr_condition} and Assumption (A1), we know that
\begin{align}
    \mathcal{P}(s)= -&\left[\left((D_v b)(D_v b)^\top\right)^{-1}(D_vb) (s,\theta_{t\xi}(s))\right]  \notag \\
    &\cdot \Bigg\{ D_x D_v f (s,\theta_{t\xi}(s))^\top \mathcal{X}(s)+\widetilde{\e}\left[\left(D_y \frac{d}{d\nu}D_v f  (s,\theta_{t\xi}(s))\left(\widetilde{X_{t\xi}}(s)\right)\right)^\top \widetilde{\mathcal{X}}(s)\right] \notag \\
    &\qquad +D_v^2 f (s,\theta_{t\xi}(s))^\top  \mathcal{V}(s) \notag\\
    &\qquad + \left(P_{t\xi}(s)\right)^\top \bigg\{D_x D_v b(s,\theta_{t\xi}(s)) \mathcal{X}(s)+\widetilde{\e}\left[ D_y\frac{d}{d\nu}D_v b(s,\theta_{t\xi}(s))\left(\widetilde{X_{t\xi}}(s) \right) \widetilde{\mathcal{X}}(s)\right] \notag \\
    &\qquad\qquad\qquad\qquad + D_v^2 b(s,\theta_{t\xi}(s)) \mathcal{V}(s)\bigg\} \Bigg\}, \label{lem:3_2'}
\end{align}
and then, from the assumptions (A2), \eqref{generic:condition:b}, \eqref{generic:condition:b'} and the estimate \eqref{lem:2_2}, we have
\begin{align}\label{lem:3_2}
    |\mathcal{P}(s)|\le &\frac{L^2}{\lambda_b} \left(1+\frac{L^2}{\lambda_b}\right) \left(|\mathcal{X}(s)|+ \left\|\widetilde{\mathcal{X}}(s)\right\|_2 +|\mathcal{V}(s)|\right).
\end{align}
Then, from \eqref{lem:3_2} and Assumption (A1), we know that
\begin{align}
    \left|\mathcal{P}(s)^\top \widetilde{\e}\bigg[ D_y \frac{db}{d\nu} (s,\theta_{t\xi}(s))\left(\widetilde{X_{t\xi}}(s)\right) \widetilde{\mathcal{X}}(s)\bigg]\right| \le\ & \frac{L^2l_b^m}{\lambda_b} \left(1+\frac{L^2}{\lambda_b}\right) \left(|\mathcal{X}(s)|+ \left\|\widetilde{\mathcal{X}}(s)\right\|_2 +|\mathcal{V}(s)|\right)\left\|\widetilde{\mathcal{X}}(s)\right\|_2. \label{lem:3_3}
\end{align}
From the estimate \eqref{lem:2_2} and Assumption \eqref{generic:condition:b}, we know that 
\begin{align}
    &\Bigg| P_{t\xi}(s)^\top \bigg\{ \left[\begin{pmatrix}
        D_v^2 b & D_vD_xb\\ D_xD_v b & D_x^2b
    \end{pmatrix}(s,\theta_{t\xi}(s))\right]\begin{pmatrix}
        \mathcal{V}(s)\\ \mathcal{X}(s)
    \end{pmatrix}^{\otimes 2} \notag \\
    &\ \qquad\qquad +\widetilde{\e}\left[ D_y\frac{d}{d\nu}D_v b(s,\theta_{t\xi}(s))\left(\widetilde{X_{t\xi}}(s)\right) \widetilde{\mathcal{X}}(s)\right] \mathcal{V}(s)  \notag \\
    &\ \qquad\qquad +\widetilde{\e}\left[ D_y\frac{d}{d\nu}D_x b(s,\theta_{t\xi}(s))\widetilde{X_{t\xi}}(s) \widetilde{\mathcal{X}}(s)\right]  \mathcal{X}(s) \bigg\}\Bigg| \notag \\
    \le\ & \frac{L^2}{\lambda_b} \left[L_b^0 |\mathcal{X}(s)|\left(|\mathcal{X}(s)|+ \left\|\widetilde{\mathcal{X}}(s)\right\|_2\right)+L_b^1 |\mathcal{V}(s)| \left(2|\mathcal{X}(s)|+ \left\|\widetilde{\mathcal{X}}(s)\right\|_2\right) +L_b^2 |\mathcal{V}(s)|^2\right]. \label{lem:3_4}
\end{align}
From the convexity condition \eqref{convex'}, we further have
\begin{equation}\label{lem:3_5}
\begin{aligned}
    \left[\begin{pmatrix}
        D_v^2 f & D_vD_xf\\ D_xD_v f & D_x^2f
    \end{pmatrix}(s,\theta_{t\xi}(s))\right]\begin{pmatrix}
        \mathcal{V}(s)\\ \mathcal{X}(s)
    \end{pmatrix}^{\otimes 2}\geq \ & 2\lambda_v |\mathcal{V}(s)|^2+2\lambda_x |\mathcal{X}(s)|^2;\\
    \text{and}\qquad D_x^2 g (X_{t\xi}(T),\lr(X_{t\xi}(T))) \left(  \mathcal{X}(T)\right)^{\otimes 2} \geq\ & 2\lambda_g |\mathcal{X}(T)|^2.
\end{aligned}
\end{equation}
From Condition \eqref{small_mf_condition}, we know that 
\begin{equation}\label{lem:3_6}
\begin{aligned}
    \left|\mathcal{X}(s)^\top \widetilde{\e}\left[\left(D_y \frac{d}{d\nu}D_x f  (s,\theta_{t\xi}(s))\left(\widetilde{X_{t\xi}}(s)\right)\right)^\top \widetilde{\mathcal{X}}(s)\right]\right| \le\ & L_f^0\  |\mathcal{X}(s)|\cdot \left\|\widetilde{\mathcal{X}}(s)\right\|_2; \\
    \left|\mathcal{V}(s)^\top \widetilde{\e}\left[\left(D_y \frac{d}{d\nu}D_v f  (s,\theta_{t\xi}(s))\left(\widetilde{X_{t\xi}}(s)\right)\right)^\top \widetilde{\mathcal{X}}(s)\right] \right| \le\ & L_f^1 \  |\mathcal{V}(s)|\cdot \left\|\widetilde{\mathcal{X}}(s)\right\|_2;\\
    \left| \widetilde{\e}\left[\left(\widetilde{\mathcal{X}}(T)\right)^\top\left(D_y \frac{d}{d\nu}D_x g (X_{t\xi}(T),\lr(X_{t\xi}(T)))\left(\widetilde{X_{t\xi}}(T)\right) \right) \right]\mathcal{X}(T)\right| \le\ & L_g \  |\mathcal{X}(T)|\cdot \left\|\widetilde{\mathcal{X}}(T)\right\|_2.
\end{aligned}
\end{equation}
From Assumption (A1) and Cauchy's inequality, we also know that
\begin{align}
    &\Bigg|\e \int_t^T \sum_{j=1}^n \mathcal{Q}^j(s)^\top  \widetilde{\e}\bigg[ D_y \frac{d\sigma_0^j}{d\nu} (s,\lr(X_{t\xi}(s)))\left(\widetilde{X_{t\xi}}(s)\right) \widetilde{\mathcal{X}}(s)\bigg] ds \Bigg| \notag \\
    \le\ & l_{\sigma}^m \int_t^T \left\|\widetilde{X}(s)\right\|_2 \sum_{j=1}^n \left\|Q^j(s) \right\|_2 ds \notag \\
    \le\ & \sqrt{n}\ l_{\sigma}^m \left(\int_t^T \left\|{X}(s)\right\|_2^2 ds \right)^{\frac{1}{2}}\left(\int_t^T \| Q(s)\|_2^2 ds\right)^{\frac{1}{2}}. \label{sigma_m_5}
\end{align}
By applying It\^o's formula to $|\mathcal{P}(s)|^2$, we know that
\small
\begin{align*}
    &\e\!\!\left[\left|D_x^2 g (X_{t\xi}(T),\lr(X_{t\xi}(T)))^\top  \mathcal{X}(T) +\widetilde{\e}\left[\!\!\left(D_y \frac{d}{d\nu}D_x g (X_{t\xi}(T),\lr(X_{t\xi}(T)))\left(\widetilde{X_{t\xi}}(T)\right) \right)^\top  \widetilde{\mathcal{X}}(T)\right]\right|^2-|\mathcal{P}(t)|^2\right]\\
    =\ & \e\int_t^T \Bigg\{\sum_{j=1}^n \left|\mathcal{Q}^j(s)\right|^2-2\sum_{j=1}^n\mathcal{P}(s)^\top \left(\sigma_1^j (s)\right)^\top \mathcal{Q}^j(s)-2\mathcal{P}(s)^\top D_xb(s,\theta_{t\xi}(s))^\top \mathcal{P}(s)  \\
    &-2 \left(P_{t\xi}(s)\right)^\top\left[D_x^2 b(s,\theta_{t\xi}(s)) \mathcal{X}(s)+\widetilde{\e}\left[ D_y\frac{d}{d\nu}D_x b(s,\theta_{t\xi}(s))\left(\widetilde{X_{t\xi}}(s)\right) \widetilde{\mathcal{X}}(s)\right]+ D_vD_x b(s,\theta_{t\xi}(s)) \mathcal{V}(s)\right]\mathcal{P}(s) \notag\\
    &\quad\qquad -2 \mathcal{P}(s)^\top D_x^2 f  (s,\theta_{t\xi}(s))^\top \mathcal{X}(s)-2\mathcal{P}(s)^\top \widetilde{\e}\left[\left(D_y \frac{d}{d\nu}D_x f  (s,\theta_{t\xi}(s))\left(\widetilde{X_{t\xi}}(s)\right)\right)^\top \widetilde{\mathcal{X}}(s)\right] \notag \\
    &\quad\qquad -2 \mathcal{P}(s)^\top D_vD_x f (s,\theta_{t\xi}(s))^\top  \mathcal{V}(s)\Bigg\}ds,
\end{align*}
\normalsize
then, from the estimate \eqref{lem:2_2} and Assumptions (A1) and (A2), by using Cauchy's inequality, we can deduce that
\begin{align*}
    &\e\int_t^T \sum_{j=1}^n \left|\mathcal{Q}^j(s)\right|^2ds \\
    \le\ & \int_t^T \bigg[2L\sum_{j=1}^n\|\mathcal{P}(s)\|_2\cdot \left\|\mathcal{Q}^j(s)\right\|_2+2L\|\mathcal{P}(s)\|_2^2 +\frac{2 L^2}{\lambda_b} \left(2L_b^0 \|\mathcal{X}(s)\|_2+ L_b^1 \| \mathcal{V}(s)\|_2 \right)\cdot \|\mathcal{P}(s)\|_2 \notag\\
    &\quad\qquad +4L \|\mathcal{P}(s)\|_2 \cdot \|\mathcal{X}(s)\|_2 +2L \|\mathcal{P}(s)\|_2\cdot \|\mathcal{V}(s)\|_2\bigg]ds+4L^2\|\mathcal{X}(T)\|_2^2\\
    \le\ & \frac{1}{2}\ \e\int_t^T \sum_{j=1}^n \left|\mathcal{Q}^j(s)\right|^2ds +4L^2\|\mathcal{X}(T)\|_2^2 \\
    &+ \int_t^T \left[2L(1+nL) \|\mathcal{P}(s)\|_2^2+2L\left(1+\frac{L^2}{\lambda_b}\right) \left(2 \|\mathcal{X}(s)\|_2+  \| \mathcal{V}(s)\|_2 \right)\cdot \|\mathcal{P}(s)\|_2\right]ds.
\end{align*}
Then, from the estimate \eqref{lem:3_2}, we have
\begin{align*}
    \int_t^T\left\|\mathcal{Q}(s)\right\|_2^2ds \le\ &  \frac{4L^3}{\lambda_b}\left(1+\frac{L^2}{\lambda_b}\right)\left(1+\frac{L^2(1+nL)}{\lambda_b}\left(1+\frac{L^2}{\lambda_b}\right)\right)\int_t^T   \left(2 \|\mathcal{X}(s)\|_2+ \| \mathcal{V}(s)\|_2 \right)^2 ds\notag\\
    &+8L^2\|\mathcal{X}(T)\|_2^2. 
\end{align*}
Substituting the last estimate into \eqref{sigma_m_5}, we have
\small
\begin{align}
    &\Bigg|\e \int_t^T \sum_{j=1}^n \mathcal{Q}^j(s)^\top  \widetilde{\e}\bigg[ D_y \frac{d\sigma^j}{d\nu} (s,\lr(X_{t\xi}(s)))\left(\widetilde{X_{t\xi}}(s)\right) \widetilde{\mathcal{X}}(s)\bigg] ds \Bigg| \notag \\
    \le\ & 2\sqrt{2n}\ L \ l_{\sigma}^m \left(\int_t^T \left\|{X}(s)\right\|_2^2 ds \right)^{\frac{1}{2}}\cdot \|\mathcal{X}(T)\|_2 \notag \\
    &+2L\ l_{\sigma}^m\sqrt{\frac{6nL}{\lambda_b}\left(1+\frac{L^2}{\lambda_b}\right)\left(1+\frac{L^2(1+nL)}{\lambda_b}\left(1+\frac{L^2}{\lambda_b}\right)\right)} \int_t^T \left\|{X}(s)\right\|_2^2 ds  \notag\\
    &+2L\ l_{\sigma}^m\sqrt{\frac{3nL}{\lambda_b}\left(1+\frac{L^2}{\lambda_b}\right)\left(1+\frac{L^2(1+nL)}{\lambda_b}\left(1+\frac{L^2}{\lambda_b}\right)\right)} \left(\int_t^T \left\|{X}(s)\right\|_2^2 ds \right)^{\frac{1}{2}}\left(\int_t^T \left\|{V}(s)\right\|_2^2 ds \right)^{\frac{1}{2}}. \label{sigma_m_6}
\end{align}
\normalsize
Substituting \eqref{lem:3_3}-\eqref{lem:3_6} and \eqref{sigma_m_6} back into \eqref{lem:3_1}, from Cauchy's inequality, we have
\small
\begin{align*}
    &\e\left[2\lambda_g |\mathcal{X}(T)|^2 + \int_t^T\left(2\lambda_v |\mathcal{V}(s)|^2+2\lambda_x |\mathcal{X}(s)|^2\right) ds \right]\notag \\
    \le\ & \e\left[ L_g \  |\mathcal{X}(T)|\cdot \left\|\widetilde{\mathcal{X}}(T)\right\|_2+ |\mathcal{P}(t)|\cdot |\eta| \right] +2\sqrt{2n}Ll_{\sigma}^m \left(\int_t^T \left\|{X}(s)\right\|_2^2 ds \right)^{\frac{1}{2}}\cdot \|\mathcal{X}(T)\|_2 \notag \\
    &+\e \int_t^T \Bigg\{ \frac{L^2l_b^m}{\lambda_b} \left(1+\frac{L^2}{\lambda_b}\right) \left(|\mathcal{X}(s)|+ \left\|\widetilde{\mathcal{X}}(s)\right\|_2 +|\mathcal{V}(s)|\right)\left\|\widetilde{\mathcal{X}}(s)\right\|_2 \notag \\
    &\quad\qquad\qquad +\frac{L^2}{\lambda_b} \left[L_b^0 |\mathcal{X}(s)|\left(|\mathcal{X}(s)|+ \left\|\widetilde{\mathcal{X}}(s)\right\|_2\right)+L_b^1 |\mathcal{V}(s)| \left(2|\mathcal{X}(s)|+ \left\|\widetilde{\mathcal{X}}(s)\right\|_2\right) +L_b^2 |\mathcal{V}(s)|^2\right] \notag \\
    &\quad\qquad\qquad +L_f^0\  |\mathcal{X}(s)|\cdot \left\|\widetilde{\mathcal{X}}(s)\right\|_2+L_f^1 \  |\mathcal{V}(s)|\cdot \left\|\widetilde{\mathcal{X}}(s)\right\|_2  \Bigg\}ds\\
    &\quad+ 2Ll_{\sigma}^m\sqrt{\frac{6nL}{\lambda_b}\left(1+\frac{L^2}{\lambda_b}\right)\left(1+\frac{L^2(1+nL)}{\lambda_b}\left(1+\frac{L^2}{\lambda_b}\right)\right)} \int_t^T \left\|{X}(s)\right\|_2^2 ds \\
    &+ 2Ll_{\sigma}^m\sqrt{\frac{3nL}{\lambda_b}\left(1+\frac{L^2}{\lambda_b}\right)\left(1+\frac{L^2(1+nL)}{\lambda_b}\left(1+\frac{L^2}{\lambda_b}\right)\right)} \left(\int_t^T \left\|{X}(s)\right\|_2^2 ds \right)^{\frac{1}{2}}\left(\int_t^T \left\|{V}(s)\right\|_2^2 ds \right)^{\frac{1}{2}} \\
    \le\ &  \|\mathcal{P}(t)\|_2\cdot \|\eta\|_2+L_g \ \left\|\mathcal{X}(T)\right\|_2^2 +2\sqrt{2n}Ll_{\sigma}^m \left(\int_t^T \left\|{X}(s)\right\|_2^2 ds \right)^{\frac{1}{2}}\cdot \|\mathcal{X}(T)\|_2 + \frac{L^2L_b^2}{\lambda_b}\int_t^T \|\mathcal{V}(s)\|_2^2 ds\\
    &+ \left\{L_f^1+ \frac{3L^2L_b^1}{\lambda_b}+\frac{L^2l_b^m}{\lambda_b} \left(1+\frac{L^2}{\lambda_b}\right) +2Ll_{\sigma}^m\sqrt{\frac{3nL}{\lambda_b}\left(1+\frac{L^2}{\lambda_b}\right)\left(1+\frac{L^2(1+nL)}{\lambda_b}\left(1+\frac{L^2}{\lambda_b}\right)\right)} \right\} \\
    &\qquad \cdot \left(\int_t^T \|\mathcal{V}(s)\|^2_2 ds\right)^{\frac{1}{2}} \left(\int_t^T \left\|\mathcal{X}(s)\right\|^2_2 ds\right)^{\frac{1}{2}} \\
    &+ \left\{L_f^0+ \frac{2L^2L_b^0}{\lambda_b} +\frac{2L^2l_b^m}{\lambda_b} \left(1+\frac{L^2}{\lambda_b}\right) +2Ll_{\sigma}^m\sqrt{\frac{6nL}{\lambda_b}\left(1+\frac{L^2}{\lambda_b}\right)\left(1+\frac{L^2(1+nL)}{\lambda_b}\left(1+\frac{L^2}{\lambda_b}\right)\right)} \right\} \\
    &\qquad \cdot \int_t^T \left\|\mathcal{X}(s)\right\|_2^2 ds.
\end{align*}
\normalsize
From Condition \eqref{lem:2_0} and Young's inequality, we further obtain that
\begin{align*}
    &\Bigg|\sqrt{2\lambda_g-L_g} \ \left\|\mathcal{X}(T)\right\|_2 - \frac{\sqrt{2n}Ll_{\sigma}^m }{\sqrt{2\lambda_g-L_g}}\left(\int_t^T \left\|{X}(s)\right\|_2^2\  ds \right)^{\frac{1}{2}} \Bigg|^2\\
    &+ \Bigg| \left(\Lambda_1\int_t^T \left\|{X}(s)\right\|_2^2\  ds \right)^{\frac{1}{2}}- \frac{1}{2}\Lambda_2  \left(\frac{1}{\Lambda_1}\int_t^T \left\|{V}(s)\right\|_2^2 \ ds \right)^{\frac{1}{2}}\Bigg|^2 \\
    &+ \left(2\lambda_v-\frac{L^2L_b^2}{\lambda_b} - \frac{\Lambda_2}{2\sqrt{\Lambda_1}}\right) \int_t^T \|\mathcal{V}(s)\|_2^2\  ds
   \quad  \le \quad  \|\mathcal{P}(t)\|_2\cdot \|\eta\|_2,
\end{align*}
where
\begin{align*}
    c_*(L,\lambda_b):= 2L\sqrt{\frac{3nL}{\lambda_b}\left(1+\frac{L^2}{\lambda_b}\right)\left(1+\frac{L^2(1+nL)}{\lambda_b}\left(1+\frac{L^2}{\lambda_b}\right)\right)},
\end{align*}
and
\begin{align*}
    \Lambda_1:=\ & 2\lambda_x - L_f^0 - \frac{2L^2L_b^0}{\lambda_b} - \frac{2L^2l_b^m}{\lambda_b} \left(1+\frac{L^2}{\lambda_b}\right) -\sqrt{2}c_*(L,\lambda_b)\ l_{\sigma}^m - \frac{2nL^2(l_\sigma^m)^2}{2\lambda_g-L_g},\\
    \Lambda_2:=\ & L_f^1+ \frac{3L^2L_b^1}{\lambda_b}+\frac{L^2l_b^m}{\lambda_b} \left(1+\frac{L^2}{\lambda_b}\right) +c_*(L,\lambda_b)\ l_{\sigma}^m.
\end{align*}
Therefore, for any $\epsilon>0$, we have
\begin{align}
    &\int_t^T \|\mathcal{V}(s)\|_2^2\ ds \le C \|\mathcal{P}(t)\|_2 \|\eta\|_2 \le \epsilon  \  \|\mathcal{P}(t)\|_2^2+\frac{C}{4\epsilon} \|\eta\|_2^2. \label{lem:3_7}
\end{align}
Using Gr\"onwall's inequality, we have
\begin{equation}\label{lem:3_8}
	\e\bigg[\sup_{t\le s\le T}|\mathcal{X}(s)|^2\bigg]\le C(L,T)\ \e\left[|\eta|^2+\int_t^T |\mathcal{V}(s)|^2ds\right].
\end{equation}
Then, from Assumption \eqref{generic:condition:b} and the estimate \eqref{lem:2_2}, the usual BSDE estimates in \cite{YH2,SP} together with BDG inequality altogether give us:
\begin{equation}\label{lem:3_9}
	\e\bigg[\sup_{t\le s\le T}|\mathcal{P}(s)|^2+\int_t^T |\mathcal{Q}(s)|^2 ds\bigg]\le C(L,T,\lambda_b)\ \e\left[|\eta|^2 +\int_t^T |\mathcal{V}(s)|^2ds\right].
\end{equation}    
Substituting \eqref{lem:3_9} into \eqref{lem:3_7} and by choosing $\epsilon$ small enough, we deduce that
\begin{align*}
    \int_t^T \|\mathcal{V}(s)\|_2^2 ds \le   C \|\eta\|_2^2. 
\end{align*}
Substituting the last estimate back into \eqref{lem:3_8} and \eqref{lem:3_9}, we have
\begin{align}\label{lem:3_10}
    &\e\bigg[\sup_{t\le s\le T}|(\mathcal{X}(s),\mathcal{P}(s))|^2+\int_t^T |\mathcal{Q}(s)|^2 ds\bigg]\le C\|\eta\|_2^2.
\end{align}
From Condition \eqref{FB:dr_condition} and the convexity of $f$ in $v$, we know that
\begin{align*}
    &2\lambda_v |\mathcal{V}(s)|^2 \\
    \le\ &D_v^2 f (s,\theta_{t\xi}(s)) (\mathcal{V}(s))^{\otimes 2} \\
    =\ & -\mathcal{V}(s)^\top D_x D_v f  (s,\theta_{t\xi}(s))^\top \mathcal{X}(s) - \mathcal{V}(s)^\top \widetilde{\e}\left[\left(D_y \frac{d}{d\nu}D_v f  (s,\theta_{t\xi}(s))\left(\widetilde{X_{t\xi}}(s)\right)\right)^\top \widetilde{\mathcal{X}}(s)\right] \notag \\
    &- \mathcal{V}(s)^\top \bigg\{D_x D_v b(s,\theta_{t\xi}(s)) \mathcal{X}(s)+\widetilde{\e}\left[ D_y\frac{d}{d\nu}D_v b(s,\theta_{t\xi}(s))\left(\widetilde{X_{t\xi}}(s) \right) \widetilde{\mathcal{X}}(s)\right]\\
    &\quad\qquad\qquad + D_v^2 b(s,\theta_{t\xi}(s)) \mathcal{V}(s)\bigg\}^\top P_{t\xi}(s) - \mathcal{V}(s)^\top D_v b(s,\theta_{t\xi}(s))^\top \mathcal{P}(s),
\end{align*}
and then, from Assumptions (A1) and (A2), we can deduce that
\begin{align*}
    2\lambda_v |\mathcal{V}(s)|\le \ & L |\mathcal{X}(s)|+L \|\mathcal{X}(s)\|_2+ L|\mathcal{P}(s)|+\frac{L^2}{\lambda_b} \left(L |\mathcal{X}(s)|+L \|\mathcal{X}(s)\|_2+L_b^2|\mathcal{V}(s)|\right). 
\end{align*}
Again from Condition \eqref{lem:2_0}, we conclude that
\begin{align}\label{lem:3_11}
    |\mathcal{V}(s)|\le \ & \left(2\lambda_v-\frac{L^2 L_b^2}{\lambda_b}\right)^{-1}\left[\left(L+\frac{L^3}{\lambda_b}\right) \left(|\mathcal{X}(s)|+\|\mathcal{X}(s)\|_2\right)+ L|\mathcal{P}(s)|\right]. 
\end{align}
From \eqref{lem:3_10} and \eqref{lem:3_11}, we obtain \eqref{lem3_1}. 
\end{proof}

We emphasize that the key of this proof is through \eqref{lem:2_2(zw)} and also the following the cone property:
\begin{align*}
    \left|\mathcal{D}_\eta P_{t\xi}(s)\right|\le &\frac{L^2}{\lambda_b} \left(1+\frac{L^2}{\lambda_b}\right) \left(\left|\mathcal{D}_\eta X_{t\xi}(s)\right|+ \left\|\mathcal{D}_\eta X_{t\xi}(s)\right\|_2 +\left|\mathcal{D}_\eta v_{t\xi}(s)\right|\right),
\end{align*}
which is a consequence of Assumptions (A2), \eqref{generic:condition:b}, \eqref{generic:condition:b'} and the estimate \eqref{lem:2_2(zw)}; see \eqref{lem:3_2'} and \eqref{lem:3_2} for details. We next show that the components of the solution of FBSDEs \eqref{FB:dr} are the G\^ateaux derivatives of $\Theta_{t\xi}(s)$ with respect to the initial condition $\xi$ along the direction $\eta$.

\begin{theorem}\label{lem:4}
	Under Assumptions (A1)-(A3) and the validity of \eqref{lem:2_0}, for any $\eta\in L_{\f_t}^2$, the solution $\left(\mathcal{D}_\eta X_{t\xi},\mathcal{D}_\eta P_{t\xi},\mathcal{D}_\eta Q_{t\xi},\mathcal{D}_\eta v_{t\xi}\right)$ of FBSDEs \eqref{FB:dr} satisfies
	\begin{equation}\label{lem4_1}
		\begin{split}
			\lim_{\epsilon\to0}\e\Bigg[& \sup_{t\le s\le T}\left|\frac{ X_{ t\xi^\epsilon}(s)-X_{t\xi}(s)}{\epsilon}-\mathcal{D}_\eta X_{t\xi}(s)\right|^2+ \sup_{t\le s\le T}\left|\frac{ P_{ t\xi^\epsilon}(s)-P_{t\xi}(s)}{\epsilon}-\mathcal{D}_\eta P_{t\xi}(s)\right|^2\\
            &+\sup_{t\le s\le T}\left|\frac{ v_{ t\xi^\epsilon}(s)-v_{t\xi}(s)}{\epsilon}-\mathcal{D}_\eta v_{t\xi}(s)\right|^2 + \int_t^T \left|\frac{Q_{ t\xi^\epsilon}(s)-Q_{t\xi}(s)}{\epsilon}-\mathcal{D}_\eta Q_{t\xi}(s)\right|^2 ds\Bigg]=0,
		\end{split}
	\end{equation}
	where $\xi^\epsilon:=\xi+\epsilon\eta$ for $\epsilon\in(0,1)$. In fact, the components of $\left(\mathcal{D}_\eta X_{t\xi},\mathcal{D}_\eta P_{t\xi},\mathcal{D}_\eta Q_{t\xi},\mathcal{D}_\eta v_{t\xi}\right)$ defined in \eqref{FB:dr} are the respective G\^ateaux derivatives of $(X_{t\xi}(s),P_{t\xi}(s),Q_{t\xi}(s),v_{t\xi}(s))$ with respect to $\xi$ along the direction $\eta$. Moreover, the G\^ateaux derivatives satisfy the boundedness condition \eqref{lem3_1}, and they are linear in $\eta$ and continuous in $\xi$. 
\end{theorem}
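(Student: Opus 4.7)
The plan is to identify the solution of FBSDEs \eqref{FB:dr}--\eqref{FB:dr_condition} with the formal derivative of $(X_{t\xi},P_{t\xi},Q_{t\xi},v_{t\xi})$ along $\eta$ by showing that, after subtracting $(\mathcal{D}_\eta X_{t\xi},\mathcal{D}_\eta P_{t\xi},\mathcal{D}_\eta Q_{t\xi},\mathcal{D}_\eta v_{t\xi})$, the rescaled increments vanish in $L^2$ as $\epsilon\to 0$. For $\epsilon\in(0,1)$, set
\begin{align*}
\Delta^\epsilon X(s):=\frac{X_{t\xi^\epsilon}(s)-X_{t\xi}(s)}{\epsilon}-\mathcal{D}_\eta X_{t\xi}(s),
\end{align*}
and similarly define $\Delta^\epsilon P, \Delta^\epsilon Q, \Delta^\epsilon v$. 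I would first invoke the continuity estimate \eqref{lem2_2} of Theorem~\ref{lem:2}, which yields $\|X_{t\xi^\epsilon}-X_{t\xi}\|_{\mathcal{S}^2}+\|P_{t\xi^\epsilon}-P_{t\xi}\|_{\mathcal{S}^2}+\|Q_{t\xi^\epsilon}-Q_{t\xi}\|_{\mathcal{L}^2}+\|v_{t\xi^\epsilon}-v_{t\xi}\|_{\mathcal{L}^2}\le C\epsilon\|\eta\|_2$; in particular the difference quotients $(X_{t\xi^\epsilon}-X_{t\xi})/\epsilon$ etc.\ are uniformly $L^2$-bounded in $\epsilon$.

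Next I would write down the FBSDEs satisfied by $\big((X_{t\xi^\epsilon}-X_{t\xi})/\epsilon,(P_{t\xi^\epsilon}-P_{t\xi})/\epsilon,(Q_{t\xi^\epsilon}-Q_{t\xi})/\epsilon,(v_{t\xi^\epsilon}-v_{t\xi})/\epsilon\big)$ by performing a first-order Taylor expansion of the coefficients $b,\sigma,D_x b,D_v b,D_x f,D_v f,D_x g$ around the point $\theta_{t\xi}(s)$; the linear functional derivatives in the measure argument are handled via the identity
\begin{align*}
k(\mathcal{L}(X_{t\xi^\epsilon}(s)))-k(\mathcal{L}(X_{t\xi}(s)))=\int_0^1 \widetilde{\mathbb{E}}\bigl[D_y\tfrac{dk}{d\nu}(\mathcal{L}(X^{\lambda,\epsilon}(s)))(\widetilde{X}^{\lambda,\epsilon}(s))^\top (\widetilde{X}_{t\xi^\epsilon}(s)-\widetilde{X}_{t\xi}(s))\bigr]d\lambda,
\end{align*}
with $X^{\lambda,\epsilon}:=\lambda X_{t\xi^\epsilon}+(1-\lambda)X_{t\xi}$. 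Subtracting the FBSDEs \eqref{FB:dr}--\eqref{FB:dr_condition} then shows that $(\Delta^\epsilon X,\Delta^\epsilon P,\Delta^\epsilon Q,\Delta^\epsilon v)$ solves a linear FBSDE of exactly the same structure as \eqref{FB:dr}--\eqref{FB:dr_condition}, but with zero initial datum and with inhomogeneous source terms $I^\epsilon_b,I^\epsilon_\sigma,I^\epsilon_f,I^\epsilon_g$ collecting the Taylor remainders.

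With this set-up, the stability estimate \eqref{lem3_1} of Lemma~\ref{lem:3} applies uniformly to the linear FBSDE for the error (the $\beta$-monotone structure is unaffected by inhomogeneous sources, only the right-hand side of the a~priori bound picks up $\|I^\epsilon\|$-type norms). Thus it suffices to prove $\|I^\epsilon\|\to 0$ as $\epsilon\to 0$. Each $I^\epsilon$ has the form of a difference between a derivative evaluated at an intermediate point and the same derivative evaluated at $\theta_{t\xi}$, multiplied by a uniformly $L^2$-bounded factor $(X_{t\xi^\epsilon}-X_{t\xi})/\epsilon$ (or analogous quantities). By Assumption (A1)--(A2), all second derivatives and all Lions derivatives are continuous, and by \eqref{lem2_2} plus dominated convergence, the intermediate arguments converge to $\theta_{t\xi}(s)$ in $L^2\otimes dt$, so continuity of the derivatives plus the uniform $L^2$-bound on the quotient yield $\|I^\epsilon\|\to 0$. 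Feeding this back gives \eqref{lem4_1}.

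Once \eqref{lem4_1} is established, the concluding remarks are immediate: linearity of $(\mathcal{D}_\eta X_{t\xi},\dots)$ in $\eta$ follows from the linearity of the FBSDE \eqref{FB:dr}--\eqref{FB:dr_condition} in the solution; continuity in $\xi$ comes from a further stability argument applied to \eqref{FB:dr}--\eqref{FB:dr_condition} when both $\xi$ and the coefficient process $\Theta_{t\xi}$ vary, using \eqref{lem2_2} once more together with the continuity of the coefficient derivatives. The main technical obstacle I anticipate is the measure-derivative remainder: one must carefully justify the interchange of $\widetilde{\mathbb{E}}$ and the $\lambda$-integral and control the Lions derivative along the interpolating law $\mathcal{L}(X^{\lambda,\epsilon}(s))$, exploiting the continuity assumed in (A1)--(A2) and the $L^2$-convergence $X^{\lambda,\epsilon}\to X_{t\xi}$ uniformly in $\lambda$; everything else is a routine FBSDE stability application of Lemma~\ref{lem:3}.
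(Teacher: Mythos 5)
Your proposal is correct and follows essentially the same route as the paper's proof in Appendix~\ref{pf:lem:4}: the same decomposition of the rescaled increment minus the candidate derivative, the same Taylor expansion producing remainder terms $\alpha^\epsilon$ (your $I^\epsilon$), the same re-run of the $\beta$-monotone duality estimate with inhomogeneous sources, and the same dominated-convergence argument (using \eqref{lem2_2}, \eqref{lem3_1} and the continuity of the derivatives in (A1)--(A2)) to send the remainders to zero. The only detail you leave implicit is upgrading the $L^2(dt)$ convergence of the control error to the $\sup_{s}$ convergence required in \eqref{lem4_1}, which the paper obtains from the pointwise algebraic relation \eqref{lem:4_1} together with the convexity of $f$ in $v$; this is routine within your framework.
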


The proof of Theorem~\ref{lem:4} is given in Appendix~\ref{pf:lem:4}, and we refer to \cite[Theorem 3.2]{AB12} and \cite[Lemma 3.8]{AB9'1} for similar proofs; we also refer to \cite{AB5} for further discussion on their very nature as Fr\'echet derivatives. Theorem~\ref{lem:4} gives the differentiability of the processes $\Theta_{t\xi}$ with respect to the initial condition $\xi$. As a consequence, in the rest of this article, we denote by $D_\eta \Theta_{t\xi}:=\left(D_\eta X_{t\xi},D_\eta P_{t\xi},D_\eta Q_{t\xi},D_\eta v_{t\xi}\right)$ the G\^ateaux derivatives along the direction $\eta$, which satisfy the FBSDEs \eqref{FB:dr}. 

\subsection{Jacobian flows of FBSDEs \eqref{intro_3}}

We now consider the differentiability of the process $(X_{tx\mu},P_{tx\mu},Q_{tx\mu},v_{tx\mu})$ defined in \eqref{intro_3}, with respect to the initial data $(x,\mu)\in\brn\times\pr_2(\brn)$. For the sake of convenience, in the rest of this article, we denote by $\theta_{tx\mu}$ the process $(X_{tx\mu},\lr(X_{t\xi}),v_{tx\mu})$ and by $\Theta_{tx\mu}$ the process $(X_{tx\mu},P_{tx\mu},Q_{tx\mu},v_{tx\mu})$, where $\lr(\xi)=\mu$. Since the process $\Theta_{tx\mu}$ depends on $\xi$ only through its law $\mu$, it is natural to use the subscript $\mu$ in $\Theta_{tx\mu}$ to indicate such dependence. We begin by considering the differentiability of $\Theta_{tx\mu}(s)$ with respect to the initial state $x\in\brn$. Consider the following FBSDEs for $D_x\Theta_{tx\mu}=\left(D_xX_{tx\mu}, D_xP_{tx\mu},D_xQ_{tx\mu},D_xv_{tx\mu}\right)$: 
\begin{align}
		D_xX_{tx\mu}(s)=\ & I+\int_t^s \left[D_x b\left(r,\theta_{tx\mu}(r)\right)D_xX_{tx\mu}(r)+ D_v b\left(r,\theta_{tx\mu}(r)\right)D_xv_{tx\mu}(r) \right] dr \notag \\
        & +\int_t^s \sigma_1(r) D_xX_{tx\mu}(r) dB(r), \notag \\
		D_xP_{tx\mu}(s)=\ & -\int_s^T D_x Q_{tx\mu} (r)dB(r)+D_x^2 g\left(X_{tx\mu}(T),\lr(X_{t\xi}(T))\right)^\top D_xX_{tx\mu}(T) \notag \\
        & +\int_s^T \bigg[D_x b\left(r,\theta_{tx\mu}(r)\right)^\top D_x P_{tx\mu}(r)+\sum_{j=1}^n\left(\sigma_1^j(r)\right)^\top D_x Q_{tx\mu}^j(r) \notag \\
        & +\left(P_{tx\mu}(r)\right)^\top \left[D_x^2 b\left(r,\theta_{tx\mu}(r)\right)D_xX_{tx\mu}(r) + D_v D_x b\left(r,\theta_{tx\mu}(r)\right)D_xv_{tx\mu}(r)\right] \notag \\
        & +\left[D_x^2 f (r,\theta_{tx\mu}(r)) \right]^\top D_x X_{tx\mu}(r) + \left[D_vD_x f (r,\theta_{tx\mu}(r))\right]^\top D_x v_{tx\mu}(r) \bigg]dr,\quad s\in[t,T], \label{FB:x}
\end{align}
with the derived condition after the first order one:
\begin{align}
	0=\ &\left[D_xD_v f (s,\theta_{tx\mu}(s)) \right]^\top D_x X_{tx\mu}(s) + \left[D_v^2 f (s,\theta_{tx\mu}(s))\right]^\top D_x v_{tx\mu}(s)+D_v b(s,\theta_{tx\mu}(s))^\top D_xP_{tx\mu}(s) \notag \\
    &+\left(P_{tx\mu}(s)\right)^\top \left[D_xD_v b\left(s,\theta_{tx\mu}(s)\right)D_xX_{tx\mu}(s) + D_v^2  b\left(s,\theta_{tx\mu}(s)\right)D_xv_{tx\mu}(s)\right], \label{FB:x_condition}
\end{align}
where $I$ is the identity matrix. 
The following result gives the solvability of FBSDEs \eqref{FB:x} and shows that the components of $D_x \Theta_{tx\mu}(s)$ are the respective G\^ateaux derivatives of $\Theta_{t\xi}(s)$ in $x\in\brn$. Its proof is similar to those for Lemma~\ref{lem:3} and Theorem~\ref{lem:4}, and it is omitted here.

\begin{theorem}\label{prop:2}
	Under Assumptions (A1)-(A3) and the validity of \eqref{lem:2_0}, there is a unique solution $D_x \Theta_{tx\mu}$ of FBSDEs \eqref{FB:x}-\eqref{FB:x_condition}, which is the G\^ateaux derivative of $\Theta_{tx\mu}(s)$ with respect to $x$. Moreover, these G\^ateaux derivatives satisfy the following boundedness property:
	\begin{equation}\label{thm2_1}
		\begin{split}
			&\e\bigg[\sup_{t\le s\le T}|D_x X_{tx\mu}(s)|^l+\sup_{t\le s\le T}|D_x P_{tx\mu}(s)|^l+\sup_{t\le s\le T}|D_x v_{tx\mu}(s)|^l\bigg]^{\frac{1}{l}}\\
            &+\left[\int_t^T\e\left[|D_x Q_{tx\mu}(s)|^l\right]^{\frac{2}{l}} ds\right]^{\frac{1}{2}} \le C,\quad l=2,4,
		\end{split}
	\end{equation}
	and they are also continuous in $(x,\mu)$. 
\end{theorem}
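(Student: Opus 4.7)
The plan is to mirror the proofs of Lemma~\ref{lem:3} and Theorem~\ref{lem:4}. The system \eqref{FB:x}-\eqref{FB:x_condition} is a linear FBSDE in $(D_xX_{tx\mu},D_xP_{tx\mu},D_xQ_{tx\mu},D_xv_{tx\mu})$ with stochastic (but, after using \eqref{cone_property_SC}, bounded) coefficients driven by the already-known process $\Theta_{tx\mu}$; moreover, the distribution-interaction terms are absent from \eqref{FB:x}, so the system is exactly of the form treated in Lemma~\ref{lem:3} after zeroing every $l_b^m$- and $l_\sigma^m$-type coefficient. The convexity conditions \eqref{convex'} combined with the cone property \eqref{cone_property_SC} for $P_{tx\mu}$ provide the required $\beta$-monotonicity, so existence and uniqueness in the appropriate Hilbert spaces follow from \cite[Lemma 2.1]{AB12} by the same continuation-in-the-coefficients argument used for Lemma~\ref{lem:3}.

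For the $L^2$ part of \eqref{thm2_1}, I would apply It\^o's formula to $(D_xP_{tx\mu}(s))^\top D_xX_{tx\mu}(s)$ and substitute the first-order condition \eqref{FB:x_condition} to eliminate $D_v b^\top D_xP_{tx\mu}$, producing on the left $2\lambda_g|D_xX_{tx\mu}(T)|^2 + \int_t^T (2\lambda_v|D_xv_{tx\mu}|^2 + 2\lambda_x|D_xX_{tx\mu}|^2)\,ds$ via \eqref{convex'}. The cross terms of the form $P_{tx\mu}^\top(D_x^2 b,\,D_vD_xb,\,D_v^2b)\cdot(D_xX,D_xv)^{\otimes 2}$ are bounded via the normalized estimates \eqref{generic:condition:b} together with the cone property \eqref{cone_property_SC}, yielding the deterministic analogue of \eqref{lem:3_4}. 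Condition \eqref{lem:2_0} (and \eqref{lem:2_12} for the cone property itself) then makes the resulting quadratic form coercive after Young's inequality, and a Gr\"onwall estimate in the forward component combined with the standard BSDE estimate plus the BDG inequality for the backward part closes the $L^2$ bound, with right-hand side of order $|I|^2$ since the ``initial condition'' is deterministic. The $L^4$ bound then comes for free: once the cone property has turned every factor $P_{tx\mu}^\top D_x^2 b$ into a deterministic bound of order $K L_b^0$ (and similarly for $D_vD_xb$, $D_v^2b$), the FBSDE has bounded coefficients and deterministic initial data $I$, so I would repeat the It\^o argument with $|\cdot|^4$ in place of $|\cdot|^2$ (or equivalently apply standard moment estimates for linear FBSDEs with bounded coefficients and data) to conclude.

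Identification of the components of $D_x\Theta_{tx\mu}$ as the G\^ateaux derivatives of $\Theta_{tx\mu}$ in $x$ follows the template of Theorem~\ref{lem:4}: for a unit direction $\mathbf{e}\in\brn$ and $x^\epsilon:=x+\epsilon\mathbf{e}$, the difference quotient $\Delta^\epsilon:=(\Theta_{tx^\epsilon\mu}-\Theta_{tx\mu})/\epsilon$ solves, after Taylor-expanding $b,f,g,\sigma$ to first order using the $C^1$-regularity in (A1)-(A2), the same linear FBSDE \eqref{FB:x}-\eqref{FB:x_condition} with an additional source whose $L^2$-norm vanishes as $\epsilon\to 0$ in view of the stability estimate \eqref{lem5_2}; the $L^2$ bound above then forces $\Delta^\epsilon\to D_x\Theta_{tx\mu}\cdot\mathbf{e}$ in the norm of \eqref{lem4_1}. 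Continuity of $D_x\Theta_{tx\mu}$ in $(x,\mu)$ is obtained by the same device, writing $D_x\Theta_{tx'\mu'}-D_x\Theta_{tx\mu}$ as the solution of another linear FBSDE of type \eqref{FB:x}-\eqref{FB:x_condition} whose coefficient perturbations are controlled via \eqref{lem5_2} and the uniform continuity of the second-order derivatives in (A1)-(A2). I expect the main technical obstacle to be the same one that appeared in Lemma~\ref{lem:3}, namely ensuring that the quadratic form generated by It\^o's identity remains coercive in the presence of the drift nonlinearity: this is precisely where the cone property \eqref{cone_property_SC} is indispensable, as it converts the a priori unbounded products $P_{tx\mu}^\top(D_x^2b,D_vD_xb,D_v^2b)\cdot(D_xX_{tx\mu},D_xv_{tx\mu})^{\otimes 2}$ into controlled lower-order terms governed by the small constants $L_b^0, L_b^1, L_b^2$ entering \eqref{lem:2_0}.
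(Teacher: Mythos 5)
Your proposal follows exactly the template the paper itself prescribes (the authors omit this proof, stating it is "similar to those for Lemma~\ref{lem:3} and Theorem~\ref{lem:4}"), and the overall architecture --- duality via It\^o on $(D_xP_{tx\mu})^\top D_xX_{tx\mu}$, elimination of $D_vb^\top D_xP_{tx\mu}$ through \eqref{FB:x_condition}, coercivity from \eqref{convex'} and \eqref{lem:2_0}, then Gr\"onwall/BDG, and finally difference quotients for the G\^ateaux identification --- is the right one. Two caveats. First, you repeatedly invoke the cone property \eqref{cone_property_SC}, but that estimate is derived in the paper only under the additional Condition \eqref{lem:2_12}, which is \emph{not} among the hypotheses of this theorem (only (A1)--(A3) and \eqref{lem:2_0} are assumed). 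What the argument actually needs, and what the paper uses in the analogous steps \eqref{lem:3_2} and \eqref{lem:3_4}, is the weaker estimate \eqref{zw:1}, $|P_{tx\mu}(s)|\le \frac{L^2}{\lambda_b}\left[1+|X_{tx\mu}(s)|+W_2(\lr(X_{t\xi}(s)),\delta_0)+|v_{tx\mu}(s)|\right]$, which holds under (A1)--(A3) alone and, combined with the normalization in \eqref{generic:condition:b}, already bounds $P_{tx\mu}^\top(D_x^2b,D_vD_xb,D_v^2b)$ by the constants $L_b^0,L_b^1,L_b^2$ times $\lambda_b^{-1}L^2$. Replace \eqref{cone_property_SC} by \eqref{zw:1} throughout and your hypotheses match the theorem's. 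Second, the claim that the $L^4$ bound "comes for free by repeating the It\^o argument with $|\cdot|^4$" is too quick: the coercivity step is a duality pairing and is intrinsically quadratic, so it does not survive raising to the fourth power. The standard repair (and the one implicit in the references the paper cites for \eqref{thm2_1}) is to first run the $L^2$ estimate conditionally on $\f_\tau$ on $[\tau,T]$, which yields the pointwise decoupling bound $|D_xP_{tx\mu}(\tau)|\le C\,|D_xX_{tx\mu}(\tau)|$ a.s.; together with the pointwise bound for $D_xv_{tx\mu}$ obtained from \eqref{FB:x_condition} this turns the forward equation into a linear SDE with bounded coefficients and deterministic data $I$, from which all moments, and then the $L^4$ bounds for $D_xP_{tx\mu}$, $D_xv_{tx\mu}$ and $D_xQ_{tx\mu}$, follow. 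Your parenthetical alternative ("standard moment estimates for linear FBSDEs with bounded coefficients") is the right instinct, but it should be the main route rather than an aside.
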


The boundedness in $L^4$-norm of $D_x \Theta_{tx\mu}$ in \eqref{thm2_1} is required for sake of the second order derivatives (Hessian flows) of the processes $\Theta_{tx\mu}$; we also refer to \cite{AB11,AB9'2} for similar estimates and detailed analysis. To consider the G\^ateaux differentiability of $\Theta_{tx\mu}$ with respect to $\xi\in L_{\f_t}^2$ along the direction $\eta$, where $\lr(\xi)=\mu$ and $\xi$ and $\eta$ are both independent of the Brownian motion $B$, we shall characterize the G\^ateaux derivatives as the solution of a system of FBSDEs. Instead of using the process $D_\eta \Theta_{t\xi}$ defined in FBSDEs \eqref{FB:dr} directly, we prefer to use the following system of FBSDEs. The advantage of using this is that its initial condition is $0$, which will be more convenient when considering the linear functional differentiability of $\Theta_{tx\mu}(s)$ with respect to $\mu\in\pr_2(\brn)$. In view of the processes $\Theta_{t\xi}$ and  $D_x\Theta_{tx\mu}$, this system of FBSDEs is simply: for $s\in[t,T]$,
\small
\begin{align}
		\bd_\eta X_{t\xi}(s)=\ & \int_t^s \bigg\{D_x b(r, \theta_{t\xi}(r))\bd_\eta X_{t\xi}(r)+D_v b(r, \theta_{t\xi}(r))\bd_\eta v_{t\xi}(r) \notag \\
        &\qquad +\widetilde{\e}\bigg[ D_y \frac{db}{d\nu} (r,\theta_{t\xi}(r))\left(\widetilde{X_{t\xi}}(r)\right) \left(\widetilde{\bd_\eta X_{t\xi}}(r)+\widetilde{D_x X_{tx\mu}}(r)\left|_{x=\widetilde{\xi}}\ \right. \widetilde{\eta}\right)\bigg] \bigg\}dr \notag \\
        &+\int_t^s \bigg\{ \widetilde{\e}\bigg[ D_y \frac{d\sigma_0}{d\nu} (r,\lr(X_{t\xi}(r)))\left(\widetilde{X_{t\xi}}(r)\right) \left(\widetilde{\bd_\eta X_{t\xi}}(r)+\widetilde{D_x X_{tx\mu}}(r)\left|_{x=\widetilde{\xi}}\ \right. \widetilde{\eta}\right)\bigg] \notag \\
        &\quad\qquad + \sigma_1(r)\bd_\eta X_{t\xi}(r)\bigg\} dB(r), \notag \\
		\bd_\eta P_{t\xi}(s)=\ & -\int_s^T\bd_\eta Q_{t\xi}(r)dB(r)+ D_x^2 g (X_{t\xi}(T),\lr(X_{t\xi}(T)))^\top  \bd_\eta X_{t\xi}(T) \notag \\
        &+\widetilde{\e}\left[\left(D_y \frac{d}{d\nu}D_x g (X_{t\xi}(T),\lr(X_{t\xi}(T)))\left(\widetilde{X_{t\xi}}(T)\right) \right)^\top \left(\widetilde{\bd_\eta X_{t\xi}}(T)+ \widetilde{D_xX_{tx\mu}}(T)\left|_{x=\widetilde{\xi}}\ \right. \widetilde{\eta}\right)\right] \notag \\
		&+\int_s^T\Bigg\{ D_xb(r,\theta_{t\xi}(r))^\top \bd_\eta P_{t\xi}(r)+\sum_{j=1}^n\left(\sigma_1^j (r)\right)^\top \bd_\eta Q_{t\xi}^j(r) \notag \\
        &\quad\qquad + \left(P_{t\xi}(r)\right)^\top \bigg\{D_x^2 b(r,\theta_{t\xi}(r)) \bd_\eta X_{t\xi}(r)+ D_vD_x b(r,\theta_{t\xi}(r)) \bd_\eta v_{t\xi}(r) \notag \\
        &\quad\qquad\qquad\qquad\qquad +\widetilde{\e}\left[ D_y\frac{d}{d\nu}D_x b(r,\theta_{t\xi}(r))\left(\widetilde{X_{t\xi}}(r)\right) \left(\widetilde{\bd_\eta X_{t\xi}}(r)+ \widetilde{D_xX_{tx\mu}}(r)\left|_{x=\widetilde{\xi}}\ \right. \widetilde{\eta}\right)\right]\bigg\}  \notag\\
        &\quad+D_x^2 f  (r,\theta_{t\xi}(r))^\top \bd_\eta X_{t\xi}(r) + D_vD_x f (r,\theta_{t\xi}(r))^\top  \bd_\eta v_{t\xi}(r) \notag\\
        &\quad+\widetilde{\e}\left[\left(D_y \frac{d}{d\nu}D_x f  (r,\theta_{t\xi}(r))\left(\widetilde{X_{t\xi}}(r)\right)\right)^\top  \left(\widetilde{\bd_\eta X_{t\xi}}(r)+ \widetilde{D_xX_{tx\mu}}(r)\left|_{x=\widetilde{\xi}}\ \right. \widetilde{\eta}\right)\right] \Bigg\}dr, \label{FB:dr'}
\end{align}
\normalsize
with the following derived condition after the first order one: 
\small
\begin{align}
    0=\ & D_x D_v f  (s,\theta_{t\xi}(s))^\top \bd_\eta X_{t\xi}(s) +\widetilde{\e}\left[\left(D_y \frac{d}{d\nu}D_v f  (s,\theta_{t\xi}(s))\left(\widetilde{X_{t\xi}}(s)\right)\right)^\top \left(\widetilde{\bd_\eta X_{t\xi}}(s)+ \widetilde{D_xX_{tx\mu}}(s)\left|_{x=\widetilde{\xi}}\ \right. \widetilde{\eta}\right)\right] \notag \\
    &+D_v^2 f (s,\theta_{t\xi}(s))^\top  \bd_\eta v_{t\xi}(s)+D_v b(s,\theta_{t\xi}(s))^\top \bd_\eta P_{t\xi}(s) \notag\\
    &+ \left(P_{t\xi}(s)\right)^\top \bigg\{D_x D_v b(s,\theta_{t\xi}(s)) \bd_\eta X_{t\xi}(s)+ D_v^2 b(s,\theta_{t\xi}(s)) \bd_\eta v_{t\xi}(s) \notag \\
    &\qquad\qquad\qquad +\widetilde{\e}\left[ D_y\frac{d}{d\nu}D_v b(s,\theta_{t\xi}(s))\left(\widetilde{X_{t\xi}}(s) \right) \left(\widetilde{\bd_\eta X_{t\xi}}(s)+ \widetilde{D_xX_{tx\mu}}(s)\left|_{x=\widetilde{\xi}}\ \right. \widetilde{\eta}\right)\right]\bigg\}, \label{FB:dr'_condition}
\end{align}
\normalsize
where $\left(\widetilde{\xi},\widetilde{\eta},\widetilde{X_{t\xi}}(s),\widetilde{D_x X_{tx\mu}}(s),\widetilde{\bd_\eta X_{t\xi}}(s)\right)$ is an independent copy of 
$$(\xi,\eta,X_{t\xi}(s),D_x X_{tx\mu}(s),\bd_\eta X_{t\xi}(s)). $$
We now give a connection among the above FBSDEs \eqref{FB:dr'}-\eqref{FB:dr'_condition} and $D_\eta \Theta_{t\xi}$ and $D_x\Theta_{tx\mu}$. 

\begin{lemma}\label{prop:4}
	Under Assumptions (A1)-(A3) and the validity of \eqref{lem:2_0}, there is a unique adapted solution $\bd_\eta \Theta_{t\xi}:=\left(\bd_\eta X_{t\xi},\bd_\eta P_{t\xi},\bd_\eta Q_{t\xi},\bd_\eta v_{t\xi}\right)$ of FBSDEs \eqref{FB:dr'}-\eqref{FB:dr'_condition}, and they satisfy the following relation:
	\begin{equation}\label{prop4_1}
		\begin{split}
			&D_\eta \Theta_{t\xi}(s)=\Big(D_x\Theta_{tx\mu}(s)\big|_{x=\xi}\Big) \eta+\bd_\eta \Theta_{t\xi}(s),\quad s\in[t,T].
		\end{split}
	\end{equation}
\end{lemma}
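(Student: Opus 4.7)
The plan is to split the proof into two stages: first, establish the well-posedness of the FBSDEs \eqref{FB:dr'}-\eqref{FB:dr'_condition} by reducing it to the machinery already developed for Lemma~\ref{lem:3}; second, verify the representation \eqref{prop4_1} by a uniqueness argument, showing that the right-hand side of \eqref{prop4_1} satisfies the same FBSDEs \eqref{FB:dr}-\eqref{FB:dr_condition} as $D_\eta\Theta_{t\xi}$ with the same initial data $\eta$.

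\textbf{Stage 1: Well-posedness.} With $\Theta_{t\xi}$ already solved in Theorem~\ref{lem:2}, the system \eqref{FB:dr'}-\eqref{FB:dr'_condition} is a linear FBSDE in the unknowns $(\bd_\eta X_{t\xi},\bd_\eta P_{t\xi},\bd_\eta Q_{t\xi},\bd_\eta v_{t\xi})$ with \emph{exactly} the same linear structure as \eqref{FB:dr}-\eqref{FB:dr_condition}, up to a square-integrable inhomogeneity coming from the known processes $\widetilde{D_xX_{tx\mu}}(\cdot)|_{x=\widetilde{\xi}}\,\widetilde{\eta}$. Its $L^2$-integrability is guaranteed by the bound \eqref{thm2_1} of Theorem~\ref{prop:2} combined with $\eta\in L^2_{\f_t}$. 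The $\beta$-monotonicity, which in the proof of Lemma~\ref{lem:3} followed from the convexity condition \eqref{convex'} and the algebraic constraint, still holds verbatim for \eqref{FB:dr'_condition}. Applying the Hilbert-space FBSDE well-posedness result \cite[Lemma 2.1]{AB12} (as done in Lemma~\ref{lem:3}) therefore yields existence, uniqueness, and an a priori bound of the form $\|\bd_\eta\Theta_{t\xi}\|\le C\|\eta\|_2$, the inhomogeneity being absorbed into the a priori estimate exactly like the initial datum was absorbed before.

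\textbf{Stage 2: Identification.} Define the candidate
\[
\Psi(s):=\big(D_x\Theta_{tx\mu}(s)\big|_{x=\xi}\big)\eta+\bd_\eta\Theta_{t\xi}(s),\qquad s\in[t,T].
\]
The forward component satisfies $\Psi_X(t)=I\cdot\eta+0=\eta$, matching the initial condition in \eqref{FB:dr}. The key observation for the dynamics is that setting $x=\xi$ gives $X_{tx\mu}=X_{t\xi}$ and $v_{tx\mu}=v_{t\xi}$, hence $\theta_{tx\mu}|_{x=\xi}=\theta_{t\xi}$, so the pointwise coefficients in \eqref{FB:x}-\eqref{FB:x_condition} at $x=\xi$ agree with the corresponding pointwise coefficients in \eqref{FB:dr}-\eqref{FB:dr_condition}. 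Multiplying \eqref{FB:x}-\eqref{FB:x_condition} at $x=\xi$ by the $\f_t$-measurable vector $\eta$ (which passes through linear operations and stochastic integrals against $B$) and adding termwise to \eqref{FB:dr'}-\eqref{FB:dr'_condition}, every inner factor of the form
\[
\widetilde{\bd_\eta X_{t\xi}}(r)+\widetilde{D_xX_{tx\mu}}(r)\big|_{x=\widetilde{\xi}}\,\widetilde{\eta}
\]
collapses precisely to $\widetilde{\Psi_X}(r)$; the drift and diffusion terms without an $\widetilde{\e}$ assemble into $D_xb(r,\theta_{t\xi})\Psi_X+D_vb(r,\theta_{t\xi})\Psi_v$ and analogous expressions for the backward equation and the algebraic constraint. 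Thus $\Psi$ solves \eqref{FB:dr}-\eqref{FB:dr_condition} with initial datum $\eta$, and uniqueness from Lemma~\ref{lem:3} together with Theorem~\ref{lem:4} gives $\Psi=D_\eta\Theta_{t\xi}$, which is exactly \eqref{prop4_1}.

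The principal difficulty is not analytic but bookkeeping: one must carefully check, term by term across the forward equation, the backward equation, and the first-order condition, that the particular inhomogeneities $\widetilde{D_xX_{tx\mu}}(r)|_{x=\widetilde{\xi}}\widetilde{\eta}$ placed inside each $\widetilde{\e}[\,\cdot\,]$ in \eqref{FB:dr'}-\eqref{FB:dr'_condition} combine with $\widetilde{\bd_\eta X_{t\xi}}$ to reconstitute the full derivative $\widetilde{\Psi_X}=\widetilde{\mathcal{D}_\eta X_{t\xi}}$, and that the non-McKean-Vlasov contribution of $D_x\Theta_{tx\mu}|_{x=\xi}\eta$ (which alone has no mean-field terms because \eqref{FB:x}-\eqref{FB:x_condition} has none) accounts correctly for the $I\eta$ initial jump. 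This delicate assembly is precisely why \eqref{FB:dr'} was written with zero initial datum and with those specific forcing terms: the decomposition \eqref{prop4_1} is, by design, the linear splitting of $D_\eta\Theta_{t\xi}$ into the part transported by the non-McKean-Vlasov Jacobian flow emanating from $x=\xi$ and the genuinely mean-field correction $\bd_\eta\Theta_{t\xi}$.
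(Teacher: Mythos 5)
Your proposal is correct and follows essentially the same route as the paper: the paper likewise omits the well-posedness of \eqref{FB:dr'}--\eqref{FB:dr'_condition} as a repetition of Lemma~\ref{lem:3}, and proves \eqref{prop4_1} by using $\Theta_{tx\mu}|_{x=\xi}=\Theta_{t\xi}$, multiplying \eqref{FB:x}--\eqref{FB:x_condition} at $x=\xi$ by $\eta$, adding termwise to \eqref{FB:dr'}--\eqref{FB:dr'_condition} so the mean-field factors recombine into the full derivative, and invoking uniqueness of \eqref{FB:dr}--\eqref{FB:dr_condition}. Your term-by-term bookkeeping and the final uniqueness step match the paper's argument.
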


In \eqref{prop4_1}, we can see that $\Big(D_x\Theta_{tx\mu}(s)\big|_{x=\xi}\Big) \eta$ and $\bd_\eta \Theta_{t\xi}(s)$ respectively represent the variation against the initial state and the variation against the initial distribution of the directional derivative of $\Theta_{t\xi}(s)$ along $\eta$; we also refer to \cite{AB11,BR} for similar motivations. The proof for Lemma~\ref{prop:4} is very similar to that for \cite[Lemma 4.2]{AB11}, but for the sake of reference for readers, we also put that in Appendix~\ref{pf:prop:4}. 

We now study the G\^ateaux differentiability of the map $\xi\mapsto \Theta_{tx\lr(\xi)}$ with respect to $\xi\in L_{\f_t}^2$ along the direction $\eta$. In view of processes $\Theta_{t\xi}$, $\Theta_{tx\mu}$, $D_x\Theta_{tx\mu}$ and $\bd_\eta \Theta_{t\xi}$, we consider the following FBSDEs for $\left(D_\eta X_{tx\xi},D_\eta P_{tx\xi},D_\eta Q_{tx\xi},D_\eta v_{tx\xi}\right)$: for $s\in[t,T]$, 
\small
\begin{align}
		D_\eta X_{tx\xi}(s)=\ & \int_t^s \bigg\{ \widetilde{\e}\bigg[ D_y \frac{db}{d\nu} (r,\theta_{tx\mu}(r))\left(\widetilde{X_{t\xi}}(r)\right) \left(\widetilde{D_x X_{tx\mu}}(r)\left|_{x=\widetilde{\xi}}\ \right. \widetilde{\eta}+\widetilde{\bd_\eta X_{t\xi}}(r)\right)\bigg] \notag \\
        &\quad\qquad +D_x b(r, \theta_{tx\mu}(r))D_\eta X_{tx\xi}(r)+D_v b(r, \theta_{tx\mu}(r))D_\eta v_{tx\xi}(r)  \bigg\}dr \notag \\
        &+\int_t^s \bigg\{ \widetilde{\e}\bigg[ D_y \frac{d\sigma_0}{d\nu} (r,\lr(X_{t\xi}(r)))\left(\widetilde{X_{t\xi}}(r)\right) \left(\widetilde{D_x X_{tx\mu}}(r)\left|_{x=\widetilde{\xi}}\ \right. \widetilde{\eta}+\widetilde{\bd_\eta X_{t\xi}}(r)\right)\bigg] \notag \\
        &\qquad\qquad + \sigma_1(r)D_\eta X_{tx\xi}(r) \bigg\}dB(r), \notag \\
		D_\eta P_{tx\xi}(s)=\ & -\int_s^TD_\eta Q_{tx\xi}(r)dB(r)+D_x^2 g (X_{tx\mu}(T),\lr(X_{t\xi}(T)))^\top  D_\eta X_{tx\xi}(T) \notag\\
        &+\widetilde{\e}\left[\left(D_y \frac{d}{d\nu}D_x g (X_{tx\mu}(T),\lr(X_{t\xi}(T)))\left(\widetilde{X_{t\xi}}(T)\right) \right)^\top  \left(\widetilde{D_x X_{tx\mu}}(T)\left|_{x=\widetilde{\xi}}\ \right. \widetilde{\eta}+\widetilde{\bd_\eta X_{t\xi}}(T)\right) \right] \notag \\
		&+\int_s^T\Bigg\{ D_xb(r,\theta_{tx\mu}(r))^\top D_\eta P_{tx\xi}(r)+\sum_{j=1}^n\left(\sigma_1^j (r)\right)^\top D_\eta Q_{tx\xi}^j(r) \notag \\
        &\quad+ \left(P_{tx\mu}(r)\right)^\top \bigg\{\widetilde{\e}\left[ D_y\frac{d}{d\nu}D_x b(r,\theta_{tx\mu}(r))\left(\widetilde{X_{t\xi}}(r)\right) \left(\widetilde{D_x X_{tx\mu}}(r)\left|_{x=\widetilde{\xi}}\ \right. \widetilde{\eta}+\widetilde{\bd_\eta X_{t\xi}}(r)\right)\right] \notag \\
        &\qquad\qquad\qquad\qquad+D_x^2 b(r,\theta_{tx\mu}(r)) D_\eta X_{tx\xi}(r)+ D_vD_x b(r,\theta_{tx\mu}(r)) D_\eta v_{tx\xi}(r) \bigg\} \notag\\
        &\quad+\widetilde{\e}\left[\left(D_y \frac{d}{d\nu}D_x f  (r,\theta_{tx\mu}(r))\left(\widetilde{X_{t\xi}}(r)\right)\right)^\top \left(\widetilde{D_x X_{tx\mu}}(r)\left|_{x=\widetilde{\xi}}\ \right. \widetilde{\eta}+\widetilde{\bd_\eta X_{t\xi}}(r)\right)\right] \notag \\
        &\quad\qquad\qquad +D_x^2 f  (r,\theta_{tx\mu}(r))^\top D_\eta X_{tx\xi}(r) + D_vD_x f (r,\theta_{tx\mu}(r))^\top  D_\eta v_{tx\xi}(r)\Bigg\}dr,  \label{FB:mu'} 
\end{align}
\normalsize
with (as a consequence of taking the G\^ateaux derivative in the first order condition)
\small
\begin{align}
    0=\ & \widetilde{\e}\left[\left(D_y \frac{d}{d\nu}D_v f  (s,\theta_{tx\mu}(s))\left(\widetilde{X_{t\xi}}(s)\right)\right)^\top \left(\widetilde{D_x X_{tx\mu}}(s)\left|_{x=\widetilde{\xi}}\ \right. \widetilde{\eta}+\widetilde{\bd_\eta X_{t\xi}}(s)\right)\right] \notag \\
    &+D_x D_v f  (s,\theta_{tx\mu}(s))^\top D_\eta X_{tx\xi}(s) +D_v^2 f (s,\theta_{tx\mu}(s))^\top  D_\eta v_{tx\xi}(s)+D_v b(s,\theta_{tx\mu}(s))^\top D_\eta P_{tx\xi}(s) \notag\\
    &+ \left(P_{tx\mu}(s)\right)^\top \bigg\{\widetilde{\e}\left[ D_y\frac{d}{d\nu}D_v b(s,\theta_{tx\mu}(s))\left(\widetilde{X_{t\xi}}(s) \right) \left(\widetilde{D_x X_{tx\mu}}(s)\left|_{x=\widetilde{\xi}}\ \right. \widetilde{\eta}+\widetilde{\bd_\eta X_{t\xi}}(s)\right)\right] \notag \\
    &\quad\qquad\qquad\qquad +D_x D_v b(s,\theta_{tx\mu}(s)) D_\eta X_{tx\xi}(s)+ D_v^2 b(s,\theta_{tx\mu}(s)) D_\eta v_{tx\xi}(s)\bigg\} , \label{FB:mu'_condition}
\end{align}
\normalsize
where $\left(\widetilde{\xi},\widetilde{\eta},\widetilde{X_{t\xi}}(s),\widetilde{D_x X_{tx\mu}}(s),\widetilde{\bd_\eta X_{t\xi}}(s)\right)$ is an independent copy of $$(\xi,\eta,X_{t\xi}(s),D_x X_{tx\mu}(s),\bd_\eta X_{t\xi}(s)).$$
 We then have the following. 

\begin{theorem}\label{prop:3}
	Under Assumptions (A1)-(A3) and the validity of \eqref{lem:2_0}, for any $\eta\in L_{\f_t}^2$, there is a unique adapted solution $D_\eta \Theta_{tx\xi}:=\left(D_\eta X_{tx\xi},D_\eta P_{tx\xi},D_\eta Q_{tx\xi},D_\eta v_{tx\xi}\right)$ of FBSDEs \eqref{FB:mu'}-\eqref{FB:mu'_condition}, and $D_\eta \Theta_{tx\xi}(s)$ are the respective G\^ateaux derivatives of the component processes of $\Theta_{tx\mu}(s)$ with respect to the lifting $\xi\sim\mu$ along the direction $\eta$. Moreover, the G\^ateaux derivatives satisfy the following boundedness property	
	\begin{align}\label{prop3_1}
		\e\bigg[\sup_{t\le s\le T}\left|\left(D_\eta X_{tx\xi}(s),D_\eta P_{tx\xi}(s),D_\eta v_{tx\xi}(s)\right)^\top\right|^2 +\int_t^T|D_\eta Q_{tx\xi}(s)|^2 ds\bigg]\le C\e\|\eta\|_2^2,
	\end{align}
	and they are linear in $\eta$ and continuous in $(x,\xi)$.
\end{theorem}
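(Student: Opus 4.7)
The plan is to mirror the three-step scheme already carried out in Lemma~\ref{lem:3}/Theorem~\ref{lem:4} (well-posedness, energy estimate, identification as a G\^ateaux derivative), treating FBSDEs \eqref{FB:mu'}--\eqref{FB:mu'_condition} as a \emph{linear non-homogeneous} system whose coefficients are governed by the already-known processes $\theta_{tx\mu}$, $P_{tx\mu}$, $X_{t\xi}$, $D_xX_{tx\mu}$, $\bd_\eta X_{t\xi}$. The non-homogeneous terms involve $\widetilde{D_xX_{tx\mu}}(s)|_{x=\widetilde\xi}\widetilde\eta + \widetilde{\bd_\eta X_{t\xi}}(s)$, whose $L^2$-norm is bounded by $C\|\eta\|_2$ in view of \eqref{thm2_1} and \eqref{lem3_1} applied to $\bd_\eta\Theta_{t\xi}$ through Lemma~\ref{prop:4}.

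For the well-posedness, I would observe that the second-variation matrix of the Lagrangian evaluated along $\theta_{tx\mu}$ is exactly the same block that provided the $\beta$-monotonicity structure in \eqref{FB:dr}--\eqref{FB:dr_condition}; since the cone property \eqref{cone_property_SC} remains in force for $P_{tx\mu}$, the convexity condition \eqref{convex'}, the small-mean-field conditions \eqref{small_mf_condition}, and the inequalities of Property~(S) are untouched. Hence the continuation-in-the-coefficients argument of \cite[Lemma~2.1]{AB12} applies verbatim with the same contraction constants, yielding existence and uniqueness of $D_\eta\Theta_{tx\xi}\in\sr^2_\f(t,T)\times\sr^2_\f(t,T)\times(\lr^2_\f(t,T))^n\times\lr^2_\f(t,T)$. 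The bound \eqref{prop3_1} would be produced by applying It\^o to $D_\eta P_{tx\xi}(s)^\top D_\eta X_{tx\xi}(s)$ and $|D_\eta P_{tx\xi}(s)|^2$, substituting \eqref{FB:mu'_condition} and the cone-type estimate
\[
|D_\eta P_{tx\xi}(s)|\le \tfrac{L^2}{\lambda_b}\bigl(1+\tfrac{L^2}{\lambda_b}\bigr)\bigl(|D_\eta X_{tx\xi}(s)|+|D_\eta v_{tx\xi}(s)|+\|\widetilde{D_xX_{tx\mu}}(s)|_{x=\widetilde\xi}\widetilde\eta+\widetilde{\bd_\eta X_{t\xi}}(s)\|_2\bigr),
\]
exactly as in \eqref{lem:3_2'}--\eqref{lem:3_2}, and then absorbing the cross terms via Young's inequality using the strict margins supplied by Property~(S); the forcing terms now generate a right-hand side of the form $C\|\eta\|_2^2$ rather than $C|\eta|^2$.

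The key step, and the one I expect to be the main obstacle, is the identification of $D_\eta\Theta_{tx\xi}$ as the actual G\^ateaux derivative of $\Theta_{tx\mu}$ along the lifting $\xi\sim\mu$ in direction $\eta$. The delicate point is that $\mu$ enters Problem~\eqref{intro_1'} only through the exogenous flow $\lr(X_{t\xi}(\cdot))$, so perturbing the lifting from $\xi$ to $\xi^\epsilon=\xi+\epsilon\eta$ perturbs \emph{two} layers: the distribution $\lr(X_{t\xi^\epsilon}(s))$ appearing in the coefficients of \eqref{intro_3}, and, indirectly through it, the solution processes themselves. I would set $\Delta^\epsilon\Theta:=\epsilon^{-1}(\Theta_{tx\mu^\epsilon}-\Theta_{tx\mu})$ with $\mu^\epsilon=\lr(\xi^\epsilon)$, Taylor-expand the coefficients of \eqref{intro_3} along the segment joining $\theta_{tx\mu}$ to $\theta_{tx\mu^\epsilon}$, and observe that in each integrand the distributional increment is carried by the difference quotient $\epsilon^{-1}(X_{t\xi^\epsilon}(s)-X_{t\xi}(s))$, which converges in $\sr^2_\f(t,T)$ to $D_\eta X_{t\xi}(s)=D_xX_{tx\mu}(s)|_{x=\xi}\eta+\bd_\eta X_{t\xi}(s)$ by Theorem~\ref{lem:4} combined with Lemma~\ref{prop:4}. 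Writing the system for $\Delta^\epsilon\Theta-D_\eta\Theta_{tx\xi}$, all coefficients match those of \eqref{FB:mu'}--\eqref{FB:mu'_condition} up to error terms tending to $0$ in $L^2$, and the stability estimate of Step~2 (applied with $\eta\equiv 0$ and the remainder as forcing) gives the convergence \eqref{lem4_1}-type statement, hence the G\^ateaux differentiability.

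Linearity of $D_\eta\Theta_{tx\xi}$ in $\eta$ follows because the FBSDEs \eqref{FB:mu'}--\eqref{FB:mu'_condition} depend linearly on $\eta$ through the forcing terms and uniqueness was established in Step~1. Continuity in $(x,\xi)$ is obtained by freezing the direction $\eta$, forming the difference system between two sets of data $(x,\xi)$ and $(x',\xi')$, and invoking \eqref{lem5_2}, the continuity of $D_xX_{tx\mu}$ in $(x,\mu)$ from Theorem~\ref{prop:2}, and the analogous continuity of $\bd_\eta X_{t\xi}$ in $\xi$ which falls out of the proof of Lemma~\ref{prop:4}; then applying once more the energy estimate from Step~2 completes the argument.
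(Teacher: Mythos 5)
Your proposal is correct and follows essentially the same route as the paper: well-posedness by the $\beta$-monotonicity/continuation argument of \cite[Lemma 2.1]{AB12}, the bound \eqref{prop3_1} via It\^o's formula on $D_\eta P_{tx\xi}^\top D_\eta X_{tx\xi}$ combined with the cone-type estimate (your displayed inequality is, after invoking Lemma~\ref{prop:4}, exactly the paper's \eqref{prop:3_2}) and the margins of Property (S), and identification of the G\^ateaux derivative by the difference-quotient/remainder analysis of Theorem~\ref{lem:4}. The only cosmetic difference is that the paper first rewrites the forcing term $\widetilde{D_xX_{tx\mu}}|_{x=\widetilde\xi}\widetilde\eta+\widetilde{\bd_\eta X_{t\xi}}$ as $\widetilde{D_\eta X_{t\xi}}$ via Lemma~\ref{prop:4} before running the energy estimate, which is equivalent to your treatment of it as a non-homogeneous term bounded by $C\|\eta\|_2$.
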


In the theorem, the G\^ateaux derivatives of $\Theta_{tx\mu}(s)$ with respect to the lifting $\xi\sim\mu$ along the direction $\eta$ actually should be denoted by $D_\eta \Theta_{tx\lr(\xi)}$, but for national convenience, we use $D_\eta \Theta_{tx\xi}$ instead by recalling that the process $\Theta_{tx\lr(\xi)}$ depends on $\xi$ only through its law $\xi$. The proof is similar to that of Lemma~\ref{lem:3}, and its key  is \eqref{zw:1} and the following the cone property:
\begin{align*}
    \left|D_\eta P_{tx\xi}(s)\right|\le &\frac{L^2}{\lambda_b} \left(1+\frac{L^2}{\lambda_b}\right) \left(\left|D_\eta X_{tx\xi}(s)\right|+ \left\|D_\eta X_{t\xi}(s)\right\|_2 +\left|D_\eta v_{tx\xi}(s)\right|\right),
\end{align*}
which is a consequence of the assumptions (A2), \eqref{generic:condition:b}, \eqref{generic:condition:b'} and the estimate \eqref{zw:1} (see \eqref{prop:3_2'} and \eqref{prop:3_2} for details). The detailed proof is given in Appendix~\ref{pf:prop:3}.

Now we calculate the derivative  of $\Theta_{tx\mu}(s)$ with respect to the initial distribution $\mu\in\pr_2(\brn)$. In view of the processes $\Theta_{t\xi}$, $\Theta_{tx\mu}$ and $D_x\Theta_{tx\mu}$, we consider the following FBSDEs for $\bd \Theta_{t\xi}:=\left(\bd X_{t\xi},\bd P_{t\xi},\bd Q_{t\xi},\bd v_{t\xi}\right)$:
\small
\begin{align}
		\bd X_{t\xi}(s,y)=\ & \int_t^s \bigg\{D_x b(r, \theta_{t\xi}(r))\bd X_{t\xi}(r,y)+D_v b(r, \theta_{t\xi}(r))\bd v_{t\xi}(r,y) \notag \\
        &\qquad +\widetilde{\e}\bigg[ D_y \frac{db}{d\nu} (r,\theta_{t\xi}(r))\left(\widetilde{X_{ty\mu}}(r)\right) \widetilde{D_y X_{ty\mu}}(r) \bigg] \notag \\
        &\qquad +\widetilde{\e}\bigg[ D_y \frac{db}{d\nu} (r,\theta_{t\xi}(r))\left(\widetilde{X_{t\xi}}(r)\right) \widetilde{\bd X_{t\xi}}(r,y)\bigg]\bigg\}dr \notag\\
        &+\int_t^s \bigg\{ \sigma_1(r)\bd X_{t\xi}(r,y) + \widetilde{\e}\bigg[ D_y \frac{d\sigma_0}{d\nu} (r,\lr(X_{t\xi}(r)))\left(\widetilde{X_{ty\mu}}(r)\right) \widetilde{D_y X_{ty\mu}}(r) \bigg] \notag \\
        &\qquad\qquad +\widetilde{\e}\bigg[ D_y \frac{d\sigma_0}{d\nu} (r,\lr(X_{t\xi}(r)))\left(\widetilde{X_{t\xi}}(r)\right) \widetilde{\bd X_{t\xi}}(r,y)\bigg] \bigg\}dB(r); \notag \\
		\bd P_{t\xi}(s,y)=\ & -\int_s^T\bd Q_{t\xi}(r,y)dB(r)+ D_x^2 g (X_{t\xi}(T),\lr(X_{t\xi}(T)))^\top  \bd X_{t\xi}(T,y) \notag \\
        &+\widetilde{\e}\left[\left(D_y \frac{d}{d\nu}D_x g (X_{t\xi}(T),\lr(X_{t\xi}(T)))\left(\widetilde{X_{ty\mu}}(T)\right) \right)^\top \widetilde{D_y X_{ty\mu}}(T)\right] \notag \\
        &+\widetilde{\e}\left[\left(D_y \frac{d}{d\nu}D_x g (X_{t\xi}(T),\lr(X_{t\xi}(T)))\left(\widetilde{X_{t\xi}}(T)\right) \right)^\top \widetilde{\bd X_{t\xi}}(T,y)\right] \notag \\
		&+\int_s^T\Bigg\{ D_xb(r,\theta_{t\xi}(r))^\top \bd P_{t\xi}(r,y)+\sum_{j=1}^n\left(\sigma_1^j (r)\right)^\top \bd Q_{t\xi}^j(r,y) \notag \\
        &\qquad\qquad + \left(P_{t\xi}(r)\right)^\top \bigg\{D_x^2 b(r,\theta_{t\xi}(r)) \bd X_{t\xi}(r,y)+ D_vD_x b(r,\theta_{t\xi}(r)) \bd v_{t\xi}(r,y) \notag \\
        &\qquad\qquad\qquad\qquad\qquad +\widetilde{\e}\left[ D_y\frac{d}{d\nu}D_x b(r,\theta_{t\xi}(r))\left(\widetilde{X_{ty\mu}}(r)\right) \widetilde{D_y X_{ty\mu}}(r)\right] \notag\\
        &\qquad\qquad\qquad\qquad\qquad +\widetilde{\e}\left[ D_y\frac{d}{d\nu}D_x b(r,\theta_{t\xi}(r))\left(\widetilde{X_{t\xi}}(r)\right) \widetilde{\bd X_{t\xi}}(r,y)\right]\bigg\}  \notag\\
        &\qquad\qquad +D_x^2 f  (r,\theta_{t\xi}(r))^\top \bd X_{t\xi}(r,y) + D_vD_x f (r,\theta_{t\xi}(r))^\top  \bd v_{t\xi}(r,y) \notag\\
        &\qquad\qquad+\widetilde{\e}\left[\left(D_y \frac{d}{d\nu}D_x f  (r,\theta_{t\xi}(r))\left(\widetilde{X_{ty\mu}}(r)\right)\right)^\top   \widetilde{D_y X_{ty\mu}}(r) \right] \notag \\
        &\qquad\qquad+\widetilde{\e}\left[\left(D_y \frac{d}{d\nu}D_x f  (r,\theta_{t\xi}(r))\left(\widetilde{X_{t\xi}}(r)\right)\right)^\top   \widetilde{\bd X_{t\xi}}(r,y)\right]\Bigg\}dr, \quad (s,y)\in[t,T]\times\brn; \label{FB:xi_y}
\end{align}
\normalsize
with the following condition 
\small
\begin{align}
    0=\ & \widetilde{\e}\left[\left(D_y \frac{d}{d\nu}D_v f  (s,\theta_{t\xi}(s))\left(\widetilde{X_{ty\mu}}(s)\right)\right)^\top \widetilde{D_y X_{ty\mu}}(s)\right] \notag \\
    &+\widetilde{\e}\left[\left(D_y \frac{d}{d\nu}D_v f  (s,\theta_{t\xi}(s))\left(\widetilde{X_{t\xi}}(s)\right)\right)^\top  \widetilde{\bd X_{t\xi}}(s,y)\right] \notag \\
    &+D_x D_v f  (s,\theta_{t\xi}(s))^\top \bd X_{t\xi}(s,y) +D_v^2 f (s,\theta_{t\xi}(s))^\top  \bd v_{t\xi}(s,y)+D_v b(s,\theta_{t\xi}(s))^\top \bd P_{t\xi}(s,y) \notag\\
    &+ \left(P_{t\xi}(s)\right)^\top \bigg\{D_x D_v b(s,\theta_{t\xi}(s)) \bd X_{t\xi}(s,y)+ D_v^2 b(s,\theta_{t\xi}(s)) \bd v_{t\xi}(s,y) \notag \\
    &\qquad\qquad\qquad +\widetilde{\e}\left[ D_y\frac{d}{d\nu}D_v b(s,\theta_{t\xi}(s))\left(\widetilde{X_{ty\mu}}(s) \right) \widetilde{D_y X_{ty\mu}}(s) \right]\notag\\
    &\qquad\qquad\qquad +\widetilde{\e}\left[ D_y\frac{d}{d\nu}D_v b(s,\theta_{t\xi}(s))\left(\widetilde{X_{t\xi}}(s) \right) \widetilde{\bd X_{t\xi}}(s,y)\right]\bigg\}, \label{FB:xi_y_v}
\end{align}
\normalsize
where $D_y X_{ty\mu}=D_x X_{tx\mu}\big|_{x=y}$, and $\left(\widetilde{X_{ty\mu}}(s),\widetilde{X_{t\xi}}(s),\widetilde{D_y X_{ty\mu}}(s),\widetilde{\bd X_{t\xi}}(s,y)\right)$ is an independent copy of $(X_{ty\mu}(s),X_{t\xi}(s),D_y X_{ty\mu}(s),\bd X_{t\xi}(s,y))$. Then, we also consider the following FBSDEs for $D \Theta_{tx\mu}:=\left(D X_{tx\mu},D P_{tx\mu},D Q_{tx\mu},D v_{tx\mu}\right)$:
\small
\begin{align}
		D X_{tx\mu}(s,y)=\ & \int_t^s \bigg\{ D_x b(r, \theta_{tx\mu}(r))D X_{tx\mu}(r,y)+D_v b(r, \theta_{tx\mu}(r))D v_{tx\mu}(r,y) \notag \\
        &\qquad +\widetilde{\e}\bigg[ D_y \frac{db}{d\nu} (r,\theta_{tx\mu}(r))\left(\widetilde{X_{ty\mu}}(r)\right) \widetilde{D_y X_{ty\mu}}(r) \bigg] \notag \\
        &\qquad +\widetilde{\e}\bigg[ D_y \frac{db}{d\nu} (r,\theta_{tx\mu}(r))\left(\widetilde{X_{t\xi}}(r)\right) \widetilde{\bd X_{t\xi}}(r,y)\bigg] \bigg\}dr \notag \\
        &+\int_t^s \bigg\{ \sigma_1(r)D X_{tx\mu}(r,y) + \widetilde{\e}\bigg[ D_y \frac{d\sigma_0}{d\nu} (r,\lr(X_{t\xi}(r)))\left(\widetilde{X_{ty\mu}}(r)\right) \widetilde{D_y X_{ty\mu}}(r) \bigg] \notag \\
        &\quad\qquad + \widetilde{\e}\bigg[ D_y \frac{d\sigma_0}{d\nu} (r,\lr(X_{t\xi}(r)))\left(\widetilde{X_{t\xi}}(r)\right) \widetilde{\bd X_{t\xi}}(r,y)\bigg] \bigg\} dB(r), \notag \\
		D P_{tx\mu}(s,y)=\ & D_x^2 g (X_{tx\mu}(T),\lr(X_{t\xi}(T)))^\top  D X_{tx\mu}(T,y) \notag\\
        &+\widetilde{\e}\left[\left(D_y \frac{d}{d\nu}D_x g (X_{tx\mu}(T),\lr(X_{t\xi}(T)))\left(\widetilde{X_{ty\mu}}(T)\right) \right)^\top  \widetilde{D_y X_{ty\mu}}(T)\right] \notag \\
        &+\widetilde{\e}\left[\left(D_y \frac{d}{d\nu}D_x g (X_{tx\mu}(T),\lr(X_{t\xi}(T)))\left(\widetilde{X_{t\xi}}(T)\right) \right)^\top \widetilde{\bd X_{t\xi}}(T,y) \right] \notag \\
		&+\int_s^T\Bigg\{ D_xb(r,\theta_{tx\mu}(r))^\top D P_{tx\mu}(r,y)+\sum_{j=1}^n\left(\sigma_1^j (r)\right)^\top D Q_{tx\mu}^j(r,y) \notag \\
        &\quad\qquad + \left(P_{tx\mu}(r)\right)^\top \bigg\{\widetilde{\e}\left[ D_y\frac{d}{d\nu}D_x b(r,\theta_{tx\mu}(r))\left(\widetilde{X_{ty\mu}}(r)\right) \widetilde{D_y X_{ty\mu}}(r) \right] \notag \\
        &\quad\qquad\qquad\qquad\qquad +\widetilde{\e}\left[ D_y\frac{d}{d\nu}D_x b(r,\theta_{tx\mu}(r))\left(\widetilde{X_{t\xi}}(r)\right) \widetilde{\bd X_{t\xi}}(r,y) \right] \notag \\
        &\quad\qquad+D_x^2 b(r,\theta_{tx\mu}(r)) D X_{tx\mu}(r,y)+ D_vD_x b(r,\theta_{tx\mu}(r)) D v_{tx\mu}(r,y) \bigg\} \notag\\
        &\quad\qquad +\widetilde{\e}\left[\left(D_y \frac{d}{d\nu}D_x f  (r,\theta_{tx\mu}(r))\left(\widetilde{X_{ty\mu}}(r)\right)\right)^\top \widetilde{D_y X_{ty\mu}}(r)\right] \notag \\
        &\quad\qquad +\widetilde{\e}\left[\left(D_y \frac{d}{d\nu}D_x f  (r,\theta_{tx\mu}(r))\left(\widetilde{X_{t\xi}}(r)\right)\right)^\top \widetilde{\bd X_{t\xi}}(r,y) \right] \notag \\
        &+D_x^2 f  (r,\theta_{tx\mu}(r))^\top D X_{tx\mu}(r,y) + D_vD_x f (r,\theta_{tx\mu}(r))^\top  D v_{tx\mu}(r,y)\Bigg\}dr \notag \\
        &-\int_s^TD Q_{tx\mu}(r,y)dB(r), \quad (s,y)\in[t,T]\times\brn, \label{FB:mu_y}
\end{align}
\normalsize
with (as a consequence of taking the linear functional derivative in the first order condition)
\small
\begin{align}
    0=\ & \widetilde{\e}\left[\left(D_y \frac{d}{d\nu}D_v f  (s,\theta_{tx\mu}(s))\left(\widetilde{X_{ty\mu}}(s)\right)\right)^\top \widetilde{D_y X_{ty\mu}}(s) \right] \notag \\
    &+\widetilde{\e}\left[\left(D_y \frac{d}{d\nu}D_v f  (s,\theta_{tx\mu}(s))\left(\widetilde{X_{t\xi}}(s)\right)\right)^\top \widetilde{\bd X_{t\xi}}(s,y) \right] \notag \\
    &+D_x D_v f  (s,\theta_{tx\mu}(s))^\top D X_{tx\mu}(s,y) +D_v^2 f (s,\theta_{tx\mu}(s))^\top  D v_{tx\mu}(s,y)+D_v b(s,\theta_{tx\mu}(s))^\top D P_{tx\mu}(s,y) \notag\\
    &+ \left(P_{tx\mu}(s)\right)^\top \bigg\{\widetilde{\e}\left[ D_y\frac{d}{d\nu}D_v b(s,\theta_{tx\mu}(s))\left(\widetilde{X_{ty\mu}}(s) \right) \widetilde{D_y X_{ty\mu}}(s)\right] \notag \\
    &\quad\qquad\qquad\qquad +\widetilde{\e}\left[ D_y\frac{d}{d\nu}D_v b(s,\theta_{tx\mu}(s))\left(\widetilde{X_{t\xi}}(s) \right) \widetilde{\bd X_{t\xi}}(s,y) \right] \notag \\
    &\quad\qquad\qquad\qquad +D_x D_v b(s,\theta_{tx\mu}(s)) D X_{tx\mu}(s,y)+ D_v^2 b(s,\theta_{tx\mu}(s)) D v_{tx\mu}(s,y)\bigg\}. \label{FB:mu_y_condition}
\end{align}
\normalsize
Since the FBSDEs \eqref{FB:mu_y}-\eqref{FB:mu_y_condition} depend on $\xi$ only through its law $\mu$, it is legitimate to use the subscript $\mu$ in $D\Theta_{tx\mu}$. Finally, the following result shows the well-posedness of FBSDEs \eqref{FB:xi_y}-\eqref{FB:xi_y_v}  and \eqref{FB:mu_y}-\eqref{FB:mu_y_condition}, and also about the linear functional differentiability of $\Theta_{tx\mu}$ in $\mu$. 

\begin{theorem}\label{prop:5}
	Under Assumptions (A1)-(A3) and the validity of \eqref{lem:2_0}, there is a unique adapted solution $\bd \Theta_{t\xi}(s,y)$ of FBSDEs \eqref{FB:xi_y}-\eqref{FB:xi_y_v}, and a unique adapted solution $D\Theta_{t\xi}(s,y)$ of FBSDEs \eqref{FB:mu_y}-\eqref{FB:mu_y_condition}, satisfying the boundedness property:
    \begin{equation}\label{prop5_01}
        \begin{aligned}
            &\e\bigg[\sup_{t\le s\le T}\left|\left(\bd X_{t\xi}(s,y),\bd P_{t\xi}(s,y),\bd v_{t\xi}(s,y)\right)^\top\right|^2+\int_t^T\left|\bd Q_{t\xi}(s,y)\right|^2 ds\bigg]\le C, \\
            &\e\bigg[\sup_{t\le s\le T}\left|\left(D X_{tx\mu}(s,y),D P_{tx\mu}(s,y),D v_{tx\mu}(s,y)\right)^\top\right|^2+\int_t^T\left|D Q_{tx\mu}(s,y)\right|^2 ds\bigg]\le C,
        \end{aligned}
    \end{equation}
    and these component processes are jointly continuous in $(y,\xi,x,\mu)$. Moreover, for any $\xi,\eta\in L_{\f_t}^2$ independent of the Brownian motion $B$ such that $\lr(\xi)=\mu$, we have
	\begin{equation}\label{prop5_03}
		\begin{split}
			&\bd_\eta X_{t\xi}(s)=\widehat{\e}\left[\bd X_{t\xi}\left(s,\widehat{\xi}\right) \widehat{\eta}\right],\quad \bd_\eta v_{t\xi}(s)=\widehat{\e}\left[\bd v_{t\xi}\left(s,\widehat{\xi}\right) \widehat{\eta}\right],\\
			&\bd_\eta P_{t\xi}(s)=\widehat{\e}\left[\bd P_{t\xi}\left(s,\widehat{\xi}\right) \widehat{\eta}\right],\quad \bd_\eta Q_{t\xi}(s)=\widehat{\e}\left[\bd Q_{t\xi}\left(s,\widehat{\xi}\right) \widehat{\eta}\right],\quad s\in[t,T];
		\end{split}
	\end{equation}
	and 
	\begin{equation}\label{prop5_04}
		\begin{split}
			&D_\eta X_{tx\xi}(s)=\widehat{\e}\left[D X_{tx\mu}\left(s,\widehat{\xi}\right) \widehat{\eta}\right],\quad D_\eta v_{tx\xi}(s)=\widehat{\e}\left[D v_{tx\mu}\left(s,\widehat{\xi}\right) \widehat{\eta}\right],\\
			&D_\eta P_{tx\xi}(s)=\widehat{\e}\left[D P_{tx\mu}\left(s,\widehat{\xi}\right) \widehat{\eta}\right],\quad D_\eta Q_{tx\xi}(s)=\widehat{\e}\left[D Q_{tx\mu}\left(s,\widehat{\xi}\right) \widehat{\eta}\right],\quad s\in[t,T],
		\end{split}
	\end{equation}
	where $(\widehat{\xi},\widehat{\eta})$ is an independent copy of $(\xi,\eta)$. That is, the mapping 	
	\begin{align*}
		\mu \mapsto (X_{tx\mu}(s),v_{tx\mu}(s),P_{tx\mu}(s),Q_{tx\mu}(s))
	\end{align*}
	is linearly functionally differentiable, with the respective derivatives being
	\begin{align*}
		&D_y\frac{d}{d\nu} X_{tx\mu}(s)(y)=D X_{tx\mu}(s,y),\quad D_y\frac{d}{d\nu} v_{tx\mu}(s)(y)=D v_{tx\mu}(s,y),\\
		&D_y\frac{d}{d\nu} P_{tx\mu}(s)(y)=D P_{tx\mu}(s,y),\quad D_y\frac{d}{d\nu} Q_{tx\mu}(s)(y)=D Q_{tx\mu}(s,y).
	\end{align*}
\end{theorem}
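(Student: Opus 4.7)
The plan has three stages: first, establish well-posedness of the two linear FBSDE systems by the same $\beta$-monotonicity route used for Lemma~\ref{lem:3}; second, derive the $L^{2}$-boundedness \eqref{prop5_01} via an It\^o-energy estimate coupled with a cone inequality; third, identify the candidate derivatives \eqref{prop5_03}--\eqref{prop5_04} by a uniqueness argument against FBSDEs \eqref{FB:dr'}--\eqref{FB:dr'_condition} and \eqref{FB:mu'}--\eqref{FB:mu'_condition}, and conclude the linear functional differentiability assertion via the lifting formula \eqref{lem01_1}.

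For stage one, fix $y\in\brn$ and view \eqref{FB:xi_y}--\eqref{FB:xi_y_v} as a linear FBSDE in the unknown $\bd\Theta_{t\xi}(\cdot,y)$, with forcing depending only on the already-constructed processes $\Theta_{t\xi}$ and the parameter-indexed process $D_yX_{ty\mu}=D_xX_{tx\mu}|_{x=y}$ supplied by Theorem~\ref{prop:2}. The linear operator driving the unknown has exactly the same structure as the one in \eqref{FB:dr}--\eqref{FB:dr_condition}; hence Assumptions~(A1)--(A3) with \eqref{lem:2_0} yield the same $\beta$-monotonicity, and the Hilbert-space FBSDE well-posedness result \cite[Lemma 2.1]{AB12} gives a unique adapted solution. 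The same argument treats \eqref{FB:mu_y}--\eqref{FB:mu_y_condition}. For the boundedness, I would apply It\^o's formula to the pairing $\bd P_{t\xi}(s,y)^\top\bd X_{t\xi}(s,y)$, substitute \eqref{FB:xi_y_v} into the resulting cross term, and invoke the convexity inequality \eqref{convex'}, the small mean field effect condition \eqref{small_mf_condition} and the cone-type estimate
\begin{equation*}
|\bd P_{t\xi}(s,y)|\le \tfrac{L^{2}}{\lambda_b}\Bigl(1+\tfrac{L^{2}}{\lambda_b}\Bigr)\bigl(|\bd X_{t\xi}(s,y)|+\|\bd X_{t\xi}(s,y)\|_2+|\bd v_{t\xi}(s,y)|+\|D_yX_{ty\mu}(s)\|_2\bigr),
\end{equation*}
derived from \eqref{FB:xi_y_v} and \eqref{generic:condition:b'} in the same fashion as \eqref{lem:3_2'}--\eqref{lem:3_2}. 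The exogenous forcing carried by $D_yX_{ty\mu}$ is absorbed by Young's inequality thanks to the $L^{2}$-bound \eqref{thm2_1}, uniformly in $y$; Gr\"onwall's inequality together with BDG then closes the estimate. The analogous calculation yields the bound for $D\Theta_{tx\mu}(s,y)$, while joint continuity in $(y,\xi,x,\mu)$ follows from the continuity of the driving data (Theorems~\ref{lem:2}, \ref{lem:5}, \ref{prop:2}) via the standard difference--Gr\"onwall argument applied to the two linear systems.

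For the identification, fix $\eta\in L^{2}_{\f_t}$ and let $(\widehat{\xi},\widehat{\eta})$ be an independent copy of $(\xi,\eta)$. Evaluate \eqref{FB:xi_y}--\eqref{FB:xi_y_v} at $y=\widehat{\xi}$, multiply through by $\widehat{\eta}$, and take $\widehat{\e}$. Using Fubini together with the natural relabeling of independent copies (converting $\widetilde{\e}[\cdots \widetilde{D_yX_{ty\mu}}(r)]|_{y=\widehat{\xi}}$ into $\widetilde{\e}[\cdots \widetilde{D_xX_{tx\mu}(r)|_{x=\widetilde{\xi}}\widetilde{\eta}}]$), the four processes
$$\widehat{\e}[\bd X_{t\xi}(s,\widehat{\xi})\widehat{\eta}],\ \widehat{\e}[\bd P_{t\xi}(s,\widehat{\xi})\widehat{\eta}],\ \widehat{\e}[\bd Q_{t\xi}(s,\widehat{\xi})\widehat{\eta}],\ \widehat{\e}[\bd v_{t\xi}(s,\widehat{\xi})\widehat{\eta}]$$
satisfy exactly FBSDEs \eqref{FB:dr'}--\eqref{FB:dr'_condition}. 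The uniqueness part of Lemma~\ref{prop:4} then forces \eqref{prop5_03}. Performing the same manipulation on \eqref{FB:mu_y}--\eqref{FB:mu_y_condition} versus \eqref{FB:mu'}--\eqref{FB:mu'_condition} and invoking Theorem~\ref{prop:3} yields \eqref{prop5_04}. The linear functional differentiability assertion is then immediate: \eqref{prop5_04} expresses the G\^ateaux derivative along $\eta$ of the lifted functional $\xi\mapsto\Theta_{tx\lr(\xi)}(s)$ as an integral kernel paired with $\widehat{\eta}$, and the bridge formula \eqref{lem01_1} identifies that kernel as $D_y\tfrac{d}{d\nu}\Theta_{tx\mu}(s)(y)=D\Theta_{tx\mu}(s,y)$.

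The principal obstacle is closing the boundedness inequality in stage one: the inhomogeneous forcing produced by pairing the distributional derivatives of $b,\sigma_0,f,g$ against the exogenous $D_yX_{ty\mu}$ generates cross terms in the It\^o-energy identity that must be split by Young's inequality in the manner of \eqref{sigma_m_5}--\eqref{sigma_m_6}, preserving enough convexity margin in \eqref{lem:2_0} to dominate the surviving quadratic form in $(\bd X,\bd v)$ and to keep the bound uniform in $y$; this bookkeeping, although conceptually transparent, will constitute the technical core of the argument.
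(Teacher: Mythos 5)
Your proposal is correct and follows essentially the same route as the paper: well-posedness and the bound \eqref{prop5_01} are obtained by repeating the $\beta$-monotonicity and It\^o-energy/cone arguments of Lemma~\ref{lem:3} and Theorem~\ref{prop:3} (which is exactly what the paper does by reference), and the identities \eqref{prop5_03}--\eqref{prop5_04} are derived by evaluating at $y=\widehat{\xi}$, multiplying by $\widehat{\eta}$, applying the stochastic Fubini theorem and the relabeling of independent copies, and then invoking the uniqueness of FBSDEs \eqref{FB:dr'}--\eqref{FB:dr'_condition} and \eqref{FB:mu'}--\eqref{FB:mu'_condition}. The only point worth noting is that your cone estimate correctly carries the extra exogenous term $\|D_yX_{ty\mu}(s)\|_2$, which is the right adaptation and is absorbed exactly as you describe.
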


The proof of Theorem~\ref{prop:5} is similar to that for \cite[Theorem 4.4]{AB11}, but for the sake of reference for readers, we also put that in Appendix~\ref{pf:prop:5}.

\section{Hessian flows of the process $\Theta_{tx\mu}$}\label{sec:second_deri}

In this section, we give the second order derivatives (or
Hessian flows) of the process $\Theta_{tx\mu}$. We further adopt the following regularity-enhanced version of Assumptions (A1) and (A2):

\textbf{(A1')} The coefficients $b$ and $\sigma$ satisfy (A1). 
The derivatives $D_y^2 \frac{db}{d\nu}$ and $D_y^2 \frac{d\sigma_0}{d\nu}$ exist, and they are continuous in all their arguments and are globally bounded in norm by $L$. The following derivatives exist, and they are continuous in all their arguments:
\begin{align*}
	D_x^3 b,\ D_x^2 D_v b, \ D_x D_v^2 b,\ D_v^3b, D_y^2 \frac{d}{d\nu}D_x b,\ D_y^2 \frac{d}{d\nu}D_v b
\end{align*}
and they are globally bounded in norm by $\frac{L}{1+|x|+|v|+W_2(m,\delta_0)}$.

\textbf{(A2')} The functionals $f$ and $g$ satisfy (A2). The following derivatives exist, and they are continuous in all their arguments and are globally bounded by $L$ in norm:
\begin{align*}
	D_x^3 f,\ D_x^2 D_v f, \ D_x D_v^2 f,\ D_v^3f, D_y^2 \frac{d}{d\nu}D_x f,\ D_y^2 \frac{d}{d\nu}D_v f,\ D_x^3 g,\ D_y^2 \frac{d}{d\nu}D_x g.
\end{align*}

We first characterize the second order derivatives of $\Theta_{tx\mu}(s)$ in $x$ as a Hessian flow. From FBSDEs \eqref{FB:x}-\eqref{FB:x_condition} and Theorem~\ref{prop:2}, the second order derivatives fulfill the following system of FBSDEs. 
\small
\begin{align}
		D_x^2 X_{tx\mu}(s)=\ & \int_t^s \left[D_x b\left(r,\theta_{tx\mu}(r)\right)D_x^2 X_{tx\mu}(r)+ D_v b\left(r,\theta_{tx\mu}(r)\right)D_x^2 v_{tx\mu}(r) \right] dr \notag \\
        & + \int_t^s \left[\begin{pmatrix}
            D_x^2 b & D_xD_v b \\ D_vD_x b & D_v^2 b
        \end{pmatrix}\left(r,\theta_{tx\mu}(r)\right)\right]\begin{pmatrix}
            D_x X_{tx\mu}(r) \\ D_x X_{tx\mu}(r)
        \end{pmatrix}^{\otimes2} dr \notag \\
        & +\int_t^s \sigma_1(r) D_x^2 X_{tx\mu}(r) dB(r); \notag \\
		\text{and}\quad D_x^2 P_{tx\mu}(s)=\ & D_x^2 g\left(X_{tx\mu}(T),\lr(X_{t\xi}(T))\right)^\top D_x^2 X_{tx\mu}(T)+D_x^3 g\left(X_{tx\mu}(T),\lr(X_{t\xi}(T))\right)\left(D_xX_{tx\mu}(T)\right)^{\otimes2} \notag \\
        & +\int_s^T \bigg\{ D_x b\left(r,\theta_{tx\mu}(r)\right)^\top D_x^2 P_{tx\mu}(r)+\sum_{j=1}^n\left(\sigma_1^j(r)\right)^\top D_x^2 Q_{tx\mu}^j(r)  \notag \\
        &\qquad\qquad +\left(P_{tx\mu}(r)\right)^\top \Big[D_x^2 b\left(r,\theta_{tx\mu}(r)\right)D_x^2 X_{tx\mu}(r) + D_v D_x b\left(r,\theta_{tx\mu}(r)\right)D_x^2 v_{tx\mu}(r)\Big] \notag \\
        &\qquad\qquad +\left[D_x^2 f (r,\theta_{tx\mu}(r)) \right]^\top D_x^2 X_{tx\mu}(r) + \left[D_vD_x f (r,\theta_{tx\mu}(r))\right]^\top D_x^2 v_{tx\mu}(r) \bigg\} dr\notag \\
        &+2\int_s^T \left(D_x P_{tx\mu}(r)\right)^\top \left[\left(D_x^2b,D_vD_x b\right)\left(r,\theta_{tx\mu}(r)\right)\right]\begin{pmatrix}
            D_x X_{tx\mu}(r) \\ D_x v_{tx\mu}(r)
        \end{pmatrix} dr \notag \\
        & +\int_s^T \left(P_{tx\mu}(r)\right)^\top \left[\begin{pmatrix}
            D_x^3 b & D_xD_vD_x b \\ D_vD_x^2 b & D_v^2 D_x b
        \end{pmatrix}\left(r,\theta_{tx\mu}(r)\right)\right]\begin{pmatrix}
            D_xX_{tx\mu}(r) \\ D_xv_{tx\mu}(r)
        \end{pmatrix}^{\otimes2} dr \notag \\
        & +\int_s^T \left[\begin{pmatrix}
            D_x^3 f & D_xD_vD_x f \\ D_vD_x^2 f & D_v^2 D_x f
        \end{pmatrix}\left(r,\theta_{tx\mu}(r)\right)\right]\begin{pmatrix}
            D_xX_{tx\mu}(r) \\ D_xv_{tx\mu}(r)
        \end{pmatrix}^{\otimes2} dr \notag \\
        &-\int_s^T D_x^2 Q_{tx\mu} (r)dB(r),\quad s\in[t,T], \label{FB:xx}
\end{align}
\normalsize
with ( via taking the derivative with respect to $x$ in  Condition \eqref{FB:x_condition})
\begin{align}
	0=\ &\left[D_xD_v f (s,\theta_{tx\mu}(s)) \right]^\top D_x^2 X_{tx\mu}(s) + \left[D_v^2 f (s,\theta_{tx\mu}(s))\right]^\top D_x^2 v_{tx\mu}(s)+D_v b(s,\theta_{tx\mu}(s))^\top D_x^2 P_{tx\mu}(s) \notag \\
    &+\left(P_{tx\mu}(s)\right)^\top \left[D_xD_v b\left(s,\theta_{tx\mu}(s)\right)D_x^2 X_{tx\mu}(s) + D_v^2  b\left(s,\theta_{tx\mu}(s)\right)D_x^2 v_{tx\mu}(s)\right] \notag \\
    &+ \left[\begin{pmatrix}
            D_x^2D_v f & D_xD_v^2 f \\ D_vD_xD_v f & D_v^3 f
        \end{pmatrix}\left(s,\theta_{tx\mu}(s)\right)\right]\begin{pmatrix}
            D_xX_{tx\mu}(s) \\ D_xv_{tx\mu}(s)
        \end{pmatrix}^{\otimes2} \notag \\
    &+ 2 \left(D_x P_{tx\mu}(s)\right)^\top \left[\left(D_x D_v b,D_v^2 b\right)\left(s,\theta_{tx\mu}(s)\right)\right]\begin{pmatrix}
            D_x X_{tx\mu}(s) \\ D_x v_{tx\mu}(s)
        \end{pmatrix} \notag\\
    &+\left(P_{tx\mu}(s)\right)^\top \left[\begin{pmatrix}
            D_x^2 D_v b & D_xD_v^2 b \\ D_vD_xD_v b & D_v^3 b
        \end{pmatrix}\left(r,\theta_{tx\mu}(r)\right)\right]\begin{pmatrix}
            D_xX_{tx\mu}(r) \\ D_xv_{tx\mu}(r)
        \end{pmatrix}^{\otimes2}. \label{FB:xx_condition}
\end{align}
In view of the $L^4$-norm boundedness of $D_x \Theta_{tx\mu}$ in \eqref{thm2_1} of Theorem~\ref{prop:2}, the proof of the following result is similar to that leading to statements in the last section (such as Lemma~\ref{lem:3} and Theorem~\ref{prop:3}), and we omit it here. 

\begin{theorem}\label{prop:8}
	Under Assumptions (A1), (A2') and (A3), and the validity of \eqref{lem:2_0}, there is a unique adapted solution $D_x^2 \Theta_{tx\mu}:=\left(D_x^2 X_{tx\mu},D_x^2 P_{tx\mu},D_x^2 Q_{tx\mu},D_x^2 v_{tx\mu}\right)$ of FBSDEs \eqref{FB:xx}-\eqref{FB:xx_condition}, and the component processes of $D_x^2 \Theta_{tx\mu}(s)$ are the respective second order G\^ateaux derivatives of the corresponding component processes of $\Theta_{tx\mu}(s)$ with respect to $x$. Moreover, they satisfy the following boundedness property	
    \begin{align}\label{boundedness-property}
		\e\bigg[\sup_{t\le s\le T}\left|\left(D_x^2 X_{tx\mu}(s),D_x^2 P_{tx\mu}(s),D_x^2 v_{tx\mu}(s)\right)^\top\right|^2+ \int_t^T\left|D_x^2 Q_{tx\mu}(s)\right|^2 ds\bigg]\le C,
	\end{align}
    and these component processes are continuous in $(x,\mu)$. 
\end{theorem}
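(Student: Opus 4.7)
The plan is to treat the system \eqref{FB:xx}--\eqref{FB:xx_condition} as a linear FBSDE in the unknown $D_x^2 \Theta_{tx\mu}$, with coefficients that depend only on the already-known processes $\Theta_{tx\mu}$ and exogenous inhomogeneous driving terms that are quadratic in $D_x \Theta_{tx\mu}$. Structurally, the homogeneous part of this FBSDE is exactly the Jacobian-flow system \eqref{FB:x}--\eqref{FB:x_condition} studied in Theorem~\ref{prop:2}: the drift matrix is built from $D_x b$, $D_v b$ and $(P_{tx\mu})^\top D_x^2 b$, $(P_{tx\mu})^\top D_v D_x b$, the terminal coefficient is $D_x^2 g$, and the first-order condition \eqref{FB:xx_condition} has the same algebraic form as \eqref{FB:x_condition}. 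Consequently, the same $\beta$-monotonicity assembled from \eqref{convex'}, the cone property \eqref{cone_property_SC} for $P_{tx\mu}$, the bounds \eqref{generic:condition:b}--\eqref{generic:condition:b'}, \eqref{small_mf_condition}, and Condition \eqref{lem:2_0} remains valid here.

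First, I would verify that the exogenous quadratic inputs sit in the appropriate Bochner $L^2$ space. In \eqref{FB:xx} these inputs are of three types: (a) $D_x^3 g(\cdot)(D_x X_{tx\mu}(T))^{\otimes 2}$ and corresponding $D_x^3 f$, $D_x^2 D_v f$, $D_x D_v^2 f$, $D_v^3 f$ contractions with $(D_x X_{tx\mu}, D_x v_{tx\mu})^{\otimes 2}$, bounded by $L$ in view of (A2'); (b) cross-terms $(D_x P_{tx\mu})^\top (D_x^2 b, D_v D_x b)(D_x X_{tx\mu}, D_x v_{tx\mu})$, for which Assumption \eqref{generic:condition:b} provides the state-dependent bound $L_b^0/(1+|X_{tx\mu}|+|v_{tx\mu}|+W_2(\lr(X_{t\xi}),\delta_0))$; and (c) $(P_{tx\mu})^\top D_x^3 b$-type terms, for which the same inverse-growth factor combined with the cone property \eqref{cone_property_SC} for $P_{tx\mu}$ cancels the denominator, leaving an $O(1)$ multiplier. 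In all three cases the driving term is controlled pointwise by $C(|D_x X_{tx\mu}|^2 + |D_x v_{tx\mu}|^2 + |D_x P_{tx\mu}|^2)$; the $L^4$ estimate \eqref{thm2_1} from Theorem~\ref{prop:2} then yields an $L^2(\Omega\times[t,T])$ bound on the inputs by a constant $C$.

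Second, with the inputs so controlled, well-posedness follows by invoking the general existence-uniqueness result for $\beta$-monotone FBSDEs in Hilbert spaces from \cite[Lemma 2.1]{AB12}, in exactly the same manner as used for FBSDEs \eqref{FB:dr}, \eqref{FB:x}, \eqref{FB:mu'} earlier. The $L^2$-bound \eqref{boundedness-property} is then proved by the energy-type argument of Lemma~\ref{lem:3}: apply It\^o's formula to $(D_x^2 P_{tx\mu}(s))^\top D_x^2 X_{tx\mu}(s)$; substitute the first-order condition \eqref{FB:xx_condition}; use the convexity/cone constraints \eqref{convex'}, \eqref{cone_property_SC} and Assumption \eqref{generic:condition:b} to bound $|D_x^2 P_{tx\mu}|$ linearly in $|D_x^2 X_{tx\mu}|$, $|D_x^2 v_{tx\mu}|$ and the quadratic source; absorb the $\lambda_x|D_x^2 X_{tx\mu}|^2$, $\lambda_v|D_x^2 v_{tx\mu}|^2$ and $\lambda_g|D_x^2 X_{tx\mu}(T)|^2$ terms via \eqref{lem:2_0} and Young's inequality; and finally use \eqref{thm2_1} to bound the resulting source contribution by a constant.

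Third, identification as the actual second G\^ateaux derivative is carried out by the standard difference-quotient method: for the canonical direction $e_i\in\brn$, form $\epsilon^{-1}\bigl(D_x \Theta_{t(x+\epsilon e_i)\mu} - D_x \Theta_{tx\mu}\bigr)$, subtract the respective Jacobian-flow FBSDEs \eqref{FB:x}--\eqref{FB:x_condition}, and pass to the limit $\epsilon\to 0$ using the continuity of $\Theta_{tx\mu}$ and $D_x \Theta_{tx\mu}$ in $x$ (from Theorems~\ref{lem:5} and \ref{prop:2}) together with the continuity of all the second/third derivatives of $b$, $f$, $g$ guaranteed by (A1')--(A2'); the limiting FBSDE is precisely \eqref{FB:xx}--\eqref{FB:xx_condition}, and uniqueness identifies the limit with the solution constructed above. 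Continuity in $(x,\mu)$ follows by a further stability argument: subtract the FBSDEs \eqref{FB:xx}--\eqref{FB:xx_condition} at $(x,\mu)$ and $(x',\mu')$, combine the Lipschitz estimate \eqref{lem5_2} of Theorem~\ref{lem:5} and the continuity of $D_x \Theta_{tx\mu}$ in $(x,\mu)$ from Theorem~\ref{prop:2} to control the perturbation of both coefficients and inputs, and re-apply the above energy estimate. The main obstacle I expect is the bookkeeping of the various source terms: because the inverse-growth factor in \eqref{generic:condition:b} must be paired appropriately with the cone bound on $P_{tx\mu}$ to produce a constant-in-time dominant, and because the quadratic inputs must be paired with an $L^4$ bound (not merely $L^2$) on $D_x \Theta_{tx\mu}$, verifying that all the constants collapse to those controlled by \eqref{lem:2_0} requires the same careful Young-inequality balancing performed in Lemma~\ref{lem:3}, now with an additional layer of quadratic source terms.
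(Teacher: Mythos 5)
Your proposal follows exactly the route the paper indicates: the paper omits the proof of this theorem, stating only that it is ``similar to that leading to statements in the last section (such as Lemma~\ref{lem:3} and Theorem~\ref{prop:3})'' once the $L^4$-bound \eqref{thm2_1} is in hand, and your argument—treating \eqref{FB:xx}--\eqref{FB:xx_condition} as the Jacobian-flow system with quadratic exogenous sources, using the $L^4$ estimate together with the inverse-growth bounds and the cone property for $P_{tx\mu}$ to place those sources in $L^2$, then running the $\beta$-monotone well-posedness lemma, the It\^o/Young energy estimate, and the difference-quotient identification—is precisely that omitted argument. No gaps; this is the same approach.
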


We next give the derivatives in $y$ of the processes $\bd \Theta_{t\xi}(s,y)$ and $D \Theta_{tx\mu}(s,y)$ defined in FBSDEs \eqref{FB:xi_y}-\eqref{FB:xi_y_v} and FBSDEs FBSDEs \eqref{FB:mu_y}-\eqref{FB:mu_y_condition}, respectively. The proof of the following result is given in Appendix~\ref{pf:prop:9}.

\begin{theorem}\label{prop:9}
	Under Assumptions (A1'), (A2') and (A3), and the validity of \eqref{lem:2_0}, the component processes of $\bd \Theta_{t\xi}(s,y)$ are G\^ateaux differentiable in $y$, and the G\^ateaux derivatives 
    \begin{align*}
        D_y\bd \Theta_{t\xi}:=\left(D_y\bd X_{t\xi},D_y\bd P_{t\xi},D_y\bd Q_{t\xi},D_y\bd v_{t\xi}\right)
    \end{align*}
    satisfy the boundedness property
    \begin{align*}
    	\e\bigg[\sup_{t\le s\le T}\left|\left(D_y \bd X_{t\xi}(s,y),D_y\bd P_{t\xi}(s,y),D_y \bd v_{t\xi}(s,y)\right)^\top\right|^2+\int_t^T\left|D_y\bd Q_{t\xi}(s,y)\right|^2 ds\bigg]\le C,
    \end{align*}
    and they are continuous in $(\xi,y)$. Moreover, the component processes $D \Theta_{tx\mu}(s,y)$ are G\^ateaux differentiable in $y$, and the corresponding G\^ateaux derivatives $D_z\dr Y_{tx\mu}(s,z)$, $D_z\dr p_{tx\mu}(s,z)$ and $D_yD \Theta_{tx\mu}:=(D_yD X_{tx\mu}, D_yD P_{tx\mu}, D_yD Q_{tx\mu}, D_yD v_{tx\mu})$ satisfy the boundedness property
    \begin{align*}
        \e\bigg[\sup_{t\le s\le T}\left|\left(D_y D X_{tx\mu}(s,y),D_y D P_{tx\mu}(s,y),D_y D v_{tx\mu}(s,y)\right)^\top\right|^2+ \int_t^T\left|D_y D Q_{tx\mu}(s,y)\right|^2 ds\bigg]\le C,
    \end{align*}
    and they are continuous in $(x,\mu,y)$.  
\end{theorem}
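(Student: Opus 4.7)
The strategy is to differentiate the FBSDE systems \eqref{FB:xi_y}-\eqref{FB:xi_y_v} and \eqref{FB:mu_y}-\eqref{FB:mu_y_condition} formally with respect to the auxiliary spatial parameter $y$, to obtain two new linear FBSDEs in the unknowns $D_y\bd\Theta_{t\xi}(s,y)$ and $D_yD\Theta_{tx\mu}(s,y)$, and to solve them within the $\beta$-monotonicity/continuation-in-coefficient framework already invoked in Lemma~\ref{lem:3} and Theorem~\ref{prop:3}. The key structural observation is that, upon differentiation in $y$, the principal (homogeneous) part of each derived system coincides exactly with that of \eqref{FB:dr}-\eqref{FB:dr_condition} (respectively \eqref{FB:mu'}-\eqref{FB:mu'_condition}), so the $\beta$-monotonicity and the cone-type bound \eqref{lem:2_2(zw)} on $P_{t\xi}$ (resp.\ \eqref{zw:1} on $P_{tx\mu}$) carry over unchanged; only the source (forcing) terms are new.

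First I would carry out the differentiation for $\bd\Theta_{t\xi}(s,y)$. Only the terms in \eqref{FB:xi_y}-\eqref{FB:xi_y_v} of the form
$$\widetilde\e\!\Bigl[D_y\tfrac{d k}{d\nu}\bigl(r,\theta_{t\xi}(r)\bigr)\bigl(\widetilde{X_{ty\mu}}(r)\bigr)\,\widetilde{D_y X_{ty\mu}}(r)\Bigr],\qquad k\in\{b,\sigma_0,D_xb,D_vb,D_xf,D_vf\},$$
depend explicitly on $y$. Differentiation in $y$ produces two classes of source terms: class (i) involves the Hessian $D_y^2 X_{ty\mu}=D_x^2 X_{tx\mu}|_{x=y}$, whose $L^2$-boundedness is furnished by Theorem~\ref{prop:8}; class (ii) consists of quadratic expressions $(\widetilde{D_yX_{ty\mu}}(r))^{\otimes 2}$ paired with the second linear-functional derivatives $D_y^2\frac{dk}{d\nu}$ whose boundedness is postulated in Assumption (A1'), and whose integrability is controlled by the $L^4$-bound on $D_x\Theta_{tx\mu}$ in \eqref{thm2_1}. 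Treating these source terms as a known datum, the system for $D_y\bd\Theta_{t\xi}$ is structurally identical to \eqref{FB:dr}-\eqref{FB:dr_condition}, so unique solvability and the stated $L^2$-estimate follow mutatis mutandis from Lemma~\ref{lem:3} together with \cite[Lemma 2.1]{AB12}; continuity in $(\xi,y)$ follows from the linear stability of the derived FBSDEs in their data and the already-established continuity of the ingredients $\Theta_{t\xi}$, $D_x\Theta_{tx\mu}$, $D_x^2\Theta_{tx\mu}$ and $\bd\Theta_{t\xi}$.

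Having identified the candidate derivative as the solution of this linear FBSDE, the Gateaux differentiability in $y$ is then verified by forming the difference quotient $\epsilon^{-1}\bigl(\bd\Theta_{t\xi}(s,y+\epsilon h)-\bd\Theta_{t\xi}(s,y)\bigr)$, subtracting the candidate derivative, and showing that the residual solves a linear FBSDE whose coefficients tend to zero in the appropriate norm as $\epsilon\to 0$; the stability scheme of Theorem~\ref{lem:4} then yields the required convergence. The argument for $D_yD\Theta_{tx\mu}$ is entirely parallel: differentiating \eqref{FB:mu_y}-\eqref{FB:mu_y_condition} in $y$ produces source terms of classes (i) and (ii) as above, plus a third class (iii) involving the $D_y\bd\Theta_{t\xi}$ processes just constructed, which are bounded by the previous step; the principal part is again that of \eqref{FB:mu'}-\eqref{FB:mu'_condition}, so Theorem~\ref{prop:3}'s proof template applies directly.

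The main technical obstacle is essentially bookkeeping: the differentiated systems generate many cross terms pairing $P_{t\xi}$ (or $P_{tx\mu}$) with Hessians of $b$ and derivatives of $\frac{db}{d\nu}$. One must invoke the fraction-type bounds in \eqref{generic:condition:b}, together with the cone bound \eqref{lem:2_11} on $|P_{t\xi}|$ (resp.\ \eqref{cone_property_SC} on $|P_{tx\mu}|$), so that the products remain uniformly bounded; H\"older's inequality with the $L^4$-estimate \eqref{thm2_1} is what absorbs the quadratic-in-$D_xX$ source terms into the $L^2$-norm. Once this careful accounting is done, the same quadratic form that delivered $\beta$-monotonicity in the earlier lemmas again closes the estimates, and the boundedness and continuity assertions follow with the same constants $C$.
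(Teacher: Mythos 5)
Your proposal is correct and follows essentially the same route as the paper: the paper's Appendix proof writes out the differentiated systems \eqref{FB:xi_yy}--\eqref{FB:xi_yy_condition} and \eqref{FB:mu_yy}--\eqref{FB:mu_yy_condition}, whose source terms are exactly your classes (i) and (ii) (Hessian flows from Theorem~\ref{prop:8} and quadratic terms absorbed via the $L^4$-bound \eqref{thm2_1} and Assumptions (A1')--(A2')), and then invokes the same well-posedness and finite-difference convergence arguments as Lemma~\ref{lem:3} and Theorem~\ref{lem:4}. Your identification of the unchanged principal part and of the cone bounds on $P_{t\xi}$, $P_{tx\mu}$ as the mechanism controlling the cross terms is precisely what the paper relies on.
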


\section{Regularity of the value functional and solution of the master equation}\label{sec:V}

We here apply the results in Sections~\ref{sec:distribution} and \ref{sec:second_deri} to derive the spatial, distributional and temporal regularities of the value functional $V$ defined in \eqref{intro_4} as a functional in $(t,x,\mu)\in[0,T]\times\brn\times\pr_2(\brn)$. We begin with the spatial regularity of $V$ as follows. 

\begin{theorem}\label{prop:6}
	Under Assumptions (A1)-(A3) and the validity of \eqref{lem:2_0}, the value functional $V$ is $C^2$ in $x$ with the spatial derivatives
	\begin{align}
		D_x V(t,x,\mu)  =P_{tx\mu}(t), \qquad D_x^2 V(t,x,\mu)  =D_x P_{tx\mu}(t) \label{prop6_03},
	\end{align}
    and they also satisfy the growth conditions	
    \begin{equation}\label{prop6_04}
    	\begin{split}
    		|V(t,x,\mu)| \le \ & C\left[1+|x|^2+W^2_2(\mu,\delta_0)\right];\\
    		|D_x V(t,x,\mu)| \le\ & \left(1-\frac{L^3}{2\lambda_v\lambda_b}\right)^{-1}\frac{L^2}{\lambda_b}\left(1+\frac{L}{2\lambda_v}\right)\left[1+|x|+W_2(\mu,\delta_0)\right];\\
            \left|D_x^2 V(t,x,\mu)\right|\le\ & C.
    	\end{split}
    \end{equation}
    Meanwhile, the following continuity  holds true:
    \begin{align}
    	\left|D_x V(t,x',\mu')-D_x V(t,x,\mu)\right|\le C\left[|x'-x|+W_2(\mu,\mu')\right], \label{prop6_05}
    \end{align}
    and $D_x^2 V$ is continuous in $(x,\mu)$. 
\end{theorem}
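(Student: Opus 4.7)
\medskip
\noindent\textbf{Proof proposal.} The plan is to identify $D_x V$ with $P_{tx\mu}(t)$ via an adjoint/envelope argument, then differentiate the identity once more in $x$ to get $D_x^2 V$.

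\medskip
\noindent\textbf{Step 1 (formal Gâteaux derivative of $V$ in $x$).} Starting from the representation
\[
V(t,x,\mu)=\e\!\left[\int_t^T f\!\left(s,X_{tx\mu}(s),\lr(X_{t\xi}(s)),v_{tx\mu}(s)\right)ds+g\!\left(X_{tx\mu}(T),\lr(X_{t\xi}(T))\right)\right],
\]
I would form the difference quotient $[V(t,x+\epsilon e,\mu)-V(t,x,\mu)]/\epsilon$ and pass to the limit along the direction $e$, using the $L^2$-continuity of $(X_{tx\mu},v_{tx\mu})$ in $x$ from \eqref{lem5_2}, the $L^2$-boundedness of $D_x\Theta_{tx\mu}$ from Theorem~\ref{prop:2}, and uniform integrability powered by the $L^4$-bound in \eqref{thm2_1}. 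The outcome is
\begin{align*}
D_x V(t,x,\mu)\cdot e =\ & \e\!\int_t^T\!\!\left[(D_x f)^\top D_xX_{tx\mu}(s) + (D_v f)^\top D_xv_{tx\mu}(s)\right]\!e\,ds\\
& +\e\!\left[(D_x g)^\top D_xX_{tx\mu}(T)\right]\!e.
\end{align*}
Note that the law-argument $\lr(X_{t\xi})$ does not vary with $x$, so no measure-derivative terms appear here.

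\medskip
\noindent\textbf{Step 2 (adjoint identification).} I would apply It\^o's formula to $\left(P_{tx\mu}(s)\right)^\top D_x X_{tx\mu}(s)$ on $[t,T]$. Using the dynamics of $P_{tx\mu}$ in \eqref{intro_3}, the dynamics of $D_xX_{tx\mu}$ in \eqref{FB:x}, and the fact that $\sigma$ is linear in $x$ with distribution-independent coefficient multiplying $X$, the quadratic variation term contributes $\sum_j Q^{j\top}_{tx\mu}\sigma_1^j D_xX_{tx\mu}$, which cancels with the corresponding $\sigma_1^\top Q^j$ term in the $P$-drift; the $D_xb^\top P$ and $P^\top D_xb\,D_xX$ terms also cancel. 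What remains is
\[
d\!\left(P_{tx\mu}^\top D_x X_{tx\mu}\right)(s) = -(D_x f)^\top D_x X_{tx\mu}\,ds + P_{tx\mu}^\top D_v b\,D_x v_{tx\mu}\,ds + dM(s),
\]
with $M$ a martingale; invoking the first-order condition $D_v b^\top P_{tx\mu}+D_v f=0$ in the third equation of \eqref{intro_3} converts $P_{tx\mu}^\top D_v b\,D_xv_{tx\mu}$ into $-(D_v f)^\top D_xv_{tx\mu}$. Integrating from $t$ to $T$, taking expectations, and using $D_x X_{tx\mu}(t)=I$ together with $P_{tx\mu}(T)=D_x g$, I obtain
\[
P_{tx\mu}(t)\,e = \e\!\left[(D_x g)^\top D_xX_{tx\mu}(T)e\right] + \e\!\int_t^T\!\!\left[(D_x f)^\top D_xX_{tx\mu}+(D_v f)^\top D_x v_{tx\mu}\right]\!e\,ds,
\]
which matches the expression of Step 1. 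This yields $D_x V(t,x,\mu)=P_{tx\mu}(t)$ (the right-hand side being $\mathcal{F}_t$-measurable and a deterministic function of $(x,\mu)$).

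\medskip
\noindent\textbf{Step 3 ($C^2$-regularity and estimates).} Differentiating the identity $D_x V(t,x,\mu)=P_{tx\mu}(t)$ once more in $x$ via the Hessian flow of Theorem~\ref{prop:8}, whose well-posedness and $L^2$-boundedness \eqref{boundedness-property} I would invoke together with the continuity of $D_x\Theta_{tx\mu}$ in $(x,\mu)$ from Theorem~\ref{prop:2}, yields $D_x^2 V(t,x,\mu)=D_x P_{tx\mu}(t)$. The growth bound for $V$ follows from \eqref{lem5_1} together with the quadratic growth of $f,g$ in (A2); the bound on $|D_x V|$ is precisely the cone inequality \eqref{cone_property_SC} evaluated at $s=t$ where $X_{tx\mu}(t)=x$ and $\lr(X_{t\xi}(t))=\mu$; the bound on $|D_x^2 V|$ and its joint continuity in $(x,\mu)$ come from the uniform $L^2$-estimate and continuity part of Theorem~\ref{prop:8}. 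The Lipschitz-type continuity \eqref{prop6_05} of $D_x V$ in $(x,\mu)$ is a direct translation of \eqref{lem5_2} for $P_{tx\mu}(t)$.

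\medskip
\noindent\textbf{Main obstacle.} The technical bottleneck is justifying the exchange of limit and expectation when extracting the Gâteaux derivative in Step 1 and passing from Step 2 to the second-order version in Step 3. The critical issue is that $D_x b, D_x f$ (and their $x$-derivatives) are only continuous, not Lipschitz, so the convergence of the difference quotients of these coefficients along trajectories must be controlled via dominated convergence, leveraging the joint continuity and the $L^4$-integrability of $D_x\Theta_{tx\mu}$ recorded in \eqref{thm2_1}. This is precisely why \eqref{thm2_1} was stated in the $L^4$-norm rather than only in $L^2$.
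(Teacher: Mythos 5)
Your proposal is correct in substance but establishes the key identity $D_x V(t,x,\mu)=P_{tx\mu}(t)$ by a genuinely different route. You differentiate the cost functional directly through the chain rule, using the Jacobian flow $D_x\Theta_{tx\mu}$ of Theorem~\ref{prop:2}, and then identify the resulting first variation with $P_{tx\mu}(t)$ by applying It\^o's formula to $P_{tx\mu}^\top D_xX_{tx\mu}$ and invoking the first-order condition; your cancellation bookkeeping for the $D_xb$ and $\sigma_1^\top Q$ terms is right. The paper instead sandwiches $V(t,x',\mu)-V(t,x,\mu)$ between the cost differences evaluated at the two optimal controls $v_{tx\mu}$ and $v_{tx'\mu}$ (inequality \eqref{prop6_1}), Taylor-expands $f$ and $g$ using (A2), and applies It\^o's formula to $P_{tx\mu}(s)^\top\bigl(X^{v_{tx\mu}}_{tx'\mu}(s)-X_{tx\mu}(s)\bigr)$, controlling the nonlinear remainder of $b$ via the cone property \eqref{zw:1} and Condition \eqref{generic:condition:b}. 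The trade-off: the paper's sandwich argument never differentiates the optimal control in $x$ --- it only needs the Lipschitz estimate \eqref{lem5_2} and the convexity in (A3) --- whereas your route leans on the full differentiability package of Theorem~\ref{prop:2} for the first derivative already. Since Theorem~\ref{prop:2} is available under the stated hypotheses, both arguments close; your duality computation is arguably more transparent, while the paper's is more economical in what it assumes about the control map. The remaining items (growth of $V$, the cone bound \eqref{cone_property_SC} at $s=t$ for $|D_xV|$, and \eqref{prop6_05} from \eqref{lem5_2}) match the paper exactly.

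One slip in Step 3: to obtain $D_x^2V=D_xP_{tx\mu}(t)$ you only need to differentiate $x\mapsto P_{tx\mu}(t)$ once, which is the Jacobian flow of Theorem~\ref{prop:2} (with the bound \eqref{thm2_1} and its continuity statement); the Hessian flow of Theorem~\ref{prop:8} would give $D_x^2P_{tx\mu}(t)$, i.e.\ a third derivative of $V$, and moreover Theorem~\ref{prop:8} requires the strengthened Assumption (A2$'$), which is not among the hypotheses of the present theorem. Cite Theorem~\ref{prop:2} instead and the step is fine. Your remark on the role of the $L^4$ bound is also slightly off for the first derivative --- the $L^2$ estimates in \eqref{lem5_2} and \eqref{thm2_1} already suffice there, since $D_xf$ and $D_vf$ are Lipschitz by (A2) and the quadratic remainder is $O(\epsilon)$ after division; the $L^4$ integrability only becomes essential at the level of the Hessian flows.
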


\begin{proof}
The growth condition of $V$ is a direct consequence of Assumption (A2), \eqref{intro_4'}, and Theorems~\ref{lem:2} and \ref{lem:5}. We now prove \eqref{prop6_03}. Since $v_{tx\mu}$ and $v_{tx'\mu}$ are optimal controls for Problem \eqref{intro_1'} with the initial conditions $x$ and $x'$,  respectively, we have  from the definition of $V$ in \eqref{intro_4},
\begin{equation}\label{prop6_1}
	\begin{split}
		&J_{tx'}\left(v_{tx'\mu};\lr(X_{t\xi}(s)),t\le s\le T\right)-J_{tx}\left(v_{tx'\mu};\lr(X_{t\xi}(s)),t\le s\le T\right) \\
		\le\ & V(t,x',\mu)-V(t,x,\mu)\\
		\le\ &  J_{tx'}\left(v_{tx\mu};\lr(X_{t\xi}(s)),t\le s\le T\right)-J_{tx}\left(v_{tx\mu};\lr(X_{t\xi}(s)),t\le s\le T\right).
	\end{split}
\end{equation}
From Assumption (A2), we deduce that
\begin{align}
		&J_{tx'}\left(v_{tx\mu};\lr(X_{t\xi}(s)),t\le s\le T\right)-J_{tx}\left(v_{tx\mu};\lr(X_{t\xi}(s)),t\le s\le T\right) \notag \\
		=\ & \e\bigg[\int_t^T \left[f\left(s,X_{tx'\mu}^{v_{tx\mu}}(s),\lr(X_{t\xi}(s)),v_{tx\mu}(s)\right)-f\left(s,X_{tx\mu}(s),\lr(X_{t\xi}(s)),v_{tx\mu}(s)\right)\right]ds \notag \\
		&\quad +g\left(X^{v_{tx\mu}}_{tx'\mu}(T),\lr(X_{t\xi}(T))\right)-g\left(X_{tx\mu}(T),\lr(X_{t\xi}(T))\right)\bigg] \notag \\
		\le\ & \e\bigg[\int_t^T D_x f (s,X_{tx\mu}(s),\lr(X_{t\xi}(s)),v_{tx\mu}(s))^\top \left(X_{tx'\mu}^{v_{tx\mu}}(s)-X_{tx\mu}(s)\right) ds \notag \\
		&\qquad +D_x g (X_{tx\mu}(T),\lr(X_{t\xi}(T)))^\top \left(X_{tx'\mu}^{v_{tx\mu}}(T)-X_{tx\mu}(T)\right) \bigg] \notag \\
		& +C(L,T)\e\bigg[\sup_{t\le s\le T}\left|X_{tx'\mu}^{v_{tx\mu}}(s)-X_{tx\mu}(s)\right|^2\bigg], \label{prop6_2}
\end{align}
where $X_{tx'\mu}^{v_{tx\mu}}$ is the state process corresponding to  the initial data $x'$ and the control $v_{tx\mu}$:
\begin{align*}
	X_{tx'\mu}^{v_{tx\mu}}(s)=x'&+\int_t^s b\left(r,X_{tx'\mu}^{v_{tx\mu}}(r),\lr(X_{t\xi}(r)),v_{tx\mu}(r)\right)dr+\int_t^s\sigma\left(r,X_{tx'\mu}^{v_{tx\mu}}(r),\lr(X_{t\xi}(r))\right)dB(r).
\end{align*}
Recalling the backward equation of FBSDEs \eqref{intro_3} for the process $P_{tx\mu}$, by applying It\^o's formula to $(P_{tx\mu}(s))^\top \left(X_{tx'\mu}^{v_{tx\mu}}(s)-X_{tx\mu}(s)\right)$, we can deduce that
\begin{align*}
    &\e\left[D_x g (X_{tx\mu}(T),\lr(X_{t\xi}(T)))^\top \left(X_{tx'\mu}^{v_{tx\mu}}(T)-X_{tx\mu}(T)\right)\right]-(P_{tx\mu}(t))^\top (x'-x)\\
    =\ & \e\int_t^T\bigg\{ \left(P_{tx\mu}(s)\right)^\top \bigg[ b\left(s,X_{tx'\mu}^{v_{tx\mu}}(s),\lr(X_{t\xi}(s)),v_{tx\mu}(s)\right)-b\left(s,X_{tx\mu}(s),\lr(X_{t\xi}(s)),v_{tx\mu}(s)\right)\\
    &\quad\qquad\qquad\qquad\qquad -D_x b\left(s,X_{tx\mu}(s),\lr(X_{t\xi}(s)),v_{tx\mu}(s)\right) \left(X_{tx'\mu}^{v_{tx\mu}}(s)-X_{tx\mu}(s)\right) \bigg]\\
    &\quad\qquad - D_x f \left(s,X_{tx\mu}(s),\lr(X_{t\xi}(s)),v_{tx\mu}(s)\right)^\top \left(X_{tx'\mu}^{v_{tx\mu}}(s)-X_{tx\mu}(s)\right) \bigg\}ds, 
\end{align*}
and therefore, 
\begin{align}
    &\e\bigg[\int_t^T D_x f \left(s,X_{tx\mu}(s),\lr(X_{t\xi}(s)),v_{tx\mu}(s)\right)^\top \left(X_{tx'\mu}^{v_{tx\mu}}(s)-X_{tx\mu}(s)\right) ds \notag \\
    &\qquad+D_x g (X_{tx\mu}(T),\lr(X_{t\xi}(T)))^\top \left(X_{tx'\mu}^{v_{tx\mu}}(T)-X_{tx\mu}(T)\right)\bigg] \notag \\
    =\ & (P_{tx\mu}(t))^\top (x'-x) \notag \\
    &+ \e\int_t^T\bigg\{ \left(P_{tx\mu}(s)\right)^\top \bigg[ b\left(s,X_{tx'\mu}^{v_{tx\mu}}(s),\lr(X_{t\xi}(s)),v_{tx\mu}(r)\right)-b\left(s,X_{tx\mu}(s),\lr(X_{t\xi}(s)),v_{tx\mu}(s)\right) \notag\\
    &\quad\qquad\qquad\qquad\qquad -D_x b\left(s,X_{tx\mu}(s),\lr(X_{t\xi}(s)),v_{tx\mu}(s)\right) \left(X_{tx'\mu}^{v_{tx\mu}}(s)-X_{tx\mu}(s)\right) \bigg] \bigg\}ds. \label{prop6_2.1}
\end{align}
From Condition \eqref{generic:condition:b} and the estimate \eqref{zw:1}, we know that
\begin{align}
    &\bigg|\left(P_{tx\mu}(s)\right)^\top \bigg[ b\left(s,X_{tx'\mu}^{v_{tx\mu}}(s),\lr(X_{t\xi}(s)),v_{tx\mu}(r)\right)-b\left(s,X_{tx\mu}(s),\lr(X_{t\xi}(s)),v_{tx\mu}(s)\right) \notag\\
    &\quad\qquad\qquad -D_x b\left(s,X_{tx\mu}(s),\lr(X_{t\xi}(s)),v_{tx\mu}(s)\right) \left(X_{tx'\mu}^{v_{tx\mu}}(s)-X_{tx\mu}(s)\right) \bigg] \bigg| \notag \\
    \le\ & \frac{L^2L_b^0}{\lambda_b}  \left|X_{tx'\mu}^{v_{tx\mu}}(s)-X_{tx\mu}(s)\right|^2. \label{prop6_2.2}
\end{align}
Conbining \eqref{prop6_2}, \eqref{prop6_2.1} and \eqref{prop6_2.2}, we have
\begin{align}
		&J_{tx'}\left(v_{tx\mu};\lr(X_{t\xi}(s)),t\le s\le T\right)-J_{tx}\left(v_{tx\mu};\lr(X_{t\xi}(s)),t\le s\le T\right) \notag \\
		\le\ & (P_{tx\mu}(t))^\top (x'-x)+C(L,T,\lambda_b)\e\bigg[\sup_{t\le s\le T}\left|X_{tx'\mu}^{v_{tx\mu}}(s)-X_{tx\mu}(s)\right|^2\bigg]. \label{prop6_2.3}
\end{align}
Note the fact that $X_{tx\mu}=X_{tx\mu}^{v_{tx\mu}}$, then, from the Lipschitz continuity of $b$ and $\sigma$ in the spatial argument, by using Gr\"onwall's inequality, we have
\begin{equation}\label{prop6_3}
	\e\bigg[\sup_{t\le s\le T}\left|X_{tx'\mu}^{v_{tx\mu}}(s)-X_{tx\mu}(s)\right|^2\bigg]=\e\bigg[\sup_{t\le s\le T}\left|X_{tx'\mu}^{v_{tx\mu}}(s)-X_{tx\mu}^{v_{tx\mu}}(s)\right|^2\bigg]\le C(L,T)|x'-x|^2.
\end{equation}
Substituting \eqref{prop6_3} into \eqref{prop6_2.3}, we obtain that
\begin{equation}\label{prop6_5}
	\begin{split}
		&J_{tx'}\left(v_{tx\mu};\lr(X_{t\xi}(s)),t\le s\le T\right)-J_{tx}\left(v_{tx\mu};\lr(X_{t\xi}(s)),t\le s\le T\right) \\
		\le\ &( P_{tx\mu}(t))^\top  (x'-x) +C(L,T,\lambda_b)|x'-x|^2.
	\end{split}
\end{equation}
Similar to \eqref{prop6_5}, we can also obtain the lower bound
\begin{equation}\label{prop6_6}
	\begin{split}
		&J_{tx'}\left(v_{tx'\mu};\lr(X_{t\xi}(s)),t\le s\le T\right)-J_{tx}\left(v_{tx'\mu};\lr(X_{t\xi}(s)),t\le s\le T\right) \\
		\geq\ & (P_{tx'\mu}(t))^\top  (x'-x) -C(L,T,\lambda_b)|x'-x|^2.
	\end{split}
\end{equation}
From \eqref{lem5_2}, we have
\begin{equation}\label{prop6_7}
	|P_{tx'\mu}(t)-P_{tx\mu}(t)|\le C|x'-x|.
\end{equation}
Combining \eqref{prop6_1}, \eqref{prop6_5}, \eqref{prop6_6} and \eqref{prop6_7}, we deduce that
\begin{equation*}
	\left|V(t,x',\mu)-V(t,x,\mu)-(P_{tx\mu}(t))^\top (x'-x)\right|\le C|x'-x|^2,
\end{equation*} 
from which we obtain $D_x V(t,x,\mu)  =P_{tx\mu}(t)$. Then, from Theorem~\ref{prop:2}, we know that $D_x^2 V(t,x,\mu)  =D_x P_{tx\mu}(t)$. Estimate \eqref{prop6_04} follows as an immediate consequence of \eqref{lem5_1}, \eqref{thm2_1} and \eqref{lem:5_11},  and Estimate \eqref{prop6_05} is a consequence of \eqref{lem5_2}. The continuity of $D_x^2 V$ in $(x,\mu)$ is a direct consequence of Theorem~\ref{prop:2}.   
\end{proof}

The proof of Theorem~\ref{prop:6} also relies on the cone property \eqref{zw:1}. In particular, the growth constant of the derivative $D_x V$ in \eqref{prop6_04} need to be more precise; indeed, from \eqref{prop6_05} and Proposition~\ref{prop:cone}, we know that the map:
\begin{align*}
    \brd\ni v\mapsto L\left(t,x,\mu,v,D_xV(t,x,\mu)\right)\in\br
\end{align*}
has a unique minimizer $\hv\left(t,x,\mu,D_xV(t,x,\mu)\right)$, and we have 
\begin{align*}
    H\left(t,x,\mu,D_xV(t,x,\mu) \right)=\ & L\left(t,x,\mu,\hv\left(t,x,\mu,D_xV(t,x,\mu)\right),D_xV(t,x,\mu)\right),\\
    D_p H \left(t,x,\mu,D_xV(t,x,\mu) \right)=\ & b \left(t,x,\mu,\hv\left(t,x,\mu,D_xV(t,x,\mu)\right)\right),\\
    D_x H \left(t,x,\mu,D_xV(t,x,\mu) \right)=\ & D_x b \left(t,x,\mu,\hv\left(t,x,\mu,D_xV(t,x,\mu)\right)\right)^\top D_xV(t,x,\mu)\\
    &+ D_x f \left(t,x,\mu,\hv\left(t,x,\mu,D_xV(t,x,\mu)\right)\right).
\end{align*}
Besides, as a consequence of \eqref{prop6_03}, we also have the following characterization of $P_{t\xi}(t)$:	
\begin{align}\label{rk_1}
	P_{t\xi}(t)=P_{tx\mu}\big|_{x=\xi}=D_x V(t,x,\mu)\big|_{x=\xi}.
\end{align}
We next give the regularity of $V$ with respect to the distribution argument $\mu$. 

\begin{theorem}\label{prop:7}
	Under Assumptions (A1'), (A2') and (A3), and the validity of \eqref{lem:2_0}, the value functional $V$ is linearly functionally differentiable in $\mu$ with the corresponding derivatives	
	\begin{align}
			D_y\frac{dV}{d\nu}(t,x,\mu)(y)=\ &\int_t^T \e\bigg\{\left[D_x f (s,\theta_{tx\mu}(s)) \right]^\top D X_{tx\mu}(s,y)+ \left[D_v f)^\top  (s,\theta_{tx\mu}(s))  \right]^\top D v_{tx\mu}(s,y) \notag \\
			&\quad\qquad +\widetilde{\e}\bigg[\left[D_y \frac{df}{d\nu} (s,\theta_{tx\mu}(s))\left(\widetilde{X_{ty\mu}}(s)\right) \right]^\top \widetilde{D_y X_{ty\mu}}(s)  \notag  \\
			&\quad\qquad\qquad +\left[D_y \frac{df}{d\nu} (s,\theta_{tx\mu}(s))\left(\widetilde{X_{t\xi}}(s)\right) \right]^\top \widetilde{\bd X_{t\xi}}(s,y) \bigg]  \bigg\} ds  \notag  \\
			& + \e\bigg\{\left[D_x g (X_{tx\mu}(T),\lr(X_{t\xi}(T)))  \right]^\top D X_{tx\mu}(T,y) \notag \\
			&\qquad +\widetilde{\e}\bigg[\left[D_y \frac{dg}{d\nu} (X_{tx\mu}(T),\lr(X_{t\xi}(T)))\left(\widetilde{X_{ty\mu}}(T)\right)  \right]^\top \widetilde{D_y X_{ty\mu}}(T)  \notag  \\
			&\qquad\qquad +\left[D_y \frac{dg}{d\nu} (X_{tx\mu}(T),\lr(X_{t\xi}(T)))\left(\widetilde{X_{t\xi}}(T)\right) \right]^\top \widetilde{\bd X_{t\xi}}(T,y) \bigg] \bigg\}, \label{prop7_01}
	\end{align}
	and
	\begin{align}
			&D_y^2\frac{dV}{d\nu}(t,x,\mu)(y) \notag \\
			=\ & \int_t^T \e\bigg\{ \left[D_x f (s, \theta_{tx\mu}(s))  \right]^\top D_yD X_{tx\mu}(s,y)+ \left[D_v f (s, \theta_{tx\mu}(s))  \right]^\top D_y D v_{tx\mu}(s,y) \notag \\
			&\quad\qquad +\widetilde{\e}\bigg[\left[D_y \frac{df}{d\nu} (s, \theta_{tx\mu}(s))\left(\widetilde{X_{ty\mu}}(s)\right)  \right]^\top \widetilde{D_y^2X_{ty\mu}}(s)   \notag \\
			&\quad\qquad\qquad +\left(\widetilde{D_y X_{ty\mu}}(s) \right)^\top \left[ D_y^2 \frac{df}{d\nu}(s, \theta_{tx\mu}(s))\left(\widetilde{X_{ty\mu}}(s)\right)\right] \widetilde{D_yX_{ty\mu}}(s)   \notag \\
			&\quad\qquad\qquad +\left[D_y \frac{df}{d\nu} (s, \theta_{tx\mu}(s))\left(\widetilde{X_{t\xi}}(s)\right) \right]^\top \widetilde{D_y \bd X_{t\xi}}(s,y) \bigg]  \bigg\} ds  \notag \\
			& + \e\bigg\{\left[D_x g (X_{tx\mu}(T),\lr(X_{t\xi}(T)))  \right]^\top D_y D X_{tx\mu}(T,y) \notag \\
			&\qquad +\widetilde{\e}\bigg[\left[D_y \frac{dg}{d\nu} (X_{tx\mu}(T),\lr(X_{t\xi}(T)))\left(\widetilde{X_{ty\mu}}(T)\right) \right]^\top \widetilde{D_y^2X_{ty\mu}}(T)  \notag \\
			&\qquad\qquad + \left(\widetilde{D_yX_{ty\mu}}(T) \right)^\top \left[ D_y^2 \frac{dg}{d\nu}(X_{tx\mu}(T),\lr(X_{t\xi}(T)))\left(\widetilde{X_{ty\mu}}(T)\right) \right] \widetilde{D_yX_{ty\mu}}(T)  \notag  \\
			&\qquad\qquad + \left[D_y \frac{dg}{d\nu} (X_{tx\mu}(T),\lr(X_{t\xi}(T)))\left(\widetilde{X_{t\xi}}(T)\right) \right]^\top \widetilde{D_y \bd X_{t\xi}}(T,y) \bigg]\bigg\}, \label{prop7_02}
	\end{align}
	where $\left(D_y X_{ty\mu},D_y^2 X_{ty\mu}\right)=\left(D_x X_{tx\mu},D_x^2 X_{tx\mu} \right)\!\Big|_{x=y}$, and 
	\begin{align*}
		\left(\widetilde{X_{ty\mu}}(s),\widetilde{X_{t\xi}}(s),\widetilde{D_y X_{ty\mu}}(s),\widetilde{D_y^2 X_{ty\mu}}(s),\widetilde{\bd X_{t\xi}}(s,y),\widetilde{D_y\bd X_{t\xi}}(s,y)\right)
	\end{align*}	
	is an independent copy of 
	\begin{align*}
		\left(X_{ty\mu}(s),X_{t\xi}(s),D_y X_{ty\mu}(s),D_y^2 X_{ty\mu}(s),\bd X_{t\xi}(s,y),D_y\bd X_{t\xi}(s,y)\right).
	\end{align*}
	Moreover, the derivatives satisfy the following growth condition
	\begin{align}
		&\left|D_y\frac{dV}{d\nu}(t,x,\mu)(y)\right|+\left|D_y^2\frac{dV}{d\nu}(t,x,\mu)(y)\right|\le C (1+|x|+W_2(\mu,\delta_0)),\label{prop7_03}
	\end{align}
	and $(D_y\frac{dV}{d\nu},D_y^2\frac{dV}{d\nu})$ are continuous in $(x,\mu,y)$. 	
\end{theorem}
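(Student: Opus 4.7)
The plan is to compute the Gâteaux derivative of the map $\xi \mapsto V(t,x,\lr(\xi))$ on $L^2_{\f_t}$ (with $\xi$ independent of $B$) and then use the identification \eqref{lem01_1} between Gâteaux derivatives in $L^2$ and linear functional derivatives on $\pr_2(\brn)$ to read off the kernels $D_y\frac{dV}{d\nu}$ and $D_y^2\frac{dV}{d\nu}$. Starting from the representation \eqref{intro_4'}, for any $\eta\in L^2_{\f_t}$ independent of $B$ the chain rule produces
\[
D_\eta V = \e\Big[\!\int_t^T\!\!\big\{(D_xf)^\top D_\eta X_{tx\xi}(s) + (D_vf)^\top D_\eta v_{tx\xi}(s) + \widetilde{\e}[(D_y\tfrac{df}{d\nu})(\widetilde{X_{t\xi}}(s))^\top \widetilde{D_\eta X_{t\xi}}(s)]\big\}ds + (\text{terminal terms})\Big],
\]
where the coefficients are evaluated at $\theta_{tx\mu}(s)$. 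The exchange of differentiation and expectation is legitimate thanks to the $L^2$-estimates \eqref{lem3_1}, \eqref{thm2_1}, \eqref{prop3_1} and the boundedness of the derivatives of $f,g$ in (A2).

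The next step is the kernel extraction. I would plug in the splitting $D_\eta X_{t\xi}(s) = (D_xX_{tx\mu}(s)|_{x=\xi})\,\eta + \bd_\eta X_{t\xi}(s)$ from \eqref{prop4_1} and the representations $D_\eta X_{tx\xi}(s) = \widehat{\e}[DX_{tx\mu}(s,\widehat{\xi})\widehat{\eta}]$ and $\bd_\eta X_{t\xi}(s) = \widehat{\e}[\bd X_{t\xi}(s,\widehat{\xi})\widehat{\eta}]$ from \eqref{prop5_03}-\eqref{prop5_04} (and their $P,v,Q$ analogues). The ``diagonal'' piece $\widetilde{D_xX_{tx\mu}}|_{x=\widetilde{\xi}}\widetilde{\eta}$ then becomes, after renaming the independent copy $\widetilde{\xi}$ as the integration variable $y$ and using $D_yX_{ty\mu} = D_xX_{tx\mu}|_{x=y}$, the kernel contribution involving $\widetilde{D_yX_{ty\mu}}(s)$ evaluated at the ``moving'' point $\widetilde{X_{ty\mu}}(s)$, while the ``cross'' piece $\bd_\eta X_{t\xi}$ gives the contribution involving $\widetilde{\bd X_{t\xi}}(s,y)$ evaluated at the ``frozen'' point $\widetilde{X_{t\xi}}(s)$. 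A Fubini rearrangement puts $D_\eta V = \widehat{\e}[K(\widehat{\xi})\widehat{\eta}]$ for a kernel $K(y)$ that matches precisely the right-hand side of \eqref{prop7_01}; this is the formula for $D_y\frac{dV}{d\nu}(t,x,\mu)(y)$.

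For \eqref{prop7_02} I would differentiate $K(y)$ in $y$. The derivatives $D_y\bd X_{t\xi}(s,y)$, $D_y\bd v_{t\xi}(s,y)$, $D_yDX_{tx\mu}(s,y)$, $D_yDv_{tx\mu}(s,y)$ are supplied by Theorem~\ref{prop:9}, while $D_y^2X_{ty\mu}(s) = D_x^2X_{tx\mu}(s)|_{x=y}$ is supplied by Theorem~\ref{prop:8} via the identification $D_yX_{ty\mu} = D_xX_{tx\mu}|_{x=y}$. A straightforward chain rule on each product inside $K(y)$ produces all the terms of \eqref{prop7_02}; in particular the ``quadratic form'' term $(\widetilde{D_yX_{ty\mu}})^\top[D_y^2\tfrac{df}{d\nu}(\cdots)(\widetilde{X_{ty\mu}})]\widetilde{D_yX_{ty\mu}}$ arises when the $y$-differentiation acts simultaneously on the inner evaluation point of $D_y\tfrac{df}{d\nu}$ and on the multiplier $\widetilde{D_yX_{ty\mu}}$. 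The growth bound \eqref{prop7_03} then follows from Cauchy-Schwarz together with the $L^2$-estimates \eqref{lem5_1}, \eqref{thm2_1}, \eqref{prop5_01} and the analogous Hessian-flow estimates of Theorems~\ref{prop:8}-\ref{prop:9}, all of which are linear in $(1+|x|+W_2(\mu,\delta_0))$ thanks to the cone property proven in Section~\ref{sec:MP}. Joint continuity in $(t,x,\mu,y)$ follows from a dominated-convergence argument using the joint continuity of the flows asserted in the same theorems.

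The main obstacle is the bookkeeping in the kernel extraction: the single chain-rule contribution $\widetilde{\e}[(D_y\tfrac{df}{d\nu})(\widetilde{X_{t\xi}}(s))^\top\widetilde{D_\eta X_{t\xi}}(s)]$ has to be split into two \emph{qualitatively different} kernel pieces, one carrying a ``moving'' evaluation point $\widetilde{X_{ty\mu}}(s)$ through the Jacobian flow $D_yX_{ty\mu}$, the other carrying a ``frozen'' evaluation point $\widetilde{X_{t\xi}}(s)$ through the distributional derivative $\bd X_{t\xi}(s,y)$. Making this rigorous requires a careful dual use of $\widetilde{\xi}$, first as a random variable against which one integrates, then as the evaluation point $y$ of the linear functional derivative; the technical device that makes this work is precisely the decomposition \eqref{prop4_1} coupled with Theorem~\ref{prop:5}. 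The corresponding second-order step requires an additional round of the same splitting, now involving the Hessian-flow processes of Theorems~\ref{prop:8} and \ref{prop:9}.
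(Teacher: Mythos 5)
Your proposal is correct and follows essentially the same route as the paper: the first-order formula \eqref{prop7_01} is obtained exactly as you describe from the chain rule combined with the decomposition \eqref{prop4_1} and the kernel representations \eqref{prop5_03}--\eqref{prop5_04} of Theorem~\ref{prop:5}, the second-order formula \eqref{prop7_02} by differentiating the kernel in $y$ via Theorems~\ref{prop:8} and \ref{prop:9}, and the growth bound \eqref{prop7_03} from the stated $L^2$-estimates. The paper's own proof is just a citation of these same ingredients (deferring the bookkeeping to the analogous argument in \cite[Theorem 5.2]{AB11}), so your more explicit account of the ``moving'' versus ``frozen'' kernel splitting is a faithful expansion of it.
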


\begin{proof}
The equality \eqref{prop7_01} is a consequence of Theorems~\ref{lem:4} and \ref{prop:3}, Lemma~\ref{prop:4} and Theorem~\ref{prop:5}, and the proof is similar to that for \cite[Theorem 5.2]{AB11}, and we omit then. The equality \eqref{prop7_02} is a direct consequence of \eqref{prop7_01} and Theorem~\ref{prop:9}. Then, the estimate \eqref{prop7_03} is a direct consequence of \eqref{thm2_1}, \eqref{prop5_01}, and Theorems~\ref{prop:8} and \ref{prop:9}.    
\end{proof}

We finally provide the temporal regularity of $V$, whose proof is similar to that for \cite[Theorem 5.3]{AB12}, and we put it in Appendix~\ref{pf:prop:10}.

\begin{theorem}\label{prop:10}
	Under Assumptions (A1'), (A2') and (A3), and the validity of \eqref{lem:2_0} and \eqref{lem:2_12}, the value functional $V$ is $C^1$ in $t$ with the temporal derivative
    \begin{equation}\label{prop10_01}
        \begin{aligned}
            \dd_t V(t,x,\mu)=\ & -\frac{1}{2}\text{Tr}\left[\left(\sigma\sigma^\top\right) (t,x,\mu)D_x^2 V(t,x,\mu)\right]-H\left(t,x,\mu,D_x V(t,x,\mu)\right)\\
		    &-\int_\brn \left\{ \left(D_p H \left(t,y,\mu,D_x V(t,y,\mu)\right)\right)^\top D_y\frac{dV}{d\nu}(t,x,\mu)({y})\right.\\
		    &\quad\qquad\left. +\frac{1}{2}\text{Tr}\left[\left(\sigma\sigma^\top\right) (t,y,\mu)D_y^2\frac{dV}{d\nu}(t,x,\mu)({y})\right]\right\}d\mu(y)=0,\quad t\in[0,T).
        \end{aligned}
    \end{equation}
\end{theorem}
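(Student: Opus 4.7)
The plan is to combine a dynamic programming principle (DPP) with an Itô-type expansion of $V$ along the joint flow of $(X_{tx\mu}, m^{t,\mu})$, following the same template as \cite[Theorem 5.3]{AB12}. All the regularity needed to justify that expansion (twice-differentiability in $x$, twice linear functional differentiability in $\mu$, with continuous derivatives) has already been established in Theorems~\ref{prop:6}, \ref{prop:7} and~\ref{prop:9}; identifying the minimizer of the Lagrangian $L$ with the equilibrium control through \eqref{lem5_3}, \eqref{prop6_03} and Proposition~\ref{prop:cone} then produces the Hamiltonian structure on the right-hand side of \eqref{prop10_01}.

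The first step is to establish, for small $h>0$, the DPP
\begin{equation*}
V(t,x,\mu) = \mathbb{E}\!\left[\int_t^{t+h} f(s,X_{tx\mu}(s),m^{t,\mu}(s),v_{tx\mu}(s))\,ds + V(t+h,X_{tx\mu}(t+h),m^{t,\mu}(t+h))\right].
\end{equation*}
Because the MFG \eqref{intro_1} has a unique solution (Theorem~\ref{lem:2}), the equilibrium flow is time-consistent, i.e.\ $m^{t,\mu}|_{[t+h,T]}=m^{t+h,m^{t,\mu}(t+h)}$; hence the restriction $v_{tx\mu}|_{[t+h,T]}$ remains optimal for Problem~\eqref{intro_1'} starting from $(t+h,X_{tx\mu}(t+h),m^{t,\mu}(t+h))$, which yields the equality above. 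Next, I would apply an Itô-type formula to $V(t+h,X_{tx\mu}(t+h),m^{t,\mu}(t+h))-V(t+h,x,\mu)$: the $X$-component contributes, via the dynamics \eqref{intro_3'} and the $C^2$-regularity of $V$ in $x$, a drift $[D_xV]^\top b(s,X_{tx\mu},m^{t,\mu},v_{tx\mu})+\frac{1}{2}\mathrm{Tr}[(\sigma\sigma^\top)D_x^2 V]$; and the $m^{t,\mu}$-component contributes, via the chain rule in the linear functional derivative combined with the forward dynamics of $X_{t\xi}$ and \eqref{rk_1} (so that the equilibrium drift equals $D_pH(s,X_{t\xi},m^{t,\mu},D_xV(s,X_{t\xi},m^{t,\mu}))$), the integral term
\begin{equation*}
\int_{\brn}\!\left\{\bigl(D_pH(s,y,m^{t,\mu}(s),D_xV(s,y,m^{t,\mu}(s)))\bigr)^{\!\top}\!D_y\tfrac{dV}{d\nu}(s,x,m^{t,\mu}(s))(y)+\tfrac{1}{2}\mathrm{Tr}\!\left[(\sigma\sigma^\top)(s,y,m^{t,\mu}(s))D_y^2\tfrac{dV}{d\nu}\right]\right\}dm^{t,\mu}(s)(y).
\end{equation*}

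Dividing the DPP by $h$, substituting the Itô expansion and sending $h\to 0^+$, the boundedness estimates from Theorems~\ref{lem:2}, \ref{lem:5}, \ref{prop:8}, \ref{prop:9} together with the joint continuity of all derivatives permit a dominated-convergence passage; evaluating at $s=t$ we use $X_{tx\mu}(t)=x$, $m^{t,\mu}(t)=\mu$ and $v_{tx\mu}(t)=\hv(t,x,\mu,D_xV(t,x,\mu))$. At that point \eqref{D_pH&D_xH} collapses the combination $f(t,x,\mu,v_{tx\mu}(t))+[D_xV]^\top b(t,x,\mu,v_{tx\mu}(t))$ into $H(t,x,\mu,D_xV(t,x,\mu))$, and we obtain \eqref{prop10_01} as a one-sided limit; a symmetric argument for $h<0$ (using $V(t-h,x,\mu)$ and the backward form of the DPP) yields the full derivative. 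Continuity of $\partial_tV$ in $t$ is then a direct consequence of the continuity in $(t,x,\mu)$ of $D_xV$, $D_x^2V$, $D_y\frac{dV}{d\nu}$, $D_y^2\frac{dV}{d\nu}$, $H$, $D_pH$, $\sigma$ coming from Theorems~\ref{prop:6}--\ref{prop:9} and Assumptions (A1'),(A2'),(A3).

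The main obstacle I foresee is the Itô-type chain rule along the measure flow $m^{t,\mu}$: although $V$ is twice linearly functionally differentiable in $\mu$, the flow $m^{t,\mu}$ is not the marginal of the same process $X_{tx\mu}$ appearing in the first argument, so one has to handle two genuinely distinct sources of variation simultaneously and justify the interchange of limits, expectations and derivatives. The framework is lubricated by the $L^2$ and $L^4$ estimates on the Jacobian and Hessian flows (Theorems~\ref{prop:2}, \ref{prop:8}, \ref{prop:9}), but the bookkeeping of cross-terms between the $X$-variation and the $\mu$-variation---and the verification that the $o(h)$ remainders are uniform in the underlying randomness---will be the principal technical burden, and is exactly where the enhanced assumptions (A1'),(A2') play their role.
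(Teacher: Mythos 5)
Your proposal is correct and follows essentially the same route as the paper's proof in Appendix~\ref{pf:prop:10}: the dynamic programming principle combined with the It\^o formula for measure-dependent functionals (the paper invokes \cite[Theorem 2.2]{AB5} and \cite[Theorem 7.1]{BR} for exactly the two-source expansion you flag as the main technical burden), followed by the identification of the Hamiltonian via \eqref{prop6_03}, \eqref{rk_1} and the cone-property relations \eqref{lem2_0.1}--\eqref{lem2_0.2}. The only preparatory step the paper makes explicit that you leave implicit is the set of temporal H\"older estimates \eqref{prop10_11}--\eqref{prop10_15} on $X$ and $P$, which are what actually justify the dominated-convergence passage as $\epsilon\to 0$.
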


Last but not least, we also discuss the solvability of the corresponding master equation:

\begin{equation}\label{master}
	\left\{
	\begin{aligned}
		&\dd_t V(t,x,\mu)+\frac{1}{2}\text{Tr}\left[\left(\sigma\sigma^\top\right) (t,x,\mu)D_x^2 V(t,x,\mu)\right]+H\left(t,x,\mu,D_x V(t,x,\mu)\right)\\
		&+\int_\brn \left\{ \left(D_p H \left(t,y,\mu,D_x V(t,y,\mu)\right)\right)^\top D_y\frac{dV}{d\nu}(t,x,\mu)({y})\right.\\
		&\quad\qquad\left. +\frac{1}{2}\text{Tr}\left[\left(\sigma\sigma^\top\right) (t,y,\mu)D_y^2\frac{dV}{d\nu}(t,x,\mu)({y})\right]\right\}d\mu(y)=0,\quad t\in[0,T),\\ 
		&V(T,x,\mu)=g(x,\mu),\quad (x,\mu)\in \brn\times\pr_2(\brn),
	\end{aligned}	
	\right.
\end{equation}
We have the well-posedness of the master equation \eqref{master} as stated below. 

\begin{theorem}\label{thm:1}
	Under Assumptions (A1'), (A2') and (A3), and the validity of \eqref{lem:2_0} and \eqref{lem:2_12}, the value functional $V$ is the unique solution of the master equation \eqref{master} in the sense that the following derivatives
	\begin{align*}
		\dd_t V,\ D_x V,\ D_x^2 V,\ D_y\frac{dV}{d\nu},\ D_y^2\frac{dV}{d\nu}
	\end{align*}
	exist and  are all continuous, and they satisfy the growth conditions \eqref{prop6_04}, \eqref{prop6_05}, and \eqref{prop7_03}. 
\end{theorem}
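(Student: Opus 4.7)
I would split the statement into existence and uniqueness. Existence is a direct compilation of the preceding sections, while uniqueness needs a verification-type argument.

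For existence, Theorem~\ref{prop:6} supplies $D_x V$ and $D_x^2 V$ with the growth and continuity bounds \eqref{prop6_04}--\eqref{prop6_05}; Theorem~\ref{prop:7} supplies $D_y\frac{dV}{d\nu}$ and $D_y^2\frac{dV}{d\nu}$ with \eqref{prop7_03}; and Theorem~\ref{prop:10} gives $\partial_t V$ via \eqref{prop10_01}, which is exactly \eqref{master} after rearrangement. The terminal condition $V(T,\cdot,\cdot) = g$ is immediate from the representation \eqref{intro_4'}, and joint continuity of all derivatives is already contained in the cited results.

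For uniqueness, let $U$ be any classical solution with the same regularity class. Fix $(t,x,\mu)$, take $\xi \in L^2_{\f_t}$ with $\lr(\xi) = \mu$ independent of $B$, and introduce the candidate equilibrium flow $\bar{m}^{t,\mu}(s) := \lr(\bar{X}(s))$ generated by
\begin{equation*}
    d\bar{X}(s) = D_p H\big(s, \bar{X}(s), \bar{m}^{t,\mu}(s), D_x U(s, \bar{X}(s), \bar{m}^{t,\mu}(s))\big)\,ds + \sigma(s, \bar{X}(s), \bar{m}^{t,\mu}(s))\,dB(s), \quad \bar{X}(t) = \xi.
\end{equation*}
Since $U$ obeys the same gradient bound as $V$, Proposition~\ref{prop:cone} places $D_x U(s,\cdot,\cdot)$ inside the cone $\mathcal{C}_K$, so $D_p H$ and the minimiser $\hv$ are well-defined and Lipschitz there, yielding unique strong solvability of the displayed SDE. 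For an arbitrary admissible $v(\cdot) \in \lr^2_\f(t,T)$, let $X^v$ be the controlled state issued from $x$ against the exogenous flow $\bar{m}^{t,\mu}$. I apply a chain rule to $s \mapsto U(s, X^v(s), \bar{m}^{t,\mu}(s))$: the stochastic part comes only from $X^v$ (since $\bar{m}^{t,\mu}$ is deterministic), while the measure-argument contribution reproduces exactly the integral in \eqref{master} with $V$ replaced by $U$, because the drift of $\bar{X}$ is precisely $D_p H(\cdot, D_x U)$. Substituting the PDE \eqref{master} for $U$ cancels $\partial_s U$ and that integral, leaving the drift
\begin{equation*}
    D_x U^\top b(s, X^v, \bar{m}^{t,\mu}, v) - H\big(s, X^v, \bar{m}^{t,\mu}, D_x U(s, X^v, \bar{m}^{t,\mu})\big) \geq -f(s, X^v, \bar{m}^{t,\mu}, v),
\end{equation*}
with equality at $v = \hv(s, X^v, \bar{m}^{t,\mu}, D_x U)$ by \eqref{bh'}--\eqref{H'}. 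Integrating from $t$ to $T$, using $U(T,\cdot,\cdot)=g$, and taking expectations gives $U(t,x,\mu) \leq J_{tx}(v; \bar{m}^{t,\mu})$ for every $v$, with equality along the pointwise optimiser. Hence $U(t,x,\mu)$ is the value of the stochastic control problem against the exogenous flow $\bar{m}^{t,\mu}$; moreover, by construction of $\bar{X}$ and Proposition~\ref{prop:cone}, $\bar{m}^{t,\mu}$ is an MFG equilibrium starting from $(t,\mu)$. The MFG uniqueness in Theorem~\ref{lem:2} then forces $\bar{m}^{t,\mu} \equiv m^{t,\mu}$, so $U(t,x,\mu) = V(t,x,\mu)$.

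The main obstacle is to justify rigorously the measure-argument chain rule
\begin{equation*}
    \frac{d}{ds}U(s, x, \bar{m}^{t,\mu}(s)) - \partial_s U(s, x, \bar{m}^{t,\mu}(s)) = \int_{\brn}\!\!\bigg[\big(D_p H\big)^\top D_y\tfrac{dU}{d\nu}(s,x,\bar{m}^{t,\mu}(s))(y) + \tfrac{1}{2}\text{Tr}\big(\sigma\sigma^\top D_y^2\tfrac{dU}{d\nu}\big)\bigg]d\bar{m}^{t,\mu}(s)(y),
\end{equation*}
under the mere assumption that $U$ is twice linearly functionally differentiable with derivatives of at most linear growth \eqref{prop7_03}. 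I would establish it by smoothing $\bar{m}^{t,\mu}(s)$ through the empirical measure of $N$ i.i.d.\ copies of $\bar{X}(s)$, applying the usual finite-dimensional It\^o formula to the resulting object (each copy satisfies the SDE displayed above), and passing to $N \to \infty$ via dominated convergence, using the growth \eqref{prop7_03}, the joint continuity of $D_y\tfrac{dU}{d\nu}$ and $D_y^2\tfrac{dU}{d\nu}$, and the $L^2$-estimate \eqref{lem2_1} on $\bar{X}$. Once this chain rule is in hand, the rest of the uniqueness argument is purely algebraic bookkeeping.
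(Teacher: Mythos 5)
Your proposal is correct, and the existence half plus the verification inequality coincide with the paper's argument (same candidate flow $\bar X=Y_{t\xi}$, same application of the It\^o formula for measure-dependent functionals, same use of Proposition~\ref{prop:cone} to make $\hv$ and $D_pH$ well defined on the range of $D_xU$). Where you genuinely diverge is in how the loop is closed. The paper does \emph{not} invoke MFG uniqueness directly: it sets $R_{t\xi}(s)=D_xU(s,Y_{t\xi}(s),\lr(Y_{t\xi}(s)))$ and $S_{t\xi}(s)=D_x^2U(\cdot)\,\sigma(\cdot)$, verifies that $(Y_{t\xi},R_{t\xi},S_{t\xi})$ solves the FBSDE system \eqref{FB:mfg_generic}, and concludes $\lr(Y_{t\xi}(s))=\lr(X_{t\xi}(s))$ from FBSDE uniqueness (Theorem~\ref{lem:2}); the price is that checking the backward equation for $R_{t\xi}$ requires applying It\^o's lemma to $D_xU$ and implicitly differentiating the master equation in $x$, i.e.\ smoothness of $U$ beyond the stated regularity class, which the paper passes over by reference to \cite{AB11}. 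Your route --- run the verification argument, observe that the feedback control with state $\bar X$ reproduces $\bar m^{t,\mu}$ as the law of its own optimal trajectory, hence $(\bar v,\bar m^{t,\mu})$ is an MFG equilibrium, and then appeal to the uniqueness of the MFG solution asserted in Theorem~\ref{lem:2} --- avoids that extra differentiation entirely and only uses the regularity the theorem actually assumes. Two small points you should make explicit: (i) the equilibrium claim requires rerunning your It\^o/verification computation with the \emph{random} initial condition $\xi$ (not just deterministic $x$) so that the fixed-point property $\lr(X^{\bar v}(s))=\bar m^{t,\mu}(s)$ is certified for the population problem \eqref{intro_1}; and (ii) your plan to re-derive the measure-flow chain rule via empirical measures is sound but unnecessary, since the paper simply cites \cite[Theorem 2.2]{AB5} and \cite[Theorem 7.1]{BR} for exactly this formula.
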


\begin{proof}
The existence result is a direct consequence of Theorems~\ref{prop:6}, \ref{prop:7} and \ref{prop:10}. We next aim to establish the uniqueness result. Let ${U}$ be another classical solution of the master equation \eqref{master}. Since $U$ satisfies Condition \eqref{prop6_04}, from Proposition~\ref{prop:cone}, we know that the map:
\begin{align*}
    \brd\ni v\mapsto L\left(t,x,\mu,v,D_xU(t,x,\mu)\right)\in\br
\end{align*}
has a unique minimizer $\hv\left(t,x,\mu,D_xU(t,x,\mu)\right)$.
For any initial $(t,\mu)$, we choose any $\xi\sim\mu$, and define the process $Y_{t\xi}$ as follows:
\begin{equation}\label{SDE_bar_Y}
	\begin{aligned}
		Y_{t\xi}(s)=\xi&+\int_t^s b\left(r, Y_{t\xi}(r),\lr(Y_{t\xi}(r)),\hv(r,Y_{t\xi}(r),\lr(Y_{t\xi}(r)),D_x U(r, Y_{t\xi}(r),\lr(Y_{t\xi}(r)))) \right)dr\\
		&+ \int_t^s \sigma \left(r, Y_{t\xi}(r),\lr(Y_{t\xi}(r))\right)dB(r),\quad s\in[t,T].
	\end{aligned}
\end{equation}
The well-posedness of SDE \eqref{SDE_bar_Y} then follows from the regularity of the functional $U$, and the proof is a simplified version for that of Theorem~\ref{lem:2}. From \eqref{D_pH&D_xH}, we know that SDE \eqref{SDE_bar_Y} also writes
\begin{equation}\label{SDE_bar_Y'}
    \begin{aligned}
        Y_{t\xi}(s)=\xi&+\int_t^s D_p H\left(r, Y_{t\xi}(r),\lr(Y_{t\xi}(r)),D_x U(r, Y_{t\xi}(r),\lr(Y_{t\xi}(r))) \right)dr\\
		&+ \int_t^s \sigma \left(r, Y_{t\xi}(r),\lr(Y_{t\xi}(r))\right)dB(r),\quad s\in[t,T].
    \end{aligned}
\end{equation}
For any initial data $x$, we consider the control problem $J_{tx}(\cdot;\lr(Y_{t\xi}(s)),t\le s\le T)$: for an arbitrary admissible control $v(\cdot)\in\lr_{\f}^2(t,T)$, the corresponding controlled state process is 
\begin{align*}
	&X_{tx\mu}^v(s)=x+\int_t^s b\left(r,X^v_{tx\mu}(r), \lr(Y_{t\xi}(r)),v(r)\right)dr+\int_t^s \sigma\left(r,X^v_{tx\mu}(r), \lr(Y_{t\xi}(r))\right)dB(r),\quad s\in[t,T],
\end{align*}
and the cost functional is 
\begin{align*}
	J_{tx}(v;\lr(Y_{t\xi}(s)),t\le s\le T)=\e\left[\int_t^T f\left(s,X^v_{tx\mu}(s), \lr(Y_{t\xi}(s)),v(s)\right) ds +g\left(X^v_{tx\mu}(T), \lr(Y_{t\xi}(T))\right) \right].
\end{align*}
From \eqref{SDE_bar_Y'} and It\^o's formula for measure-dependent functionals (see \cite[Theorem 2.2]{AB5} and also \cite[Theorem 7.1]{BR}), we deduce that
\begin{align*}
	&\e\left[U\left(s,X^v_{tx\mu}(s),\lr(Y_{t\xi}(s))\right)-U(t,x,\mu)\right]\\
	=\ & \int_t^s \e \bigg\{ \dd_t U\left(r,X^v_{tx\mu}(r),\lr(Y_{t\xi}(r))\right)+b \left(r,X^v_{tx\mu}(r),\lr(Y_{t\xi}(r)),v(r)\right)^\top D_x U\left(r,X^v_{tx\mu}(r),\lr(Y_{t\xi}(r))\right)\\
	&\quad\qquad +\frac{1}{2}\text{Tr}\left[ D_x^2 U\left(r,X^v_{tx\mu}(r),\lr(Y_{t\xi}(r))\right)\left(\sigma\sigma^\top\right) \left(r,X^v_{tx\mu}(r),\lr(Y_{t\xi}(r))\right)\right]\\
	&\!\!\!+\widetilde{\e}\bigg[D_p H \left(r,\widetilde{Y_{t\xi}(r)},\lr(Y_{t\xi}(r)),D_x U(r, \widetilde{Y_{t\xi}(r)},\lr(Y_{t\xi}(r)))\right)^\top D_y\frac{dU}{d\nu}\left(r,X^v_{tx\mu}(r),\lr(Y_{t\xi}(r))\right)\left(\widetilde{Y_{t\xi}(r)}\right) \\
	&\quad\qquad\qquad +\frac{1}{2}\text{Tr}\left[ D_y^2\frac{dU}{d\nu}\left(r,X^v_{tx\mu}(r),\lr(Y_{t\xi}(r))\right)\left(\widetilde{Y_{t\xi}(r)}\right)\left(\sigma\sigma^\top\right) \left(r,\widetilde{Y_{t\xi}(r)},\lr(Y_{t\xi}(r))\right)\right]\bigg]\bigg\}dr\\
	=\ & \int_t^s \e\bigg\{\dd_t U\left(r,X^v_{tx\mu}(r),\lr(Y_{t\xi}(r))\right)-f\left(r,X^v_{tx\mu}(r),\lr(Y_{t\xi}(r)),v(r)\right)\\
    &\quad\qquad +L\Big(r,X^v_{tx\mu}(r),\lr(Y_{t\xi}(r)),v(r),D_x U\left(r,X^v_{tx\mu}(r),\lr(Y_{t\xi}(r))\right)\Big)\\
    &\quad\qquad +\frac{1}{2}\text{Tr}\left[ D_x^2 U\left(r,X^v_{tx\mu}(r),\lr(Y_{t\xi}(r))\right)\left(\sigma\sigma^\top\right) \left(r,X^v_{tx\mu}(r),\lr(Y_{t\xi}(r))\right)\right]\\
	&\!\!\!+\widetilde{\e}\bigg[D_p H \left(r,\widetilde{Y_{t\xi}(r)},\lr(Y_{t\xi}(r)),D_x U(r, \widetilde{Y_{t\xi}(r)},\lr(Y_{t\xi}(r)))\right)^\top D_y\frac{dU}{d\nu}\left(r,X^v_{tx\mu}(r),\lr(Y_{t\xi}(r))\right)\left(\widetilde{Y_{t\xi}(r)}\right)\\
	&\quad\qquad\qquad +\frac{1}{2}\text{Tr}\left[ D_y^2\frac{dU}{d\nu}\left(r,X^v_{tx\mu}(r),\lr(Y_{t\xi}(r))\right)\left(\widetilde{Y_{t\xi}(r)}\right)\left(\sigma\sigma^\top\right) \left(r,\widetilde{Y_{t\xi}(r)},\lr(Y_{t\xi}(r))\right)\right]\bigg] \bigg\}dr,
\end{align*}	
where $\widetilde{Y_{t\xi}}(r)$ is an independent copy of $Y_{t\xi}(r)$. Since $U$ satisfies Equation \eqref{master}, from the last equation, we see that
\begin{align}
		&\e\left[g\left(X^v_{tx\mu}(T),\lr(Y_{t\xi}(T))\right)-U(t,x,\mu)\right] \notag \\
		=\ & \int_t^T \e\Big\{-f\left(r,X^v_{tx\mu}(r),\lr(Y_{t\xi}(r)),v(r)\right)\notag\\
		&\quad\qquad +L\Big(r,X^v_{tx\mu}(r),\lr(Y_{t\xi}(r)),v(r),D_x U\left(r,X^v_{tx\mu}(r),\lr(Y_{t\xi}(r))\right)\Big) \notag \\
		&\quad\qquad -H\Big(r,X^v_{tx\mu}(r),\lr(Y_{t\xi}(r)),D_x U\left(r,X^v_{tx\mu}(r),\lr(Y_{t\xi}(r))\right)\Big)\Big\}dr, \label{thm1_1}
\end{align}
therefore, from the definition of the Hamiltonian $H$, we have
\begin{equation}\label{pf:uni_2}
	\begin{split}
		J_{tx}(v(\cdot);\lr(Y_{t\xi}(s),t\le s\le T))&\geq U(t,x,\mu).
	\end{split}
\end{equation}
We define the processes $(Y_{tx\mu},u_{tx\mu})$ as 
\begin{align}
	Y_{tx\mu}(s)=x&+\int_t^s b\left(r, Y_{tx\mu}(r),\lr(Y_{t\xi}(r)),\hv(r,Y_{tx\mu}(r),\lr(Y_{t\xi}(r)),D_x U(r, Y_{tx\mu}(r),\lr(Y_{t\xi}(r)))) \right)dr \notag \\
	&+ \int_t^s \sigma \left(r, Y_{tx\mu}(r),\lr(Y_{t\xi}(r))\right)dB(r),\quad s\in[t,T], \label{SDE_bar_Y_x}
\end{align}
where 
$$u_{tx\mu}(s):=\hv(s,Y_{tx\mu}(s),\lr(Y_{t\xi}(s)),D_x U(s, Y_{tx\mu}(s),\lr(Y_{t\xi}(s)))). $$
The well-posedness of $(Y_{tx\mu},u_{tx\mu})$ follows from the regularity of the functional $U$ and the well-posedness of the process $Y_{t\xi}$. If we set $v(s)=u_{tx\mu}(s)$ in \eqref{thm1_1}, then we obtain the following equation: 
\begin{align}\label{pf:uni_3}
    {U}(t,x,\mu)= J_{tx}(u_{tx\mu};\lr(Y_{t\xi}(s)),t\le s\le T).
\end{align}
We now define the processes 
\begin{equation}\label{pf:uni_1}
    \begin{aligned}
    	R_{t\xi}(s):=\ & D_x U\left(s,Y_{t\xi}(s),\lr(Y_{t\xi}(s))\right),\\
    	S_{t\xi}(s):=\ & D_x^2 U\left(s,Y_{t\xi}(s),\lr(Y_{t\xi}(s))\right)\sigma \left(s,Y_{t\xi}(s),\lr(Y_{t\xi}(s))\right),
    \end{aligned}
\end{equation}
then, from the SDE \eqref{SDE_bar_Y} for $Y_{t\xi}$, similar to \cite[Theorem 6.1]{AB11}, by using  It\^o's lemma and the master equation  \eqref{master} for $U$, we verify that $(R_{t\xi},S_{t\xi})$ satisfies the following BSDE as an adapted solution:
\begin{equation}\label{BSDE_bar_pq}
    \begin{aligned}
    	R_{t\xi}(T)=\ & D_x g\left(Y_{t\xi}(T),\lr(Y_{t\xi}(T))\right)-\int_s^T S_{t\xi}(r)dB(r)\\
    	&+\int_s^T \bigg[\left(D_xH \left(r, Y_{t\xi}(r),\lr(Y_{t\xi}(r)),R_{t\xi}(r) \right)\right)^\top R_{t\xi}(r)\\
    	&\quad\qquad +\sum_{j=1}^n \left(D_x\sigma^j\left(r, Y_{t\xi}(r),\lr(Y_{t\xi}(r))\right)\right)^\top S^j_{t\xi}(r)\bigg] dr,\quad s\in[t,T].
    	\end{aligned}
\end{equation}
From \eqref{SDE_bar_Y}, \eqref{pf:uni_1} and \eqref{BSDE_bar_pq}, the process $(Y_{t\xi}, R_{t\xi},S_{t\xi})$ satisfy the FBSDEs \eqref{FB:mfg_generic}. Then, from the uniqueness result of FBSDEs \eqref{FB:mfg_generic}, we know that $(Y_{t\xi}, R_{t\xi},S_{t\xi})=(X_{t\xi}, P_{t\xi},Q_{t\xi})$, and therefore, $\lr(Y_{t\xi}(s))=\lr(X_{t\xi}(s))$. Similar to the above, if we define 		
\begin{align*}
    R_{tx\mu}(s):=\ & D_x U\left(s, Y_{tx\mu}(s),\lr(X_{t\xi}(s))\right), \\
    S_{tx\mu}(s):=\ & D_x^2 U\left(s,Y_{tx\mu}(s),\lr(X_{t\xi}(s))\right)\sigma \left(s,Y_{tx\mu}(s),\lr(X_{t\xi}(s))\right),
\end{align*}
with the uniqueness result of FBSDEs, then $(Y_{tx\mu}, R_{tx\mu},S_{tx\mu})=(X_{tx\mu}, P_{tx\mu},Q_{tx\mu})$, and therefore $u_{tx\mu}=v_{tx\mu}$. From \eqref{pf:uni_2} and \eqref{pf:uni_3} and the definition of $V$ in  \eqref{intro_4}, we know that $U$ coincides with the value functional $V$.
\end{proof}

The proof of Theorems~\ref{prop:10} and \ref{thm:1} relies on the application of It\^o's formula for measure-dependent functionals \cite{AB5,BR}, which leads to the derivation of the corresponding master equation \eqref{master}. For the master equation corresponding to MFGs with common noises, a more general It\^o's formula (such as \cite[Theorem 3.2]{AB13} and \cite[Theorem 4.17]{book_mfg} ) is required to tackle the prevalence of conditional distribution issue. Similar to  the proof of Theorems~\ref{prop:10} and \ref{thm:1}, we can also give classical solutions of such kind of master equations, and we leave this in our future works. 

\begin{remark}
    Starting from the viewpoint from the viscosity solution for MFG master equation, Bertucci et al. \cite{Bertucci} extends more to the newly defined Lipschitz solutions of the MFG master equations, and it proves the unique existence of such kind of Lipschitz solutions for small time via an analytical approach based on the fixed point theorem when the initial condition is Lipschitz continuous and the Hamiltonian $H$ is differentiable with Lipschitz first order derivatives. From a very different perspective as Bertucci et al. \cite{Bertucci}, our work aims at reducing  regularity (and also monotonicity) on the coefficients to warrant the unique existence of classical (smooth) and global-in-time solution of MFG master equation \eqref{intro_4}. We require the regularity assumptions in (A2’) for the cost functionals, and we allow more generic drift and diffusion functions: the drift can be nonlinear in state, control and distribution, and the diffusion can be distribution-dependent and also nonlinear and degenerate. We emphasize that it is not that one result can cover another; indeed, the two results come from two different perspectives and via two totally different approaches.
\end{remark}

\section*{Acknowledgement}

Alain Bensoussan is supported by the National Science Foundation under grant NSF-DMS-2204795. Ziyu Huang acknowledges the financial supports as a postdoctoral fellow from Department of Statistics of The Chinese University of Hong Kong. Shanjian Tang is supported by the National Natural Science Foundation of China under grant nos. 11631004 and  12031009. Phillip Yam acknowledges the financial supports from HKGRF-14301321 with the project title ``General Theory for Infinite Dimensional Stochastic Control: Mean field and Some Classical Problems'', and HKGRF-14300123 with the project title ``Well-posedness of Some Poisson-driven Mean Field Learning Models and their Applications''. The work described in this article was also supported by a grant from the Germany/Hong Kong Joint Research Scheme sponsored by the Research Grants Council of Hong Kong and the German Academic Exchange Service of Germany (Reference No. G-CUHK411/23). He also thanks The University of Texas at Dallas for the kind invitation to be a Visiting Professor in Naveen Jindal School of Management.

\footnotesize
\addcontentsline{toc}{section}{References}

\newpage

\normalsize 
\appendix
\appendixpage
\addappheadtotoc

In all the following proofs, we mean by $C(\alpha_1,\dots, \alpha_k)$  a constant depending only on parameters $(\alpha_1,\dots, \alpha_k)$; otherwise,  $C$  is a constant depending on all the components of the vector
$$(L,T,\lambda_b,\lambda_v,\lambda_x,l_b^m, l_\sigma^m,L_b^0,L_b^1,L_b^2,L_f^0,L_f^1). $$

\section{Proof of Theorem~\ref{lem:5}}\label{pf:lem:5}

Similar to the proof of \cite[Theorem 5.2]{AB12},  we have the well-posedness from~\cite[Lemma 2.1]{AB12} on the well-posedness of FBSDEs in Hilbert spaces under $\beta$-monotonicity. The proof of the sufficiency of the (necessity) maximum principle is similar to that of \cite[Theorem 5.1]{AB12}. Here, we only prove \eqref{lem5_1} and \eqref{lem5_3}, and the proof of \eqref{lem5_2} is similar to  that of \eqref{lem5_1}. For notational simplicity, we drop the subscript $tx\xi$ in $(X_{tx\xi},P_{tx\xi},Q_{tx\xi},v_{tx\xi})$, and denote by $m(s):=\lr\left(X_{t\xi}(s)\right)$ for $s\in[t,T]$. 

By applying It\^o's formula to $P(s)^\top X(s)$ and using the third equation of FBSDEs \eqref{intro_3}, similar as in \eqref{lem:2_1}, we have
\begin{align}
    &\e\left[D_xg(X(T),m(T))^\top X(T)-P(t)^\top x \right] \notag\\
    =\ & \e\Bigg\{\int_t^T \left[b(s,\cdot,m(s),\cdot)\Big|^{(X(s),v(s))}_{(0,0)} -\left[(D_x b,D_v b) (s,X(s),m(s),v(s)) \right]\begin{pmatrix}
        X(s)\\ v(s)
    \end{pmatrix}\right]^\top P(s)  \notag \\
    &\qquad -\left[\begin{pmatrix}
        D_x f\\ D_v f
    \end{pmatrix}(s,\cdot,m(s),\cdot)\Bigg|^{(X(s),v(s))}_{(0,0)} \right]^\top \begin{pmatrix}
        X(s)\\ v(s)
    \end{pmatrix}  -\left[\begin{pmatrix}
        D_x f\\ D_v f
    \end{pmatrix}(s,0,m(s),0) \right]^\top \begin{pmatrix}
        X(s)\\ v(s)
    \end{pmatrix}  \notag \\
    &\qquad + b(s,0,m(s),0))^\top P(s) + \sum_{j=1}^{n}  \sigma^j_0(s,m(s))^\top Q^j(s) ds\Bigg\}. \label{lem:5_1}
\end{align}
Similar to that for \eqref{lem:2_2}, the third equation of FBSDEs \eqref{intro_3}, Condition \eqref{generic:condition:b'} and Assumption (A2) also yield  
\begin{align}\label{lem:5_2}
    |P(s)|\le \frac{L^2}{\lambda_b} \left[1+|X(s)|+W_2(m(s),\delta_0)+|v(s)|\right].
\end{align}
Then, from Condition \eqref{generic:condition:b}, we have
\begin{align}
    &\left|\left[b(s,\cdot,m(s),\cdot)\bigg|^{(X(s),v(s))}_{(0,0)} -\left[(D_x b,D_v b) (s,X(s),m(s),v(s)) \right]\begin{pmatrix}
        X(s)\\ v(s)
    \end{pmatrix}\right]^\top P(s)\right| \notag \\
    \le\ & \frac{L^2 L_b^0}{\lambda_b} |X(s)|^2 +\frac{2L^2 L_b^1}{\lambda_b} |X(s)|\cdot |v(s)| +\frac{L^2 L_b^2}{\lambda_b} |v(s)|^2. \label{lem:5_3}
\end{align}
From \eqref{lem:5_2} and Assumption (A1), we have
\begin{align}
    \left|b(s,0,m(s),0))^\top P(s)\right| \le\ & \frac{L^3}{\lambda_b} \left(1+W_2(m(s),\delta_0) \right)\left(1+|X(s)|+W_2(m(s),\delta_0)+|v(s)|\right). \label{lem:5_4}
\end{align}
From the convexity assumption in (A3), we have
\begin{equation}\label{lem:5_5}
\begin{aligned}
    \left[\begin{pmatrix}
        D_x f\\ D_v f
    \end{pmatrix}(s,\cdot,m(s),\cdot)\bigg|^{(X(s),v(s))}_{(0,0)} \right]^\top \begin{pmatrix}
        X(s)\\ v(s)
    \end{pmatrix} \geq\ & 2\lambda_v|v(s)|^2+2\lambda_x |X(s)|^2; \\[3mm]
    D_xg(X(T),m(T))^\top X(T)\geq\ & 2\lambda_g |X(T)|^2\geq 0.
\end{aligned}
\end{equation}
From Assumption (A2), we also know that
\begin{equation}\label{lem:5_5'}
\begin{aligned}
    \left|\left[\begin{pmatrix}
        D_x f\\ D_v f
    \end{pmatrix}(s,0,m(s),0)\right]^\top \begin{pmatrix}
        X(s)\\ v(s)
    \end{pmatrix}\right| \le\ & L \left(1+ W_2(m(s),\delta_0)\right)(|X(s)|+|v(s)|);\\[3mm]
    \left|D_xg(0,m(T))^\top X(T)\right|\le \ & L(1+W_2(m(T),\delta_0))|X(T)|.
\end{aligned}
\end{equation}
From Assumption (A1) and Cauchy's inequality, we also know that
\begin{align}
    &\e \bigg|\int_t^T \sum_{j=1}^n \sigma^j_0(s,m(s))^\top Q^j(s) ds \bigg| \le L\int_t^T \left[1+ W_2(m(s),\delta_0)\right] \sum_{j=1}^n \left\|Q^j(s)\right\|_2 ds. \label{sigma_m_4}
\end{align} 
Substituting \eqref{lem:5_3}-\eqref{sigma_m_4} back into \eqref{lem:5_1}, we have
\begin{align}
    &\e\left[\int_t^T 2\lambda_v|v(s)|^2+2\lambda_x|X(s)|^2 ds \right] \notag\\
    \le\ & \int_t^T \bigg[ \frac{L^2 L_b^2}{\lambda_b} \|v(s)\|_2^2 +\frac{2L^2 L_b^1}{\lambda_b} \|X(s)\|_2\cdot \|v(s)\|_2 + \frac{L^2 L_b^0}{\lambda_b} \|X(s)\|_2^2 \bigg] ds  \notag \\
    &+C(L,\lambda_b)\bigg[\int_t^T \left(1+W_2(m(s),\delta_0)\right) \Big( 1+W_2(m(s),\delta_0)+\|X(s)\|_2 + \|v(s)\|_2 +\left\|Q(s)\right\|_2 \Big)ds \notag \\
    &\quad\qquad\qquad + \left(1+W_2(m(T),\delta_0)\right) \|X(T)\|_2 + \|P(t)\|_2 \cdot |x| \bigg]. \notag 
\end{align}
Then, from \eqref{lem:2_0} with an application of Young's inequality, we know that for any $\epsilon>0$, we have
\begin{align}
    & \left(2\lambda_v-\frac{L^2 L_b^2}{\lambda_b}-\frac{L^2 L_b^1}{\lambda_b\sqrt{2\lambda_x-\frac{L^2 L_b^0}{\lambda_b}}}\right) \int_t^T \|v(s)\|_2^2 ds \notag \\
    \le\ & \epsilon \left[\sup_{t\le s\le T}\|X(s)\|_2^2+\sup_{t\le s\le T}\|P(s)\|_2^2+\int_t^T  \Big( \left\|Q(s)\right\|_2^2 + \|v(s)\|_2^2 \Big) ds\right] \notag \\
    &\quad +C(L,T,\lambda_b)\left(1+\frac{1}{\epsilon}\right) \left(1+\sup_{t\le s\le T} W_2^2 (m(s),\delta_0) \right). \label{lem:5_6}
\end{align}
Using Gr\"onwall's inequality, we have
\begin{equation}\label{lem:5_7}
	\e\bigg[\sup_{t\le s\le T}|X(s)|^2\bigg]\le C(L,T)\left[1+|x|^2+\e\sup_{t\le s\le T}W_2^2(m(s),\delta_0)+\e\int_t^T |v(s)|^2ds\right].
\end{equation}
The estimate \eqref{lem:5_7}, the usual BSDE estimates in \cite{YH2,SP} and  the BDG inequality  yield 
\begin{equation}\label{lem:5_8}
	\e\bigg[\sup_{t\le s\le T}|P(s)|^2+\int_t^T \!\!\!  |Q(s)|^2 ds\bigg]\le C(L,T)\left[1+|x|^2+\e\sup_{t\le s\le T}W_2^2(m(s),\delta_0)+\e\int_t^T |v(s)|^2ds\right].
\end{equation}    
Substituting \eqref{lem:5_7} and \eqref{lem:5_8} into \eqref{lem:5_6}, we have
\begin{align*}
    & \left(2\lambda_v-\frac{L^2 L_b^2}{\lambda_b}-\frac{L_f^1+\frac{L^2 (2L_b^1+l_b^m)}{\lambda_b}}{2\sqrt{2\lambda_x-L_f^0-\frac{L^2 (L_b^0+2l_b^m)}{\lambda_b}}}\right)\cdot \int_t^T \|v(s)\|_2^2 ds \notag \\
    \le\ & \epsilon \  C(L,T)\left[\int_t^T  \|v(s)\|_2^2  ds\right]+C(L,T,\lambda_b)\left(1+\epsilon+\frac{1}{\epsilon}\right) \left(1+|x|^2+\sup_{t\le s\le T}W_2^2(m(s),\delta_0)\right).
\end{align*}
From \eqref{lem2_1}, we know that
\begin{align}\label{lem:5_8.5}
    \sup_{t\le s\le T}W_2^2(m(s),\delta_0)\le\e\left[ \sup_{t\le s\le T}\left|X_{t\xi}(s)\right|^2\right]\le C \left(1+\|\xi\|_2^2\right),
\end{align}
then, by choosing $\epsilon$ small enough, we have
\begin{align*}
    & \int_t^T \|v(s)\|_2^2 ds \le C \left(1+|x|^2+W_2^2(\mu,\delta_0) \right).
\end{align*}
Substituting the last estimate and \eqref{lem:5_8.5} back into \eqref{lem:5_7} and \eqref{lem:5_8}, we have
\begin{equation}\label{lem:5_9}
\begin{aligned}
    &\e\left[\sup_{t\le s\le T}\left|\left(X(s), P(s)\right)^\top\right|^2+\int_t^T |Q(s)|^2ds \right]\le C \left(1+|x|^2+W_2^2(\mu,\delta_0) \right).
\end{aligned}
\end{equation}
Similar to \eqref{lem:2_10}, from the third equation in FBSDEs \eqref{intro_3} and the convexity of $f$ in $v$ in Assumption (A3), we have
\begin{align}\label{lem:5_10}
    |v(s)|\le\ & \frac{L}{2\lambda_v}\left[1+|P(s)|+|X(s)|+ W_2(m(s),\delta_0)\right].
\end{align}
Then, from \eqref{lem:5_8.5}, \eqref{lem:5_9} and \eqref{lem:5_10}, we obtain \eqref{lem5_1}.

We next prove \eqref{lem5_3} under Condition \eqref{lem:2_12}. From \eqref{lem:5_2} and \eqref{lem:5_10}, we know that
\begin{align*}
    |P(s)|\le\ & \frac{L^2}{\lambda_b} \left[1+|X(s)|+W_2(m(s),\delta_0)\right]+\frac{L^2}{\lambda_b}|v(s)|\\
    \le\ &  \frac{L^2}{\lambda_b} \left[1+|X(s)|+W_2(\lr(X(s)),\delta_0)\right]+\frac{L^3}{2\lambda_v\lambda_b}\left[1+|P(s)|+|X(s)|+W_2(m(s),\delta_0)\right].
\end{align*}
Since $\frac{L^2}{\lambda_b}<1$, we deduce that
\begin{align}\label{lem:5_11}
    |P(s)|\le \left(1-\frac{L^3}{2\lambda_v\lambda_b}\right)^{-1}\frac{L^2}{\lambda_b}\left(1+\frac{L}{2\lambda_v}\right)\left[1+|X(s)|+W_2(m(s),\delta_0)\right].
\end{align}
Then, from Assumption (A1) and (A3), the estimate \eqref{lem:5_11} and Proposition~\ref{prop:cone}, we obtain \eqref{lem5_3} and \eqref{intro_3'}.

\section{Proof of the statements in Section~\ref{sec:distribution}}\label{pf:distribution}

\subsection{Proof of Theorem~\ref{lem:4}}\label{pf:lem:4}

For notational simplicity, we still adopt the notations $(\mathcal{X},\mathcal{P},\mathcal{Q},\mathcal{V})$ in place of the processes $\mathcal{D}_\eta \Theta_{t\xi}:=\left(\mathcal{D}_\eta X_{t\xi},\mathcal{D}_\eta P_{t\xi},\mathcal{D}_\eta Q_{t\xi},\mathcal{D}_\eta v_{t\xi}\right)$. For $\epsilon\in(0,1)$ and $s\in[t,T]$, set 
$$\de^\epsilon X(s):=\frac{1}{\epsilon}\left[X_{t\xi^\epsilon}(s)-X_{t\xi}(s)\right]\quad  \text{and} \quad \delta^\epsilon X(s):=\de^\epsilon X(s)-\mathcal{X}(s).$$
Define similarly  $\de^\epsilon P(s),\de^\epsilon Q(s),\de^\epsilon v(s)$ and $\delta^\epsilon P(s),\delta^\epsilon Q(s),\delta^\epsilon v(s)$. For sake of convenience, we denote by 
\begin{align*}
    &\de^\epsilon\Theta(s):=(\de^\epsilon X(s),\de^\epsilon P(s),\de^\epsilon Q(s),\de^\epsilon v(s)),\quad \delta^\epsilon\Theta(s):=(\delta^\epsilon X(s),\delta^\epsilon P(s),\delta^\epsilon Q(s),\delta^\epsilon v(s)).
\end{align*}
From \eqref{lem2_2},  we see that 
\begin{equation}\label{lem4_2}
	\begin{split}
		&\e\bigg[ \sup_{t\le s\le T}\left|\left(\de^\epsilon X(s),\de^\epsilon P(s),,\de^\epsilon v(s)\right)^\top\right|^2 + \int_t^T \left|\de^\epsilon Q(s)\right|^2 ds \bigg]\le C\|\eta\|_2^2;
	\end{split}
\end{equation}	
and from \eqref{lem3_1}, we see that
\begin{equation}\label{lem4_2'}
	\begin{split}
		&\e\left[\sup_{t\le s\le T}\left|\left(\mathcal{X}(s),\mathcal{P}(s),\mathcal{V}(s),\delta^\epsilon X(s),\delta^\epsilon P(s),\delta^\epsilon v(s)\right)^\top\right|^2+ \int_t^T\left|\left(\mathcal{Q}(s),\delta^\epsilon Q(s)\right)^\top \right|^2 ds\right] \le C\|\eta\|_2^2.
	\end{split}
\end{equation}
From the FBSDEs \eqref{FB:mfg_generic} and FBSDEs \eqref{FB:dr}-\eqref{FB:dr_condition}, we know that the process $\delta^\epsilon\Theta$ satisfies the following FBSDEs: 
\small
\begin{align*}
    \delta^\epsilon X(s)=\ & \int_t^s \Bigg\{D_x b(r, \theta_{t\xi}(r))\delta^\epsilon X(r)+\widetilde{\e}\bigg[ D_y \frac{db}{d\nu} (r,\theta_{t\xi}(r))\left(\widetilde{X_{t\xi}}(r)\right) \widetilde{\delta^\epsilon X}(r)\bigg]+D_v b(r, \theta_{t\xi}(r))\delta^\epsilon v(r) +\alpha^\epsilon_b(r)\Bigg\} dr \\
    &+ \int_t^s \Bigg\{ \sigma_1(r)\delta^\epsilon(r) + \widetilde{\e}\bigg[ D_y \frac{d\sigma_0}{d\nu} (r,\lr(X_{t\xi}(r)))\left(\widetilde{X_{t\xi}}(r)\right) \widetilde{\delta^\epsilon X}(r)\bigg] +\alpha^\epsilon_{\sigma}(r) \Bigg\}dB(r),\\
    \delta^\epsilon P(s)=\ & \left[D_x^2 g  (X_{t\xi}(T),\lr(X_{t\xi}(T))) \right]^\top \delta^\epsilon X(T) +\alpha^\epsilon_g \\
	&+\widetilde{\e}\left\{\left[\left(D_y \frac{d}{d\nu}D_x g \right) (X_{t\xi}(T),\lr(X_{t\xi}(T)))\left(\widetilde{X_{t\xi}}(T)\right) \right]^\top \widetilde{\delta^\epsilon X}(T)\right\}\\
    &+\int_s^T \bigg[ D_x b(r,\theta_{t\xi^\epsilon}(r))^\top \delta^\epsilon P(r)+\sum_{j=1}^n \left(\sigma^j_1(r)\right)^\top \delta^\epsilon Q^j(r)+\alpha^\epsilon_f(r) \bigg]  dr\\
    &+\int_s^T \left(P_{t\xi}(r)\right)^\top \bigg\{D_x^2 b(r,\theta_{t\xi}(r)) \delta^\epsilon X(r)+ D_vD_x b(r,\theta_{t\xi}(r)) \delta^\epsilon v(r)\\
    &\qquad\qquad\qquad\qquad +\widetilde{\e}\left[ D_y\frac{d}{d\nu}D_x b(r,\theta_{t\xi}(r))\left(\widetilde{X_{t\xi}}(r)\right) \widetilde{\delta^\epsilon X}(r)\right]\bigg\} dr\\
    &+\int_s^T \Bigg\{\left[D_x^2 f (r,\theta_{t\xi}(r)) \right]^\top \delta^\epsilon X(r)+ \left[D_vD_x f (r,\theta_{t\xi}(r)) \right]^\top \delta^\epsilon v(r)\\
    &\qquad\qquad + \widetilde{\e}\left[\bigg[ \left(D_y \frac{d}{d\nu}D_x f \right) (r,\theta_{t\xi}(r))\left(\widetilde{X_{t\xi}}(r)\right)\bigg]^\top \widetilde{\delta^\epsilon X}(r) \right]\Bigg\}dr -\int_s^T \delta^\epsilon Q(r)\; dB(r),\quad s\in[t,T];
\end{align*}
\normalsize
subject to the condition 
\begin{align}
    0=\ & D_v b(s,\theta_{t\xi^\epsilon}(s))^\top \delta^\epsilon P(s)+\alpha^\epsilon_v(s)  \notag \\
    &+\left(P_{t\xi}(s)\right)^\top \bigg\{D_xD_v b(s,\theta_{t\xi}(s)) \delta^\epsilon X(s)+ D_v^2 b(s,\theta_{t\xi}(s)) \delta^\epsilon v(s) \notag \\
    &\qquad\qquad\qquad +\widetilde{\e}\left[ D_y\frac{d}{d\nu}D_v b(s,\theta_{t\xi}(s))\left(\widetilde{X_{t\xi}}(s)\right) \widetilde{\delta^\epsilon X}(s)\right]\bigg\}  \notag \\
    &+ \left[D_xD_v f (s,\theta_{t\xi}(s)) \right]^\top \delta^\epsilon X(s)+ \left[D_v^2 f (s,\theta_{t\xi}(s)) \right]^\top \delta^\epsilon v(s) \notag \\
    &+ \widetilde{\e}\left[\bigg[ \left(D_y \frac{d}{d\nu}D_v f \right) (s,\theta_{t\xi}(s))\left(\widetilde{X_{t\xi}}(s)\right)\bigg]^\top \widetilde{\delta^\epsilon X}(s) \right],\quad s\in[t,T]; \label{lem:4_1}
\end{align}
where the processes are the remainder terms after the Taylor expansion up to order $1$:
\small
\begin{align*}
    \alpha^\epsilon_b(r):=\ & \int_0^1 \left[D_x b\left(r,\cdot\right)\Big|^{\theta_{t\xi}^{h,\epsilon}}_{\theta_{t\xi}(r)} \right]dh\ \de^\epsilon X(r) +\int_0^1 \widetilde{\e}\left[\left( D_y \frac{db}{d\nu} \left(r,\cdot\right)\left(\cdot\right)\bigg|^{\left(\theta_{t\xi}^{h,\epsilon}(r),\widetilde{X_{t\xi}^{h,\epsilon}}(r)\right)}_{\left(\theta_{t\xi}(r),\widetilde{X_{t\xi}}(r)\right)} \right)\widetilde{\de^\epsilon X}(r)\right] dh \\
    & +\int_0^1 \left[D_v b\left(r,\cdot\right)\Big|^{\theta_{t\xi}^{h,\epsilon}}_{\theta_{t\xi}(r)} \right]dh\ \de^\epsilon v(r),\\
    \alpha^\epsilon_\sigma (r):=\ & \int_0^1 \widetilde{\e}\left[\left( D_y \frac{d\sigma_0}{d\nu} \left(r,\cdot\right)\left(\cdot\right)\bigg|^{\left(\lr\left(X_{t\xi}^{h,\epsilon}(r)\right),\widetilde{X_{t\xi}^{h,\epsilon}}(r)\right)}_{\left(\lr(X_{t\xi}(r)),\widetilde{X_{t\xi}}(r)\right)} \right)\widetilde{\de^\epsilon X}(r)\right] dh,\\
    \alpha^\epsilon_f(r):=\ & \int_0^1  \left(P_{t\xi}(r)\right)^\top \Bigg\{ D_x^2b(r,\cdot)\Big|^{\theta_{t\xi}^{h,\epsilon}(r)}_{\theta_{t\xi}(r)} \de^\epsilon X(r) + D_v D_xb(r,\cdot)\Big|^{\theta_{t\xi}^{h,\epsilon}(r)}_{\theta_{t\xi}(r)} \de^\epsilon v(r)\\
    &\quad\qquad\qquad\qquad + \widetilde{\e}\Bigg[ D_y\frac{d}{d\nu}D_x b(r,\cdot)\left(\cdot\right) \bigg|^{\left(\theta_{t\xi}^{h,\epsilon}(r),\widetilde{X_{t\xi}^{h,\epsilon}}(r)\right)}_{\left(\theta_{t\xi}(r),\widetilde{X_{t\xi}}(r)\right)} \widetilde{\de^\epsilon X}(r)\Bigg]\Bigg\} dh \\
    &+ \int_0^1 \Bigg\{\left[D_x^2 f  \left(r,\cdot\right)\Big|^{\theta_{t\xi}^{h,\epsilon}(r)}_{\theta_{t\xi}(r)} \right]^\top \de^\epsilon X(r)+ \left[D_vD_x f  \left(r,\cdot\right)\Big|^{\theta_{t\xi}^{h,\epsilon}(r)}_{\theta_{t\xi}(r)} \right]^\top \de^\epsilon v(r)\\
	&\qquad\qquad +\widetilde{\e}\Bigg[\bigg[\left(D_y \frac{d}{d\nu}D_x f \right) \left(r,\cdot\right) \left(\cdot \right)\bigg|^{\left(\theta_{t\xi}^{h,\epsilon}(r),\widetilde{X_{t\xi}^{h,\epsilon}}(r)\right)}_{\left(\theta_{t\xi}(r),\widetilde{X_{t\xi}}(r)\right)} \bigg]^\top \widetilde{\de^\epsilon X}(s)\Bigg] \Bigg\}dh +\left(D_x b(r,\cdot)\Big|^{\theta_{t\xi^\epsilon}(r)}_{\theta_{t\xi}(r)} \right)^\top \mathcal{P}(r), \\
    \alpha^\epsilon_g:=\ &\int_0^1 \left[D_x^2 g \Big|^{\left(X_{t\xi}^{h,\epsilon}(T),\lr\left(X_{t\xi}^{h,\epsilon}(T)\right)\right)}_{(X_{t\xi}(T),\lr(X_{t\xi}(T)))} \right]^\top dh\  \de^\epsilon X(T) \\
    &+\widetilde{\e}\left\{\int_0^1 \left[\left(D_y \frac{d}{d\nu}D_x g \right) (\cdot)\left(\cdot\right)\bigg|^{\left(X_{t\xi}^{h,\epsilon}(T),\lr\left(X_{t\xi}^{h,\epsilon}(T)\right),\widetilde{X_{t\xi}^{h,\epsilon}}(T)\right)}_{\left(X_{t\xi}(T),\lr(X_{t\xi}(T)),\widetilde{X_{t\xi}}(T)\right)} \right]^\top \widetilde{\de^\epsilon X}(T) \right\},\\
    \alpha^\epsilon_v(s):=\ & \int_0^1 \left(P_{t\xi}(s)\right)^\top \Bigg\{ D_x D_v b(s,\cdot)\Big|^{\theta_{t\xi}^{h,\epsilon}(s)}_{\theta_{t\xi}(s)} \de^\epsilon X(s) + D_v^2 b(s,\cdot)\Big|^{\theta_{t\xi}^{h,\epsilon}(s)}_{\theta_{t\xi}(s)} \de^\epsilon v(s) \notag \\
    &\quad\qquad\qquad\qquad + \widetilde{\e}\Bigg[ D_y\frac{d}{d\nu}D_v b(s,\cdot)\left(\cdot\right) \bigg|^{\left(\theta_{t\xi}^{h,\epsilon}(s),\widetilde{X_{t\xi}^{h,\epsilon}}(s)\right)}_{\left(\theta_{t\xi}(s),\widetilde{X_{t\xi}}(s)\right)} \widetilde{\de^\epsilon X}(s)\Bigg]\Bigg\} dh \notag \\
    &+ \int_0^1 \Bigg\{\left[D_xD_v f  \left(s,\cdot\right)\Big|^{\theta_{t\xi}^{h,\epsilon}(s)}_{\theta_{t\xi}(s)} \right]^\top \de^\epsilon X(s)+ \left[D_v^2 f  \left(s,\cdot\right)\Big|^{\theta_{t\xi}^{h,\epsilon}(s)}_{\theta_{t\xi}(s)} \right]^\top \de^\epsilon v(s) \notag \\
	&\qquad+\widetilde{\e}\Bigg[\bigg[\left(D_y \frac{d}{d\nu}D_v f \right) \left(s,\cdot\right) \left(\cdot \right)\bigg|^{\left(\theta_{t\xi}^{h,\epsilon}(s),\widetilde{X_{t\xi}^{h,\epsilon}}(s)\right)}_{\left(\theta_{t\xi}(s),\widetilde{X_{t\xi}}(s)\right)} \bigg]^\top \widetilde{\de^\epsilon X}(s)\Bigg] \Bigg\}dh+\left(D_v b(s,\cdot)\Big|^{\theta_{t\xi^\epsilon}(s)}_{\theta_{t\xi}(s)} \right)^\top \mathcal{P}(s).
\end{align*}
\normalsize
Here, 
$$\theta_{t\xi}^{h,\epsilon}(s):=\theta_{t\xi}(s)+h\epsilon\de^\epsilon\theta(s), \quad  \Theta_{t\xi}^{h,\epsilon}(s):=\Theta_{t\xi}(s)+h\epsilon\de^\epsilon\Theta(s)$$
 for $h\in[0,1]$, and $\left(\widetilde{X_{t\xi}^{h,\epsilon}}(s),\widetilde{\de^\epsilon X}(s),\widetilde{\delta^\epsilon X}(s)\right)$ is an independent copy of $\left(X_{t\xi}^{h,\epsilon}(s),\de^\epsilon X(s),\delta^\epsilon X(s)\right)$. Then, by using the last equality and applying It\^o's formula to $\left(\delta^\epsilon P(s)\right)^\top \delta^\epsilon X(s)$, we have
\small
\begin{align}
    &\e \Bigg\{ \left[D_x^2 g  (X_{t\xi}(T),\lr(X_{t\xi}(T))) \right] \left(\delta^\epsilon X(T)\right)^{\otimes2} +\left(\delta^\epsilon X(T)\right)^\top \alpha^\epsilon_g  \notag \\
	&\quad + \left(\delta^\epsilon X(T)\right)^\top \widetilde{\e}\left[\left[\left(D_y \frac{d}{d\nu}D_x g \right) (X_{t\xi}(T),\lr(X_{t\xi}(T)))\left(\widetilde{X_{t\xi}}(T)\right) \right]^\top \widetilde{\delta^\epsilon X}(T)\right] \Bigg\} \notag \\
    =\ & \e \int_t^T \left(\delta^\epsilon P(s)\right)^\top \widetilde{\e}\bigg[ D_y \frac{db}{d\nu} (s,\theta_{t\xi}(s))\left(\widetilde{X_{t\xi}}(s)\right) \widetilde{\delta^\epsilon X}(s)\bigg] ds  \notag \\
    & + \sum_{j=1}^n  \e \int_t^T \left(\delta^\epsilon Q^j(s)\right)^\top \widetilde{\e}\bigg[ D_y \frac{d\sigma^j_0}{d\nu} (s,\lr(X_{t\xi}(s)))\left(\widetilde{X_{t\xi}}(s)\right) \widetilde{\delta^\epsilon X}(s)\bigg] ds \notag\\
    &-\e \int_t^T \left(P_{t\xi}(s)\right)^\top \left[\begin{pmatrix}
        D_x^2 b & D_xD_v b \\ D_vD_x b & D_v^2 b
    \end{pmatrix}(s,\theta_{t\xi}(s)) \right] \begin{pmatrix}
        \delta^\epsilon X(s) \\ \delta^\epsilon v(s)
    \end{pmatrix}^{\otimes 2} ds  \notag \\
    &-\e\int_t^T \left(P_{t\xi}(s)\right)^\top \widetilde{\e}\left[ D_y\frac{d}{d\nu}D_x b(s,\theta_{t\xi}(s))\left(\widetilde{X_{t\xi}}(s)\right) \widetilde{\delta^\epsilon X}(s)\right]\bigg] \delta^\epsilon X(s) ds \notag \\
    &-\e\int_t^T \left(P_{t\xi}(s)\right)^\top \widetilde{\e}\left[ D_y\frac{d}{d\nu}D_v b(s,\theta_{t\xi}(s))\left(\widetilde{X_{t\xi}}(s)\right) \widetilde{\delta^\epsilon X}(s)\right]\bigg] \delta^\epsilon v(s) ds \notag \\
    &-\e \int_s^T \left[\begin{pmatrix}
        D_x^2 f & D_xD_v f \\ D_vD_x f & D_v^2 f
    \end{pmatrix} (s,\theta_{t\xi}(s)) \right] \begin{pmatrix}
        \delta^\epsilon X(s) \\ \delta^\epsilon v(s)
    \end{pmatrix}^{\otimes2}  ds  \notag \\
    &-\e \int_s^T \left(\delta^\epsilon X(s)\right)^\top \widetilde{\e}\left[\bigg[ \left(D_y \frac{d}{d\nu}D_x f \right) (s,\theta_{t\xi}(s))\left(\widetilde{X_{t\xi}}(s)\right)\bigg]^\top \widetilde{\delta^\epsilon X}(s) \right]ds \notag \\
    &-\e \int_s^T \left(\delta^\epsilon v(s)\right)^\top \widetilde{\e}\left[\bigg[ \left(D_y \frac{d}{d\nu}D_v f \right) (s,\theta_{t\xi}(s))\left(\widetilde{X_{t\xi}}(s)\right)\bigg]^\top \widetilde{\delta^\epsilon X}(s) \right]ds \notag \\
    &+ \e\int_t^T \bigg[ \left(\delta^\epsilon P(s)\right)^\top \alpha^\epsilon_b(s) +\sum_{j=1}^n  \left(\delta^\epsilon Q^j(s)\right)^\top \left(\alpha^\epsilon_\sigma\right)^j (s) -  \left(\delta^\epsilon X(s)\right)^\top \alpha^\epsilon_f(s)-  \left(\delta^\epsilon v(s)\right)^\top \alpha^\epsilon_v(s)\bigg] ds. \label{lem:4_2}
\end{align}
\normalsize
From the convexity condition \eqref{convex'}, we know that 
\begin{equation}\label{lem:4_3}
\begin{aligned}
    \left[D_x^2 g  (X_{t\xi}(T),\lr(X_{t\xi}(T))) \right] \left(\delta^\epsilon X(T)\right)^{\otimes2} \geq\ & 2\lambda_g \left|\delta^\epsilon X(T)\right|^2;\\
    \text{and}\qquad \left[\begin{pmatrix}
        D_x^2 f & D_xD_v f \\ D_vD_x f & D_v^2 f
    \end{pmatrix} (s,\theta_{t\xi}(s)) \right] \begin{pmatrix}
        \delta^\epsilon X(s) \\ \delta^\epsilon v(s)
    \end{pmatrix}^{\otimes2} \geq\ & 2\lambda_x \left|\delta^\epsilon X(s)\right|^2+ 2\lambda_v \left|\delta^\epsilon v(s)\right|^2.
\end{aligned}
\end{equation}
From Condition \eqref{small_mf_condition}, we know that
\small
\begin{equation}\label{lem:4_4}
\begin{aligned}
    \e\left|\left(\delta^\epsilon X(T)\right)^\top \widetilde{\e}\left[\left[\left(D_y \frac{d}{d\nu}D_x g \right) (X_{t\xi}(T),\lr(X_{t\xi}(T)))\left(\widetilde{X_{t\xi}}(T)\right) \right]^\top \widetilde{\delta^\epsilon X}(T)\right] \right|\le\ & L_g \left\|\delta^\epsilon X(T)\right\|_2^2;\\
    \e \left|\int_s^T \left(\delta^\epsilon X(s)\right)^\top \widetilde{\e}\left[\bigg[ \left(D_y \frac{d}{d\nu}D_x f \right) (s,\theta_{t\xi}(s))\left(\widetilde{X_{t\xi}}(s)\right)\bigg]^\top \widetilde{\delta^\epsilon X}(s) \right]ds \right|\le\ & L_f^0 \int_t^T \left\|\delta^\epsilon X(s)\right\|_2^2 ds; \\
    \qquad \e \left|\int_s^T \left(\delta^\epsilon v(s)\right)^\top \widetilde{\e}\left[\bigg[ \left(D_y \frac{d}{d\nu}D_v f \right) (s,\theta_{t\xi}(s))\left(\widetilde{X_{t\xi}}(s)\right)\bigg]^\top \widetilde{\delta^\epsilon X}(s) \right]ds \right|\le\ & L_f^1 \int_t^T \left\|\delta^\epsilon X(s)\right\|_2\cdot \left\|\delta^\epsilon v(s)\right\|_2 ds.
\end{aligned}
\end{equation}
\normalsize
From Conditions \eqref{generic:condition:b} and \eqref{lem:2_2}, we know that
\begin{equation}\label{lem:4_5}
\begin{aligned}
    &\e \left|\int_t^T \left(P_{t\xi}(s)\right)^\top \left[\begin{pmatrix}
        D_x^2 b & D_xD_v b \\ D_vD_x b & D_v^2 b
    \end{pmatrix}(s,\theta_{t\xi}(s)) \right] \begin{pmatrix}
        \delta^\epsilon X(s) \\ \delta^\epsilon v(s)
    \end{pmatrix}^{\otimes 2} ds  \right| \\
    \le\ & \frac{L^2}{\lambda_b}\int_t^T \left( L_b^0 \left\|\delta^\epsilon X(s)\right\|_2^2+2L_b^1 \left\|\delta^\epsilon X(s)\right\|_2 \cdot \left\|\delta^\epsilon v(s)\right\|_2+L_b^2\left\|\delta^\epsilon v(s)\right\|^2_2 \right) ds; \\
    &\e\left|\int_t^T \left(P_{t\xi}(s)\right)^\top \widetilde{\e}\left[ D_y\frac{d}{d\nu}D_x b(s,\theta_{t\xi}(s))\left(\widetilde{X_{t\xi}}(s)\right) \widetilde{\delta^\epsilon X}(s)\right]\bigg] \delta^\epsilon X(s) ds \right| \\
    \le\ & \frac{L^2L_b^0}{\lambda_b}\int_t^T \left\|\delta^\epsilon X(s)\right\|_2^2 ds; \\
    &\e\left|\int_t^T \left(P_{t\xi}(s)\right)^\top \widetilde{\e}\left[ D_y\frac{d}{d\nu}D_v b(s,\theta_{t\xi}(s))\left(\widetilde{X_{t\xi}}(s)\right) \widetilde{\delta^\epsilon X}(s)\right]\bigg] \delta^\epsilon v(s) ds \right| \\
    \le\ & \frac{L^2L_b^1}{\lambda_b}\int_t^T \left\|\delta^\epsilon X(s)\right\|_2\cdot \left\|\delta^\epsilon v(s)\right\|_2 ds.
\end{aligned}
\end{equation}
From Condition \eqref{lem:4_1}, Assumption (A1) and Condition \eqref{lem:2_2}, we know that
\begin{align}
    \left|\delta^\epsilon P(s)\right|\le \frac{L}{\lambda_b}\left(L+\frac{L^3}{\lambda_b}\right) \Big(|\delta^\epsilon X(s)|+ |\delta^\epsilon v(s)|+\left\|\delta^\epsilon X (s)\right\|_2 \Big) +\frac{L}{\lambda_b} \left|\alpha^\epsilon_v(s)\right| ,\quad s\in[t,T]. \label{lem:4_6}
\end{align}
Then, from Assumption (A1), we know that
\begin{align}
    & \e \left|\int_t^T \left(\delta^\epsilon P(s)\right)^\top \widetilde{\e}\bigg[ D_y \frac{db}{d\nu} (s,\theta_{t\xi}(s))\left(\widetilde{X_{t\xi}}(s)\right) \widetilde{\delta^\epsilon X}(s)\bigg] ds  \right| \notag \\
    \le\ & l_b^m \int_t^T \left\|\delta^\epsilon P(s)\right\|_2\cdot \left\|\delta^\epsilon X(s)\right\|_2ds \notag \\
    \le\ & \frac{Ll_b^m}{\lambda_b}\left(L+\frac{L^3}{\lambda_b}\right) \int_t^T \Big(\left\|\delta^\epsilon v(s)\right\|_2\cdot \left\|\delta^\epsilon X (s)\right\|_2+2\left\|\delta^\epsilon X (s)\right\|_2^2 \Big) ds +\frac{Ll_b^m}{\lambda_b}\int_t^T \left\|\alpha^\epsilon_v(s)\right\|_2\cdot \left\|\delta^\epsilon X(s)\right\|_2 ds. \label{lem:4_7}
\end{align}
From Assumption (A1), we also know that
\begin{align}
    &\sum_{j=1}^n  \e \Bigg|\int_t^T \left(\delta^\epsilon Q^j(s)\right)^\top \widetilde{\e}\bigg[ D_y \frac{d\sigma^j_0}{d\nu} (s,\lr(X_{t\xi}(s)))\left(\widetilde{X_{t\xi}}(s)\right) \widetilde{\delta^\epsilon X}(s)\bigg] ds \Bigg| \notag \\
    \le\ & \sqrt{n}\ l_\sigma^m \left(\int_t^T \left\|\delta^\epsilon X(s)\right\|_2^2 ds\right)^{\frac{1}{2}} \left(\int_t^T \left\|\delta^\epsilon Q(s)\right\|_2^2 ds\right)^{\frac{1}{2}}. \label{sigma_m_7}
\end{align}
By applying It\^o's formula to $\left|\delta^\epsilon P(s)\right|^2$, we know that
\begin{align*}
    &\e\Bigg[\Bigg|\left[D_x^2 g  (X_{t\xi}(T),\lr(X_{t\xi}(T))) \right]^\top \delta^\epsilon X(T) +\alpha^\epsilon_g \\
    &\qquad +\widetilde{\e}\left\{\left[\left(D_y \frac{d}{d\nu}D_x g \right) (X_{t\xi}(T),\lr(X_{t\xi}(T)))\left(\widetilde{X_{t\xi}}(T)\right) \right]^\top \widetilde{\delta^\epsilon X}(T)\right\}\Bigg|^2- \left|\delta^\epsilon P(t)\right|^2\Bigg]\\
    =\ & \e\int_t^T \Bigg\{\sum_{j=1}^n \left|\delta^\epsilon Q^j(s)\right|^2-2\sum_{j=1}^n\left(\delta^\epsilon P(s)\right)^\top \left(\sigma_1^j (s)\right)^\top \delta^\epsilon Q^j(s)-2\left(\delta^\epsilon P(s)\right)^\top D_x b(s,\theta_{t\xi^\epsilon}(s))^\top \delta^\epsilon P(s)  \\
    &\quad\qquad -2 \left(P_{t\xi}(s)\right)^\top\bigg\{D_x^2 b(s,\theta_{t\xi}(s)) \delta^\epsilon X(s)+ D_vD_x b(s,\theta_{t\xi}(s)) \delta^\epsilon v(s)\\
    &\qquad\qquad\qquad\qquad\qquad +\widetilde{\e}\left[ D_y\frac{d}{d\nu}D_x b(s,\theta_{t\xi}(s))\left(\widetilde{X_{t\xi}}(s)\right) \widetilde{\delta^\epsilon X}(s)\right]\bigg\} \delta^\epsilon P(s)\notag\\
    &\quad\qquad -2 \left(\delta^\epsilon P(s)\right)^\top D_x^2 f  (s,\theta_{t\xi}(s))^\top \delta^\epsilon X(s) -2 \left(\delta^\epsilon P(s)\right)^\top D_vD_x f (s,\theta_{t\xi}(s))^\top \delta^\epsilon v(s) \\
    &\quad\qquad -2\left(\delta^\epsilon P(s)\right)^\top \widetilde{\e}\left[\left(D_y \frac{d}{d\nu}D_x f  (s,\theta_{t\xi}(s))\left(\widetilde{X_{t\xi}}(s)\right)\right)^\top \widetilde{\delta^\epsilon X}(s)\right]-2 \left(\delta^\epsilon P(s)\right)^\top \alpha^\epsilon_f(s) \Bigg\}ds,
\end{align*}
then, from the estimate \eqref{lem:2_2} and Assumptions (A1) and (A2), by using Cauchy's inequality, we can deduce that
\begin{align*}
    &\int_t^T\left\|\delta^\epsilon Q(s)\right\|_2^2ds \\
    \le\ & \int_t^T \bigg[2L\sum_{j=1}^n\left\|\delta^\epsilon P(s)\right\|_2\cdot \left\|\delta^\epsilon Q^j(s)\right\|_2+2L\left\|\delta^\epsilon P(s)\right\|_2^2 +2 \left\|\delta^\epsilon P(s)\right\|_2\cdot \left\|\alpha^\epsilon_f (s)\right\|_2 \notag\\
    &\quad\qquad +2L\left(1+\frac{L^2}{\lambda_b}\right) \left(2 \left\|\delta^\epsilon X(s)\right\|_2+ \left\| \delta^\epsilon v(s)\right\|_2 \right)\cdot \left\|\delta^\epsilon P(s)\right\|_2 \bigg]ds+6L^2\|\delta^\epsilon X(T)\|_2^2+3 \left\| \alpha^\epsilon_g \right\|_2^2 \\
    \le\ & \frac{1}{2}\int_t^T \left\|\delta^\epsilon Q(s)\right\|_2^2 ds +6L^2\|\delta^\epsilon X(T)\|_2^2 +3 \left\| \alpha^\epsilon_g \right\|_2^2 \\
    &+ \int_t^T \left[2L(1+nL) \left\|\delta^\epsilon P(s)\right\|_2^2+2L\left(1+\frac{L^2}{\lambda_b}\right) \left[2 \|\delta^\epsilon X(s)\|_2+  \| \delta^\epsilon v(s)\|_2 \right]\cdot \|\delta^\epsilon P(s)\|_2\right]ds.
\end{align*}
Subatituting \eqref{lem:4_6} into the last inequality, we have
\begin{align*}
    &\int_t^T\left\|\delta^\epsilon Q(s)\right\|_2^2ds \\
    \le\ & 12 L^2\|\delta^\epsilon X(T)\|_2^2 + \frac{4L^3}{\lambda_b}\left(1+\frac{L^2}{\lambda_b}\right)^2 \left(3+\frac{4L^2(1+nL)}{\lambda_b}\right) \int_t^T \left(2 \|\delta^\epsilon X(s)\|_2^2+  \| \delta^\epsilon v(s)\|_2^2 \right) ds \\
    &\!\!\!+ 6 \left\| \alpha^\epsilon_g \right\|_2^2 +\int_t^T \bigg[\frac{4L^2}{\lambda_b} \left(1+\frac{L^2}{\lambda_b}\right) \left(2 \|\delta^\epsilon X(s)\|_2+  \| \delta^\epsilon v(s)\|_2 \right)\left\|\alpha^\epsilon_v(s)\right\|_2 +\frac{16 L^3(1+nL)}{\lambda_b^2} \left\|\alpha^\epsilon_v(s)\right\|_2^2\bigg]ds.
\end{align*}
Substituting the last estimate back into \eqref{sigma_m_7}, we know that
\begin{align}
    &\sum_{j=1}^n  \e \Bigg|\int_t^T \left(\delta^\epsilon Q^j(s)\right)^\top \widetilde{\e}\bigg[ D_y \frac{d\sigma^j_0}{d\nu} (s,\lr(X_{t\xi}(s)))\left(\widetilde{X_{t\xi}}(s)\right) \widetilde{\delta^\epsilon X}(s)\bigg] ds \Bigg| \notag \\
    \le\ & 2\sqrt{3n}\:L\:l_\sigma^m \left(\int_t^T \left\|\delta^\epsilon X(s)\right\|_2^2 ds\right)^{\frac{1}{2}} \|\delta^\epsilon X(T)\|_2 \notag \\
    &+\frac{2\sqrt{2nL}\: L\: l_\sigma^m}{\sqrt{\lambda_b}}\left(1+\frac{L^2}{\lambda_b}\right)\sqrt{3+\frac{4L^2(1+nL)}{\lambda_b}} \int_t^T \left\|\delta^\epsilon X(s)\right\|_2^2 ds \notag \\
    &+\frac{2\sqrt{nL}\: L\: l_\sigma^m}{\sqrt{\lambda_b}}\left(1+\frac{L^2}{\lambda_b}\right)\sqrt{3+\frac{4L^2(1+nL)}{\lambda_b}}\left(\int_t^T \left\|\delta^\epsilon X(s)\right\|_2^2 ds\right)^{\frac{1}{2}} \left(\int_t^T \| \delta^\epsilon v(s)\|_2^2 ds\right)^{\frac{1}{2}} \notag \\
    &+\sqrt{6n}\: l_\sigma^m \left(\int_t^T \left\|\delta^\epsilon X(s)\right\|_2^2 ds\right)^{\frac{1}{2}} \left\| \alpha^\epsilon_g \right\|_2 \notag \\
    &+ \frac{2\sqrt{n}\: L\: l_\sigma^m}{\sqrt{\lambda_b}} \sqrt{1+\frac{L^2}{\lambda_b}} \left(\int_t^T \left\|\delta^\epsilon X(s)\right\|_2^2 ds\right)^{\frac{1}{2}} \left[\int_t^T \left(2 \|\delta^\epsilon X(s)\|_2+  \| \delta^\epsilon v(s)\|_2 \right)\left\|\alpha^\epsilon_v(s)\right\|_2 ds\right]^{\frac{1}{2}} \notag \\
    &+\frac{4\sqrt{nL(1+nL)}\:L\: l_\sigma^m}{\lambda_b}\left(\int_t^T \left\|\delta^\epsilon X(s)\right\|_2^2 ds\right)^{\frac{1}{2}} \left(\int_t^T \left\|\alpha^\epsilon_v(s)\right\|_2^2 ds\right)^{\frac{1}{2}}. \label{sigma_m_8}
\end{align}
Substituting \eqref{lem:4_3}, \eqref{lem:4_4}, \eqref{lem:4_5} and \eqref{lem:4_7} and \eqref{sigma_m_8} back into \eqref{lem:4_2}, we can deduce that
\small
\begin{align*}
    &\left(2\lambda_g-L_g\right) \left\|\delta^\epsilon X(T)\right\|_2^2 +\left(2\lambda_v-\frac{L^2L_b^2}{\lambda_b}\right)\int_t^T \left\|\delta^\epsilon v(s)\right\|_2^2 ds\\
    &+\left(2\lambda_x -L_f^0-\frac{2L^2}{\lambda_b} \left(L_b^0+l_b^m+\frac{L^2l_b^m}{\lambda_b}\right)  -\frac{2\sqrt{2nL} L l_\sigma^m}{\sqrt{\lambda_b}}\left(1+\frac{L^2}{\lambda_b}\right)\sqrt{3+\frac{4L^2(1+nL)}{\lambda_b}}\right)\int_t^T \left\|\delta^\epsilon X(s)\right\|_2^2 ds  \notag \\
    \le\ & I^\epsilon+ 2\sqrt{3n} L l_\sigma^m \left(\int_t^T \left\|\delta^\epsilon X(s)\right\|_2^2 ds\right)^{\frac{1}{2}} \|\delta^\epsilon X(T)\|_2 \notag \\
    &+\left(L_f^1+\frac{L^2}{\lambda_b}\left({3L_b^1}+{l_b^m}+\frac{L^2l_b^m}{\lambda_b}\right)   +\frac{2\sqrt{nL} L l_\sigma^m}{\sqrt{\lambda_b}}\left(1+\frac{L^2}{\lambda_b}\right)\sqrt{3+\frac{4L^2(1+nL)}{\lambda_b}}  \right)\\
    &\qquad \cdot \left(\int_t^T \left\|\delta^\epsilon X(s)\right\|_2^2 ds\right)^{\frac{1}{2}} \cdot \left(\int_t^T \| \delta^\epsilon v(s)\|_2^2 ds\right)^{\frac{1}{2}},
\end{align*}
\normalsize
where
\small
\begin{align*}
    I^\epsilon:=&\int_t^T \bigg[ \left\|\delta^\epsilon P(s)\right\|\cdot \left\|\alpha^\epsilon_b(s)\right\|_2  + \left\|\delta^\epsilon Q(s)\right\|_2 \cdot \left\|\alpha^\epsilon_\sigma(s)\right\|_2 +  \left\|\delta^\epsilon X(s)\right\|_2 \cdot \left\|\alpha^\epsilon_f(s)\right\|_2 +  \left\|\delta^\epsilon v(s)\right\|_2 \cdot \left\|\alpha^\epsilon_v(s)\right\|_2 \\
    &\qquad\qquad +\frac{Ll_b^m}{\lambda_b} \left\|\alpha^\epsilon_v(s)\right\|_2\cdot \left\|\delta^\epsilon X(s)\right\|_2 \bigg] ds\\
    & +\sqrt{6n} l_\sigma^m \left(\int_t^T \left\|\delta^\epsilon X(s)\right\|_2^2 ds\right)^{\frac{1}{2}} \left\| \alpha^\epsilon_g \right\|_2 +\frac{4\sqrt{nL(1+nL)}L l_\sigma^m}{\lambda_b}\left(\int_t^T \left\|\delta^\epsilon X(s)\right\|_2^2 ds\right)^{\frac{1}{2}} \left(\int_t^T \left\|\alpha^\epsilon_v(s)\right\|_2^2 ds\right)^{\frac{1}{2}} \notag \\
    & + \frac{2\sqrt{n}L l_\sigma^m}{\sqrt{\lambda_b}} \sqrt{1+\frac{L^2}{\lambda_b}} \left(\int_t^T \left\|\delta^\epsilon X(s)\right\|_2^2 ds\right)^{\frac{1}{2}} \left[\int_t^T \left(2 \|\delta^\epsilon X(s)\|_2+  \| \delta^\epsilon v(s)\|_2 \right)\left\|\alpha^\epsilon_v(s)\right\|_2 ds\right]^{\frac{1}{2}}.
\end{align*}
\normalsize
Then, from Condition \eqref{lem:2_0} and Young's inequality, we can deduce that
\begin{align}\label{lem:4_8}
    \int_t^T \left\|\delta^\epsilon v(s)\right\|_2^2 ds\le C I^\epsilon.
\end{align}
From Assumptions (A1) and (A2), the estimates \eqref{lem4_2} and \eqref{lem4_2'}, and the dominated convergence theorem, we can deduce that 
\begin{align*}
    \lim_{\epsilon\to0} \int_t^T \left\|\alpha^\epsilon_b(s)\right\|_2^2+\left\|\alpha^\epsilon_\sigma(s)\right\|_2^2+\left\|\alpha^\epsilon_v(s)\right\|_2^2+\left\|\alpha^\epsilon_v(s)\right\|_2^2  ds=0,\quad \lim_{\epsilon\to0} \left\|\alpha^\epsilon_v(s)\right\|_2=0,
\end{align*}
with which we have 
\begin{align}\label{lem:4_11}
	\lim_{\epsilon\to0} I^\epsilon=0.
\end{align}
From \eqref{lem:4_8} and \eqref{lem:4_11}, we know that
\begin{align}\label{lem:4_9}
    \lim_{\epsilon\to0} \int_t^T \left\|\delta^\epsilon v(s)\right\|_2^2 ds =0. 
\end{align}
With a similar approach to \eqref{lem:4_9}, the Gr\"onwall's inequality for the SDE for $\delta^\epsilon X$ and the BDG inequality for the BSDE for $\left(\delta^\epsilon P,\delta^\epsilon Q\right)$, we deduce
\begin{equation}\label{lem:4_10}
	\begin{split}
		&\lim_{\epsilon\to0}\e\bigg[\sup_{t\le s\le T}\left|\left(\delta^\epsilon X(s),\delta^\epsilon P(s)\right)^\top\right|^2 + \int_t^T \left|\delta^\epsilon Q(s) \right|^2ds \bigg]=0.
	\end{split}    	
\end{equation}    
From Condition \eqref{lem:4_1}, Assumption (A1), the convexity of $f$ in $v$, and Condition \eqref{lem:2_2}, we have
\begin{align*}
    \left(2\lambda_v - \frac{L^2L_b^2}{\lambda_b}\right) \left|\delta^\epsilon v(s)\right|\le\ &  \left(L+\frac{L^2L_b^1}{\lambda_b}\right) \left|\delta^\epsilon X(s)\right|+ \left(L_f^1+\frac{L^2L_b^1}{\lambda_b}\right) \left\|\delta^\epsilon X(s) \right\|_2+L \left|\delta^\epsilon P(s)\right|+\left|\alpha^\epsilon_v(s)\right|.
\end{align*}
Then, from \eqref{lem:2_0}, \eqref{lem:4_11} and \eqref{lem:4_10}, we obtain \eqref{lem4_1}. The desired G\^ateaux differentiability is then a consequence of \eqref{lem4_1} and Lemma~\ref{lem:3}. The proof of the continuity in $\xi$ of the process $(\mathcal{X}(s),\mathcal{P}(s),\mathcal{Q}(s),\mathcal{V}(s))$ is similar to that of \eqref{lem4_1}, and is omitted here.

\subsection{Proof of Lemma~\ref{prop:4}}\label{pf:prop:4}

The solvability of FBSDEs \eqref{FB:dr'} can be shown in a similar manner as that for Lemma~\ref{lem:3}, and so we omit it. We only prove \eqref{prop4_1} here. From the uniqueness of the solution of FBSDEs \eqref{FB:mfg_generic}, we see $\Theta_{tx\mu}(s)|_{x=\xi}=\Theta_{t\xi}(s),\ s\in[t,T]$; see \cite{BR} for details. Then, from FBSDEs \eqref{FB:x} and \eqref{FB:dr'}, we know that the sum of the processes
\begin{align*}
	D_x\Theta_{tx\mu}(s)\big|_{x=\xi}\ \eta+\bd_\eta \Theta_{t\xi}(s) 
\end{align*}
satisfies the following FBSDEs 
\small
\begin{align*}
		&D_xX_{tx\mu}(s)\big|_{x=\xi}\ \eta+\bd_\eta X_{t\xi}(s)\\
        =\ & \eta+\int_t^s \bigg\{D_x b\left(r,\theta_{t\xi}(r)\right)\left(D_xX_{tx\mu}(r)\big|_{x=\xi}\ \eta+\bd_\eta X_{t\xi}(r)\right)\\
        &\qquad\qquad + D_v b\left(r,\theta_{t\xi}(r)\right)\left(D_xv_{tx\mu}(r)\big|_{x=\xi}\ \eta +\bd_\eta v_{t\xi}(r)\right) \notag \\
        &\qquad\qquad +\widetilde{\e}\bigg[ D_y \frac{db}{d\nu} (r,\theta_{t\xi}(r))\left(\widetilde{X_{t\xi}}(r)\right) \left(\widetilde{D_x X_{tx\mu}}(r)\left|_{x=\widetilde{\xi}}\ \right. \widetilde{\eta}+\widetilde{\bd_\eta X_{t\xi}}(r)\right)\bigg] \bigg\} dr\\
        & +\int_t^s \bigg\{ \sigma_1(r) \left(D_xX_{tx\mu}(r)\big|_{x=\xi}\ \eta +\bd_\eta X_{t\xi}(r)\right)\\
        &\qquad\qquad +\widetilde{\e}\bigg[ D_y \frac{d\sigma_0}{d\nu} (r,\lr(X_{t\xi}(r)))\left(\widetilde{X_{t\xi}}(r)\right) \left(\widetilde{D_x X_{tx\mu}}(r)\left|_{x=\widetilde{\xi}}\ \right. \widetilde{\eta}+\widetilde{\bd_\eta X_{t\xi}}(r)\right)\bigg] \bigg\} dB(r) \notag \\
		\text{and} \qquad &D_xP_{tx\mu}(s)\big|_{x=\xi}\ \eta+\bd_\eta P_{t\xi}(s)\\
        =\ & D_x^2 g\left(X_{t\xi}(T),\lr(X_{t\xi}(T))\right)^\top \left(D_xX_{tx\mu}(T)\big|_{x=\xi}\ \eta+\bd_\eta X_{t\xi}(T) \right) \notag \\
        &+\widetilde{\e}\left[\left(D_y \frac{d}{d\nu}D_x g (X_{t\xi}(T),\lr(X_{t\xi}(T)))\left(\widetilde{X_{t\xi}}(T)\right) \right)^\top \left( \widetilde{D_xX_{tx\mu}}(T)\left|_{x=\widetilde{\xi}}\ \right. \widetilde{\eta}+\widetilde{\bd_\eta X_{t\xi}}(T)\right)\right] \\
        & +\int_s^T \Bigg\{ D_x b\left(r,\theta_{t\xi}(r)\right)^\top \left(D_x P_{tx\mu}(r)\big|_{x=\xi}\ \eta+\bd_\eta P_{t\xi}(r)\right) \\
        &\qquad\qquad +\sum_{j=1}^n\left(\sigma_1^j(r)\right)^\top \left(D_x Q_{tx\mu}^j(r)\big|_{x=\xi}\ \eta +\bd_\eta Q_{t\xi}^j(r)\right) \notag \\
        &\qquad\qquad +\left(P_{t\xi}(r)\right)^\top \bigg\{D_x^2 b\left(r,\theta_{t\xi}(r)\right)\left(D_xX_{tx\mu}(r)\big|_{x=\xi}\ \eta +\bd_\eta X_{t\xi}(r)\right) \\
        &\qquad\qquad\qquad\qquad\qquad + D_v D_x b\left(r,\theta_{t\xi}(r)\right) \left(D_xv_{tx\mu}(r)\big|_{x=\xi}\ \eta+\bd_\eta v_{t\xi}(r)\right) \notag \\
        &\qquad\qquad\qquad\qquad\qquad +\widetilde{\e}\left[ D_y\frac{d}{d\nu}D_x b(r,\theta_{t\xi}(r))\left(\widetilde{X_{t\xi}}(r)\right) \left( \widetilde{D_xX_{tx\mu}}(r)\left|_{x=\widetilde{\xi}}\ \right. \widetilde{\eta}+\widetilde{\bd_\eta X_{t\xi}}(r)\right)\right] \bigg\}\\
        &\qquad\qquad +\left[D_x^2 f (r,\theta_{t\xi}(r)) \right]^\top \left(D_x X_{tx\mu}(r)\big|_{x=\xi}\ \eta+\bd_\eta X_{t\xi}(r)\right) \\
        &\qquad\qquad + \left[D_vD_x f (r,\theta_{t\xi}(r))\right]^\top \left(D_x v_{tx\mu}(r) \big|_{x=\xi}\ \eta+\bd_\eta v_{t\xi}(r)\right)\\
        &\qquad\qquad +\widetilde{\e}\left[\left(D_y \frac{d}{d\nu}D_x f  (r,\theta_{t\xi}(r))\left(\widetilde{X_{t\xi}}(r)\right)\right)^\top  \left(\widetilde{D_xX_{tx\mu}}(r)\left|_{x=\widetilde{\xi}}\ \right. \widetilde{\eta}+\widetilde{\bd_\eta X_{t\xi}}(r)\right)\right]\Bigg\}dr\\
        &-\int_s^T \left(D_x Q_{tx\mu} (r)\big|_{x=\xi}\ \eta+\bd_\eta Q_{t\xi}(r)\right) dB(r),\quad s\in[t,T].
\end{align*}
\normalsize
Multiplying Condition \eqref{FB:x_condition} by $\eta$, in view of  Condition \eqref{FB:dr'_condition}, we see that 
\begin{align*}
	0=\ &\left[D_xD_v f (s,\theta_{t\xi}(s)) \right]^\top \left(D_x X_{tx\mu}(s)\big|_{x=\xi}\ \eta+\bd_\eta X_{t\xi}(s)\right) \\
    &+ \left[D_v^2 f (s,\theta_{t\xi}(s))\right]^\top \left(D_x v_{tx\mu}(s)\big|_{x=\xi}\ \eta+\bd_\eta v_{t\xi}(s)\right)\\
    &+\widetilde{\e}\left[\left(D_y \frac{d}{d\nu}D_v f  (s,\theta_{t\xi}(s))\left(\widetilde{X_{t\xi}}(s)\right)\right)^\top \left(\widetilde{D_xX_{tx\mu}}(s)\left|_{x=\widetilde{\xi}}\ \right. \widetilde{\eta}+\widetilde{\bd_\eta X_{t\xi}}(s)\right)\right]\\
    &+D_v b(s,\theta_{t\xi}(s))^\top \left(D_xP_{tx\mu}(s)\big|_{x=\xi}\ \eta +\bd_\eta P_{t\xi}(s)\right) \notag \\
    &+\left(P_{t\xi}(s)\right)^\top \bigg\{D_xD_v b\left(s,\theta_{t\xi}(s)\right) \left(D_xX_{tx\mu}(s)\big|_{x=\xi}\ \eta +\bd_\eta X_{t\xi}(s)\right)\\
    &\qquad\qquad\qquad + D_v^2  b\left(s,\theta_{t\xi}(s)\right) \left(D_xv_{tx\mu}(s)\big|_{x=\xi}\ \eta+\bd_\eta v_{t\xi}(s)\right) \\
    &\qquad\qquad\qquad + \widetilde{\e}\left[ D_y\frac{d}{d\nu}D_v b(s,\theta_{t\xi}(s))\left(\widetilde{X_{t\xi}}(s) \right) \left(\widetilde{D_xX_{tx\mu}}(s)\left|_{x=\widetilde{\xi}}\ \right. \widetilde{\eta}+\widetilde{\bd_\eta X_{t\xi}}(s)\right)\right] \bigg\}.
\end{align*}
All the above are exactly the FBSDEs system \eqref{FB:dr}-\eqref{FB:dr_condition} for $D_\eta \Theta_{t\xi}$. From the uniqueness of the solution of FBSDEs \eqref{FB:dr}-\eqref{FB:dr_condition}, we deduce \eqref{prop4_1}.

\subsection{Proof of Theorem~\ref{prop:3}}\label{pf:prop:3}

In view of Lemma~\ref{prop:4}, the system of FBSDEs \eqref{FB:mu'} also reads, for $s\in[t,T]$,
\small
\begin{align}
		D_\eta X_{tx\xi}(s)=\ & \int_t^s \bigg\{ \widetilde{\e}\bigg[ D_y \frac{db}{d\nu} (r,\theta_{tx\mu}(r))\left(\widetilde{X_{t\xi}}(r)\right) \widetilde{D_\eta X_{t\xi}}(r)\bigg] \notag \\
        &\qquad +D_x b(r, \theta_{tx\mu}(r))D_\eta X_{tx\xi}(s)+D_v b(r, \theta_{tx\mu}(r))D_\eta v_{tx\xi}(r)  \bigg\}dr \notag \\
        &+\int_t^s \bigg\{ 
		\widetilde{\e}\bigg[ D_y \frac{d\sigma_0}{d\nu} (r,\lr(X_{t\xi}(r)))\left(\widetilde{X_{t\xi}}(r)\right) \widetilde{D_\eta X_{t\xi}}(r)\bigg]+ \sigma_1(r)D_\eta X_{tx\xi}(r) \bigg\}dB(r), \notag \\[3mm]
		D_\eta P_{tx\xi}(s)=\ &  -\int_s^TD_\eta Q_{tx\xi}(r)dB(r)+D_x^2 g (X_{tx\mu}(T),\lr(X_{t\xi}(T)))^\top  D_\eta X_{tx\xi}(T) \notag\\
        &+\widetilde{\e}\left[\left(D_y \frac{d}{d\nu}D_x g (X_{tx\mu}(T),\lr(X_{t\xi}(T)))\left(\widetilde{X_{t\xi}}(T)\right) \right)^\top  \widetilde{D_\eta X_{t\xi}}(T) \right] \notag \\
		&+\int_s^T\Bigg\{ D_xb(r,\theta_{tx\mu}(r))^\top D_\eta P_{tx\xi}(r)+\sum_{j=1}^n\left(\sigma_1^j (r)\right)^\top D_\eta Q_{tx\xi}^j(r) \notag \\
        &\quad\qquad + \left(P_{tx\mu}(r) \right)^\top \bigg\{\widetilde{\e}\left[ D_y\frac{d}{d\nu}D_x b(r,\theta_{tx\mu}(r))\left(\widetilde{X_{t\xi}}(r)\right) \widetilde{D_\eta X_{t\xi}}(r)\right] \notag \\
        &\qquad\qquad\qquad\qquad\qquad +D_x^2 b(r,\theta_{tx\mu}(r)) D_\eta X_{tx\xi}(r)+ D_vD_x b(r,\theta_{tx\mu}(r)) D_\eta v_{tx\xi}(r) \bigg\} \notag\\
        &\quad\qquad +\widetilde{\e}\left[\left(D_y \frac{d}{d\nu}D_x f  (r,\theta_{tx\mu}(r))\left(\widetilde{X_{t\xi}}(r)\right)\right)^\top \widetilde{D_\eta X_{t\xi}}(r)\right] \notag \\
        &\quad\qquad +D_x^2 f  (r,\theta_{tx\mu}(r))^\top D_\eta X_{tx\xi}(r) + D_vD_x f (r,\theta_{tx\mu}(r))^\top  D_\eta v_{tx\xi}(r)\Bigg\}dr;  \label{FB:mu} 
\end{align}
\normalsize
and in view of Conditions \eqref{FB:x_condition}, \eqref{FB:dr'_condition} and Lemma~\ref{prop:4}, Condition \eqref{FB:mu'_condition} also reads
\small
\begin{align}
    0=\ & \widetilde{\e}\left[\left(D_y \frac{d}{d\nu}D_v f  (s,\theta_{tx\mu}(s))\left(\widetilde{X_{t\xi}}(s)\right)\right)^\top \widetilde{D_\eta X_{t\xi}}(s)\right] \notag \\
    &+D_x D_v f  (s,\theta_{tx\mu}(s))^\top D_\eta X_{tx\xi}(s) +D_v^2 f (s,\theta_{tx\mu}(s))^\top  D_\eta v_{tx\xi}(s)+D_v b(s,\theta_{tx\mu}(s))^\top D_\eta P_{tx\xi}(s) \notag\\
    &+ \left(P_{tx\mu}(s)\right)^\top \bigg\{\widetilde{\e}\left[ D_y\frac{d}{d\nu}D_v b(s,\theta_{tx\mu}(s))\left(\widetilde{X_{t\xi}}(s) \right) \widetilde{D_\eta X_{t\xi}}(s)\right] \notag \\
    &\quad\qquad\qquad\qquad +D_x D_v b(s,\theta_{tx\mu}(s)) D_\eta X_{tx\xi}(s)+ D_v^2 b(s,\theta_{tx\mu}(s)) D_\eta v_{tx\xi}(s)\bigg\}. \label{FB:mu_condition}
\end{align}
\normalsize
Then, the proof of the well-posedness of the above FBSDEs is similar to that for Lemma~\ref{lem:3}, and the proof of the G\^ateaux differentiability is similar to that for Theorem~\ref{lem:4}, and they are all omitted here. In view of FBSDEs \eqref{FB:mu}-\eqref{FB:mu_condition}, the estimate \eqref{prop3_1} is derived in a similar way to  that of \eqref{lem3_1} (for the solution $D_\eta \Theta_{t\xi}$ of FBSDEs \eqref{FB:dr}-\eqref{FB:dr_condition}), with the only difference that the latter FBSDEs only depends on the process $\theta_{t\xi}$ while the former depends on both processes $\theta_{tx\mu}$ and $(\widetilde{X_{t\xi}},\widetilde{D_\eta X_{t\xi}})$. For the reader's convenience, we prove \eqref{prop3_1} in details here. Applying It\^o's formula to $D_\eta P_{tx\xi}(s)^\top D_\eta X_{tx\xi}(s)$, similar to \eqref{lem:3_1}, we have
\begin{align*}
    &\e\Bigg\{D_x^2 g (X_{tx\mu}(T),\lr(X_{t\xi}(T))) \left(  D_\eta X_{tx\xi}(T)\right)^{\otimes 2} \\
    &\quad +\widetilde{\e}\left[\left(\widetilde{D_\eta X_{t\xi}}(T)\right)^\top\left(D_y \frac{d}{d\nu}D_x g (X_{tx\mu}(T),\lr(X_{t\xi}(T)))\left(\widetilde{X_{t\xi}}(T)\right) \right) \right]D_\eta X_{tx\xi}(T) \Bigg\}\\
    =\ & \e \int_t^T \Bigg\{ D_\eta P_{tx\xi}(s)^\top \widetilde{\e}\bigg[ D_y \frac{db}{d\nu} (s,\theta_{tx\mu}(s))\left(\widetilde{X_{t\xi}}(s)\right) \widetilde{D_\eta X_{t\xi}}(s)\bigg]+ D_\eta P_{tx\xi}(s)^\top D_v b(s, \theta_{tx\mu}(s))D_\eta v_{tx\xi}(s)\\
    &\quad\qquad  +\sum_{j=1}^n \left(D_\eta Q_{tx\xi}^j(s)\right)^\top \widetilde{\e}\bigg[ D_y \frac{d\sigma_0^j}{d\nu} (s,\lr(X_{t\xi}(s)))\left(\widetilde{X_{t\xi}}(s)\right) \widetilde{D_\eta X_{t\xi}}(s)\bigg] \\ 
    &\quad\qquad -P_{tx\mu}(s)^\top \bigg[D_x^2 b(s,\theta_{tx\mu}(s)) D_\eta X_{tx\xi}(s)+\widetilde{\e}\left[ D_y\frac{d}{d\nu}D_x b(s,\theta_{tx\mu}(s))\left(\widetilde{X_{t\xi}}(s) \right)\widetilde{D_\eta X_{t\xi}}(s)\right]\\
    &\qquad\qquad\qquad\qquad + D_vD_x b(s,\theta_{tx\mu}(s)) D_\eta v_{tx\xi}(s)\bigg]  D_\eta X_{tx\xi}(s) \\
    &\quad\qquad - D_\eta X_{tx\xi}(s)^\top D_x^2 f  (s,\theta_{tx\mu}(s))^\top D_\eta X_{tx\xi}(s) -D_\eta X_{tx\xi}(s)^\top D_vD_x f (s,\theta_{tx\mu}(s))^\top  D_\eta v_{tx\xi}(s)\\
    &\quad\qquad -D_\eta X_{tx\xi}(s)^\top \widetilde{\e}\left[\left(D_y \frac{d}{d\nu}D_x f (s,\theta_{tx\mu}(s))\left(\widetilde{X_{t\xi}}(s)\right)\right)^\top \widetilde{D_\eta X_{t\xi}}(s)\right] \Bigg\}ds.
\end{align*}
Substituting Condition \eqref{FB:mu_condition} into this last equality, we deduce that
\begin{align}
    &\e\Bigg\{D_x^2 g (X_{tx\mu}(T),\lr(X_{t\xi}(T))) \left(  D_\eta X_{tx\xi}(T)\right)^{\otimes 2} \notag \\
    &\quad +\widetilde{\e}\left[\left(\widetilde{D_\eta X_{t\xi}}(T)\right)^\top\left(D_y \frac{d}{d\nu}D_x g (X_{t\xi}(T),\lr(X_{t\xi}(T)))\left(\widetilde{X_{t\xi}}(T)\right) \right) \right]D_\eta X_{tx\xi}(T) \Bigg\}\notag \\
    =\ & \e \int_t^T \Bigg\{ D_\eta P_{tx\xi}(s)^\top \widetilde{\e}\bigg[ D_y \frac{db}{d\nu} (s,\theta_{tx\mu}(s))\left(\widetilde{X_{t\xi}}(s)\right) \widetilde{D_\eta X_{t\xi}}(s)\bigg]\notag \\
    &\quad\qquad  +\sum_{j=1}^n \left(D_\eta Q_{tx\xi}^j(s)\right)^\top \widetilde{\e}\bigg[ D_y \frac{d\sigma_0^j}{d\nu} (s,\lr(X_{t\xi}(s)))\left(\widetilde{X_{t\xi}}(s)\right) \widetilde{D_\eta X_{t\xi}}(s)\bigg] \\ 
    &\quad\qquad - P_{tx\mu}(s)^\top \bigg\{ \left[\begin{pmatrix}
        D_v^2 b & D_vD_xb\\ D_xD_v b & D_x^2b
    \end{pmatrix}(s,\theta_{tx\mu}(s))\right]\begin{pmatrix}
        D_\eta v_{tx\xi}(s)\\ D_\eta X_{tx\xi}(s)
    \end{pmatrix}^{\otimes 2} \notag \\
    &\quad\qquad\qquad\qquad\qquad +\widetilde{\e}\left[ D_y\frac{d}{d\nu}D_v b(s,\theta_{tx\mu}(s))\left(\widetilde{X_{t\xi}}(s)\right) \widetilde{D_\eta X_{t\xi}}(s)\right] D_\eta v_{tx\xi}(s)  \notag \\
    &\quad\qquad\qquad\qquad\qquad +\widetilde{\e}\left[ D_y\frac{d}{d\nu}D_x b(s,\theta_{tx\mu}(s))\left(\widetilde{X_{t\xi}}(s)\right) \widetilde{D_\eta X_{t\xi}} \right] D_\eta X_{tx\xi}(s) \bigg\} \notag \\
    &\quad\qquad -\left[\begin{pmatrix}
        D_v^2 f & D_vD_xf\\ D_xD_v f & D_x^2f
    \end{pmatrix}(s,\theta_{tx\mu}(s))\right]\begin{pmatrix}
        D_\eta v_{tx\xi}(s)\\ D_\eta X_{tx\xi}(s)
    \end{pmatrix}^{\otimes 2} \notag \\
    &\quad\qquad -D_\eta X_{tx\xi}(s)^\top \widetilde{\e}\left[\left(D_y \frac{d}{d\nu}D_x f  (s,\theta_{tx\mu}(s))\left(\widetilde{X_{t\xi}}(s)\right)\right)^\top \widetilde{D_\eta X_{t\xi}}(s)\right] \notag  \\
    &\quad\qquad -D_\eta v_{tx\xi}(s)^\top \widetilde{\e}\left[\left(D_y \frac{d}{d\nu}D_v f  (s,\theta_{tx\mu}(s))\left(\widetilde{X_{t\xi}}(s)\right)\right)^\top \widetilde{D_\eta X_{t\xi}}(s)\right] \Bigg\}ds. \label{prop:3_1}
\end{align}
From Condition \eqref{FB:mu_condition} and Assumption (A1), we also know that
\begin{align}
    D_\eta P_{tx\xi}(s)= -&\left[\left((D_v b)(D_v b)^\top\right)^{-1}(D_vb) (s,\theta_{tx\mu}(s))\right]  \notag \\
    &\cdot \Bigg\{ D_x D_v f (s,\theta_{tx\mu}(s))^\top D_\eta X_{tx\xi}(s) +D_v^2 f (s,\theta_{tx\mu}(s))^\top  D_\eta v_{tx\xi}(s) \notag \\
    &\qquad +\widetilde{\e}\left[\left(D_y \frac{d}{d\nu}D_v f  (s,\theta_{tx\mu}(s))\left(\widetilde{X_{t\xi}}(s)\right)\right)^\top \widetilde{D_\eta X_{t\xi}}(s)\right] \notag\\
    &\qquad + \left(P_{tx\mu}(s)\right)^\top \bigg\{D_x D_v b(s,\theta_{tx\mu}(s)) D_\eta X_{tx\xi}(s) + D_v^2 b(s,\theta_{tx\mu}(s)) D_\eta v_{tx\xi}(s) \notag \\
    &\quad\qquad\qquad\qquad\qquad +\widetilde{\e}\left[ D_y\frac{d}{d\nu}D_v b(s,\theta_{tx\mu}(s))\left(\widetilde{X_{t\xi}}(s) \right) \widetilde{D_\eta X_{t\xi}}(s)\right] \bigg\} \Bigg\}, \label{prop:3_2'}
\end{align}
and then, from the assumptions (A2), \eqref{generic:condition:b}, \eqref{generic:condition:b'} and the estimate \eqref{lem:5_2}, we have
\begin{align}\label{prop:3_2}
    |D_\eta P_{tx\xi}(s)|\le &\frac{L^2}{\lambda_b} \left(1+\frac{L^2}{\lambda_b}\right) \left(|D_\eta X_{tx\xi}(s)|+ \left\|D_\eta X_{t\xi}(s)\right\|_2 +|D_\eta v_{tx\xi}(s)|\right).
\end{align}
Similar estimate as \eqref{prop:3_2} is also used in \cite{AB10'}, which is motivated by the idea of dividing the time span of temporal variable; while here is motivated by that of dividing domain of the spatial variable. From \eqref{prop:3_2} and Assumption (A1), we also see that
\begin{align}
    &\left|D_\eta P_{tx\xi}(s)^\top \widetilde{\e}\bigg[ D_y \frac{db}{d\nu} (s,\theta_{tx\mu}(s))\left(\widetilde{X_{t\xi}}(s)\right) \widetilde{D_\eta X_{t\xi}}(s)\bigg]\right| \notag \\
    \le\ & \frac{L^2l_b^m}{\lambda_b} \left(1+\frac{L^2}{\lambda_b}\right) \left(|D_\eta X_{tx\xi}(s)|+ \left\|D_\eta X_{t\xi}(s)\right\|_2 +|D_\eta v_{tx\xi}(s)|\right)\left\|D_\eta X_{t\xi}(s)\right\|_2. \label{prop:3_3}
\end{align}
From the estimate \eqref{lem:5_2} and the assumption \eqref{generic:condition:b}, we know that 
\begin{align}
    &\Bigg| P_{tx\mu}(s)^\top \bigg\{ \left[\begin{pmatrix}
        D_v^2 b & D_vD_xb\\ D_xD_v b & D_x^2b
    \end{pmatrix}(s,\theta_{tx\mu}(s))\right]\begin{pmatrix}
        D_\eta v_{tx\xi}(s)\\ D_\eta X_{tx\xi}(s)
    \end{pmatrix}^{\otimes 2} \notag \\
    &\qquad\qquad +\widetilde{\e}\left[ D_y\frac{d}{d\nu}D_v b(s,\theta_{tx\mu}(s))\left(\widetilde{X_{t\xi}}(s)\right) \widetilde{D_\eta X_{t\xi}}(s)\right] D_\eta v_{tx\xi}(s)  \notag \\
    &\qquad\qquad +\widetilde{\e}\left[ D_y\frac{d}{d\nu}D_x b(s,\theta_{tx\mu}(s))\widetilde{X_{t\xi}}(s) \widetilde{D_\eta X_{t\xi}}(s)\right]  D_\eta X_{tx\xi}(s) \bigg\}\Bigg| \notag \\
    \le\ & \frac{L^2}{\lambda_b} \Big[L_b^0 |D_\eta X_{tx\xi}(s)|\left(|D_\eta X_{tx\xi}(s)|+ \left\|D_\eta X_{t\xi}(s)\right\|_2\right) \notag \\
    &\qquad +L_b^1 |D_\eta v_{tx\xi}(s)| \left(2|D_\eta X_{tx\xi}(s)|+ \left\|D_\eta X_{t\xi}(s)\right\|_2\right) +L_b^2 |D_\eta v_{tx\xi}(s)|^2\Big]. \label{prop:3_4}
\end{align}
From the convexity condition \eqref{convex'}, we have
\begin{equation}\label{prop:3_5}
\begin{aligned}
    \left[\begin{pmatrix}
        D_v^2 f & D_vD_xf\\ D_xD_v f & D_x^2f
    \end{pmatrix}(s,\theta_{tx\mu}(s))\right]\begin{pmatrix}
        D_\eta v_{tx\xi}(s)\\ D_\eta X_{tx\xi}(s)
    \end{pmatrix}^{\otimes 2}\geq \ & 2\lambda_v |D_\eta v_{tx\xi}(s)|^2+2\lambda_x |D_\eta X_{tx\xi}(s)|^2;\\
    \text{and}\qquad D_x^2 g (X_{tx\mu}(T),\lr(X_{t\xi}(T))) \left(  D_\eta X_{tx\xi}(T)\right)^{\otimes 2} \geq\ & 2\lambda_g |D_\eta X_{tx\xi}(T)|^2.
\end{aligned}
\end{equation}
From the condition \eqref{small_mf_condition}, we know that
\small
\begin{align}
    &\left|D_\eta X_{tx\xi}(s)^\top \widetilde{\e}\left[\left(D_y \frac{d}{d\nu}D_x f  (s,\theta_{tx\mu}(s))\left(\widetilde{X_{t\xi}}(s)\right)\right)^\top \widetilde{D_\eta X_{t\xi}}(s)\right]\right| \le L_f^0\  |D_\eta X_{tx\xi}(s)|\cdot \left\|D_\eta X_{t\xi}(s)\right\|_2; \notag \\
    &\left|D_\eta v_{tx\xi}(s)^\top \widetilde{\e}\left[\left(D_y \frac{d}{d\nu}D_v f  (s,\theta_{tx\mu}(s))\left(\widetilde{X_{t\xi}}(s)\right)\right)^\top \widetilde{D_\eta X_{t\xi}}(s)\right] \right| \le L_f^1 \  |D_\eta v_{tx\xi}(s)|\cdot \left\|D_\eta X_{t\xi}(s)\right\|_2; \notag \\
    &\left| \widetilde{\e}\left[\left(\widetilde{D_\eta X_{t\xi}}(T)\right)^\top\left(D_y \frac{d}{d\nu}D_x g (X_{tx\mu}(T),\lr(X_{t\xi}(T)))\left(\widetilde{X_{t\xi}}(T)\right) \right) \right]D_\eta X_{tx\xi}(T)\right| \notag \\
    &\qquad\qquad\qquad\qquad\qquad\qquad\qquad\qquad\qquad\qquad\qquad\qquad\qquad\qquad \le L_g   |D_\eta X_{tx\xi}(T)|\cdot \left\|D_\eta X_{t\xi}(T)\right\|_2. \label{prop:3_6}
\end{align}
\normalsize
From Assumption (A1) and Cauchy's inequality, we also know that
\begin{align}
    &\Bigg|\e \int_t^T \sum_{j=1}^n D_\eta Q_{tx\xi}^j(s)^\top  \widetilde{\e}\bigg[ D_y \frac{d\sigma_0^j}{d\nu} (s,\lr(X_{t\xi}(s)))\left(\widetilde{X_{t\xi}}(s)\right) \widetilde{D_\eta X_{t\xi}}(s)\bigg] ds \Bigg| \notag \\
    \le\ & l_{\sigma}^m \int_t^T \left\|\widetilde{D_\eta X_{t\xi}}(s)\right\|_2 \sum_{j=1}^n \left\|Q^j(s) \right\|_2 ds \notag \\
    \le\ & \sqrt{n}l_{\sigma}^m \left(\int_t^T \left\|D_\eta X_{t\xi}(s)\right\|_2^2 ds \right)^{\frac{1}{2}}\left(\int_t^T \| Q(s)\|_2^2 ds\right)^{\frac{1}{2}}. \label{sigma_m_9}
\end{align}
Substituting \eqref{prop:3_3}-\eqref{sigma_m_9} back into \eqref{prop:3_1}, from Cauchy's inequality, we have
\begin{align*}
    &\e\left[2\lambda_g |D_\eta X_{tx\xi}(T)|^2 + \int_t^T\left(2\lambda_v |D_\eta v_{tx\xi}(s)|^2+2\lambda_x |D_\eta X_{tx\xi}(s)|^2\right) ds \right]\notag \\
    \le\ & \e\left[ L_g \  |D_\eta X_{tx\xi}(T)|\cdot \left\|D_\eta X_{t\xi}(T)\right\|_2 \right]  + \sqrt{n}\:l_{\sigma}^m \left(\int_t^T \left\|D_\eta X_{t\xi}(s)\right\|_2^2 ds \right)^{\frac{1}{2}}\cdot \left(\int_t^T \| Q(s)\|_2^2 ds\right)^{\frac{1}{2}} \notag \\
    &+\e \int_t^T \Bigg\{ \frac{L^2l_b^m}{\lambda_b} \left(1+\frac{L^2}{\lambda_b}\right) \left(|D_\eta X_{tx\xi}(s)|+ \left\|D_\eta X_{t\xi}(s)\right\|_2 +|D_\eta v_{tx\xi}(s)|\right)\left\|D_\eta X_{t\xi}(s)\right\|_2 \notag \\
    &\quad\qquad\qquad +\frac{L^2}{\lambda_b} \Big[L_b^0 |D_\eta X_{tx\xi}(s)|\left(|D_\eta X_{tx\xi}(s)|+ \left\|D_\eta X_{t\xi}(s)\right\|_2\right)\\
    &\qquad\qquad\qquad\qquad +L_b^1 |D_\eta v_{tx\xi}(s)| \left(2|D_\eta X_{tx\xi}(s)|+ \left\|D_\eta X_{t\xi}(s)\right\|_2\right) +L_b^2 |D_\eta v_{tx\xi}(s)|^2\Big] \notag \\
    &\quad\qquad\qquad +L_f^0\  |D_\eta X_{tx\xi}(s)|\cdot \left\|D_\eta X_{t\xi}(s)\right\|_2+L_f^1 \  |D_\eta v_{tx\xi}(s)|\cdot \left\|D_\eta X_{t\xi}(s)\right\|_2  \Bigg\}ds\\
    \le\ &  L_g \ \left\|D_\eta X_{tx\xi}(T)\right\|_2 \cdot \left\|D_\eta X_{t\xi}(T)\right\|_2  + \sqrt{n}\:l_{\sigma}^m \left(\int_t^T \left\|D_\eta X_{t\xi}(s)\right\|_2^2 ds \right)^{\frac{1}{2}}\left(\int_t^T \| Q(s)\|_2^2 ds\right)^{\frac{1}{2}} \notag \\
    &+ \int_t^T \Bigg\{ \frac{L^2L_b^2}{\lambda_b}\|D_\eta v_{tx\xi}(s)\|_2^2 + \frac{2L^2L_b^1}{\lambda_b} \|D_\eta v_{tx\xi}(s)\|_2 \cdot \left\|D_\eta X_{tx\xi}(s)\right\|_2 +\left[\frac{L^2L_b^0}{\lambda_b} \right]\left\|D_\eta X_{tx\xi}(s)\right\|_2^2\\
    &\qquad\qquad + \left[L_f^1+ \frac{L^2L_b^1}{\lambda_b}+\frac{L^2l_b^m}{\lambda_b} \left(1+\frac{L^2}{\lambda_b}\right) \right] \|D_\eta v_{tx\xi}(s)\|_2 \cdot \left\|D_\eta X_{t\xi}(s)\right\|_2 \\
    &\qquad\qquad +\left[L_f^0+ \frac{L^2L_b^0}{\lambda_b} +\frac{L^2l_b^m}{\lambda_b} \left(1+\frac{L^2}{\lambda_b}\right) \right]\left\|D_\eta X_{tx\xi}(s)\right\|_2 \cdot \left\|D_\eta X_{t\xi}(s)\right\|_2\\
    &\qquad\qquad +\left[\frac{L^2l_b^m}{\lambda_b} \left(1+\frac{L^2}{\lambda_b}\right) \right] \left\|D_\eta X_{t\xi}(s)\right\|_2^2 \Bigg\}ds.
\end{align*}
Therefore, from Condition \eqref{lem:2_0}, we know that for any $\epsilon>0$,
\begin{align}
    &\left[2\lambda_v-\frac{L^2 L_b^2}{\lambda_b}-\frac{L^2L_b^1}{\sqrt{2\lambda_x\lambda_b^2-2L^2L_b^0\lambda_b }}\right]\cdot \int_t^T \|D_\eta v_{tx\xi}(s)\|_2^2 ds \notag \\
    \le\ &  C(L,\lambda_b)\bigg[\left\|D_\eta X_{tx\xi}(T)\right\|_2 \cdot \left\|D_\eta X_{t\xi}(T)\right\|_2+\int_t^T \left( \|D_\eta v_{tx\xi}(s)\|_2+ \|D_\eta X_{tx\xi}(s)\|_2\right)\cdot \left\|D_\eta X_{t\xi}(s)\right\|_2 ds \notag \\
    &\quad\qquad\qquad + \int_t^T \left\|D_\eta X_{t\xi}(s)\right\|_2^2 ds + \left(\int_t^T \left\|D_\eta X_{t\xi}(s)\right\|_2^2 ds \right)^{\frac{1}{2}}\left(\int_t^T \| Q(s)\|_2^2 ds\right)^{\frac{1}{2}} \bigg] \notag \\
    \le\ & \epsilon\bigg[\sup_{t\le s\le T}\|D_\eta X_{tx\xi}(s)\|_2^2+\int_t^T \left(\|D_\eta v_{tx\xi}(s)\|_2^2 +\|D_\eta Q_{tx\xi}(s)\|_2^2 \right) ds\bigg] \notag \\
    &+C(L,T,\lambda_b)\left(1+\frac{1}{\epsilon}\right) \sup_{t\le s\le T}\|D_\eta X_{t\xi}(s)\|_2^2. \label{prop:3_7}
\end{align}
By using Gr\"onwall's inequality, we have
\begin{equation}\label{prop:3_8}
	\e\bigg[\sup_{t\le s\le T}|D_\eta X_{tx\xi}(s)|^2\bigg]\le C(L,T)\int_t^T \left(\|D_\eta v_{tx\xi}(s)\|_2^2+\|D_\eta X_{t\xi}(s)\|_2^2 \right) ds.
\end{equation}
From Assumption \eqref{generic:condition:b} and Estimates \eqref{lem:5_2} and \eqref{prop:3_8}, the usual BSDE estimates in \cite{YH2,SP} together with BDG inequality altogether give
\begin{equation}\label{prop:3_9}
	\e\bigg[\sup_{t\le s\le T}|D_\eta P_{tx\xi}(s)|^2+\int_t^T |D_\eta Q_{tx\xi}(s)|^2 ds\bigg]\le C(L,T,\lambda_b)\bigg[\int_t^T \|D_\eta v_{tx\xi}(s)\|_2^2ds+ \sup_{t\le s\le T}\|D_\eta X_{t\xi}(s)\|_2^2\bigg].
\end{equation}    
Substituting \eqref{prop:3_8} and \eqref{prop:3_9} into \eqref{prop:3_7}, we have
\begin{align*}
    &\left[2\lambda_v-\frac{L^2 L_b^2}{\lambda_b}-\frac{L^2L_b^1}{\sqrt{2\lambda_x\lambda_b^2-2L^2L_b^0\lambda_b }}\right]\cdot \int_t^T \|D_\eta v_{tx\xi}(s)\|_2^2 ds  \\
    \le\ & \epsilon\ C(L,T,\lambda_b)\bigg[\int_t^T \|D_\eta v_{tx\xi}(s)\|_2^2ds\bigg] +C(L,T,\lambda_b)\left(1+\frac{1}{\epsilon}\right) \sup_{t\le s\le T}\|D_\eta X_{t\xi}(s)\|_2^2.
\end{align*}
By choosing $\epsilon$ small enough such that $\epsilon<\frac{1}{C(L,T,\lambda_b)}\left[2\lambda_v-\frac{L^2 L_b^2}{\lambda_b}-\frac{L^2L_b^1}{\sqrt{2\lambda_x\lambda_b^2-2L^2L_b^0\lambda_b }}\right]$, we deduce that
\begin{align*}
    \int_t^T \|D_\eta v_{tx\xi}(s)\|_2^2 ds \le   C \sup_{t\le s\le T}\|D_\eta X_{t\xi}(s)\|_2^2. 
\end{align*}
Substituting the last estimate back into \eqref{prop:3_8} and \eqref{prop:3_9}, we have
\begin{align}\label{prop:3_10}
    &\e\bigg[\sup_{t\le s\le T}|(D_\eta X_{tx\xi}(s),D_\eta P_{tx\xi}(s))|^2+\int_t^T |D_\eta Q_{tx\xi}(s)|^2 ds\bigg]\le C \sup_{t\le s\le T}\|D_\eta X_{t\xi}(s)\|_2^2.
\end{align}
From Condition \eqref{FB:mu_condition} and the convexity of $f$ in $v$, we know that
\begin{align*}
    &2\lambda_v |D_\eta v_{tx\xi}(s)|^2 \\
    \le\ &D_v^2 f (s,\theta_{tx\mu}(s)) (D_\eta v_{tx\xi}(s))^{\otimes 2} \\
    =\ & -D_\eta v_{tx\xi}(s)^\top D_x D_v f  (s,\theta_{tx\mu}(s))^\top D_\eta X_{tx\xi}(s) \\
    &- D_\eta v_{tx\xi}(s)^\top \widetilde{\e}\left[\left(D_y \frac{d}{d\nu}D_v f  (s,\theta_{tx\mu}(s))\left(\widetilde{X_{t\xi}}(s)\right)\right)^\top \widetilde{D_\eta X_{t\xi}}(s)\right] \notag \\
    &- D_\eta v_{tx\xi}(s)^\top \bigg\{D_x D_v b(s,\theta_{tx\mu}(s)) D_\eta X_{tx\xi}(s) + D_v^2 b(s,\theta_{tx\mu}(s)) D_\eta v_{tx\xi}(s) \\
    &\quad\qquad\qquad\qquad +\widetilde{\e}\left[ D_y\frac{d}{d\nu}D_v b(s,\theta_{tx\mu}(s))\left(\widetilde{X_{t\xi}}(s) \right) \widetilde{D_\eta X_{t\xi}}(s)\right] \bigg\}^\top P_{tx\mu}(s) \\
    &- D_\eta v_{tx\xi}(s)^\top D_v b(s,\theta_{tx\mu}(s))^\top D_\eta P_{tx\xi}(s),
\end{align*}
and then, from Assumptions (A1) and (A2), we can deduce that
\begin{align*}
    2\lambda_v |D_\eta v_{tx\xi}(s)|\le \ & L |D_\eta X_{tx\xi}(s)|+L \left\|D_\eta X_{t\xi}(s)\right\|_2+ L|D_\eta P_{tx\xi}(s)|\\
    &+\frac{L^2}{\lambda_b} \left(L |D_\eta X_{tx\xi}(s)|+L \left\|D_\eta X_{t\xi}(s)\right\|_2+L_b^2|D_\eta v_{tx\xi}(s)|\right). 
\end{align*}
Again from Condition \eqref{lem:2_0}, we also have
\begin{align}\label{prop:3_11}
    |D_\eta v_{tx\xi}(s)|\le \ & \left(2\lambda_v-\frac{L^2 L_b^2}{\lambda_b}\right)^{-1}\left[\left(L+\frac{L^3}{\lambda_b}\right) \left(|D_\eta X_{tx\xi}(s)|+\left\|D_\eta X_{t\xi}(s)\right\|_2\right)+ L|D_\eta P_{tx\xi}(s)|\right]. 
\end{align}
From \eqref{prop:3_10}, \eqref{prop:3_11} and Theorem~\ref{lem:4}, we conclude with \eqref{prop3_1}. 

\subsection{Proof of Theorem~\ref{prop:5}}\label{pf:prop:5}

The proof of the well-posedness of two systems of FBSDEs \eqref{FB:xi_y}-\eqref{FB:xi_y_v} and \eqref{FB:mu_y}-\eqref{FB:mu_y_condition}, and the derivation of Estimate \eqref{prop5_01} are similar to respective arguments leading to Lemmas~\ref{lem:3} and \ref{prop:3}, and we omit here. We only prove \eqref{prop5_03} and \eqref{prop5_04} here.  Note that $\xi$ and $\eta$ and  their respective independent copies $\widehat{\xi}$ and $\widehat{\eta}$ are all required to be  independent of the Brownian motion $B$. Substituting $y=\widehat{\xi}$ in the SDE of \eqref{FB:xi_y}, multiplying it by $\widehat{\eta}$, and taking expectation with respect to $\widehat{\xi}$ and $\widehat{\eta}$ (denoted by $\widehat{\e}$ instead of $\widehat{\e}_{\widehat{\xi},\widehat{\eta}}$,  to reduce the heavy notations), we have
\begin{align*}
    \widehat{\e}\left[\bd X_{t\xi}\left(s,\widehat{\xi}\right)\widehat{\eta}\right]=\ & \int_t^s \bigg\{D_x b(r, \theta_{t\xi}(r))\widehat{\e}\left[\bd X_{t\xi}\left(r,\widehat{\xi}\right)\widehat{\eta}\right]+D_v b(r, \theta_{t\xi}(r))\widehat{\e}\left[\bd v_{t\xi}\left(r,\widehat{\xi}\right)\widehat{\eta}\right] \notag \\
    &\qquad +\widetilde{\e}\widehat{\e}\bigg[ D_y \frac{db}{d\nu} (r,\theta_{t\xi}(r))\left(\widetilde{X_{ty\mu}}(r)\Big|_{y=\widehat{\xi}}\right) \widetilde{D_y X_{ty\mu}}(r)\Big|_{y=\widehat{\xi}}\  \widehat{\eta}\bigg] \notag \\
    &\qquad +\widetilde{\e}\bigg[ D_y \frac{db}{d\nu} (r,\theta_{t\xi}(r))\left(\widetilde{X_{t\xi}}(r)\right) \widehat{\e}\left[\widetilde{\bd X_{t\xi}}\left(r,\widehat{\xi}\right)\widehat{\eta}\right]\bigg]\bigg\}dr \\
    &+\int_t^s \bigg\{ \sigma_1(r)\widehat{\e}\left[\bd X_{t\xi}\left(r,\widehat{\xi}\right)\widehat{\eta} \right]\\
    &\quad\qquad +\widetilde{\e}\widehat{\e}\bigg[ D_y \frac{d\sigma_0}{d\nu} (r,\lr(X_{t\xi}(r)))\left(\widetilde{X_{ty\mu}}(r)\Big|_{y=\widehat{\xi}}\right) \widetilde{D_y X_{ty\mu}}(r)\Big|_{y=\widehat{\xi}}\  \widehat{\eta}\bigg] \\
    &\quad\qquad +\widetilde{\e}\bigg[ D_y \frac{d\sigma_0}{d\nu} (r,\lr(X_{t\xi}(r)))\left(\widetilde{X_{t\xi}}(r)\right) \widehat{\e}\left[\widetilde{\bd X_{t\xi}}\left(r,\widehat{\xi}\right)\widehat{\eta}\right]\bigg] \bigg\}dB(r),
\end{align*}
where the stochastic Fubini theorem \cite{MR2227239} is used to interchange the order of the stochastic integral (with respect to the Brownian motion $B$) and the expectation with respect to $(\widetilde{\xi},\widetilde{\eta})$. Since $(\widehat{\xi},\widehat{\eta})$ are independent of $(\xi,\eta)$ and shares the law of $(\widetilde{\xi},\widetilde{\eta})$, we see that, for instance,
\begin{equation}\label{prop5_5}
	\begin{split}
		&\widetilde{\e}\widehat{\e}\bigg[ D_y \frac{db}{d\nu} (r,\theta_{t\xi}(r))\left(\widetilde{X_{ty\mu}}(r)\Big|_{y=\widehat{\xi}}\right) \widetilde{D_y X_{ty\mu}}(r)\Big|_{y=\widehat{\xi}} \ \widehat{\eta}\bigg] \\
		=\ &\widetilde{\e} \bigg[D_y \frac{db}{d\nu} (r,\theta_{t\xi}(r))\left(\widetilde{X_{t\xi}}(r)\right) \widetilde{D_y X_{ty\mu}}(r)\Big|_{y=\widetilde{\xi}}\  \widetilde{\eta}\bigg].
	\end{split}
\end{equation}
Therefore, we see that
\small
\begin{align}
    &\widehat{\e}\left[\bd X_{t\xi}\left(s,\widehat{\xi}\right)\widehat{\eta}\right]\notag \\
    =\ & \int_t^s \bigg\{D_x b(r, \theta_{t\xi}(r))\widehat{\e}\left[\bd X_{t\xi}\left(r,\widehat{\xi}\right)\widehat{\eta}\right]+D_v b(r, \theta_{t\xi}(r))\widehat{\e}\left[\bd v_{t\xi}\left(r,\widehat{\xi}\right)\widehat{\eta}\right] \notag \\
    &\qquad +\widetilde{\e} \bigg[D_y \frac{db}{d\nu} (r,\theta_{t\xi}(r))\left(\widetilde{X_{t\xi}}(r)\right)\left( \widetilde{D_y X_{ty\mu}}(r)\Big|_{y=\widetilde{\xi}} \ \widetilde{\eta} +\widehat{\e}\left[\widetilde{\bd X_{t\xi}}\left(r,\widehat{\xi}\right)\widehat{\eta}\right]\right) \bigg]\bigg\}dr \notag \\
    &+\int_t^s \bigg\{ \sigma_1(r)\widehat{\e}\left[\bd X_{t\xi}\left(r,\widehat{\xi}\right)\widehat{\eta} \right] \notag \\
    &\quad\qquad +\widetilde{\e} \bigg[D_y \frac{d\sigma_0}{d\nu} (r,\lr(X_{t\xi}(r)))\left(\widetilde{X_{t\xi}}(r)\right)\left( \widetilde{D_y X_{ty\mu}}(r)\Big|_{y=\widetilde{\xi}} \ \widetilde{\eta} +\widehat{\e}\left[\widetilde{\bd X_{t\xi}}\left(r,\widehat{\xi}\right)\widehat{\eta}\right]\right) \bigg] \bigg\} dB(r). \label{prop5_1}
\end{align}
\normalsize
Dealing with all the other terms in a similar way, from the BSDE in \eqref{FB:xi_y}, we have	
\small
\begin{align}
		&\widehat{\e}\left[\bd P_{t\xi}\left(s,\widehat{\xi}\right)\widehat{\eta}\right]\notag \\
        =\ & -\int_s^T\widehat{\e}\left[\bd Q_{t\xi}\left(r,\widehat{\xi}\right)\widehat{\eta}\right]dB(r)+ D_x^2 g (X_{t\xi}(T),\lr(X_{t\xi}(T)))^\top  \widehat{\e}\left[\bd X_{t\xi}\left(T,\widehat{\xi}\right) \widehat{\eta}\right] \notag \\
        &+\widetilde{\e}\left[\left(D_y \frac{d}{d\nu}D_x g (X_{t\xi}(T),\lr(X_{t\xi}(T)))\left(\widetilde{X_{t\xi}}(T)\right) \right)^\top \left(\widetilde{D_y X_{ty\mu}}(T)\Big|_{y=\widetilde{\xi}} \ \widetilde{\eta} +\widehat{\e}\left[\widetilde{\bd X_{t\xi}}\left(T,\widehat{\xi}\right)\widehat{\eta}\right]\right)\right] \notag \\
		&+\int_s^T\Bigg\{ D_xb(r,\theta_{t\xi}(r))^\top \widehat{\e}\left[\bd P_{t\xi}\left(r,\widehat{\xi}\right)\widehat{\eta}\right] +\sum_{j=1}^n\left(\sigma_1^j (r)\right)^\top \widehat{\e}\left[\bd Q_{t\xi}^j\left(r,\widehat{\xi}\right)\widehat{\eta}\right] \notag \\
        &\quad+ \left(P_{t\xi}(r) \right)^\top \bigg\{D_x^2 b(r,\theta_{t\xi}(r)) \widehat{\e}\left[\bd X_{t\xi}\left(r,\widehat{\xi}\right)\widehat{\eta}\right]+ D_vD_x b(r,\theta_{t\xi}(r)) \widehat{\e}\left[\bd v_{t\xi}\left(r,\widehat{\xi}\right)\widehat{\eta} \right] \notag \\
        &\quad\qquad+\widetilde{\e}\left[ D_y\frac{d}{d\nu}D_x b(r,\theta_{t\xi}(r))\left(\widetilde{X_{t\xi}}(r)\right)\left( \widetilde{D_y X_{ty\mu}}(r)\Big|_{y=\widetilde{\xi}}\ \widetilde{\eta}+ \widehat{\e}\left[ \widetilde{\bd X_{t\xi}}\left(r,\widehat{\xi}\right)\widehat{\eta}\right]\right)\right]\bigg\} \notag\\
        &+D_x^2 f  (r,\theta_{t\xi}(r))^\top \widehat{\e}\left[\bd X_{t\xi}\left(r,\widehat{\xi}\right)\widehat{\eta}\right] + D_vD_x f (r,\theta_{t\xi}(r))^\top  \widehat{\e}\left[\bd v_{t\xi}\left(r,\widehat{\xi}\right)\widehat{\eta} \right] \notag\\
        &+\widetilde{\e}\left[\left(D_y \frac{d}{d\nu}D_x f  (r,\theta_{t\xi}(r))\left(\widetilde{X_{t\xi}}(r)\right)\right)^\top   \left(\widetilde{D_y X_{ty\mu}}(r)\Big|_{y=\widetilde{\xi}}\ \widetilde{\eta}+   \widehat{\e}\left[ \widetilde{\bd X_{t\xi}}\left(r,\widehat{\xi}\right)\widehat{\eta}\right]\right) \right]\Bigg\}dr.\label{prop5_3}
\end{align}
\normalsize
In a similar way, substituting $y=\widehat{\xi}$ in Condition \eqref{FB:xi_y_v}, multiplying it by $\widehat{\eta}$, and taking expectation with respect to $\widehat{\xi}$ and $\widehat{\eta}$, we  deduce that 
\small
\begin{align}
    0=\ & \widetilde{\e}\left[\left(D_y \frac{d}{d\nu}D_v f  (s,\theta_{t\xi}(s))\left(\widetilde{X_{t\xi}}(s)\right)\right)^\top \left(\widetilde{D_y X_{ty\mu}}(s)\Big|_{y=\widetilde{\xi}} \widetilde{\eta} + \widehat{\e}\left[\widetilde{\bd X_{t\xi}}\left(s,\widehat{\xi}\right) \widehat{\eta}\right]\right) \right] \notag \\
    &+D_x D_v f  (s,\theta_{t\xi}(s))^\top \widehat{\e}\left[\bd X_{t\xi}\left(s,\widehat{\xi}\right)\widehat{\eta} \right] +D_v^2 f (s,\theta_{t\xi}(s))^\top  \widehat{\e}\left[\bd v_{t\xi}\left(s,\widehat{\xi}\right) \widehat{\eta}\right] \notag \\
    &+D_v b(s,\theta_{t\xi}(s))^\top \widehat{\e}\left[\bd P_{t\xi}\left(s,\widehat{\xi}\right)\widehat{\eta} \right] \notag\\
    &+ \left(P_{t\xi}(s)\right)^\top \bigg\{D_x D_v b(s,\theta_{t\xi}(s)) \widehat{\e}\left[\bd X_{t\xi}\left(s,\widehat{\xi}\right)\widehat{\eta}\right] + D_v^2 b(s,\theta_{t\xi}(s)) \widehat{\e}\left[ \bd v_{t\xi}\left(s,\widehat{\xi}\right) \widehat{\eta} \right] \notag \\
    &\qquad\qquad\qquad +\widetilde{\e}\left[ D_y\frac{d}{d\nu}D_v b(s,\theta_{t\xi}(s))\left(\widetilde{X_{t\xi}}(s) \right) \left( \widetilde{D_y X_{ty\mu}}(s)\Big|_{y=\widetilde{\xi}} \widetilde{\eta} + \widehat{\e}\left[ \widetilde{\bd X_{t\xi}}\left(s,\widehat{\xi}\right)\widehat{\eta}\right] \right]\right) \bigg\}. \label{prop5_2} 
\end{align}
\normalsize
From \eqref{prop5_1}-\eqref{prop5_2}, we know that the component processes
\begin{align*}
	\left(\widehat{\e}\left[\bd X_{t\xi}\left(s,\widehat{\xi}\right) \widehat{\eta}\right],\ \widehat{\e}\left[\bd v_{t\xi}\left(s,\widehat{\xi}\right) \widehat{\eta}\right],\ \widehat{\e}\left[\bd P_{t\xi}\left(s,\widehat{\xi}\right) \widehat{\eta}\right],\ \widehat{\e}\left[\bd Q_{t\xi}\left(s,\widehat{\xi}\right) \widehat{\eta}\right] \right)
\end{align*}
satisfy FBSDEs \eqref{FB:dr'}-\eqref{FB:dr'_condition}. From the uniqueness result of FBSDEs \eqref{FB:dr'}-\eqref{FB:dr'_condition}, we obtain \eqref{prop5_03}. We now establish \eqref{prop5_04}. From the SDE of \eqref{FB:mu_y} and the conditions \eqref{prop5_03} and \eqref{prop5_5}, we have for $s\in[t,T]$,
\begin{align*}
    &\widehat{\e}\left[D X_{tx\mu}\left(s,\widehat{\xi}\right)\widehat{\eta}\right]\\
    =\ & \int_t^s \bigg\{ D_x b(r, \theta_{tx\mu}(r))D X_{tx\mu}\left(r,\widehat{\xi}\right)\widehat{\e}\left[\widehat{\eta}\right]+D_v b(r, \theta_{tx\mu}(r))\widehat{\e}\left[D v_{tx\mu}\left(r,\widehat{\xi}\right)\widehat{\eta} \right]\notag \\
    &\qquad +\widetilde{\e}\bigg[ D_y \frac{db}{d\nu} (r,\theta_{tx\mu}(r))\left(\widetilde{X_{t\xi}}(r)\right) \left(\widetilde{D_y X_{ty\mu}}(r)\Big|_{y=\widetilde{\xi}} \widetilde{\eta}+\widetilde{\bd_\eta X_{t\xi}}(r)\right)\bigg] \bigg\}dr \\
    &+\int_t^s \bigg\{ \sigma_1(r)\widehat{\e}\left[D X_{tx\mu}\left(r,\widehat{\xi}\right)\widehat{\eta} \right]\\
    &\quad\qquad + \widetilde{\e}\bigg[ D_y \frac{d\sigma_0}{d\nu} (r,\lr(X_{t\xi}(r)))\left(\widetilde{X_{t\xi}}(r)\right) \left(\widetilde{D_y X_{ty\mu}}(r)\Big|_{y=\widetilde{\xi}} \widetilde{\eta}+\widetilde{\bd_\eta X_{t\xi}}(r)\right)\bigg] \bigg\}dB(r).
\end{align*}
In a similar manner, we deduce that
\small
\begin{align*}
		&\widehat{\e}\left[D P_{tx\mu}\left(s,\widehat{\xi}\right)\widehat{\eta}\right]\\
        =\ & -\int_s^T\widehat{\e}\left[D Q_{tx\mu}\left(r,\widehat{\xi}\right)\widehat{\eta}\right] dB(r)+D_x^2 g (X_{tx\mu}(T),\lr(X_{t\xi}(T)))^\top \widehat{\e}\left[ D X_{tx\mu}\left(T,\widehat{\xi}\right)\widehat{\eta}\right] \notag\\
        &+\widetilde{\e}\left[\left(D_y \frac{d}{d\nu}D_x g (X_{tx\mu}(T),\lr(X_{t\xi}(T)))\left(\widetilde{X_{t\xi}}(T)\right) \right)^\top  \left(\widetilde{D_y X_{ty\mu}}(T)\Big|_{y=\widetilde{\xi}}\ \widetilde{\eta}+ \widetilde{\bd_\eta X_{t\xi}}(T) \right) \right] \notag \\
		&+\int_s^T\Bigg\{ D_xb(r,\theta_{tx\mu}(r))^\top \widehat{\e}\left[D P_{tx\mu}\left(r,\widehat{\xi}\right)\widehat{\eta}\right]+\sum_{j=1}^n\left(\sigma_1^j (r)\right)^\top \widehat{\e}\left[D Q_{tx\mu}^j\left(r,\widehat{\xi}\right)\widehat{\eta}\right] \notag \\
        &\quad\qquad + \left(P_{tx\mu}(r)\right)^\top \bigg\{\widetilde{\e}\left[ D_y\frac{d}{d\nu}D_x b(r,\theta_{tx\mu}(r))\left(\widetilde{X_{t\xi}}(r)\right) \left(\widetilde{D_y X_{ty\mu}}(r)\Big|_{y=\widetilde{\xi}}\ \widetilde{\eta}+\widetilde{\bd_\eta X_{t\xi}}(r)\right) \right] \notag \\
        &\quad\qquad\qquad\qquad\qquad +D_x^2 b(r,\theta_{tx\mu}(r)) \widehat{\e}\left[D X_{tx\mu}\left(r,\widehat{\xi}\right)\widehat{\eta}\right]+ D_vD_x b(r,\theta_{tx\mu}(r)) \widehat{\e}\left[D v_{tx\mu}\left(r,\widehat{\xi}\right)\widehat{\eta}\right] \bigg\} \notag\\
        &\quad\qquad +\widetilde{\e}\left[\left(D_y \frac{d}{d\nu}D_x f  (r,\theta_{tx\mu}(r))\left(\widetilde{X_{t\xi}}(r)\right)\right)^\top \left(\widetilde{D_y X_{ty\mu}}(r)\Big|_{y=\widetilde{\xi}}\ \widetilde{\eta}+ \widetilde{\bd_\eta X_{t\xi}}(r)\right)\right] \notag \\
        &\quad\qquad +D_x^2 f  (r,\theta_{tx\mu}(r))^\top \widehat{\e}\left[D X_{tx\mu}\left(r,\widehat{\xi}\right)\widehat{\eta}\right] + D_vD_x f (r,\theta_{tx\mu}(r))^\top  \widehat{\e}\left[D v_{tx\mu}\left(r,\widehat{\xi}\right)\widehat{\eta}\right] \Bigg\}dr, 
\end{align*}
\normalsize
and that
\small
\begin{align*}
    0=\ & \widetilde{\e}\left[\left(D_y \frac{d}{d\nu}D_v f  (s,\theta_{tx\mu}(s))\left(\widetilde{X_{t\xi}}(s)\right)\right)^\top \left(\widetilde{D_y X_{ty\mu}}(s) \Big|_{y=\widetilde{\xi}}\ \widetilde{\eta} +\widetilde{\bd_\eta X_{t\xi}}(s)\right) \right]\notag\\
    &+D_v^2 f (s,\theta_{tx\mu}(s))^\top  \widehat{\e}\left[D v_{tx\mu}\left(s,\widehat{\xi}\right)\widehat{\eta}\right] \notag \\
    &+D_x D_v f  (s,\theta_{tx\mu}(s))^\top \widehat{\e}\left[ D X_{tx\mu}\left(s,\widehat{\xi}\right)\widehat{\eta}\right] +D_v b(s,\theta_{tx\mu}(s))^\top \widehat{\e}\left[D P_{tx\mu}\left(s,\widehat{\xi}\right)\widehat{\eta}\right] \notag\\
    &+ \left(P_{tx\mu}(s)\right)^\top \bigg\{\widetilde{\e}\left[ D_y\frac{d}{d\nu}D_v b(s,\theta_{tx\mu}(s))\left(\widetilde{X_{t\xi}}(s) \right) \left(\widetilde{D_y X_{ty\mu}}(s)\Big|_{y=\widetilde{\xi}}\ \widetilde{\eta} +\widetilde{\bd_\eta X_{t\xi}}(s)\right) \right] \notag \\
    &\quad\qquad\qquad +D_x D_v b(s,\theta_{tx\mu}(s)) \widehat{\e}\left[D X_{tx\mu}\left(s,\widehat{\xi}\right)\widehat{\eta}\right]+ D_v^2 b(s,\theta_{tx\mu}(s)) \widehat{\e}\left[D v_{tx\mu}\left(s,\widehat{\xi}\right)\widehat{\eta}\right]\bigg\}.
\end{align*}
\normalsize
Therefore, the component processes
\begin{align*}
	\left(\widehat{\e}\left[D X_{tx\mu}\left(s,\widehat{\xi}\right) \widehat{\eta}\right],\ \widehat{\e}\left[D v_{tx\mu}\left(s,\widehat{\xi}\right) \widehat{\eta}\right],\ \widehat{\e}\left[D P_{tx\mu}\left(s,\widehat{\xi}\right) \widehat{\eta}\right],\ \widehat{\e}\left[D Q_{tx\mu}\left(s,\widehat{\xi}\right) \widehat{\eta}\right] \right)
\end{align*}
also satisfy FBSDEs \eqref{FB:mu'}-\eqref{FB:mu'_condition}. From the uniqueness result of FBSDEs \eqref{FB:mu'}-\eqref{FB:mu'_condition}, we conclude with \eqref{prop5_04}.

\section{Proof of Theorem~\ref{prop:9}}\label{pf:prop:9}

We here give the respective systems of FBSDEs for the derivatives matrix $D_y \bd \Theta_{t\xi}(s,y)$ and the derivative $D_y \dr \Theta_{tx\mu}(s,y)$. From FBSDEs \eqref{FB:xi_y}-\eqref{FB:xi_y_v} and Theorems~\ref{prop:2} and \ref{prop:8}, the component G\^ateaux derivatives of $\bd \Theta_{t\xi}(s,y)$ in $y$ can be characterized as the solution of the following FBSDEs: for $(s,y)\in[t,T]\times\brn$,
\small
\begin{align}
		D_y\bd X_{t\xi}(s,y)=\ & \int_t^s \bigg\{D_x b(r, \theta_{t\xi}(r))D_y\bd X_{t\xi}(r,y)+D_v b(r, \theta_{t\xi}(r))D_y\bd v_{t\xi}(r,y) \notag \\
        &\qquad +\widetilde{\e}\bigg[ D_y \frac{db}{d\nu} (r,\theta_{t\xi}(r))\left(\widetilde{X_{t\xi}}(r)\right) \widetilde{D_y\bd X_{t\xi}}(r,y)\bigg] \bigg\}dr \notag \\
        & +\int_t^s \widetilde{\e}\bigg[ D_y \frac{db}{d\nu} (r,\theta_{t\xi}(r))\left(\widetilde{X_{ty\mu}}(r)\right) \widetilde{D_y^2 X_{ty\mu}}(r) \bigg] dr  \notag \\
        & +\int_t^s \widetilde{\e}\bigg[ D_y^2 \frac{db}{d\nu} (r,\theta_{t\xi}(r))\left(\widetilde{X_{ty\mu}}(r)\right) \left(\widetilde{D_y X_{ty\mu}}(r)\right)^{\otimes2} \bigg] dr \notag \\
        & +\int_t^s \bigg\{ \sigma_1(r)D_y\bd X_{t\xi}(r,y) + \widetilde{\e}\bigg[ D_y \frac{d\sigma_0}{d\nu} (r,\lr(X_{t\xi}(r)))\left(\widetilde{X_{t\xi}}(r)\right) \widetilde{D_y\bd X_{t\xi}}(r,y)\bigg] \bigg\}dB(r)\notag\\
        & +\int_t^s \widetilde{\e}\bigg[ D_y \frac{d\sigma_0}{d\nu} (r,\lr(X_{t\xi}(r)))\left(\widetilde{X_{ty\mu}}(r)\right) \widetilde{D_y^2 X_{ty\mu}}(r) \bigg] dB(r)  \notag \\
        & +\int_t^s \widetilde{\e}\bigg[ D_y^2 \frac{d\sigma_0}{d\nu} (r,\lr(X_{t\xi}(r)))\left(\widetilde{X_{ty\mu}}(r)\right) \left(\widetilde{D_y X_{ty\mu}}(r)\right)^{\otimes2} \bigg] dB(r);\notag \\
		\text{and}\quad D_y\bd P_{t\xi}(s,y)=\ & -\int_s^T D_y \bd Q_{t\xi}(r,y)dB(r)+ D_x^2 g (X_{t\xi}(T),\lr(X_{t\xi}(T)))^\top D_y\bd X_{t\xi}(T,y) \notag \\
        &+\widetilde{\e}\left[\left(D_y \frac{d}{d\nu}D_x g (X_{t\xi}(T),\lr(X_{t\xi}(T)))\left(\widetilde{X_{t\xi}}(T)\right) \right)^\top \widetilde{D_y \bd X_{t\xi}}(T,y)\right] \notag \\
        &+\widetilde{\e}\left[\left(D_y \frac{d}{d\nu}D_x g (X_{t\xi}(T),\lr(X_{t\xi}(T)))\left(\widetilde{X_{ty\mu}}(T)\right) \right)^\top \widetilde{D_y^2 X_{ty\mu}}(T)\right] \notag \\
        &+\widetilde{\e}\left[\left(D_y^2 \frac{d}{d\nu}D_x g (X_{t\xi}(T),\lr(X_{t\xi}(T)))\left(\widetilde{X_{ty\mu}}(T)\right) \right) \left(\widetilde{D_y X_{ty\mu}}(T)\right)^{\otimes2} \right] \notag \\
		&+\int_s^T\Bigg\{ D_xb(r,\theta_{t\xi}(r))^\top D_y \bd P_{t\xi}(r,y)+\sum_{j=1}^n\left(\sigma_1^j (r)\right)^\top D_y \bd Q_{t\xi}^j(r,y) \notag \\
        &\qquad\qquad + \left(P_{t\xi}(r)\right)^\top \bigg\{D_x^2 b(r,\theta_{t\xi}(r)) D_y\bd X_{t\xi}(r,y)+ D_vD_x b(r,\theta_{t\xi}(r)) D_y\bd v_{t\xi}(r,y) \notag \\
        &\qquad\qquad\qquad\qquad\qquad +\widetilde{\e}\left[ D_y\frac{d}{d\nu}D_x b(r,\theta_{t\xi}(r))\left(\widetilde{X_{t\xi}}(r)\right) \widetilde{D_y \bd X_{t\xi}}(r,y)\right]\bigg\}  \notag\\
        &\qquad\qquad +D_x^2 f  (r,\theta_{t\xi}(r))^\top D_y\bd X_{t\xi}(r,y) + D_vD_x f (r,\theta_{t\xi}(r))^\top  D_y\bd v_{t\xi}(r,y) \notag\\
        &\qquad\qquad+\widetilde{\e}\left[\left(D_y \frac{d}{d\nu}D_x f  (r,\theta_{t\xi}(r))\left(\widetilde{X_{t\xi}}(r)\right)\right)^\top   \widetilde{D_y\bd X_{t\xi}}(r,y)\right]\Bigg\}dr \notag \\
        & +\int_s^T \left(P_{t\xi}(r)\right)^\top \widetilde{\e}\left[ D_y\frac{d}{d\nu}D_x b(r,\theta_{t\xi}(r))\left(\widetilde{X_{ty\mu}}(r)\right) \widetilde{D_y^2 X_{ty\mu}}(r)\right]dr \notag\\
        & +\int_s^T \left(P_{t\xi}(r)\right)^\top \widetilde{\e}\left[ D_y^2 \frac{d}{d\nu}D_x b(r,\theta_{t\xi}(r))\left(\widetilde{X_{ty\mu}}(r)\right) \left(\widetilde{D_y X_{ty\mu}}(r)\right)^{\otimes2} \right] dr \notag\\
        & +\int_s^T \widetilde{\e}\left[\left(D_y \frac{d}{d\nu}D_x f  (r,\theta_{t\xi}(r))\left(\widetilde{X_{ty\mu}}(r)\right)\right)^\top   \widetilde{D_y^2 X_{ty\mu}}(r) \right] dr\notag \\
        &+\int_s^T \widetilde{\e}\left[\left(D_y^2 \frac{d}{d\nu}D_x f  (r,\theta_{t\xi}(r))\left(\widetilde{X_{ty\mu}}(r)\right)\right) \left(\widetilde{D_y X_{ty\mu}}(r)\right)^{\otimes2} \right] dr, \label{FB:xi_yy}
\end{align}
\normalsize
with (as a consequence of taking the derivative with respect to $y$ of Condition \eqref{FB:xi_y_v})
\small
\begin{align}
    0=\ & \widetilde{\e}\left[\left(D_y \frac{d}{d\nu}D_v f  (s,\theta_{t\xi}(s))\left(\widetilde{X_{t\xi}}(s)\right)\right)^\top  \widetilde{D_y\bd X_{t\xi}}(s,y)\right] \notag \\
    &+D_x D_v f  (s,\theta_{t\xi}(s))^\top D_y\bd X_{t\xi}(s,y) +D_v^2 f (s,\theta_{t\xi}(s))^\top  D_y\bd v_{t\xi}(s,y)+D_v b(s,\theta_{t\xi}(s))^\top D_y\bd P_{t\xi}(s,y) \notag\\
    &+ \left(P_{t\xi}(s)\right)^\top \bigg\{D_x D_v b(s,\theta_{t\xi}(s)) D_y\bd X_{t\xi}(s,y)+ D_v^2 b(s,\theta_{t\xi}(s)) D_y\bd v_{t\xi}(s,y) \notag \\
    &\qquad\qquad\qquad +\widetilde{\e}\left[ D_y\frac{d}{d\nu}D_v b(s,\theta_{t\xi}(s))\left(\widetilde{X_{t\xi}}(s) \right) \widetilde{D_y\bd X_{t\xi}}(s,y)\right]\bigg\}\notag\\
    &+\widetilde{\e}\left[\left(D_y \frac{d}{d\nu}D_v f  (s,\theta_{t\xi}(s))\left(\widetilde{X_{ty\mu}}(s)\right)\right)^\top \widetilde{D_y^2 X_{ty\mu}}(s)\right]\notag\\
    &+\widetilde{\e}\left[\left(D_y^2 \frac{d}{d\nu}D_v f  (s,\theta_{t\xi}(s))\left(\widetilde{X_{ty\mu}}(s)\right)\right) \left(\widetilde{D_y X_{ty\mu}}(s)\right)^{\otimes2} \right]\notag\\
    &+ \left(P_{t\xi}(s)\right)^\top \bigg\{\widetilde{\e}\left[ D_y\frac{d}{d\nu}D_v b(s,\theta_{t\xi}(s))\left(\widetilde{X_{ty\mu}}(s) \right) \widetilde{D_y^2 X_{ty\mu}}(s) \right]\notag\\
    &\qquad\qquad\qquad +\widetilde{\e}\left[ D_y^2\frac{d}{d\nu}D_v b(s,\theta_{t\xi}(s))\left(\widetilde{X_{ty\mu}}(s) \right) \left(\widetilde{D_y X_{ty\mu}}(s)\right)^{\otimes2} \right]\bigg\}, \label{FB:xi_yy_condition}
\end{align}
\normalsize
where 
$$\left(D_y X_{ty\mu},D_y^2 X_{ty\mu}\right)=\left(D_x X_{tx\mu},D_x^2 X_{tx\mu} \right) \Big|_{x=y},$$ 
and $\left(\widetilde{X_{ty\mu}}(s),\widetilde{X_{t\xi}}(s),\widetilde{D_y X_{ty\mu}}(s),\widetilde{D_y^2 X_{ty\mu}}(s),\widetilde{D_y\bd X_{t\xi}}(s,y)\right)$ is the respective independent copy of $\left(X_{ty\mu}(s),X_{t\xi}(s),D_y X_{ty\mu}(s),D_y^2 X_{ty\mu}(s),D_y\bd X_{t\xi}(s,y)\right)$. Then, from FBSDEs \eqref{FB:mu_y}-\eqref{FB:mu_y_condition}, the component G\^ateaux derivatives of $D \Theta_{tx\mu}(s,y)$ in $y$ can be characterized as the solution of the following FBSDEs: for $(s,y)\in[t,T]\times\brn$,
\small
\begin{align}
		D_yD X_{tx\mu}(s,y)=\ & \int_t^s \bigg[ D_x b(r, \theta_{tx\mu}(r))D_yD X_{tx\mu}(r,y)+D_v b(r, \theta_{tx\mu}(r))D_yD v_{tx\mu}(r,y)\bigg]dr \notag\\
        & +\int_t^s \widetilde{\e}\bigg[ D_y \frac{db}{d\nu} (r,\theta_{tx\mu}(r))\left(\widetilde{X_{t\xi}}(r)\right) \widetilde{D_y\bd X_{t\xi}}(r,y)\bigg]dr \notag\\
        & +\int_t^s \widetilde{\e}\bigg[ D_y \frac{db}{d\nu} (r,\theta_{tx\mu}(r))\left(\widetilde{X_{ty\mu}}(r)\right) \widetilde{D_y^2 X_{ty\mu}}(r) \bigg]dr \notag \\
        & +\int_t^s \widetilde{\e}\bigg[ D_y^2 \frac{db}{d\nu} (r,\theta_{tx\mu}(r))\left(\widetilde{X_{ty\mu}}(r)\right) \left(\widetilde{D_y X_{ty\mu}}(r)\right)^{\otimes2} \bigg]dr \notag \\
        &+\int_t^s \sigma_1(r)D X_{tx\mu}(r,y) dB(r) \notag \\
        & +\int_t^s \widetilde{\e}\bigg[ D_y \frac{d\sigma_0}{d\nu} (r,\lr(X_{t\xi}(r)))\left(\widetilde{X_{t\xi}}(r)\right) \widetilde{D_y\bd X_{t\xi}}(r,y)\bigg]dB(r) \notag\\
        & +\int_t^s \widetilde{\e}\bigg[ D_y \frac{d\sigma_0}{d\nu} (r,\lr(X_{t\xi}(r)))\left(\widetilde{X_{ty\mu}}(r)\right) \widetilde{D_y^2 X_{ty\mu}}(r) \bigg]dB(r) \notag \\
        & +\int_t^s \widetilde{\e}\bigg[ D_y^2 \frac{d\sigma_0}{d\nu} (r,\lr(X_{t\xi}(r)))\left(\widetilde{X_{ty\mu}}(r)\right) \left(\widetilde{D_y X_{ty\mu}}(r)\right)^{\otimes2} \bigg]dB(r); \notag \\
		\text{and}\quad D_yD P_{tx\mu}(s,y)=\ & -\int_s^T D_yD Q_{tx\mu}(r,y)dB(r)+D_x^2 g (X_{tx\mu}(T),\lr(X_{t\xi}(T)))^\top  D_yD X_{tx\mu}(T,y) \notag\\
        &+\widetilde{\e}\left[\left(D_y \frac{d}{d\nu}D_x g (X_{tx\mu}(T),\lr(X_{t\xi}(T)))\left(\widetilde{X_{t\xi}}(T)\right) \right)^\top \widetilde{D_y\bd X_{t\xi}}(T,y) \right] \notag \\
        &+\widetilde{\e}\left[\left(D_y \frac{d}{d\nu}D_x g (X_{tx\mu}(T),\lr(X_{t\xi}(T)))\left(\widetilde{X_{ty\mu}}(T)\right) \right)^\top  \widetilde{D_y^2 X_{ty\mu}}(T)\right] \notag \\
        &+\widetilde{\e}\left[\left(D_y^2 \frac{d}{d\nu}D_x g (X_{tx\mu}(T),\lr(X_{t\xi}(T)))\left(\widetilde{X_{ty\mu}}(T)\right) \right)  \left(\widetilde{D_y X_{ty\mu}}(T)\right)^{\otimes2} \right] \notag \\
		&+\int_s^T\Bigg\{ D_xb(r,\theta_{tx\mu}(r))^\top D_yD P_{tx\mu}(r,y)+\sum_{j=1}^n\left(\sigma_1^j (r)\right)^\top D_yD Q_{tx\mu}^j(r,y) \notag \\
        &+ \left(P_{tx\mu}(r)\right)^\top \bigg[D_x^2 b(r,\theta_{tx\mu}(r)) D_yD X_{tx\mu}(r,y)+ D_vD_x b(r,\theta_{tx\mu}(r)) D_yD v_{tx\mu}(r,y) \bigg] \notag\\
        &+D_x^2 f  (r,\theta_{tx\mu}(r))^\top D_yD X_{tx\mu}(r,y) + D_vD_x f (r,\theta_{tx\mu}(r))^\top  D_yD v_{tx\mu}(r,y)\Bigg\}dr \notag \\
        & +\int_s^T \left(P_{tx\mu}(r)\right)^\top \widetilde{\e}\left[ D_y\frac{d}{d\nu}D_x b(r,\theta_{tx\mu}(r))\left(\widetilde{X_{t\xi}}(r)\right) \widetilde{D_y\bd X_{t\xi}}(r,y) \right]  dr \notag\\
        &+\int_s^T \left(P_{tx\mu}(r)\right)^\top \widetilde{\e}\left[ D_y\frac{d}{d\nu}D_x b(r,\theta_{tx\mu}(r))\left(\widetilde{X_{ty\mu}}(r)\right) \widetilde{D_y^2 X_{ty\mu}}(r) \right]dr \notag \\
        &+\int_s^T \left(P_{tx\mu}(r)\right)^\top \widetilde{\e}\left[ D_y^2 \frac{d}{d\nu}D_x b(r,\theta_{tx\mu}(r))\left(\widetilde{X_{ty\mu}}(r)\right) \left(\widetilde{D_y X_{ty\mu}}(r) \right)^{\otimes2} \right]dr \notag \\
        &+\int_s^T \widetilde{\e}\left[\left(D_y \frac{d}{d\nu}D_x f  (r,\theta_{tx\mu}(r))\left(\widetilde{X_{t\xi}}(r)\right)\right)^\top \widetilde{D_y\bd X_{t\xi}}(r,y) \right] dr \notag \\
        & +\int_s^T \widetilde{\e}\left[\left(D_y \frac{d}{d\nu}D_x f  (r,\theta_{tx\mu}(r))\left(\widetilde{X_{ty\mu}}(r)\right)\right)^\top \widetilde{D_y^2 X_{ty\mu}}(r)\right]dr \notag \\
        & +\int_s^T \widetilde{\e}\left[\left(D_y^2 \frac{d}{d\nu}D_x f  (r,\theta_{tx\mu}(r))\left(\widetilde{X_{ty\mu}}(r)\right)\right) \left(\widetilde{D_y X_{ty\mu}}(r)\right)^{\otimes2} \right]dr, \label{FB:mu_yy}
\end{align}
\normalsize
with (via taking the derivative with respect to $y$ in Condition \eqref{FB:mu_y_condition})
\small
\begin{align}
    0=\ & D_x D_v f  (s,\theta_{tx\mu}(s))^\top D_yD X_{tx\mu}(s,y) +D_v^2 f (s,\theta_{tx\mu}(s))^\top  D_yD v_{tx\mu}(s,y)+D_v b(s,\theta_{tx\mu}(s))^\top D_yD P_{tx\mu}(s,y) \notag\\
    &+ \left(P_{tx\mu}(s)\right)^\top \bigg[D_x D_v b(s,\theta_{tx\mu}(s)) D_yD X_{tx\mu}(s,y)+ D_v^2 b(s,\theta_{tx\mu}(s)) D_yD v_{tx\mu}(s,y)\bigg] \notag \\
    &+\widetilde{\e}\left[\left(D_y \frac{d}{d\nu}D_v f  (s,\theta_{tx\mu}(s))\left(\widetilde{X_{t\xi}}(s)\right)\right)^\top \widetilde{D_y\bd X_{t\xi}}(s,y) \right]\notag\\
    &+\widetilde{\e}\left[\left(D_y \frac{d}{d\nu}D_v f  (s,\theta_{tx\mu}(s))\left(\widetilde{X_{ty\mu}}(s)\right)\right)^\top \widetilde{D_y^2 X_{ty\mu}}(s) \right] \notag \\
    &+\widetilde{\e}\left[\left(D_y^2 \frac{d}{d\nu}D_v f  (s,\theta_{tx\mu}(s))\left(\widetilde{X_{ty\mu}}(s)\right)\right) \left(\widetilde{D_y X_{ty\mu}}(s) \right)^{\otimes2} \right] \notag \\
    &+\left(P_{tx\mu}(s)\right)^\top \widetilde{\e}\left[ D_y\frac{d}{d\nu}D_v b(s,\theta_{tx\mu}(s))\left(\widetilde{X_{t\xi}}(s) \right) \widetilde{D_y\bd X_{t\xi}}(s,y) \right] \notag \\
    &+ \left(P_{tx\mu}(s)\right)^\top \widetilde{\e}\left[ D_y\frac{d}{d\nu}D_v b(s,\theta_{tx\mu}(s))\left(\widetilde{X_{ty\mu}}(s) \right) \widetilde{D_y^2 X_{ty\mu}}(s)\right] \notag \\
    &+ \left(P_{tx\mu}(s)\right)^\top \widetilde{\e}\left[ D_y^2 \frac{d}{d\nu}D_v b(s,\theta_{tx\mu}(s))\left(\widetilde{X_{ty\mu}}(s) \right) \left(\widetilde{D_y X_{ty\mu}}(s)\right)^{\otimes2} \right]. \label{FB:mu_yy_condition}
\end{align}
\normalsize
From Theorem \ref{prop:8} and the $L^4$-norm boundedness of $D_x \Theta_{tx\mu}$ in \eqref{thm2_1} of Theorem~\ref{prop:2}, the proofs of the well-posedness of FBSDEs \eqref{FB:xi_yy}-\eqref{FB:xi_yy_condition} and \eqref{FB:mu_yy}-\eqref{FB:mu_yy_condition}, and of the corresponding convergence results for finite differences are similar to those for the statements in Section~\ref{sec:distribution} (such as Theorem~\ref{lem:4}), and we simply omit. From the continuity given in  Assumption (A2') and Theorem~\ref{prop:5}, similar to the proof of Theorem~\ref{lem:4}, we see that the derivatives are continuous. 

\section{Proof of Theorem~\ref{prop:10}}\label{pf:prop:10}

From  Condition \eqref{lem:2_12} and Proposition~\ref{prop:cone}, we know that for $s\in[t,T]$, 
\begin{equation}\label{lem2_0.1}
\begin{aligned}
    &D_p H \left(s,X_{t\xi}(s),\lr\left(X_{t\xi}(s)\right),P_{t\xi}(s) \right)=b \left(s,X_{t\xi}(s),\lr\left(X_{t\xi}(s)\right),v_{t\xi}(s) \right);
\end{aligned}
\end{equation}
and from \eqref{lem5_3}, we have
\begin{equation}\label{lem2_0.2}
\begin{aligned}
    &H\left(s,X_{tx\mu}(s),\lr\left(X_{t\xi}(s)\right),P_{tx\mu}(s) \right)=L\left(s,X_{tx\mu}(s),\lr\left(X_{t\xi}(s)\right),v_{tx\mu}(s),P_{tx\mu}(s) \right).\\
\end{aligned}
\end{equation}
In view of \eqref{lem2_0.1} and \eqref{lem2_0.2}, we now prove \eqref{prop10_01},  similarly to that for \cite[Theorem 5.3]{AB11}. From the estimate \eqref{lem2_1}, using Cauchy-Schwarz inequality, we deduce that 
\begin{align}
		\e\left[|X_{t\xi}(t')-\xi|^2\right] =\ & \e\Bigg[\left| \int_t^{t'}b(s,\theta_{t\xi}(s)) ds+ \int_t^{t'}\sigma(s,X_{t\xi}(s),\lr(X_{t\xi}(s))) dB(s) \right|^2\Bigg] \notag \\
		\le\ & C(L,T) \int_t^{t'}\e\left[1+|X_{t\xi}(s)|^2+\|X_{t\xi}(s)\|_2^2+|v_{t\xi}(s)|^2\right]ds \notag \\
		\le\ & C\left(1+W_2^2(\mu,\delta_0)\right)|t'-t|.  \label{prop10_11}
\end{align}
In a similar fashion, from \eqref{lem2_1} and \eqref{lem5_1}, we can deduce that 
\begin{equation}\label{prop10_12}
	\begin{split}
		&\e\left[|X_{tx\mu}(t')-x|^2\right] \le C \left(1+|x|^2+W_2^2(\mu,\delta_0)\right) |t'-t|.
	\end{split}		
\end{equation}
Note that
\begin{align}
	&P_{t\xi}(t')-P_{t\xi}(t) \notag \\
    =\ & P_{t\xi}(t')-\e\left[P_{t\xi}(t')|\xi\right] -\int_t^{t'}\e\Bigg[\bigg( D_x b(r,X_{t\xi}(r),\lr(X_{t\xi}(r)),v_{t\xi}(r))^\top P(r)+\sum_{j=1}^n \left(\sigma^j_1(r)\right)^\top Q^j(r) \notag \\
    &\quad\qquad\qquad\qquad\qquad\qquad\qquad\qquad + D_x f(r,X_{t\xi}(r),\lr(X_{t\xi}(r)),v_{t\xi}(r))\bigg)\Bigg|\xi\Bigg]ds. \label{prop10_13}
\end{align}  
From Cauchy-Schwarz inequality and \eqref{lem2_1}, we deduce that
\begin{align*}
	&\e\Bigg\{\Bigg|\int_t^{t'}\e\Bigg[ \bigg( D_x b(r,X_{t\xi}(r),\lr(X_{t\xi}(r)),v_{t\xi}(r))^\top P_{t\xi}(r)+\sum_{j=1}^n \left(\sigma^j_1(r)\right)^\top Q_{t\xi}^j(r)  \\
    &\qquad\qquad\qquad + D_x f(r,X_{t\xi}(r),\lr(X_{t\xi}(r)),v_{t\xi}(r))\bigg)\Bigg|\xi\Bigg]ds\Bigg|^2\Bigg\}\\
	\le\ & |t'-t|\cdot \e\Bigg[\int_t^{t'} \Bigg|D_x b(r,X_{t\xi}(r),\lr(X_{t\xi}(r)),v_{t\xi}(r))^\top P_{t\xi}(r)+\sum_{j=1}^n \left(\sigma^j_1(r)\right)^\top Q_{t\xi}^j(r)  \\
    &\qquad\qquad\qquad + D_x f(r,X_{t\xi}(r),\lr(X_{t\xi}(r)),v_{t\xi}(r)) \Bigg|^2 ds\Bigg]\\
	\le\ &  |t'-t|\cdot C(L,T) \e\bigg[\int_t^{t'} \left(|X_{t\xi}(r)|^2 + \|X_{t\xi}(r)\|_2^2+|v_{t\xi}(r)|^2+|P_{t\xi}(r)|^2+|Q_{t\xi}(r)|^2 \right) ds\bigg]\\
	\le\ & C\left(1+W_2^2(\mu,\delta_0)\right) |t'-t| .
\end{align*}
Substituting the last estimate into \eqref{prop10_13}, we have
\begin{equation}\label{prop10_14}
	\begin{split}
		&\e\left[|P_{t\xi}(t')-P_{t\xi}(t)|^2\right]\le C\left(1+W_2^2(\mu,\delta_0)\right)|t'-t|.
	\end{split}		
\end{equation}
Similarly, we can deduce that
\begin{equation}\label{prop10_15}
	\begin{split}
		&\e\left[|P_{tx\mu}(t')-P_{tx\mu}(t)|^2\right]\le C\left(1+|x|^2+W_2^2(\mu,\delta_0)\right) |t'-t|.
	\end{split}		
\end{equation}
By the usual dynamic programming principle, for any $\epsilon\in[0,T-t]$,
\begin{equation*}
	V(t,x,\mu)=\e\left[\int_t^{t+\epsilon}f(s,\theta_{tx\mu}(s))ds\right]+V\left(t+\epsilon,X_{tx\mu}(t+\epsilon),\lr(X_{t\xi}(t+\epsilon))\right),
\end{equation*}
so we have
\begin{align}
		\frac{1}{\epsilon}\left[V(t+\epsilon,x,\mu)-V(t,x,\mu)\right] \notag =\ & \frac{1}{\epsilon}\e[V(t+\epsilon,x,\mu)-V(t,x,\mu)] \notag \\
		=\ & \frac{1}{\epsilon} \e\left[V(t+\epsilon,x,\mu)-V(t+\epsilon,X_{tx\mu}(t+\epsilon),\lr(X_{t\xi}(t+\epsilon)))\right] \notag \\
		& -\frac{1}{\epsilon} \e\left[\int_t^{t+\epsilon}f(s,\theta_{tx\mu}(s))ds\right]. \label{prop10_1}
\end{align}	
From Theorems~\ref{prop:6} and \ref{prop:7}, Estimates \eqref{prop10_11} and \eqref{prop10_12} and It\^o's formula for measure-dependent functionals (see \cite[Theorem 2.2]{AB5} and also \cite[Theorem 7.1]{BR}), we deduce that
\begin{equation}\label{prop10_2}
	\begin{split}
		&\lim_{\epsilon\to0}\frac{1}{\epsilon}\e\left[V(t+\epsilon,x,\mu)-V(t+\epsilon,X_{tx\mu}(t+\epsilon),\lr(X_{t\xi}(t+\epsilon)))\right]\\
		=\ & - b (t,x,\mu,v_{tx\mu}(t))^\top D_x V(t,x,\mu)-\frac{1}{2}\text{Tr}\left(\sigma (t,x,\mu)^\top D_x^2 V(t,x,\mu)\sigma(t,x)\right)\\
		& -{\e}\left[b (t,{\xi},\mu,{v_{t\xi}}(t))^\top D_y\frac{dV}{d\nu}(t,x,\mu)({\xi})+\frac{1}{2}\text{Tr}\left(\sigma (t,\xi,\mu)^\top D_y^2\frac{dV}{d\nu}(t,x,\mu)({\xi})\sigma(t,{\xi})\right) \right].
	\end{split}
\end{equation}
From \eqref{prop10_11}, \eqref{prop10_12}, \eqref{prop10_14} and \eqref{prop10_15}, we also have
\begin{equation*}\label{prop10_3}
	\lim_{\epsilon\to0}\frac{1}{\epsilon}\e\left[\int_t^{t+\epsilon}f(s,X_{tx\mu}(s),\lr(X_{t\xi}(s)),v_{tx\mu}(s))ds\right]=f(t,x,\mu,v_{tx\mu}(t));
\end{equation*}
substituting \eqref{prop10_2} and the previous equation back to \eqref{prop10_1}, from \eqref{prop6_03}, \eqref{rk_1}, \eqref{lem2_0.1} and \eqref{lem2_0.2}, we have
\begin{align*}
		&\lim_{\epsilon\to0}\frac{1}{\epsilon}\left[V(t+\epsilon,x,\mu)-V(t,x,\mu)\right]\\
		=\ & -b(t,x,\mu,v_{tx\mu}(t))^\top D_x V(t,x,\mu) -\frac{1}{2}\text{Tr}\left[\sigma (t,x,\mu)^\top D_x^2 V(t,x,\mu)\sigma(t,x,\mu)\right]-f(t,x,\mu,v_{tx\mu}(t))\\
		& -{\e}\left[b(t,{\xi},\mu,{v_{t\xi}}(t))^\top D_y\frac{dV}{d\nu}(t,x,\mu)({\xi})+\frac{1}{2}\text{Tr}\left(\sigma (t,{\xi},\mu)^\top D_y^2\frac{dV}{d\nu}(t,x,\mu)({\xi})\sigma(t,\xi,\mu)\right) \right]\\
		=\ & -\frac{1}{2}\text{Tr}\left(\sigma (t,x,\mu)^\top D_x^2 V(t,x,\mu)\sigma(t,x,\mu)\right)-b \left(t,x,\mu,v_{tx\mu}(t)\right)^\top P_{tx\mu}(t) -f\left(t,x,\mu,v_{tx\mu}(t)\right)\\
		& -{\e}\bigg[D_p H \left(t,{\xi},\mu,P_{t\xi}(t)\right)^\top D_y\frac{dV}{d\nu}(t,x,\mu)({\xi}) +\frac{1}{2}\text{Tr}\left(\sigma (t,\xi,\mu)^\top D_y^2\frac{dV}{d\nu}(t,x,\mu)(\xi)\sigma(t,\xi,\mu)\right) \bigg]\\
		=\ & -\frac{1}{2}\text{Tr}\left(\sigma (t,x,\mu)^\top D_x^2 V(t,x,\mu)\sigma(t,x,\mu)\right)-H\left(t,x,\mu,P_{tx\mu}(t)\right)\\
		& -\e\bigg[ D_p H \left(t,\xi,\mu,P_{t\xi}(t)\right)^\top D_y\frac{dV}{d\nu}(t,x,\mu)(\xi)+\frac{1}{2}\text{Tr}\left( \sigma (t,\xi,\mu)^\top D_y^2\frac{dV}{d\nu}(t,x,\mu)(\xi)\sigma(t,\xi,\mu)\right)\bigg] \\
        =\ & -\frac{1}{2}\text{Tr}\left(\sigma (t,x,\mu)^\top D_x^2 V(t,x,\mu)\sigma(t,x,\mu)\right)-H\left(t,x,\mu,D_x V(t,x,\mu)\right)\\
		& -\e\bigg[ D_p H \left(t,\xi,\mu,D_x V(t,\xi,\mu)\right)^\top D_y\frac{dV}{d\nu}(t,x,\mu)(\xi)+\frac{1}{2}\text{Tr}\left( \sigma\sigma^\top (t,\xi,\mu) D_y^2\frac{dV}{d\nu}(t,x,\mu)(\xi)\right)\bigg] \\
        =\ & -\frac{1}{2}\text{Tr}\left(\sigma (t,x,\mu)^\top D_x^2 V(t,x,\mu)\sigma(t,x,\mu)\right)-H\left(t,x,\mu,D_x V(t,x,\mu)\right)\\[3mm]
		&\!\!\!-\int_\brn\!\!\!\bigg[ D_p H \left(t,y,\mu,D_x V(t,y,\mu)\right)^\top D_y\frac{dV}{d\nu}(t,x,\mu)(y)
		+\frac{1}{2}\text{Tr}\left( \sigma\sigma^\top (t,y,\mu) D_y^2\frac{dV}{d\nu}(t,x,\mu)({y})\right)\bigg] d\mu(y),
\end{align*}
from which we can conclude with \eqref{prop10_01}.

\end{document}